%% file: main.tex
\documentclass[leqno]{amsart}
\usepackage[a4paper,vmargin={3cm,3cm},hmargin={3cm,3cm}]{geometry}
\usepackage[T1]{fontenc}
\usepackage[utf8]{inputenc}
\usepackage[english]{babel}
\usepackage{csquotes}
\usepackage{lmodern}
\usepackage{amsmath,amssymb,amsthm,mathtools}
\usepackage{microtype}
\usepackage{cite}
\usepackage[bookmarksdepth=1,hyperfootnotes=false]{hyperref}
\hypersetup{colorlinks=true,linkcolor=blue,urlcolor=blue,citecolor=blue}
\usepackage{fancyhdr}
\usepackage{indentfirst}
\usepackage{float}
\usepackage{placeins}

\renewcommand{\!}{}

\theoremstyle{plain}
\newtheorem{theorem}{Theorem}[section]
\newtheorem{proposition}[theorem]{Proposition}
\newtheorem{lemma}[theorem]{Lemma}
\newtheorem{corollary}[theorem]{Corollary}

\theoremstyle{definition}
\newtheorem{definition}[theorem]{Definition}

\newtheorem{assumption}[theorem]{Assumption}
\newtheorem{example}[theorem]{Example}
\newtheorem{problem}{Problem}

\newcommand{\R}{\mathbb{R}}
\newcommand{\C}{\mathbb{C}}
\newcommand{\Z}{\mathbb{Z}}
\newcommand{\Q}{\mathbb{Q}}
\newcommand{\N}{\mathbb{N}}
\newcommand{\abs}[1]{\left\lvert #1 \right\rvert}

\newcommand{\evalG}{\operatorname{eval}_{\Gamma}}
\newcommand{\evalX}{\operatorname{eval}_{\mathbf X}}
\newcommand{\spG}{\operatorname{sp}_{\Gamma}}
\newcommand{\evalH}[2]{\left[#1\right]_{#2}}
\newcommand{\len}{\operatorname{len}}
\newcommand{\TBe}[1]{\mathrm{TB}_{#1}}
\newcommand{\TB}{\mathrm{TB}}
\newcommand{\vol}{\operatorname{vol}}
\newcommand{\covol}{\operatorname{covol}}

\newcommand{\dgt}{\operatorname{dgt}}
\newcommand{\ord}{\operatorname{ord}}
\newcommand{\freq}{\operatorname{freq}}
\newcommand{\revsig}{\overset{\leftarrow}{\sigma}}

\title[Terminal blocks of primes in Pisot numeration systems]{Terminal blocks of primes in Pisot numeration systems}
\author{Sungkon Chang and Johann Verwee}
\date{}

\numberwithin{equation}{section}

\begin{document}

\raggedbottom

\begin{abstract}
We prove a prime number theorem for fixed terminal block words in a family of
Pisot integer numeration systems defined by canonical block expansions.
Suppose that the dominant root $\varphi$ is a Pisot number and that the
characteristic polynomial $P_h$ is its minimal polynomial. Then every terminal
block word of total digit-length $m$ occurs among the primes with asymptotic
frequency $\varphi^{-m}$. The result recovers the known Zeckendorf and generalized Fibonacci cases and extends the terminal-prime problem to irreducible Pisot-unit systems without assuming that the recurrence coefficients are monotone, that the canonical expansions are greedy, or that the finiteness property (F) holds. Explicit non-$\beta$ families and Pisot units failing (F) illustrate this extension.

The main structural ingredient is a canonical block language whose expansion
of the companion substitution fixed word records the corresponding integer
expansions in natural order. This connects terminal conditions with Rauzy
cylinder windows and then with a linear orbit on a compact torus. The required
multiplicity-one geometry follows from Barge's pure-discreteness theorem after
reversal of the substitution words. The same toral model yields prime
genericity for finite factors of the substitution fixed word, polynomial
sampling laws, asymptotic independence from fixed congruence classes,
fixed-shift correlation formulas, and M\"obius orthogonality. Combined with
established prime theorems, it also gives a terminal refinement of Chebotarev
and arithmetic progressions inside fixed terminal prime classes.
\end{abstract}

\maketitle

\tableofcontents

\input{sections/01-introduction}
\input{sections/02-general-data}

\input{sections/03-structural-coding}
\input{sections/04-rauzy-windows}
\input{sections/05-full-terminal-partition}
\input{sections/06-toral-model}
\input{sections/07-prime-terminal-blocks}
\input{sections/08-rauzy-verification}
\input{sections/09-outlook-and-further-directions}

\bibliographystyle{abbrv}
\bibliography{refs}

\end{document}

%% file: sections/01-introduction.tex
\section{Introduction}\label{sec:introduction}

The distribution of primes in arithmetic progressions is a central
theme of analytic number theory. In 1837,
Dirichlet~\cite{Dirichlet1837} proved that every arithmetic progression
$a\bmod q$ with $\left(a,q\right)=1$ contains infinitely many primes. In its
modern quantitative form, the prime number theorem in arithmetic progressions
says the following.
Fix integers $q\geqslant 2$, $k\geqslant 1$, and $a$ with
$\left(a,q\right)=1$. Let $\pi\left(X\right)$ denote the number of
primes at most $X$, and let $\phi$ denote Euler's totient function
\cite[Chapters~4 and~20]{MontgomeryVaughanMNT1}. Then, as
$X\to+\infty$,
$$
\#\left\{p\leqslant X:p\ \mathrm{prime},\
p\equiv a\pmod{q^k}\right\}
\sim \frac{\pi\left(X\right)}{\phi\left(q^k\right)}.
$$
This analytic statement has a concrete digit interpretation. In base $q$,
prescribing the last $k$ digits of an integer is exactly the same as
prescribing one residue class modulo $q^k$. Thus, in ordinary bases,
terminal-digit conditions are congruence conditions. The problem considered
in this paper asks for an analogue of this elementary picture in recurrence
numeration systems, where terminal blocks still make sense but are no longer
described by congruences modulo powers of a base.

The first model beyond ordinary bases is Zeckendorf numeration. Recall
the Fibonacci sequence $\left(F_n\right)_{n\geqslant1}$ defined by
$$
F_1=F_2=1,\qquad
F_{n+2}=F_{n+1}+F_n\quad\left(n\geqslant1\right).
$$
The classical theorem of Lekkerkerker and
Zeckendorf~\cite{Lekkerkerker1952,Zeckendorf1972} gives every
non-negative integer a unique expansion as a sum of non-consecutive
Fibonacci numbers. Generalized representation theorems for related
Fibonacci-type sequences were developed early on by Daykin and Hoggatt
\cite{Daykin1960,Hoggatt1972}. Since $F_1=F_2$, we use the convention $a_1=0$.
Thus, if
$
p=a_1F_1+a_2F_2+\cdots+a_nF_n
$
is the Zeckendorf expansion of a prime $p$, then its first terminal
digit is $a_2$.

\begin{theorem}\label{thm:intro-zeckendorf}
Let $\varphi$ be the golden ratio. For each $c\in\left\{0,1\right\}$, as
$X\to+\infty$,
$$
\#\left\{p\leqslant X:p\ \mathrm{prime},\ a_2=c\right\}
\sim \frac{\pi\left(X\right)}{\varphi^{c+1}}.
$$
\end{theorem}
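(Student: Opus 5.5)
The plan is to turn the digit condition $a_2=c$ into a condition on the fractional part of $n\varphi$ (a Beatty-type window), and then to apply Vinogradov's theorem on the equidistribution of $p\varphi$ modulo $1$ over primes.

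\emph{Step 1: the coding.} The classical description of the Zeckendorf least significant digit is the following. The integers $n\geqslant1$ whose expansion has $a_2=1$ are exactly the values $\lfloor k\varphi^2\rfloor$, $k\geqslant1$, the upper Wythoff sequence. Equivalently, they are the positions of the letter $b$ in the Fibonacci word $abaababa\cdots$, the fixed point of $a\mapsto ab$, $b\mapsto a$. I will prove this by induction on the Fibonacci intervals $[F_{k},F_{k+1})$, using the self-similarity of the Fibonacci word. In the version I need, for $n\geqslant1$,
$$
a_2(n)=1\iff \{(n+1)\varphi^{-1}\}<\varphi^{-2},
$$
possibly after a harmless shift of $n$ and a choice of open or closed endpoints at the boundary. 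I will fix the exact shift and endpoints by checking small $n$: $n=1,4,6,9,12,\dots$ are the ones with $a_2=1$. The window has length $1-\varphi^{-1}=\varphi^{-2}$. Its complement has length $\varphi^{-1}$.

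\emph{Step 2: equidistribution over primes.} Since $\varphi^{-1}$ is irrational, Vinogradov's theorem says that $(p\alpha)_{p}$ is equidistributed modulo $1$ along primes for every irrational $\alpha$. Take $\alpha=\varphi^{-1}$ and translate by the fixed shift $\varphi^{-1}$. Then for any interval $I\subset[0,1)$,
$$
\#\{p\leqslant X:\{(p+1)\varphi^{-1}\}\in I\}\sim|I|\,\pi(X).
$$
This holds because equidistribution is translation invariant and the boundary has measure zero. The endpoints of $I$ therefore do not matter, so the ambiguity left in Step 1 is harmless.

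\emph{Step 3: conclusion.} Taking $I=[0,\varphi^{-2})$ gives the count $\pi(X)/\varphi^2$ for $c=1$. The complement gives $\pi(X)/\varphi$ for $c=0$. These match the exponent $c+1$ in the statement of Theorem~\ref{thm:intro-zeckendorf}.

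The main obstacle is Step 1: I must pin down the exact Beatty window with the correct shift and check that it is compatible with the convention $a_1=0$. I would first verify the standard statement that $n$ has Zeckendorf expansion ending in $F_2$ exactly when $n=\lfloor k\varphi^2\rfloor$. The second part is the conversion of "$n$ is a Beatty value $\lfloor k\beta\rfloor$ with $\beta=\varphi^2$" into "$\{(n+1)/\beta\}$ lies in an interval of length $1/\beta$". That conversion follows from $\lfloor k\beta\rfloor=n\iff k\in((n)/\beta,(n+1)/\beta]$, using $1/\beta=\varphi^{-2}$ and $\varphi^{-2}\equiv-\varphi^{-1}\pmod 1$. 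This last congruence shows the same irrational rotation appears either way, so Vinogradov applies equally to either form.
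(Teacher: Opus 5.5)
Your argument is correct in substance and your final window is the right one. The one thing to repair is the classical characterization in Step~1, which is off by one.

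\textbf{The off-by-one in Step~1.} The claim ``$a_2(n)=1\iff n=\lfloor k\varphi^2\rfloor$'' is false. The integer $n=1$ has $a_2=1$ but is not an upper Wythoff number, and $\lfloor\varphi^2\rfloor=2=F_3$ has $a_2=0$. The correct statement is
\[
a_2(n)=1\iff n+1=\lfloor k\varphi^2\rfloor\ \text{for some }k\geqslant1 .
\]
Equivalently, $n$ is a $0$-indexed position of $b$ in $abaab\cdots$. The $1$-indexed positions are the upper Wythoff numbers $2,5,7,10,\dots$, so your two ``equivalent'' descriptions differ by exactly this shift.

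Your displayed criterion $\{(n+1)\varphi^{-1}\}<\varphi^{-2}$ does not follow from the statement as you wrote it; it follows from the corrected one.
\begin{itemize}
\item Applying your Beatty conversion to $m=n+1$ gives $\{(n+2)\varphi^{-2}\}<\varphi^{-2}$. Via $\varphi^{-2}\equiv-\varphi^{-1}\pmod 1$, this is exactly $\{(n+1)\varphi^{-1}\}<\varphi^{-2}$.
\item Applying it to $m=n$ instead gives the different window $\{(n+1)\varphi^{-1}\}\in(\varphi^{-1},1)$.
\end{itemize}
The induction over $[F_k,F_{k+1})$ works for the corrected statement. For $F_k\leqslant n<F_{k+1}$ with $k\geqslant3$, both $a_2(n)$ and the letter at position $n$ reduce to those of $n-F_k$, with base cases $n=0,1$. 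For the uncorrected statement it already fails at $n=1$. (A minor point: $\lfloor k\beta\rfloor=n$ means $k\in[n/\beta,(n+1)/\beta)$, but the endpoints are irrational, so this does not matter.)

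Steps~2 and~3 are fine as written. Vinogradov for $\alpha=\varphi^{-1}$, translation invariance, and the null boundaries of intervals give densities $\varphi^{-2}$ and $\varphi^{-1}$.

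\textbf{Comparison with the paper.} With that repair, your route genuinely differs from the paper's. The paper gets this theorem as the case $g=(1,1)$ of Theorem~\ref{thm:intro-main}, with terminal blocks $(0),(0,1)$ of lengths $1,2$. That proof passes through:
\begin{itemize}
\item the coding Theorem~\ref{thm:terminal-coding}, where $\widehat T_\infty(n+1)=\TB(n)$ is precisely your corrected Step~1;
\item the Rauzy windows and their multiplicity-one tiling, obtained from Barge's theorem after reversal;
\item the covolume comparison of $\mathcal L$ and $\mathcal T$;
\item the congruence $\evalG(n)\equiv ns_0\pmod{\mathcal T}$;
\item and finally Vinogradov.
\end{itemize}
For $L=2$ this all collapses: the windows are intervals and $\mathbf v=\tau(s_0)=-\varphi^{-1}$. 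So the paper's toral orbit is your rotation up to orientation.

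Your Sturmian/Beatty coding produces the window explicitly and elementarily, with no tiling input. It is essentially the alternative the paper mentions via \cite[Lemma~4.1]{DrmotaMuellnerSpiegelhofer2018} plus Vinogradov. The paper's machinery buys what a Beatty description cannot: arbitrary order $L$ (where the windows are fractal), prescribed words of several blocks, and non-greedy canonical expansions.
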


This particular asymptotic is contained in the more general terminal-block
result of Davletyarova, Zhukova, and Shutov~\cite{DZS2014}; it also follows
from \cite[Lemma~4.1]{DrmotaMuellnerSpiegelhofer2018} together with
Vinogradov's equidistribution theorem. This latter derivation was one of
the motivations for the present generalization.

We consider the same type of question with the Fibonacci sequence replaced by a
sequence defined by a linear recurrence. The basic recurrence data are as follows.

\begin{definition}\label{def:intro-recurrence-data}
Fix an order $L\geqslant 2$ and a coefficient vector
$$
g=\left(1,g_2,\ldots,g_L\right),
$$
where $g_2,\ldots,g_L$ are positive integers. We use the reversed coefficient vector
$$
h=(h_1,\ldots,h_L),\qquad
h_r:=g_{L+1-r}\quad \left(1\leqslant r\leqslant L\right),
$$
so that $h_L=g_1=1$. The coefficients $h_1,\ldots,h_L$ determine a
sequence of weights $\left(H_n\right)_{n\geqslant1}$, defined by $H_1=1$,
$$
H_{r+1}=h_1H_r+h_2H_{r-1}+\cdots+h_rH_1
\qquad \left(1\leqslant r<L\right),
$$
and
$$
H_{n+L}=h_1H_{n+L-1}+h_2H_{n+L-2}+\cdots+h_LH_n
\qquad \left(n\geqslant1\right).
$$
\end{definition}

A slightly different choice of initial values is used in the standard
references
\cite{Fraenkel1985,GrabnerTichy1990,HamlinWebb2012,MillerWang2012}.
One reason for our choice of initial values is combinatorial: for every
$m\geqslant0$, $H_{m+1}$ is the number of admissible coefficient words
of length $m$. Another reason comes from ordinary bases. Although our
standing assumptions require $L\geqslant2$, the same normalization
extends naturally to order one: for
$$
H_1=1,\qquad H_{n+1}=qH_n\quad\left(n\geqslant1\right),
$$
with $q\geqslant2$, one obtains $H_n=q^{n-1}$, which is exactly the
usual base-$q$ weight sequence, and the corresponding reduced
$H$-expansions are the usual base-$q$ expansions.

The associated characteristic polynomial is
\begin{equation}\label{eq:intro-characteristic-polynomial}
\begin{aligned}
P_h(X)
&=X^L-h_1X^{L-1}-h_2X^{L-2}-\cdots-h_{L-1}X-1\\
&=X^L-g_LX^{L-1}-g_{L-1}X^{L-2}-\cdots-g_2X-1.
\end{aligned}
\end{equation}
Since $h_1,\ldots,h_L$ are positive, $P_h$ has a unique positive
dominant root $\varphi>1$, which is simple, by \cite[Proposition~27]{Chang2021} and \cite[Theorem~12.2]{Ostrowski1973}.

Since $H_1=1$, every non-negative integer can be written as a finite
non-negative integer combination of the weights $H_1,H_2,\ldots$ If
arbitrary coefficients are allowed, however, such a representation is not
unique, precisely because the weights satisfy a recurrence relation. For
uniqueness, we choose a certain admissible coefficient sequence
\begin{equation}\label{eq:intro-reduced-H-expansion}
n=\sum_{k\geqslant 1}a_kH_k,
\end{equation}
where only finitely many coefficients are nonzero and the word
$\left(a_1,a_2,\ldots\right)$ satisfies the admissibility constraints imposed by the
recurrence. We call this the \emph{canonical infinite $H$-expansion}
of $n$. A finite reduced representative will be defined in Section~\ref{sec:general-data}.
This admissible representative need not coincide with the usual greedy
expansion. Thus our objects are canonical infinite expansions of
non-negative integers in the basis $H_1,H_2,\ldots$, not beta-expansions
of real numbers in powers of $\varphi^{-1}$.

We introduce the relevant notions through the following example.

\begin{example}\label{ex:intro-order-two}
For $g=\left(1,2\right)$, we have $h=\left(2,1\right)$, so
$H_1=1$, $H_2=2$, $H_3=5$, $H_4=12$, and for $n\geqslant 1$,
$H_{n+2}=2H_{n+1}+H_n$. We write coefficient words from the small-weight end, starting with the
coefficient of $H_1$, then the coefficient of $H_2$, and so on. In this
example, the admissible representatives coincide with the usual greedy
expansions. For instance,
$$
14=H_2+H_4,\qquad 15=H_1+H_2+H_4,\qquad
21=2H_2+H_3+H_4.
$$
Thus, omitting trailing zeros, the corresponding coefficient words are
$$
14:\ \left(0,1,0,1\right),\qquad
15:\ \left(1,1,0,1\right),\qquad
21:\ \left(0,2,1,1\right).
$$
The pair $\left(0,2\right)$ may occur as one admissible package rather
than as two independent digits. Every finite reduced representative in this example
decomposes uniquely into a concatenation of
$$
\left(0\right),\qquad \left(1\right),\qquad \left(0,2\right).
$$
These packages are called the \emph{semi-proper blocks} of this system:
$$
b_{1,0}=\left(0\right),\qquad
b_{1,1}=\left(1\right),\qquad
b_{2,0}=\left(0,2\right).
$$
The semi-proper block at the small-weight end of the canonical infinite
block expansion of $n$ is called the \emph{terminal block} of $n$ and is
denoted by $\TB(n)$. Hence
$$
\TB(14)=b_{1,0},\qquad
\TB(15)=b_{1,1},\qquad
\TB(21)=b_{2,0}.
$$
\end{example}

In general, the canonical infinite block expansion of every non-negative
integer is a concatenation of semi-proper blocks. For a semi-proper block
$b$, let $\len(b)$ denote its number of digits. The condition
$\TB(n)=b$ prescribes its first semi-proper block. Prescribing several
consecutive terminal blocks is a closer analogue of fixing a finite string
of terminal digits in an ordinary base.

We use the theory of Pisot substitutions to generalize
Theorem~\ref{thm:intro-zeckendorf}. The symbolic object associated with the
recurrence is the following.

\begin{definition}
\label{def:intro-associated-substitution}
Let $\mathcal A:=\left\{A_1,\ldots,A_L\right\}$. The \emph{associated
substitution} $\sigma$ on $\mathcal A$ is given by
$$
\sigma(A_i)=A_1^{h_i}A_{i+1}\quad\left(1\leqslant i<L\right),
\qquad
\sigma(A_L)=A_1.
$$
Here a substitution is a rule replacing each letter by a finite word and
extending to finite and one-sided infinite words by concatenation. We say
that a finite word $U$ is a \emph{prefix} of a finite or one-sided infinite
word $V$ if there exists a finite or one-sided infinite word $W$ such that
$V=UW$. Because $\sigma(A_1)$ begins with $A_1$, the word
$\sigma^k(A_1)$ is a prefix of $\sigma^{k+1}(A_1)$ for every integer
$k\geqslant0$. Hence there is a unique right-infinite word $A_\infty$
having every $\sigma^k(A_1)$ as a prefix. Equivalently,
$$
A_\infty:=\lim_{k\to\infty}\sigma^k(A_1).
$$
It satisfies $\sigma(A_\infty)=A_\infty$, and we call it the
\emph{fixed word} of $\sigma$.
\end{definition}

For the system $g=(1,3,2)$, equivalently $h=(2,3,1)$, which will
serve as a recurring example throughout the paper, the associated
substitution is
\[
\sigma(A_1)=A_1A_1A_2,\qquad
\sigma(A_2)=A_1A_1A_1A_3,\qquad
\sigma(A_3)=A_1.
\]

Since $h_1,\ldots,h_L$ are positive, this substitution is primitive,
meaning that some iterate of $\sigma$ sends every letter to a word
containing every letter of the alphabet. The relations between linear numeration systems, finite beta-expansions,
and substitution dynamics are studied in \cite{FrougnySolomyak1992,Hollander1996}; see also
Chang's recent preprint on the Zeckendorf shift and finite beta-expansions
\cite{ChangZeckendorfShift}.

The incidence matrix of $\sigma$ has characteristic polynomial
$P_h$. Hence, when the dominant root
$\varphi$ is a Pisot number and $P_h$ is its minimal polynomial, this
characteristic polynomial is irreducible, its dominant eigenvalue is
$\varphi$, and all its other eigenvalues have modulus strictly less than
$1$. A primitive substitution with these properties is called an
irreducible Pisot substitution.

The classical $\beta$-numeration framework provides useful context for our
algebraic assumptions. A number $\beta>1$ has the finiteness
property~(F) if every nonnegative element of $\Z[\beta^{-1}]$ has
a finite greedy $\beta$-expansion. The monotone and dominant coefficient
conditions appearing in Theorem~\ref{thm:intro-unconditional-cases}
are classical sufficient conditions for~(F)
\cite{FrougnySolomyak1992,Hollander1996}. In the classical
$\beta$-substitution setting, property~(F) is a standard sufficient
condition for the pure-discrete tiling geometry; see, for example,
\cite{AkiyamaBargeBertheLeeSiegel2015}.
Our main results, however, require neither~(F) nor agreement of the
canonical $H$-expansions with greedy expansions. Instead, they assume
that the dominant root $\varphi$ is a Pisot number and that $P_h$ is
its minimal polynomial.

The Pisot Substitution Conjecture predicts pure discrete spectrum for
every irreducible Pisot substitution, both for the symbolic substitution
system and for the associated one-dimensional tiling system
\cite[Conjectures~4.1 and~4.2]{AkiyamaBargeBertheLeeSiegel2015}. For
irreducible Pisot substitutions these two formulations are equivalent by
Clark--Sadun; see the discussion following Conjecture~4.2 in
\cite{AkiyamaBargeBertheLeeSiegel2015} and
\cite[Theorem~3.1]{ClarkSadun2003}.

For the companion substitutions arising here, the required geometric
input follows from existing pure-discreteness theory. After reversing each
substitution word, the substitution is injective on initial letters and
constant on final letters, so Barge's theorem
\cite[Theorem~3.13]{Barge2016} gives pure discrete spectrum for the
associated tiling system. Reversal exchanges the two symbolic languages
and conjugates one shift with the inverse of the other; the
symbolic--tiling equivalence therefore transfers Barge's conclusion back
to our original substitution. In the irreducible unimodular setting
considered here, pure discrete spectrum is equivalent, by
\cite[Theorem~4.8]{BaratBertheLiardetThuswaldner2006}, to the associated
Rauzy fractal generating the corresponding lattice tiling.
Section~\ref{sec:rauzy-verification} identifies this geometry with the
stable coordinates and lattice used below. Thus no instance of the Pisot
Substitution Conjecture is used as an assumption in our argument: for the
companion family considered here, the required pure discreteness follows
from Barge's theorem after reversal.

We can now state the main result without developing the geometric
machinery used in its proof.

\begin{theorem}\label{thm:intro-main}
Suppose that the dominant root $\varphi$ is a Pisot number and that
$P_h$ is its minimal polynomial. Then, for every semi-proper block $b$
of the associated numeration system, as $X\to+\infty$,
$$
\#\left\{
p\leqslant X:
p\ \mathrm{prime},\
\TB(p)=b
\right\}
\sim
\frac{\pi\left(X\right)}{\varphi^{\len(b)}}.
$$
\end{theorem}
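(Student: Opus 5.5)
We reduce the prime statement to a known equidistribution theorem for primes along a linear orbit on a torus. Throughout, assume $\varphi$ is Pisot and $P_h$ is its minimal polynomial, so $\sigma$ is an irreducible unimodular Pisot substitution. The constant term of $P_h$ is $-1$, so $\varphi$ is a unit.

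\textbf{Step 1: coding integers by the fixed word.} The first step is to relate the canonical block expansion of $n$ to the prefix $A_\infty[0,n)$ of the fixed word. I would show that the canonical block language is exactly the prefix--suffix (Dumont--Thomas) expansion of prefixes of $A_\infty$, so that reading these prefixes in order lists the expansions of $0,1,2,\ldots$ in natural order. Under this dictionary, the condition $\TB(n)=b$ becomes a condition on the first levels of the prefix--suffix decomposition. That condition is a union of finitely many conditions of the form ``the suffix $A_\infty[n,\infty)$ begins with $\sigma^{k}(W)$ in a prescribed position''. Equivalently, $\TB(n)=b$ says that the image of $n$ under the Rauzy map lies in a subtile
\[
\Phi\bigl(\sigma^{\len(b)}\text{-level cylinder}\bigr)
\]
of the Rauzy fractal.

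\textbf{Step 2: identifying the measure of the window.} Let $\pi_s$ denote projection onto the contracting space. The point $\pi_s\bigl(\mathbf{e}(A_\infty[0,n))\bigr)$ equals $n\alpha$ modulo a lattice, for a fixed vector $\alpha$ on the torus $\mathbb{T}^{L-1}$. The window attached to $b$ is $\mathcal{R}_b$, the image of the Rauzy fractal under the contraction $\varphi^{-\len(b)}$ on the stable space, suitably translated. The Rauzy fractal has measure $1$ relative to the lattice covolume; this is the multiplicity-one statement. Hence $\mathcal{R}_b$ has relative measure $\varphi^{-\len(b)}$, since the contraction has determinant $\varphi^{-1}$ per step, $\varphi$ being a unit. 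The multiplicity-one claim is exactly where Barge's theorem enters, through pure discreteness and \cite[Theorem~4.8]{BaratBertheLiardetThuswaldner2006}. One must also check that the boundaries of the subtiles have measure zero, which is standard for Pisot Rauzy fractals.

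\textbf{Step 3: primes along the orbit.} The coordinates of $\alpha$, together with $1$, are linearly independent over $\Q$, because $P_h$ is irreducible. Vinogradov's theorem, in Weyl-criterion form for primes, then shows that $(p\alpha)$ is equidistributed on $\mathbb{T}^{L-1}$. A set whose boundary has measure zero is Jordan measurable, so approximating $\mathbf{1}_{\mathcal R_b}$ above and below by trigonometric polynomials yields
\[
\#\{p\leqslant X:\TB(p)=b\}\sim \varphi^{-\len(b)}\pi(X).
\]

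\textbf{Main obstacle.} I expect the hard step to be showing that the cylinder window has measure exactly $\varphi^{-\len(b)}$. This requires that the subtiles $\Phi(\text{cylinder})$ do not overlap and tile the torus with multiplicity one; without that, equidistribution gives only the measure of the window, not the counting density. My plan for this step is to invoke Barge's criterion after reversing the substitution words. The reversed substitution is injective on initial letters, since $\sigma(A_i)$ ends in $A_{i+1}$ or $A_1$ with distinct last letters. It is constant on final letters after reversal, since every $\sigma(A_i)$ begins with $A_1$. I would then transfer the conclusion back to $\sigma$ through the Clark--Sadun equivalence.
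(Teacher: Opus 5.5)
Your Step~1 fails as soon as some $h_i\geqslant2$, and Steps~2--3 inherit the error.

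\textbf{The integer $n$ is not a prefix length of $A_\infty$.} By Theorem~\ref{thm:terminal-coding}, the canonical expansion of $n$ is the decomposition of the prefix $\widehat T_\infty[n+1]$ of the \emph{expanded} word $\mathcal E(A_\infty)$, in which each letter $A_i$ occupies $h_i$ positions. The Dumont--Thomas expansion of $A_\infty[0,n)$ instead writes $n$ in the weights $\lvert T_k\rvert=\lvert\sigma^{k-1}(A_1)\rvert$. These obey the same order-$L$ recurrence as the $H_k$, but with different initial values. For $g=(1,2)$ these weights are $1,3,7,17,\ldots$ rather than $1,2,5,12,\ldots$. For instance, $A_\infty[0,2)=A_1A_1$ gives $2=2\lvert T_1\rvert$, whereas the canonical expansion is $2=H_2$. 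Concretely, $u_1u_2=A_1A_2$ while $\TB(1)=b_{1,1}$ and $\TB(2)=b_{1,0}$.

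Prefixes of $A_\infty$ correspond only to the base class $N_*$, and then through the nonlinear time change $m\mapsto c_m$ of Theorem~\ref{thm:fixed-word-bridge}. So the congruence $\pi_s(p_n)\equiv ns_0\pmod{\mathcal L}$ in your Step~2 linearizes the wrong index. It codes $u_nu_{n+1}\cdots$, which is how the paper proves Theorem~\ref{thm:prime-fixed-word}, not the terminal block $\TB(n)$. Your dictionary is correct only when $h_1=\cdots=h_L=1$ (Zeckendorf, Tribonacci), where $\mathcal E$ is a relabeling and $c_m=m$.

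\textbf{What is missing is the passage from the base classes to all integers.} The paper uses $N_{i,j}=N_i+j$ and the self-affinity $R_i=t_i+K^i(\widetilde R)$. Here the full window $\widetilde R=\overline{\{\evalG(n):n\in\N_0\}}$ is not the Rauzy fractal $R$: one has $\vol(R)=(\omega+\cdots+\omega^L)\vol(\widetilde R)$. So $\widetilde R$ is strictly larger when some $h_i\geqslant2$, and the terminal windows are contractions of $\widetilde R$, not of $R$.

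The paper then introduces a second lattice $\mathcal T$, adapted to the weights $H_k$, for which $\evalG(n)\equiv ns_0\pmod{\mathcal T}$ (Proposition~\ref{prop:toral-congruence}). Barge's theorem gives multiplicity one only for $R$ modulo $\mathcal L$. The property you actually need is multiplicity one for $\widetilde R$ modulo $\mathcal T$. The paper derives it from the covolume ratio $\covol(\mathcal L)/\covol(\mathcal T)=\omega+\cdots+\omega^L$ (Lemmas~\ref{lem:Qdet} and~\ref{lem:Pdet}), which gives $\vol(\widetilde R)=\covol(\mathcal T)$. Combined with integer equidistribution in $\widetilde R$, this forces rational independence and almost-everywhere injectivity (Propositions~\ref{prop:rational-independence} and~\ref{prop:ae-injectivity}). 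Without this bridge, the relative measure $\varphi^{-\len(b)}$ claimed in Step~2 is unjustified.

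Your reversal argument for Barge's criterion and your Vinogradov step agree with the paper and go through once this bridge is in place.
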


In the Zeckendorf case, the terminal blocks are $\left(0\right)$ and
$\left(0,1\right)$. They have lengths $1$ and $2$, corresponding
respectively to $a_2=0$ and $a_2=1$. Thus
Theorem~\ref{thm:intro-main} recovers
Theorem~\ref{thm:intro-zeckendorf}.

The single-block statement is the first case of the following stronger result, in which any fixed number of consecutive terminal blocks may be prescribed.

\begin{theorem}
\label{thm:intro-several-blocks}
Under the hypotheses of Theorem~\ref{thm:intro-main}, let
$e\geqslant1$ be an integer and let $B$ be any concatenation of $e$
semi-proper blocks. Denote by $\TBe{e}(n)$ the concatenation, starting
from the small-weight end, of the first $e$ semi-proper blocks in the
canonical infinite block expansion of $n$, and let $\len(B)$ be the total number of digits
in $B$. Then, as $X\to+\infty$,
$$
\#\left\{
p\leqslant X:
p\ \mathrm{prime},\
\TBe{e}(p)=B
\right\}
\sim
\frac{\pi\left(X\right)}{\varphi^{\len(B)}}.
$$
\end{theorem}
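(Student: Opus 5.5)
The plan is to convert the terminal condition $\TBe{e}(n)=B$ into a condition on a linear orbit on a compact torus, and then apply Vinogradov's equidistribution theorem for primes.

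First I will use the structural coding of the canonical block language. The fixed word $A_\infty$, expanded block by block, lists the canonical infinite $H$-expansions of $0,1,2,\ldots$ in natural order. Under this coding, the set of $n$ with $\TBe{e}(n)=B$ is exactly the set of positions $n$ at which $A_\infty$ (or a suitable desubstituted version of it) lies in a cylinder determined by $B$. For a word $B$ of total length $m=\len(B)$, this cylinder should be the $\sigma^m$-image structure corresponding to a single prefix word. Concretely, I expect $\TBe{e}(n)=B$ to hold iff $n$ lies in a translate $\sigma^{m}$-level cylinder. Its density equals the frequency of a letter of $\sigma^m(\cdot)$, and by Perron--Frobenius with left eigenvector normalized appropriately this frequency is $\varphi^{-m}$.

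Next I will pass to Rauzy fractals. For an irreducible unimodular Pisot substitution, position $n$ of $A_\infty$ lies in a cylinder $[W]$ iff the point $n\cdot\pi_s(\mathbf e)$, reduced modulo the lattice, falls in a cylinder window $\mathcal R_W$ in the contracting space $\R^{L-1}$. Here $\pi_s$ is the projection along the expanding direction, and $\mathbf e$ is the abelianization of one letter step, accounted for via the prefix vector. This holds up to the boundary of subtiles, which is a null set. The multiplicity-one (tiling) property is supplied by Barge's theorem after reversal combined with \cite[Theorem~4.8]{BaratBertheLiardetThuswaldner2006}. Under this property, the windows $\mathcal R_W$ project injectively modulo the lattice to a fundamental domain of $\mathbb T^{L-1}$. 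Hence $\TBe{e}(n)=B$ iff $n\alpha \bmod \Lambda\in \Omega_B$, where $\alpha\in\mathbb T^{L-1}$ is fixed and $\Omega_B$ is a set whose boundary has Haar measure zero and whose measure is $\varphi^{-m}$.

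Finally I will apply prime equidistribution. Because $P_h$ is irreducible, the coordinates of $\alpha$ together with $1$ are $\Q$-linearly independent, so $\alpha$ generates a totally irrational rotation. Vinogradov's theorem, in its multidimensional Weyl-criterion form, gives that $(p\alpha)_p$ is equidistributed in $\mathbb T^{L-1}$. Sandwiching $\mathbf 1_{\Omega_B}$ between continuous functions, which is possible because its boundary is null, then yields the asymptotic $\pi(X)\varphi^{-m}$.

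The main obstacle is the multiplicity-one step. One must identify the torus of the numeration orbit exactly with the lattice $\Lambda$ for which the Rauzy fractal tiles, and show that $\Omega_B$ is a.e. single-covered with boundary measure zero. Without this, $\mathbf 1_{\Omega_B}$ would only be bounded by a covering multiplicity. I would first check the identification by verifying that the lattice generated by $\pi_s(e_i-e_j)$ matches the one in Barge's setting after reversal. I would then deduce measure-zero boundaries from the standard self-affinity of the subtiles.
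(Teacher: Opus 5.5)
\textbf{There is a genuine gap: your toral coding linearizes the wrong index.} The orbit $ms_0$ modulo the lattice generated by the $\pi_s(e_i-e_j)$ (the paper's $\mathcal L$) codes \emph{positions} $m$ in the fixed word $A_\infty=u_0u_1\cdots$; this is Lemma~\ref{lem:prefix-toral-congruence}. The integer $n$, however, is not position $n$ of $A_\infty$. By Theorem~\ref{thm:terminal-coding}, it is position $n+1$ of the expanded word $\widehat T_\infty=\mathcal E(A_\infty)$, in which each letter $A_i$ is spread over $h_i$ consecutive integers. The two indexings are related by the nonlinear time change $c_m=\lVert\mathcal E(u_0\cdots u_{m-1})\rVert=\sum_i h_i(p_m)_i$ of Theorem~\ref{thm:fixed-word-bridge}. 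For instance, for $g=(1,2)$ one has $u_4=A_1$ but $\TB(4)=b_{2,0}$.

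So sampling $ps_0$ modulo $\mathcal L$ at primes controls the factor $u_p\cdots u_{p+r-1}$, which is Theorem~\ref{thm:prime-fixed-word}, not $\TBe{e}(p)$. The constants do not match either: $\freq(A_i)=\omega^i/(\omega+\cdots+\omega^L)$, whereas the integer density of $\TB=b_{i,j}$ is $\omega^i$. Your plan essentially works only when $h_1=\cdots=h_L=1$ (Fibonacci, Tribonacci). There $\mathcal E$ is a relabeling, $c_m=m$ and $\widetilde R=R$. It already fails for $g=(1,2)$ and for the running example $g=(1,3,2)$, both covered by the theorem. Appealing to ``a suitable desubstituted version'' of $A_\infty$ does not repair this. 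The relevant word $\widehat T_\infty$ is an expanded, non-letter-to-letter image, and the results you cite (Barge, \cite[Theorem~4.8]{BaratBertheLiardetThuswaldner2006}) concern $\sigma$ and do not apply to it directly.

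\textbf{The missing ingredient is a second lattice adapted to the weights $H_k$.} This is $\mathcal T$, generated by the $Q_r(\Gamma)$. Equivalently, it is the image under $\pi_s$ of the integer vectors $x$ with $\sum_i h_ix_i=0$, the weighted analogue of your lattice matching $c_m=\sum_ih_i(p_m)_i$. For this lattice, $\evalG(n)\equiv ns_0\pmod{\mathcal T}$ with $n$ the integer itself (Proposition~\ref{prop:toral-congruence}).

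The relevant window is then the full window $\widetilde R=\bigcup_{i,j}(js_0+R_i)$, not $R$. Its multiplicity-one property modulo $\mathcal T$ does not follow from the standard tiling theorem, which only says that $R$ tiles by $\mathcal L$. Indeed, $\vol(\widetilde R)=\covol(\mathcal T)>\covol(\mathcal L)=\vol(R)$ as soon as some $h_i\geqslant2$. The paper derives the property from the $\mathcal L$-tiling in several steps:
\begin{enumerate}
\item the translation identity $N_{i,j}=N_i+j$;
\item the self-affinity $R_i=t_i+K^i(\widetilde R)$ with $\lvert\det K\rvert=\omega$, which together with $\sum_ih_i\omega^i=1$ makes the pieces $js_0+R_i$ a null-overlap partition of $\widetilde R$;
\item equidistribution of $\evalG(n)$ in $\widetilde R$, transported from the $\mathcal L$-torus through the time change $c_m$;
\item the determinant identity $\covol(\mathcal L)/\covol(\mathcal T)=\omega+\cdots+\omega^L$, giving $\vol(\widetilde R)=\covol(\mathcal T)$;
\item a comparison of the pushforward of Lebesgue measure on $\widetilde R$ with the limit law of $ns_0$ modulo $\mathcal T$, which forces total irrationality and multiplicity one almost everywhere.
\end{enumerate}

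For $e\geqslant2$, the windows should likewise come from $R_B=v_B+K^{\len(B)}(\widetilde R)$. This uses the order-preserving bijection $r\mapsto[B]_H+\Phi^{\len(B)}(r)$ onto $\{n:\TBe{e}(n)=B\}$. Factor cylinders of $A_\infty$ are the wrong tool here, since they involve carries even in the Tribonacci case.
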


For Zeckendorf numeration, Theorem~\ref{thm:intro-several-blocks}
recovers, up to the harmless indexing shift between the two standard
Fibonacci conventions, the prime asymptotic for prescribed terminal
blocks established by Davletyarova, Zhukova, and Shutov~\cite{DZS2014}.

The algebraic hypothesis in these theorems is itself automatic in
several concrete families.

\begin{theorem}\label{thm:intro-unconditional-cases}
The conclusions of Theorems~\ref{thm:intro-main} and
\ref{thm:intro-several-blocks} hold unconditionally in the monotone
branch $g_2\leqslant g_3\leqslant\cdots\leqslant g_L$, in the
dominant branch $\sum_{k=1}^{L-1}g_k<g_L$, and for the two families
\[
g=\left(1,\underbrace{c+1,\ldots,c+1}_{L-2\text{ terms}},c\right),
\qquad c\geqslant1,
\]
and
\[
g=\left(1,c+1,\underbrace{c,\ldots,c}_{L-2\text{ terms}}\right),
\qquad L\geqslant4,\quad c\geqslant3.
\]
For $L\geqslant3$, the first family is non-$\beta$ in the sense that its
companion substitution is not the standard simple-Parry
$\beta$-substitution associated with $\varphi$; it contains the recurring
example $g=(1,3,2)$. For the second family, the associated Pisot unit fails
the finiteness property~(F).
\end{theorem}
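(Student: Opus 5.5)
The plan is to check, family by family, exactly the hypotheses of Theorems~\ref{thm:intro-main} and~\ref{thm:intro-several-blocks}: the dominant root $\varphi$ is Pisot and $P_h$ is its minimal polynomial. No further input is needed for those conclusions, because the pure-discreteness geometry is supplied by Barge's theorem after reversal for every companion substitution, as explained before Theorem~\ref{thm:intro-main}. Separately, I will verify the two side claims: the first family is non-$\beta$, and the second family fails~(F).

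\emph{Reduction to a root-location statement.} In every case I will show that $P_h$ has exactly one root of modulus $>1$ and no root on $\abs{z}=1$; call this Step~(I). Irreducibility then follows automatically. Since $P_h(0)=-1$, any monic integer factor not containing $\varphi$ would have constant term $\pm1$. Its roots would all lie strictly inside the unit disc, so the product of their moduli would be $<1$, a contradiction. Hence $P_h$ is the minimal polynomial of $\varphi$ and $\varphi$ is Pisot.

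\emph{Step~(I) in the first two branches.} In the monotone branch, $h_1=g_L\geqslant h_2\geqslant\cdots\geqslant h_L=1$. Brauer's classical theorem on $X^L-a_1X^{L-1}-\cdots-a_L$ with $a_1\geqslant\cdots\geqslant a_L\geqslant1$ then gives irreducibility and the Pisot property directly. In the dominant branch I use Rouché on $\abs{z}=1$, comparing $P_h$ with $-h_1z^{L-1}$. The remaining terms have total modulus at most $1+\sum_{r\geqslant2}h_r=1+\sum_{k=1}^{L-1}g_k\leqslant h_1$. In the equality case, a root on the circle forces all terms to align, which is possible only at $z=\pm1$, and $P_h(\pm1)\neq0$ is checked directly. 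So $P_h$ has $L-1$ roots inside the disc and one outside.

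\emph{Step~(I) for the two explicit families.} For $h=(c,c+1,\ldots,c+1,1)$ I first multiply by $X-1$, which gives
\[
(X-1)P_h(X)=X^{L+1}-(c+1)X^L-X^{L-1}+cX+1 .
\]
I then locate the roots of this sparse polynomial; the extra root $X=1$ is harmless and I will exclude it by hand. The naive Rouché comparison with $(c+1)z^L$ on $\abs{z}=1$ fails by a constant. I therefore expect this to be the main obstacle. My first attempt will be Rouché on a slightly shrunken circle $\abs{z}=1-\varepsilon$, or on $\abs{z}=1$ with the grouping $z^{L-1}(z^2-(c+1)z-1)$ against $cz+1$. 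The quadratic factor has one root $>c+1$ and one root of modulus $<1$, and the estimate should be run on that factored form. The family $h=(c,\ldots,c,c+1,1)$ gets the same treatment after multiplication by $X-1$. There the restrictions $L\geqslant4$ and $c\geqslant3$ should be exactly what makes the dominant term beat the rest on the circle.

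\emph{Side claims.} For non-$\beta$-ness, the standard simple-Parry substitution of $\varphi$ is built from $d_\varphi(1)$. Its letter coefficients must satisfy Parry's lexicographic condition: every shift is strictly smaller than the sequence itself. The candidate $(c,c+1,\ldots,c+1,1)$ violates this, since its shift starting with $c+1$ exceeds it. Hence $d_\varphi(1)$ is a different word, and the companion substitution is not the $\beta$-substitution when $L\geqslant3$.

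For the second family, the digit string $(c,\ldots,c,c+1,1)$ again violates Parry's condition, so the greedy expansion of $1$ is not this finite word. I will then exhibit an explicit element of $\Z[\varphi^{-1}]_{\geqslant0}$ whose greedy expansion does not terminate, which shows that~(F) fails. The natural candidate is the remainder after subtracting the first greedy digits of $1$, and I expect it to be purely periodic, as in Akiyama's criterion for~(F). Should this computation prove delicate, a fallback is the Frougny--Solomyak necessary condition for~(F): test whether $P_h$ has a second positive real root, or whether $d_\varphi(1)$ fails to be finite.
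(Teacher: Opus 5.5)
The following parts of your proposal are fine:
\begin{itemize}
\item the reduction to root location plus the constant-term irreducibility argument;
\item Brauer's theorem in the monotone branch;
\item the Rouch\'e comparison with $-h_1z^{L-1}$ in the dominant branch, which is valid via the symmetric form of Rouch\'e once you have excluded roots on $\abs{z}=1$, and which replaces the paper's appeal to Berwald's criterion by a self-contained argument;
\item the non-$\beta$ argument via Parry's lexicographic condition, which in effect matches the paper's observation that $\lfloor\varphi\rfloor=c+1$ while $h_1=c$.
\end{itemize}

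\textbf{The (F) claim is not proved.} The genuine gap is the last assertion of the statement: nothing in your proposal shows that the second family fails~(F). Showing that $(c,\ldots,c,c+1,1)$ violates Parry's condition only says that $d_\varphi(1)$ is not \emph{this} word. A priori it could be another finite word. Your fallbacks do not close this. By Descartes' rule $P_h$ has a unique positive root, so the ``second positive real conjugate'' obstruction is unavailable. And ``test whether $d_\varphi(1)$ is infinite'' is not a test; it is the entire difficulty.

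The paper settles it by computing $d_\varphi(1)$ explicitly. It is eventually periodic with preperiod $\ell=(L-1)(L-3)$ and period $n=2L-1$. This is certified by the identity $P_{L,c}(X)T(X)=X^N-\sum_{k=1}^{n}a_kX^{N-k}-X^\ell+\sum_{k=1}^{\ell}(a_k-a_{n+k})X^{\ell-k}$, with the cyclotomic factor $T(X)=(X^{L(L-1)}-1)(X-1)/\bigl((X^L-1)(X^{L-1}-1)\bigr)$. A residue analysis modulo $L$ and $L-1$ of the recursion $a_{n+k}=a_k-\delta_k$ then shows $0\leqslant a_k\leqslant c$ for $k\geqslant2$ and $a_N=1$. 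Parry's criterion identifies this as the greedy expansion of $1$, and $x=1-(c+1)\omega\in\Z[\varphi^{-1}]_{\geqslant0}$ has the non-terminating greedy expansion $0.0a_2a_3\cdots$. Some such explicit certificate that $d_\varphi(1)$ is infinite is indispensable, and your sketch does not provide one.

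\textbf{The root location for the explicit families needs a correction.} The Pisot property for the two explicit families is only planned. For the first family, your splitting is $G=z^{L-1}(z^2-(c+1)z-1)$ against $H=cz+1$. On $\abs{z}=1$, with $x=\cos\theta$, one finds $\abs{G}^2-\abs{H}^2=(1-x)(2c+4+4x)$. For the second family, the analogous splitting $G=z^L(z-(c+1))$ against $H=z^2-cz-1$ (the paper's) gives $2(x-1)(2x-(c-1))$. This is nonnegative on $[-1,1]$ because $c\geqslant3$; $L$ plays no role here, and $L\geqslant4$ is needed only for the (F) construction.

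In both cases equality holds exactly at $z=1$, which is a zero of $(z-1)P_h$ lying on the contour. So the extra root is not harmless: Rouch\'e cannot be applied on $\abs{z}=1$. Shrinking the circle goes the wrong way. The quantity $\abs{G}^2-\abs{H}^2$ vanishes at $z=1$ with positive radial derivative, namely $2(c+1)\bigl((L-1)(c+1)-1\bigr)$, respectively $2c(Lc-c+1)$. Hence the inequality fails at $z=r$ for $r<1$ close to $1$.

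The repair is as follows.
\begin{enumerate}
\item Work on $\abs{z}=r$ with $r>1$ close to $1$.
\item Conclude that $(z-1)P_h$ has $L$ zeros in $\abs{z}<r$, one of them $z=1$.
\item Let $r\downarrow1$.
\item Exclude roots of $P_h$ on the unit circle, because equality forces $z=1$ while $P_h(1)\neq0$.
\end{enumerate}
With this repair, your route for the first family is a legitimate alternative to the paper's homotopy argument with $f_t$ and Descartes' rule. For the second family it coincides with the paper's proof.
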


The failure of~(F) for the second family is established in
Section~\ref{sec:nonfinite-beta-family}.

The Rauzy torus also gives an application in the opposite direction,
which no longer refers to terminal blocks. Write
$A_\infty=u_0u_1u_2\cdots$ for the substitution fixed word. For every
finite factor $W$ of $A_\infty$, the frequency of occurrences of $W$
starting at prime positions is equal to its ordinary factor frequency:
\[
\frac1{\pi(X)}
\#\left\{p\leqslant X:
 u_pu_{p+1}\cdots u_{p+|W|-1}=W\right\}
\longrightarrow
\operatorname{freq}(W).
\]
Equivalently, the same statement has a von Mangoldt weighted form. This
is proved in Theorem~\ref{thm:prime-fixed-word} by lifting the fixed word
to its Rauzy torus, applying Vinogradov's theorem there, and projecting
back to the symbolic coding.

The toral model also gives several further consequences. Fixed terminal
words have the expected frequencies along polynomial samples and, by
Rhin's theorem, along polynomial values at prime arguments. Terminal
blocks are asymptotically independent of every fixed reduced congruence
class, successive terminal blocks of primes satisfy an asymptotic
Bernoulli law, fixed shifts admit an exact toral correlation formula,
and every fixed terminal cylinder is orthogonal to the M\"obius
function.

The same coding can be combined with established prime theorems. We
obtain a terminal Chebotarev theorem, showing asymptotic independence
between terminal blocks and Frobenius conjugacy classes. Moreover, the
positive relative density of every terminal prime class, together with
Shao's refinement of the Green--Tao theorem~\cite{GreenTao2008,ShaoNarrow2017}, implies that every such
class contains arbitrarily long arithmetic progressions with
polylogarithmically bounded common difference.

Theorems~\ref{thm:intro-main} and
\ref{thm:intro-several-blocks} complement the existing literature on
digital functions along primes. Mauduit and
Rivat~\cite{MauduitRivat2010} solved the Gelfond problem for the sum of
digits of primes in a fixed integer base. For Zeckendorf expansions and
more general morphic numeration systems, Drmota, M\"ullner and
Spiegelhofer~\cite{DrmotaMuellnerSpiegelhofer2018,DrmotaMuellnerSpiegelhofer2025}
developed methods for studying digit functions along primes. In their
discussion of the Tribonacci substitution, they explicitly suggested a
prime number theorem based on pure discrete spectrum and the classical
Rauzy fractal
\cite[Section~9.2]{DrmotaMuellnerSpiegelhofer2025}.
Theorem~\ref{thm:prime-fixed-word} establishes this fixed-word statement
and, more generally, gives prime genericity for every finite factor of the
companion fixed words considered here. This fixed-word statement is distinct from the broader Tribonacci
sum-of-digits problem posed in
\cite[Problem~8]{DrmotaMuellnerSpiegelhofer2025}. An explicit computation
for Tribonacci factors of length two is given in Section~7.3. Drmota and
Steiner~\cite{DrmotaSteiner2002} treated Zeckendorf digit statistics along
polynomial sequences and along polynomial values at primes.

For prescribed terminal blocks in Zeckendorf numeration, Davletyarova, Zhukova, and Shutov~\cite{DZS2014} established the corresponding prime asymptotic, and subsequently extended this result to a family of order-two generalized Fibonacci numeration systems~\cite{DZS2016}. Shutov later established a prime number theorem for prescribed endings of greedy expansions in linear recurrence systems with nonincreasing coefficients~\cite{Shutov2021}, and studied densities and first differences for numbers with fixed last digits in linear recurrent bases~\cite{Shutov2024}. Chang~\cite{Chang2025TerminalDigits} later formulated the Zeckendorf asymptotic independently as a conjecture.

The present work was developed independently of these earlier results
and shares with Shutov's approach the use of Rauzy geometry and toral
dynamics. Its main distinction lies in an arithmetic--substitution
duality: Theorem~\ref{thm:terminal-coding} relates the entire
canonical $H$-expansion of an integer to the decomposition of a prefix
of the companion substitution fixed word, not only to its terminal
block. This duality provides the arithmetic coding used in our prime
asymptotics. Our main theorems apply to the whole companion family
under the irreducible Pisot-unit hypotheses, without imposing
monotonicity on the recurrence coefficients or requiring greedy
canonical expansions or property~(F). In contrast, the terminal-block
results of Shutov~\cite{Shutov2021,Shutov2024} are stated for
nonincreasing recurrence coefficients. The explicit families of
Theorem~\ref{thm:intro-unconditional-cases} give nonmonotone and
non-$\beta$ examples, while Section~\ref{sec:nonfinite-beta-family}
gives an infinite family of Pisot units failing~(F).

\subsection*{Strategy of the proof}

The geometric construction starts from the two algebraic hypotheses of
Theorem~\ref{thm:intro-main}: the dominant root $\varphi$ is a Pisot
number and $P_h$ is its minimal polynomial. For modularity of the argument,
Section~\ref{sec:rauzy} isolates the precise multiplicity-one statement
needed later as Theorem~\ref{thm:rauzy-fundamental-domain}, whose proof
is given in Section~\ref{sec:rauzy-verification} under the same
algebraic hypotheses.

The Pisot algebraic hypotheses allow us to associate with each canonical
infinite $H$-expansion the vector of its evaluations at the non-dominant
conjugates of $\varphi$. Since these conjugates have modulus strictly
less than $1$, the resulting sets of evaluation vectors are bounded. For each terminal block $b$, we call the closure of the
corresponding set of evaluation vectors the \emph{Rauzy piece associated
with $b$}, following the Rauzy-fractal framework initiated by
Rauzy~\cite{Rauzy1982} and developed by Arnoux and
Ito~\cite{ArnouxIto2001}. This construction is illustrated in
Figure~\ref{fig:intro-expanded-rauzy-132} for the example
$g=\left(1,3,2\right)$.

\begin{figure}[htbp]
\centering
\includegraphics[width=0.94\textwidth]{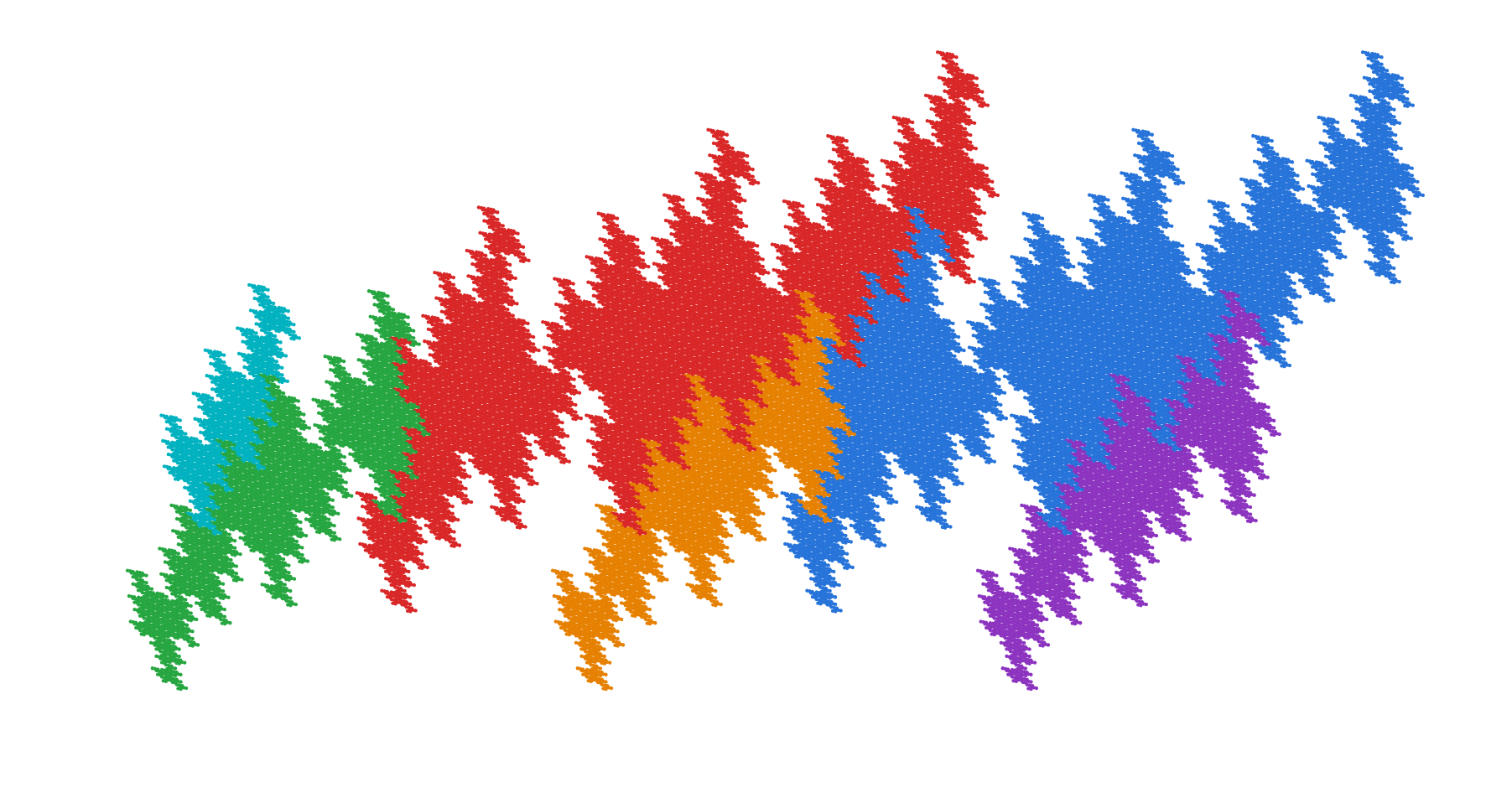}
\caption{A numerical approximation of the union of the Rauzy pieces for
$g=\left(1,3,2\right)$, partitioned according to the terminal block.
The colors red, blue, green, orange, purple, and cyan correspond,
respectively, to the semi-proper blocks $\left(0\right)$,
$\left(1\right)$, $\left(0,2\right)$, $\left(1,2\right)$,
$\left(2,2\right)$, and $\left(0,3,2\right)$.}
\label{fig:intro-expanded-rauzy-132}
\end{figure}

Theorems~\ref{thm:intro-main} and
\ref{thm:intro-several-blocks} have analytic conclusions, but their main
new input is the arithmetic coding of canonical terminal blocks by the
companion substitution, together with its Rauzy-geometric realization. The
proof proceeds by replacing the terminal-block condition successively by
descriptions adapted to substitution dynamics, Rauzy geometry, and
finally prime equidistribution:
$$
\begin{aligned}
\TB(n)=b_{i,j}
&\Longrightarrow
\text{a symbolic prefix condition}\\
&\Longrightarrow
\text{membership in a translated Rauzy piece}\\
&\Longrightarrow
\text{a linear orbit condition on a compact torus}.
\end{aligned}
$$

The symbolic step is the main new coding ingredient. After expanding the
fixed word of the associated substitution into semi-proper blocks, we prove
that the resulting infinite word lists
\[
\TB(0),\TB(1),\TB(2),\ldots
\]
in their natural integer order. For example, when
$g=\left(1,3,2\right)$, the fixed word of $\sigma$ begins with
$$
A_1A_1A_2A_1A_1A_2A_1A_1A_1A_3\cdots.
$$
The letters $A_1,A_2,A_3$ correspond respectively to
$$
b_{1,0}=\left(0\right),\qquad
b_{2,0}=\left(0,2\right),\qquad
b_{3,0}=\left(0,3,2\right).
$$
We expand them according to
$$
\begin{aligned}
A_1&\longmapsto \left(0\right)\left(1\right),\\
A_2&\longmapsto
\left(0,2\right)\left(1,2\right)\left(2,2\right),\\
A_3&\longmapsto \left(0,3,2\right).
\end{aligned}
$$
The expanded fixed word therefore begins
$$
\left(0\right)\left(1\right)
\left(0\right)\left(1\right)
\left(0,2\right)\left(1,2\right)\left(2,2\right)\left(0\right)\left(1\right)
\left(0\right)\left(1\right)
\left(0,2\right)\left(1,2\right)\left(2,2\right)\cdots,
$$
and these are precisely the terminal blocks of the consecutive integers
$n\geqslant0$.

The classes with terminal block $b_{i,0}$ occur at the boundaries of
the complete expanded images of the letters $A_i$. Their arithmetic positions
are therefore encoded by prefix-counting vectors of the unexpanded fixed word,
and the remaining terminal classes are obtained from these base classes by
explicit translations. Projecting the corresponding vectors onto the
non-dominant eigenspaces produces the Rauzy pieces associated with the
terminal classes. In Figure~\ref{fig:intro-expanded-rauzy-132}, the red and
blue pieces differ by one unit translation in the stable plane, while the
green, orange, and purple pieces are successive unit translates.

The multiplicity-one fundamental-domain property, proved in
Section~\ref{sec:rauzy-verification} from the algebraic hypotheses,
allows us to quotient by the natural difference lattice and turn
membership in such a piece into a linear orbit condition on a compact
torus. Rational independence and Vinogradov's theorem then give
equidistribution along the primes, while the volumes of the Rauzy pieces
determine the asymptotic constants.

A main structural contribution of the paper is the direct arithmetic
interpretation of the substitution fixed word. Rather than coding
beta-expansions of real numbers, we interpret its block expansion through
the canonical infinite block expansions of non-negative integers, whose
terminal blocks it records in order. Iterating the same affine coding yields the joint
distribution of any fixed number of consecutive terminal blocks. This
interpretation remains valid when the canonical infinite $H$-expansion is
non-greedy, as in the system $g=\left(1,3,2\right)$.

Further developments of this geometric framework for Pisot substitutions
and numeration systems include the self-affine and beta-numeration tiling
work of Akiyama, Berth\'e--Siegel, Canterini--Siegel, and
Siegel--Thuswaldner
\cite{Akiyama1999SelfAffine,BertheSiegel2005,CanteriniSiegel2001,SiegelThuswaldner2009}.
Very recently, Carton, Sudbery, and Yassawi constructed topological groups
associated with Pisot numerations satisfying a zero-preservation condition
and showed that, in the unimodular case, the resulting group is
continuously isomorphic to a torus \cite{CartonSudberyYassawi2026}.
Their setting is based on normalized greedy expansions with standard
initial conditions, whereas the present paper uses the canonical block
expansions above, which need not be greedy.

For the present companion family, Barge's theorem supplies the geometric
input after reversal of the substitution words. Reversal changes
\[
A_i\mapsto A_1^{h_i}A_{i+1},\qquad A_L\mapsto A_1,
\]
into a substitution that is injective on initial letters and constant on
final letters. Barge's theorem
\cite[Theorem~3.13]{Barge2016} therefore gives pure discrete spectrum under
Assumption~2.8. Under the same assumption, our substitution is unimodular
and irreducible and, by Lemma~\ref{lem:rauzy-hypotheses}, satisfies strong
coincidence. The standard Rauzy-fractal equivalence
\cite[Theorem~4.8]{BaratBertheLiardetThuswaldner2006} then identifies pure
discreteness with the multiplicity-one lattice-tiling property.
Section~\ref{sec:rauzy-verification} checks this identification in our stable
coordinates and with our lattice $\mathcal L$.

The monotone and dominant conditions, together with the two explicit families in Theorem~\ref{thm:intro-unconditional-cases}, therefore provide concrete situations in which the algebraic Assumption~\ref{ass:pisot} can be verified directly. The first family contains the recurring example $g=(1,3,2)$, while the second provides Pisot units failing the finiteness property~(F).

The organization of the paper follows this strategy.
Section~\ref{sec:general-data} develops the block language, infinite and
reduced expansions, substitution, and stable coordinates.
Section~\ref{sec:coding} establishes the symbolic coding of terminal
blocks by projected prefix vectors.
Sections~\ref{sec:rauzy} and~\ref{sec:recombination} construct the Rauzy
pieces and recover all terminal classes.
Section~\ref{sec:volume} builds the arithmetic toral model and proves
the sampling principle.
Section~\ref{sec:prime-blocks} proves prime genericity for the
substitution fixed word and the prime terminal-block theorem using
Theorem~\ref{thm:rauzy-fundamental-domain}, and records selected direct
consequences and transfers from established prime theorems.
Section~\ref{sec:rauzy-verification} applies Barge's theorem and the
standard Rauzy-fractal equivalence to prove
Theorem~\ref{thm:rauzy-fundamental-domain} under
Assumption~\ref{ass:pisot}; it then records elementary coefficient
conditions and two explicit infinite families for which that algebraic
assumption is automatic. The first contains $g=(1,3,2)$ and is non-$\beta$ for $L\geqslant3$, while the second consists of Pisot units failing the finiteness property~(F).
Finally, Section~\ref{sec:conclusion} discusses quantitative extensions and some
non-unimodular and reducible cases beyond the present Pisot-unit setting.

%% file: sections/02-general-data.tex
\section{Numeration, substitution, and stable coordinates}\label{sec:general-data}

Section~\ref{sec:introduction} introduced the recurrence data and the
associated substitution used throughout the paper. In this section, we
develop the block language used to represent non-negative integers and
obtain their canonical infinite expansions, finite reduced representatives,
and terminal blocks. We
then represent both the arithmetic and substitutional data in a common
contracting space. This provides the common language needed before the
two descriptions can be related.

\subsection{The block language}

We introduce a block language for the recurrence data of
Definition~\ref{def:intro-recurrence-data}, designed to encode
non-negative integers at each fixed digit-length. We first fix the order
in which coefficient words are read. A finite
\emph{coefficient word} of non-negative integers
$
a=\left(a_1,a_2,\ldots,a_m\right)
$
is read from left to right in increasing order of the weights: $a_1$ is
the coefficient of $H_1$, $a_2$ is the coefficient of $H_2$, and so on.
We define its \emph{$H$-value} by
$$
\evalH{a}{H}:=\sum_{k=1}^{m}a_kH_k.
$$
The empty word has $H$-value $0$. At this point, coefficient words are unrestricted, and distinct
coefficient words may have the same $H$-value.

Next, we group individual digits into finite blocks adapted to the coefficients
of the recurrence. For $1\leqslant i\leqslant L$ and
$j\in\{0,\ldots,h_i-1\}$, set
$$
b_{i,j}:=\left(j,h_{i-1},h_{i-2},\ldots,h_1\right),
$$
with the convention that $b_{1,j}=(j)$. Adapting the block terminology of \cite{Chang2021}, we call $b_{i,j}$ a
\emph{semi-proper block}. Its \emph{digit-length} is $i$, and we write
$\len(b_{i,j})=i$. Its first digit $j$ may vary, whereas its remaining
$i-1$ digits are prescribed by $h_{i-1},\ldots,h_1$. There are exactly $h_i$ semi-proper blocks of digit-length $i$. Since
$h_L=1$, the unique semi-proper block of digit-length $L$ is
$$
b_{L,0}
=\left(0,h_{L-1},h_{L-2},\ldots,h_1\right).
$$
We put
$
\rho_i:=h_i-1.
$
Thus the semi-proper blocks of digit-length $i$ are
$b_{i,0},\ldots,b_{i,\rho_i}$. If $h_i=1$, then $b_{i,0}$ is the unique
semi-proper block of digit-length $i$.

For $m\geqslant0$, let $\mathcal W_m$ be the set of all words of the
form
$$
W=b_{i_1,j_1}b_{i_2,j_2}\cdots b_{i_t,j_t},
$$
where $t\geqslant0$, the indices satisfy
$1\leqslant i_r\leqslant L$ and
$0\leqslant j_r\leqslant h_{i_r}-1$ for every
$1\leqslant r\leqslant t$, and the total digit-length is
$$
i_1+i_2+\cdots+i_t=m.
$$
Thus $t$ is the number of blocks in $W$, whereas $m$ is the total
number of digits contained in these blocks. We call the elements of
$\mathcal W_m$ \emph{semi-proper block words}. When $t=0$, the
concatenation is the empty word and the sum is understood to be $0$:
hence the empty word is the unique element of $\mathcal W_0$.

Every word in $\mathcal W_m$ has a unique decomposition into
semi-proper blocks. Indeed, its block on the high-weight side is uniquely
determined by its final digits, and removing that block determines the
preceding block recursively. Thus an element of $\mathcal W_m$ may be
identified unambiguously either with its digit word or with its ordered
sequence of semi-proper blocks.

Let $W\in\mathcal W_m$, and let $a_1,\ldots,a_m$ be its digits, read
from the small-weight end. Then
$$
\evalH{W}{H}=\sum_{k=1}^{m}a_kH_k.
$$

The positivity of the recurrence coefficients implies that
$(H_k)_{k\geqslant1}$ is nondecreasing and unbounded. Hence, as $m$
ranges over the non-negative integers, the finite sets
$$
\left\{0,1,\ldots,H_{m+1}-1\right\}
$$
exhaust the non-negative integers.

The finite-level bijection will follow from a more fundamental order
property of the global block language. We therefore pass to canonical
infinite products of semi-proper blocks.

\subsection{Infinite expansions, reduced representatives, and terminal blocks}
\label{subsec:reduced-expansions}

The finite sets $\mathcal W_m$ are useful for fixed digit-lengths, but
terminal blocks are most naturally read from an infinite block expansion.
Let $\mathcal B$ denote the set of all semi-proper blocks, and let
$\mathbf W$ be the set of all infinite block words
\[
c=c_1:c_2:c_3:\cdots,
\qquad c_k\in\mathcal B,
\]
subject only to the condition that
$c_k=b_{1,0}$ for all sufficiently large $k$. If $c\in\mathbf W$,
let $\operatorname{dgt}(c)=(a_k)_{k\geqslant1}$ denote the infinite digit
sequence obtained by concatenating its blocks. We then define
\[
\mathcal W:=\{\operatorname{dgt}(c):c\in\mathbf W\}
\subset\N_0^{\mathbb N}.
\]
Let
\[
z:=b_{1,0}^{\infty}\in\mathbf W
\qquad\text{and}\qquad
Z:=\operatorname{dgt}(z)=(0,0,\ldots)\in\mathcal W.
\]
For $W=(a_k)_{k\geqslant1}\in\mathcal W$, set
\[
\evalH{W}{H}:=\sum_{k\geqslant1}a_kH_k.
\]
The sum is finite because every block word in $\mathbf W$ is eventually
$b_{1,0}$.

We also define
\[
\operatorname{ord}(W)
:=
\begin{cases}
\max\{k\geqslant1:a_k>0\}&\textrm{ if }W\neq Z\\
0&\textrm{ if }W=Z.
\end{cases}
\]
If $c\in\mathbf W$, put
$\operatorname{ord}(c):=\operatorname{ord}(\operatorname{dgt}(c))$.
Thus, if $c\neq z$, then $\operatorname{ord}(c)$ records the highest
nonzero digit position of $c$.

\begin{example}
Consider again the order-two system $g=(1,2)$, for which $h=(2,1)$.
Its semi-proper blocks are
\[
\mathcal B=\{(0),(1),(0,2)\}.
\]
Thus an element of $\mathbf W$ is obtained by choosing any sequence of
these blocks, subject only to the condition that all sufficiently far
blocks are $b_{1,0}=(0)$. For instance,
\[
c=b_{1,1}:b_{2,0}:z
=(1):(0,2):(0):(0):\cdots
\]
belongs to $\mathbf W$, and
\[
\dgt(c)=(1,0,2,0,0,\ldots)\in\mathcal W.
\]
In this way, $\mathcal W$ consists of the infinite digit sequences
obtained by concatenating the blocks of words in $\mathbf W$ and
forgetting the block boundaries. Moreover,
\[
\ord(c)=\ord(\dgt(c))=3,
\]
and, since $H_1=1$, $H_2=2$, and $H_3=5$,
\[
[\dgt(c)]_H=H_1+2H_3=11.
\]
\end{example}

The preceding example shows how an infinite block word determines both
its digit sequence and its finite $H$-value. We record the converse
structural fact: every element of $\mathcal W$ comes from a unique
infinite block word, and its nonzero part has a unique finite block
representative.

\begin{lemma}\label{lem:infinite-block-decomposition}
Let $W\in\mathcal W$ and put $r:=\operatorname{ord}(W)$. If $r>0$,
there is a unique finite semi-proper block word
\[
C=c_1:\cdots:c_t
\]
of total digit-length $r$ such that
\[
W=\operatorname{dgt}(C:z).
\]
Its high-weight block $c_t$ is not $b_{1,0}$. If $r=0$, then $z$ is the
unique element of $\mathbf W$ whose digit sequence is $W=Z$.
\end{lemma}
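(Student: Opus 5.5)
Since $W\in\mathcal W$, there is some $c=c_1:c_2:\cdots\in\mathbf W$ with $W=\dgt(c)$. Existence will come from truncating $c$, and uniqueness from the key observation that every semi-proper block other than $b_{1,0}$ ends with a nonzero digit. This holds because $b_{1,j}=(j)$ with $j\geqslant1$ when $j\neq0$, and $b_{i,j}$ for $i\geqslant2$ ends with $h_1\geqslant1$. Thus $b_{1,0}=(0)$ is the only block whose last digit is zero.

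\emph{Existence.} Let $r=\ord(W)>0$. Let $t$ be the largest index with $c_t\neq b_{1,0}$; it exists because $c$ is eventually $b_{1,0}$ and $W\neq Z$. Then $c_{t+1}:c_{t+2}:\cdots=z$. By the observation, the last digit of $c_t$ is nonzero, and all later digits are $0$. Hence the total digit-length of $c_1:\cdots:c_t$ is exactly $r$. Setting $C=c_1:\cdots:c_t$ gives $W=\dgt(C:z)$, and $c_t\neq b_{1,0}$.

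\emph{Uniqueness.} Suppose $C=c_1:\cdots:c_t$ and $C'=c'_1:\cdots:c'_s$ both have digit-length $r$ and $\dgt(C:z)=\dgt(C':z)=W$. Then the digit words $\dgt(C)$ and $\dgt(C')$ both equal $(a_1,\ldots,a_r)$, so they are equal finite words in $\mathcal W_r$. By the unique block decomposition of words in $\mathcal W_r$ stated earlier, we get $s=t$ and $c_k=c'_k$ for all $k$. I would spell that recursion out here for safety. The high-weight block is determined by the final digits: if $a_r=h_1$ and the tail matches some $(h_{i-1},\ldots,h_1)$ preceded by a legal first digit, the candidates are $b_{i,j}$ for various $i$. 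The expected obstacle is confirming that these candidates cannot be ambiguous. Since the paper asserts this decomposition uniqueness as already established, I would invoke it rather than reprove it.

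\emph{Case $r=0$.} Here $W=Z$. Suppose $c\in\mathbf W$ has $\dgt(c)=Z$. If some $c_k\neq b_{1,0}$, then its last digit would be a nonzero entry of $Z$, which is impossible. Hence $c=z$.
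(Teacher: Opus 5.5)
Your proposal is correct and is essentially the paper's proof: you truncate the trailing copies of $b_{1,0}$, use the fact that every other semi-proper block ends in a nonzero digit (either $j\geqslant1$ or $h_1$) to get total digit-length exactly $r$, invoke the unique finite block decomposition for uniqueness, and treat $W=Z$ with the same last-digit observation. The ambiguity you worry about cannot arise, since the high-weight block is forced to be $b_{i,j}$ with $i$ the least index such that $a_{r-i+1}<h_i$: any longer block would put the digit $h_i$ in that position.
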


\begin{proof}
Existence follows from the definition of $\mathcal W$: choose
$c\in\mathbf W$ with $\operatorname{dgt}(c)=W$ and remove all trailing
copies of $b_{1,0}$. If $W\neq Z$, the last remaining block cannot be
$b_{1,0}$. Every semi-proper block of digit-length at least $2$ ends
with the positive digit $h_1$, while a block of digit-length $1$ is
nonzero unless it is $b_{1,0}$. Hence the last digit of the remaining
finite block word is nonzero, so its total digit-length is exactly
$r$.

For uniqueness, the resulting digit word has length $r$, and by the
unique finite block decomposition established in the preceding
subsection it has a unique decomposition into semi-proper blocks. If
$W=Z$, every block must have zero digit word, and the only such
semi-proper block is $b_{1,0}$; hence the unique infinite block word is
$z$.
\end{proof}

Lemma~\ref{lem:infinite-block-decomposition} shows that the digit map is
bijective from $\mathbf W$ onto $\mathcal W$. We therefore write
\[
\operatorname{dgt}^{-1}(W)\in\mathbf W
\qquad (W\in\mathcal W)
\]
for the unique infinite block word with digit sequence $W$.

We will also use finite reduced representatives. If
$W=(a_k)_{k\geqslant1}\in\mathcal W$ and
$r:=\operatorname{ord}(W)>0$, set
\[
\operatorname{red}(W):=(a_1,\ldots,a_r),
\]
and set $\operatorname{red}(Z):=(0)$. If
$c=C:z\in\mathbf W$ with $C$ finite and with high-weight block different
from $b_{1,0}$, set $\operatorname{red}(c):=C$, and set
$\operatorname{red}(z):=\varepsilon$. Thus $\operatorname{red}(W)$ is a finite digit word, whereas
$\operatorname{red}(c)$ is a finite block word; the type of the argument
will make clear which reduction is being used. In particular, the zero
digit sequence has reduced representative $(0)$, while the corresponding
reduced block word is empty.

We will also use the following order on infinite digit sequences. For two distinct elements
$W=(a_k)_{k\geqslant1}$ and $W'=(a'_k)_{k\geqslant1}$ of $\mathcal W$,
write $W\prec W'$ if, at the largest index $r$ for which
$a_r\neq a'_r$, one has $a_r<a'_r$. This is the ascending
lexicographical order used in \cite{Chang2021}.

For $m\geqslant1$, let
\begin{equation}\label{def:U}
U_m:=(0,\ldots,0,1,0,0,\ldots)\in\N_0^{\mathbb N}
\end{equation}
denote the infinite coefficient sequence whose only nonzero entry is
the $m$th one. For every $m\geqslant2$, one has $U_m\in\mathcal W$:
if $h_1>1$, then
\[
\operatorname{dgt}^{-1}(U_m)
=b_{1,0}^{m-1}:b_{1,1}:z,
\]
whereas if $h_1=1$, then
\[
\operatorname{dgt}^{-1}(U_m)
=b_{1,0}^{m-2}:b_{2,0}:z.
\]
The sequences $U_m$ will serve as convenient comparison points for the
order $\prec$.

\begin{theorem}\label{thm:block-language}
The $H$-value map
\[
W\longmapsto\evalH{W}{H}
\]
is a strictly increasing bijection from $\mathcal W$, ordered by
$\prec$, onto $\N_0$ with its usual order. Moreover, if
$W\neq Z$, $n=\evalH{W}{H}$, and
$s:=\operatorname{ord}(W)$, then
\[
H_s\leqslant n<H_{s+1}.
\]
Conversely, if $n\geqslant1$ and $s\geqslant1$ satisfy
\[
H_s\leqslant n<H_{s+1},
\]
then the unique $W\in\mathcal W$ satisfying
$\evalH{W}{H}=n$ has $\operatorname{ord}(W)=s$.
\end{theorem}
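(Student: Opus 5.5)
The plan is to prove a finite-level statement by induction on $m$ and then pass to infinite words. Write $\mathcal W_m$ for the semi-proper block words of total digit-length $m$, each padded with zeros to an element of $\N_0^{\mathbb N}$.

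\emph{Claim.} For every $m\geqslant0$, the $H$-value map is a strictly $\prec$-increasing bijection from $\mathcal W_m$ onto $\{0,1,\ldots,H_{m+1}-1\}$.

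Throughout I use the uniform form of the recurrence: for every $n\geqslant2$,
\[
H_n=\sum_{k=1}^{\min(n-1,L)}h_kH_{n-k},
\]
which combines the two defining formulas of Definition~\ref{def:intro-recurrence-data}.

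\textbf{Base case and decomposition.} For $m=0$ the set $\mathcal W_0$ contains only the empty word, with value $0$, and $H_1=1$, so the claim holds. For $m\geqslant1$, every $W\in\mathcal W_m$ factors uniquely as $W=W'\,b_{i,j}$, where $b_{i,j}$ is its high-weight block, $1\leqslant i\leqslant\min(m,L)$, $0\leqslant j\leqslant h_i-1$, and $W'\in\mathcal W_{m-i}$.

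\textbf{Order of the high-weight blocks.} Compare the possible top blocks, all ending at position $m$, from the high-weight end. I expect
\[
b_{1,0}\prec\cdots\prec b_{1,h_1-1}\prec b_{2,0}\prec\cdots\prec b_{2,h_2-1}\prec\cdots\prec b_{L,0}.
\]
For $i\geqslant2$, the block $b_{i,j}$ carries the digits $h_1,h_2,\ldots,h_{i-1}$ at positions $m,m-1,\ldots,m-i+2$, and the digit $j$ at position $m-i+1$. For $i<i'$, the blocks $b_{i,j}$ and $b_{i',j'}$ first differ at position $m-i+1$. There $b_{i,j}$ has $j\leqslant h_i-1$, while $b_{i',j'}$ has $h_i$. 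For $i=1$, the top digit $j\leqslant h_1-1$ is below the top digit $h_1$ of every longer block. Blocks of equal length are ordered by $j$. Since the top block sits above all digits of $W'$, the comparison of two words with different top blocks is decided by the top blocks alone.

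\textbf{Induction step.} The value of $W=W'\,b_{i,j}$ is
\[
[W']_H+\Bigl(jH_{m-i+1}+\sum_{k=1}^{i-1}h_kH_{m-k+1}\Bigr).
\]
By induction, $[W']_H$ ranges increasingly over $[0,H_{m-i+1})$, which contains $H_{m-i+1}$ integers. By the recurrence, the bracketed offset is exactly the total count of all words whose top block is smaller in the order above: these number $\sum_{k<i}h_kH_{m-k+1}+jH_{m-i+1}$. So the blocks partition $\mathcal W_m$ into consecutive, order-respecting intervals. Their union is $[0,\sum_{k\leqslant\min(m,L)}h_kH_{m+1-k})$, which equals $[0,H_{m+1})$ by the uniform recurrence. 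This proves the claim.

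\textbf{Passage to $\mathcal W$.} Let $W\in\mathcal W$ with $\operatorname{ord}(W)\leqslant m$. No block of $\operatorname{dgt}^{-1}(W)$ can straddle position $m$, because every block of length at least $2$ ends in the positive digit $h_1$. Hence the truncation of $W$ at length $m$ lies in $\mathcal W_m$. Conversely, padding a word of $\mathcal W_m$ by $z$ gives an element of $\mathcal W$ of order at most $m$. Lemma~\ref{lem:infinite-block-decomposition} guarantees that these two operations are mutually inverse.

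Thus $\{W\in\mathcal W:\operatorname{ord}(W)\leqslant m\}$ is mapped increasingly and bijectively onto $[0,H_{m+1})$. These sets are nested and compatible as $m$ grows, and they exhaust $\mathcal W$ and $\N_0$, since $H_k\to\infty$. This gives the global strictly increasing bijection.

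\textbf{Bounds on the order.} For $W\neq Z$ with $s=\operatorname{ord}(W)$, we have $W\in\mathcal W_s$, so $n<H_{s+1}$. Also $W\notin\mathcal W_{s-1}$, so $n\geqslant H_s$. The converse statement follows because the sets $\{\operatorname{ord}\leqslant s\}$ map onto the nested intervals $[0,H_{s+1})$.

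\textbf{Main obstacle.} I expect the delicate step to be the block-order comparison together with the boundary cases. These are small $m<L$, where only blocks with $i\leqslant m$ occur, and $m=i$, where $\mathcal W_0$ contributes exactly one word. The uniform form of the recurrence is what handles these cases cleanly.
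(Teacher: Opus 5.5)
Your proposal is correct and follows essentially the same route as the paper: a strong induction on $m$ that splits $\mathcal W_m$ by the high-weight block $b_{i,j}$, the same ordering $b_{1,0},\ldots,b_{1,h_1-1},b_{2,0},\ldots$ of top blocks, the recurrence to place the last interval's right endpoint at $H_{m+1}-1$, and passage to infinite words using the fact that every block of length at least $2$ ends in $h_1>0$. The only difference is at the end, where you read off both order bounds and the converse directly from the identification of $\{W\in\mathcal W:\operatorname{ord}(W)\leqslant m\}$ with $\mathcal W_m$; this is slightly cleaner than the paper's use of $a_s>0$, the comparison word $U_{s+1}$, and the disjointness of the intervals $[H_k,H_{k+1})$.
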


\begin{proof}
The first statement is an instance of the general Zeckendorf-collection
formalism of \cite[Theorem~16 and Lemma~37]{Chang2021}. Since our
normalization of the initial weights is tailored to the present block
language, we verify the required order property directly.

We first prove by induction on $m$ that the $H$-value map is a strictly
increasing bijection
\[
\mathcal W_m\longrightarrow\{0,1,\ldots,H_{m+1}-1\},
\]
where $\mathcal W_m$ is ordered by the restriction of $\prec$.
The claim is immediate for $m=0$. Suppose that it holds at every
smaller digit-length. If the high-weight block of
$W\in\mathcal W_m$ is $b_{i,j}$, then
\[
W=V:b_{i,j},
\qquad
V\in\mathcal W_{m-i},
\]
and
\[
\evalH{W}{H}
=
\evalH{V}{H}+C_{m,i,j},
\]
where
\[
C_{m,i,j}
:=
jH_{m-i+1}
+\sum_{r=1}^{i-1}h_rH_{m+1-r}.
\]
When the blocks are read from the high-weight end, one first encounters
the fixed digits $h_1,\ldots,h_{i-1}$ and then the digit
$j<h_i$. Hence the possible high-weight blocks occur in the order
\[
b_{1,0},\ldots,b_{1,h_1-1},
b_{2,0},\ldots,b_{2,h_2-1},\ldots,
\]
with $i\leqslant\min\{L,m\}$, and within each such group the order is
the order of $V$.

By the induction hypothesis, the group with fixed high-weight block
$b_{i,j}$ is therefore mapped increasingly onto the consecutive
interval
\[
\{C_{m,i,j},\ldots,C_{m,i,j}+H_{m-i+1}-1\}.
\]
Consecutive intervals with the same $i$ are adjacent, and the last
interval of length $i$ is adjacent to the first interval of length
$i+1$, since
\[
C_{m,i,h_i-1}+H_{m-i+1}=C_{m,i+1,0}.
\]
The first interval starts at $0$. The right endpoint of the final
interval is $H_{m+1}-1$: if $m<L$, this is exactly the defining
initial relation for $H_{m+1}$, while if $m\geqslant L$, it is the
order-$L$ recurrence together with $h_L=1$. This proves the
finite-level claim.

Now let $W,W'\in\mathcal W$. After appending sufficiently many
copies of $b_{1,0}$ to their finite nonzero parts, both may be viewed
as elements of the same $\mathcal W_m$. The finite-level result then
shows that
\[
W\prec W'
\quad\Longrightarrow\quad
\evalH{W}{H}<\evalH{W'}{H},
\]
so the global $H$-value map is strictly increasing and injective.
Conversely, given $n\in\N_0$, choose $m$ with $n<H_{m+1}$.
The finite-level bijection gives a unique $V\in\mathcal W_m$ of
$H$-value $n$, and $\operatorname{dgt}(V:z)\in\mathcal W$ has the
same value. Thus the global map is surjective.

Let $W=(a_k)_{k\geqslant1}\in\mathcal W\setminus\{Z\}$, put
$s:=\operatorname{ord}(W)$ and $n:=\evalH{W}{H}$. Since $a_s>0$,
\(
n\geqslant H_s.
\)
All coefficients above position $s$ vanish, so $W\prec U_{s+1}$.
Since $U_{s+1}\in\mathcal W$ and
$\evalH{U_{s+1}}{H}=H_{s+1}$, monotonicity gives
\(
n<H_{s+1}.
\)

Conversely, suppose that $H_s\leqslant n<H_{s+1}$. By the first
statement, let $W\in\mathcal W$ be the unique word satisfying
$\evalH{W}{H}=n$, and put $m:=\operatorname{ord}(W)$. The first part
of the proof gives
\[
H_m\leqslant n<H_{m+1}.
\]
We have $H_1=1$, $H_2=h_1\geqslant1$, and, by positivity of the
recurrence coefficients,
\[
H_k<H_{k+1}\qquad (k\geqslant2).
\]
Thus the only possible equality between consecutive weights is
$H_1=H_2=1$. Consequently, the intervals
\[
[H_k,H_{k+1}),\qquad k\geqslant1,
\]
are pairwise disjoint and cover $[1,+\infty)$; the only one that can
be empty is the first, when $H_1=H_2=1$. Since $n$ belongs to both
the $m$th and the $s$th intervals, neither of these two intervals is
empty, and hence $m=s$.
\end{proof}

By Theorem~\ref{thm:block-language}, for every $n\in\N_0$ there
is a unique $W\in\mathcal W$ such that $\evalH{W}{H}=n$. We call this
sequence the \emph{canonical infinite $H$-expansion} of $n$ and write
\begin{equation}\label{def:inverse-expansion}
[n]_H^{-1}:=W
\qquad\text{when}\qquad
\evalH{W}{H}=n.
\end{equation}
The map $n\mapsto[n]_H^{-1}$ is a strictly increasing bijection from
$\N_0$ onto $\mathcal W$. Its finite digit representative
$\operatorname{red}([n]_H^{-1})$ is the reduced $H$-expansion, while
$\operatorname{dgt}^{-1}([n]_H^{-1})$ is its canonical infinite
semi-proper block decomposition.

The finite-level claim proved above gives, for every $m\geqslant0$,
a bijection from $\mathcal W_m$ onto
\[
\{0,1,\ldots,H_{m+1}-1\}.
\]
In particular,
\[
\#\mathcal W_m=H_{m+1},
\]
and consequently
\[
\sum_{m\geqslant0}\#\mathcal W_m x^m
=
\sum_{m\geqslant0}H_{m+1}x^m
=
\frac{1}{1-h_1x-h_2x^2-\cdots-h_Lx^L}.
\]

We illustrate both the finite-level bijection and the reduced
representatives in the order-two case $g=(1,2)$.

\begin{example}
For the order-two system $g=(1,2)$ introduced in
Example~\ref{ex:intro-order-two}, at digit-length $2$ the five block
words are
\[
b_{1,0}\,b_{1,0},\quad
b_{1,1}\,b_{1,0},\quad
b_{1,0}\,b_{1,1},\quad
b_{1,1}\,b_{1,1},\quad
b_{2,0},
\]
and their respective $H$-values are $0$, $1$, $2$, $3$, and $4$.
Thus the $H$-value map is a bijection from $\mathcal W_2$ onto
$\{0,1,2,3,4\}$.

Moreover,
\[
\operatorname{red}([4]_H^{-1})=(0,2),
\qquad
\operatorname{red}\!\left(\operatorname{dgt}^{-1}([4]_H^{-1})\right)
=b_{2,0}.
\]
The infinite block expansion of $4$ is $b_{2,0}:z$, and its finite
block representatives of digit-length at least $2$ are
\[
b_{2,0}b_{1,0}^q,\qquad q\geqslant0.
\]
\end{example}

We introduce notation for the canonical block expansion of an
arbitrary integer and for its reduced block representative.

\begin{definition}\label{def:reduced-expansion-terminal-block}
For $n\in\N_0$, define the infinite block expansion
\[
C_n:=\operatorname{dgt}^{-1}([n]_H^{-1})\in\mathbf W
\]
and its finite reduced block word
\[
W_n:=\operatorname{red}(C_n).
\]
Thus $W_0=\varepsilon$, whereas $C_0=z=b_{1,0}^{\infty}$.

The first block of $C_n$ is called the \emph{terminal block} of $n$
and is denoted by $\TB(n)$. For $n>0$, this is also the first block of
$W_n$, while $\TB(0)=b_{1,0}$.
\end{definition}

\begin{definition}\label{def:terminal-block-words}
For $e\geqslant1$, let $\mathcal B_e$ be the finite set of
concatenations of exactly $e$ semi-proper blocks, and put
$\mathcal B_0:=\{\varepsilon\}$. We define $\TBe{e}(n)$ to be the
concatenation of the first $e$ blocks of $C_n$, and set
$\TBe{0}(n):=\varepsilon$. Thus $\TBe{e}(n)$ is defined for every
$n\in\N_0$ and every $e\geqslant0$; in particular,
\[
\TBe{e}(0)=b_{1,0}^e.
\]
We call $\TBe{e}(n)$ the \emph{$e$-terminal block word} of $n$.
\end{definition}

For $e\geqslant1$ and
$B=b_{i_1,j_1}\cdots b_{i_e,j_e}\in\mathcal B_e$, we extend the
digit-length function $\len$ additively to terminal block words by
\[
\len(B):=\sum_{r=1}^e\len(b_{i_r,j_r})
=i_1+\cdots+i_e,
\]
and set $\len(\varepsilon)=0$. We call $\len(B)$ the
\emph{total digit-length} of $B$.

\begin{example}
For the order-two system $g=(1,2)$ of
Example~\ref{ex:intro-order-two}, the first reduced block words are
\[
W_1=b_{1,1},\qquad
W_2=b_{1,0}\,b_{1,1},\qquad
W_3=b_{1,1}\,b_{1,1},\qquad
W_4=b_{2,0}.
\]
Their terminal blocks are
\[
\TB(1)=b_{1,1},\qquad
\TB(2)=b_{1,0},\qquad
\TB(3)=b_{1,1},\qquad
\TB(4)=b_{2,0}.
\]
Since $C_2=b_{1,0}:b_{1,1}:z$, one has
\[
\TBe{2}(2)=b_{1,0}b_{1,1},
\qquad
\len(\TBe{2}(2))=2.
\]
Also $C_0=z$, so, for example,
$\TBe{3}(0)=b_{1,0}^3$.
\end{example}

Before turning to substitutions, we note that the positivity assumption
on $g_2,\ldots,g_L$ is not needed for the block language itself. If
$h_i=0$ for some $i>1$, there is simply no semi-proper block of
digit-length $i$, and the finite-level induction above adapts verbatim
after omitting the corresponding empty block groups.

\subsection{Substitution and stable coordinates}

We turn from the block language to the substitution associated with
the recurrence data. The symbolic construction below requires no
assumptions beyond those already imposed on the recurrence data. The
stable geometric realization, however, requires an additional algebraic
hypothesis.

Let $\sigma$ be the substitution on $\mathcal A$ and let $A_\infty$
be its fixed word, as defined in
Definition~\ref{def:intro-associated-substitution}. For a finite word
$W$, let $\lvert W\rvert_{A_j}$ denote the number of occurrences of
$A_j$ in $W$. We use the row-incidence matrix
\[
M_{i,j}:=\lvert\sigma(A_i)\rvert_{A_j}.
\]
Thus
\[
M=
\begin{pmatrix}
h_1&1&0&\cdots&0\\
h_2&0&1&\cdots&0\\
\vdots&\vdots&\ddots&\ddots&\vdots\\
h_{L-1}&0&0&\cdots&1\\
h_L&0&0&\cdots&0
\end{pmatrix}.
\]
This convention is the transpose of the usual column-incidence
convention for substitutions.

Under our standing positivity assumption, the substitution $\sigma$ is
primitive. Indeed, every image $\sigma(A_i)$ contains $A_1$, while
$\sigma^{L-1}(A_1)$ contains every letter of $\mathcal A$. Consequently,
every letter occurs in $\sigma^L(A_i)$ for every
$1\leqslant i\leqslant L$. Moreover,
\[
\lvert\det M\rvert=h_L=1,
\]
so $M$ is unimodular, and a direct computation gives
\[
\det(XI-M)=P_h(X).
\]

Up to this point, the construction has depended only on the recurrence
data. To obtain contracting coordinates, we now impose the following
algebraic assumption.

\begin{assumption}\label{ass:pisot}
The dominant root $\varphi$ is a Pisot number and $P_h$ is its minimal
polynomial over $\mathbb Q$.
\end{assumption}

Since $P_h$ is the minimal polynomial of $\varphi$, it is irreducible
and $\varphi$ has degree $L$. Let
$\gamma_1,\ldots,\gamma_{L-1}$ be the remaining roots of $P_h$; these
are precisely the other algebraic conjugates of $\varphi$. Since
$\varphi$ is Pisot, we have
\[
\abs{\gamma_j}<1
\qquad \left(1\leqslant j\leqslant L-1\right).
\]
Moreover, $h_L=1$ gives $P_h(0)=-1$, so $\varphi$ is an algebraic
unit. The matrix $M$ therefore has the expanding eigenvalue $\varphi$
and the contracting eigenvalues
$\gamma_1,\ldots,\gamma_{L-1}$. In particular, $\sigma$ is a primitive
unimodular irreducible Pisot substitution.

Set $\omega:=\varphi^{-1}$. Dividing $P_h(\varphi)=0$ by
$\varphi^L$ gives
\[
1=h_1\omega+h_2\omega^2+\cdots+h_L\omega^L.
\]
We will use this form of the characteristic identity throughout the
paper.

Next, we construct the stable coordinates associated with the contracting
eigenvalues and relate them to the letter-count vectors of finite
prefixes of $A_\infty$. Define
\[
T_1:=A_1,\qquad T_k:=\sigma(T_{k-1})\quad\text{for }k\geqslant2.
\]
Equivalently, $T_k=\sigma^{k-1}(A_1)$, and the words $T_k$ form a nested
sequence of prefixes of $A_\infty$. For a finite word $W$, let
\[
\ell(W):=
\begin{pmatrix}
\lvert W\rvert_{A_1}\\
\vdots\\
\lvert W\rvert_{A_L}
\end{pmatrix}
\in\mathbb Z^L
\]
be its column vector of letter counts. Let $e_1,\ldots,e_L$ be the
standard basis of $\R^L$. With the row-incidence convention used above,
one has
\[
\ell(\sigma(W))=M^{\mathsf T}\ell(W)
\]
for every finite word $W$. Therefore
\[
\ell(T_k)=\left(M^{\mathsf T}\right)^{k-1}e_1.
\]

To pass to contracting coordinates, consider the spectrum of
$M^{\mathsf T}$. The eigenvalue $\varphi$ determines the expanding
direction, while the conjugate eigenvalues
$\gamma_1,\ldots,\gamma_{L-1}$ determine the contracting directions.
Let $a$ be the number of real contracting conjugates and $b$ the number
of conjugate pairs of non-real contracting conjugates, so that
$a+2b=L-1$. We introduce the \emph{contracting coordinate space}
\[
E_s:=\R^a\times\C^b,
\]
viewed as a real vector space of dimension $L-1$. Let
\[
\Gamma=\left(\eta_1,\ldots,\eta_{a+b}\right)
\]
consist of all real contracting conjugates and one representative of
each non-real conjugate pair. For each
$j\in\{1,\ldots,a+b\}$, choose a right eigenvector
$w_j\in\C^L$ of $M$ associated with $\eta_j$, chosen real when
$\eta_j$ is real, and normalize it so that its last coordinate is $1$.
For this companion matrix the last coordinate of a nonzero eigenvector
cannot vanish, so this normalization is well defined. Thus
$Mw_j=\eta_jw_j$. Taking the last coordinate in this identity gives
$(w_j)_1=\eta_j$.

We use the following complex-bilinear pairing on $\C^L$:
\[
u\cdot v:=u^{\mathsf T}v.
\]
Each $w_j$ therefore defines a scalar coordinate functional
$v\mapsto w_j\cdot v$ on $\R^L$. Collecting these coordinates, we
define the \emph{real-linear stable projection}
\[
\begin{array}{rcll}
\pi_s: & \R^L & \longrightarrow & E_s\\
       & v    & \longmapsto     & \left(w_j\cdot v\right)_{1\leqslant j\leqslant a+b}.
\end{array}
\]

Figure~\ref{fig:stable-projection-132} illustrates this construction
for the recurring three-letter example $g=(1,3,2)$. The ambient space is
$\R^3$, with coordinate directions $e_1,e_2,e_3$ corresponding to the
three letters. Starting from the origin, a word is represented by a
stepped path: reading the letter $A_i$ produces a unit step in the
direction $e_i$. Consequently, after reading a finite prefix $W$, the
corresponding vertex of the path is precisely its letter-count vector
$\ell(W)$.

In this three-dimensional situation, the expanding eigenspace is a
line, while the real contracting invariant subspace is a plane. The
coordinate space $E_s=\C$ is another real two-dimensional realization of
the same stable quotient. In Figure~\ref{fig:stable-projection-132}, we identify $E_s$
with the displayed orange stable plane by a fixed real-linear
isomorphism. The blue stepped path records prefix letter-count vectors,
the red lines indicate projection along the expanding direction onto
that plane, and the black points represent the resulting stable
coordinates. Thus, under this identification, the black point attached
to a prefix $W$ represents $\pi_s(\ell(W))$. The left panel gives a
local view of several such projections, while the right panel shows the
same geometry on a larger scale.

\begin{figure}[!ht]
\centering
\includegraphics[width=0.98\textwidth,trim=48 0 0 0,clip]{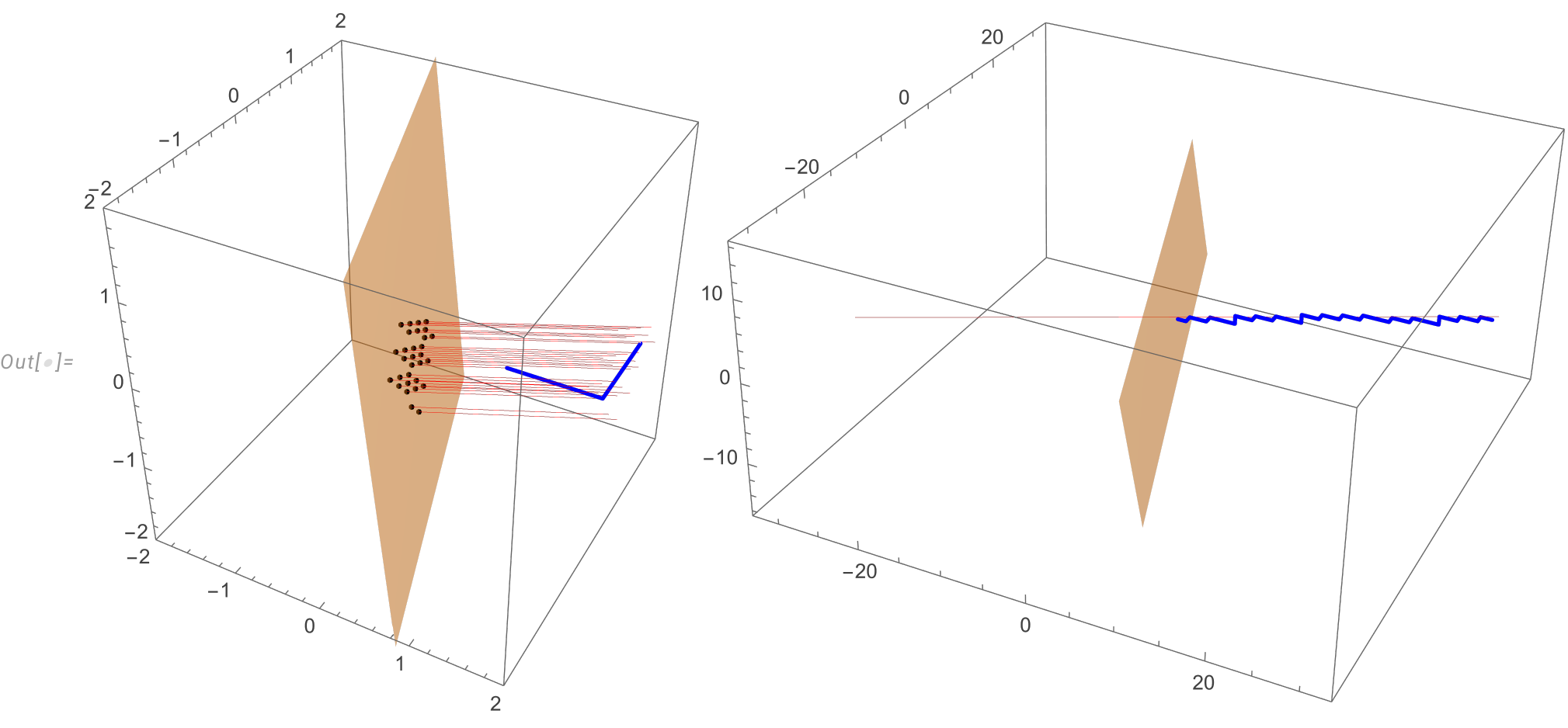}
\caption{Stable projection for $g=(1,3,2)$. The orange plane is the
real-plane realization of $E_s$ fixed above. The blue stepped path
records prefix letter-count vectors; red lines indicate projection
along the expanding direction onto this plane, and black points
represent the corresponding stable coordinates. The left panel is a
local view and the right panel shows the geometry on a larger scale.}
\label{fig:stable-projection-132}
\end{figure}

For each $m\geqslant1$, the vector $\ell(T_m)\in\R^L$ records the
numbers of occurrences of the letters in $T_m$, and
$\pi_s(\ell(T_m))\in E_s$ gives its contracting coordinates. For every
$j\in\{1,\ldots,a+b\}$,
\[
\begin{aligned}
w_j\cdot\ell(T_m)
&=w_j^{\mathsf T}\left(M^{\mathsf T}\right)^{m-1}e_1\\
&=\left(M^{m-1}w_j\right)\cdot e_1\\
&=\eta_j^{m-1}\left(w_j\cdot e_1\right)\\
&=\eta_j^m.
\end{aligned}
\]
Hence
\[
\pi_s\left(\ell(T_m)\right)
=
\left(\eta_j^m\right)_{1\leqslant j\leqslant a+b}.
\]

For the recurring example $g=(1,3,2)$, one has $E_s=\C$, and there is
only one contracting conjugate $\eta_1$ to consider. The first three
iterates have letter-count vectors
\[
\ell(T_1)=
\begin{pmatrix}1\\0\\0\end{pmatrix},
\qquad
\ell(T_2)=
\begin{pmatrix}2\\1\\0\end{pmatrix},
\qquad
\ell(T_3)=
\begin{pmatrix}7\\2\\1\end{pmatrix}.
\]
Their stable projections are therefore
\[
\pi_s\left(\ell(T_1)\right)=\eta_1,
\qquad
\pi_s\left(\ell(T_2)\right)=\eta_1^2,
\qquad
\pi_s\left(\ell(T_3)\right)=\eta_1^3
\qquad\text{in }\C.
\]

The preceding computation identifies the contracting vector attached
to each digit position. Set
\[
s_0:=\pi_s(e_L)=(1,\ldots,1)\in E_s,
\]
where the second equality follows from the normalization of the
eigenvectors $w_j$. Thus the first digit position is represented by
$s_0$, while, for $k\geqslant2$, the $k$-th digit position is
represented by
\[
\pi_s\left(\ell(T_{k-1})\right)
=
\left(\eta_j^{k-1}\right)_{1\leqslant j\leqslant a+b}.
\]
These vectors allow us to encode the canonical infinite $H$-expansion
of an integer.

Let $n\in\N_0$ and write
\[
[n]_H^{-1}=(a_k)_{k\geqslant1}.
\]
Thus $(a_k)_{k\geqslant1}$ is the canonical infinite $H$-expansion of
$n$, and it is eventually zero. For each $\eta_j\in\Gamma$, set
\[
\operatorname{eval}_{\eta_j}(n)
:=\sum_{k\geqslant1}a_k\eta_j^{k-1}.
\]
Collecting these scalar evaluations, we define the \emph{stable
evaluation} of $n$ by
\[
\evalG(n):=
\left(\operatorname{eval}_{\eta_j}(n)\right)_{1\leqslant j\leqslant a+b}
=
\left(
\sum_{k\geqslant1}a_k\eta_j^{k-1}
\right)_{1\leqslant j\leqslant a+b}
\in E_s.
\]
The sums are finite because $(a_k)_{k\geqslant1}$ is eventually zero.
Equivalently,
\[
\evalG(n)
=
a_1s_0
+
\sum_{k\geqslant2}
a_k\pi_s\left(\ell(T_{k-1})\right)
=
\pi_s\left(
a_1e_L+\sum_{k\geqslant2}a_k\ell(T_{k-1})
\right).
\]
Thus $\evalG(n)$ is obtained by assigning a contracting vector to each
position of the infinite $H$-expansion, weighting it by the
corresponding digit, and summing the contributions. In particular,
$\evalG(0)=0$ follows directly from the definition.

For the recurring example $g=(1,3,2)$, one has
\[
h=(2,3,1)
\]
and
\[
H_1=1,\qquad H_2=2,\qquad H_3=7,\qquad H_4=21.
\]
Here $E_s=\C$ and $\Gamma=(\eta_1)$. For example, the reduced
$H$-expansions
\[
6=2H_1+2H_2,
\qquad
20=3H_2+2H_3
\]
give
\[
\evalG(6)=2+2\eta_1,
\qquad
\evalG(20)=3\eta_1+2\eta_1^2.
\]
In particular, the canonical expansion of $6$ is not greedy, since the
usual greedy algorithm gives $6=3H_2$. The colored point cloud in
Figure~\ref{fig:intro-expanded-rauzy-132} is obtained by plotting these
stable evaluations in the complex plane and grouping them according to
their terminal blocks. This makes concrete how the arithmetic coding by
terminal blocks is realized in the contracting space underlying the
Rauzy geometry illustrated in Figure~\ref{fig:intro-expanded-rauzy-132}.

At this point, the arithmetic and substitutional descriptions live in
the same contracting space $E_s$. An integer $n$, through its infinite
$H$-expansion, is represented by the stable evaluation
$\evalG(n)\in E_s$, whereas a finite prefix $W$ of $A_\infty$ is
represented by the projected letter-count vector
$\pi_s(\ell(W))\in E_s$. The structural coding developed in the next
section relates these two descriptions.

%% file: sections/03-structural-coding.tex
\section{Structural coding of terminal blocks}\label{sec:coding}

Section~\ref{sec:general-data} placed two kinds of data in the same stable
space $E_s$. An integer $n$ is represented by its stable evaluation
$\evalG(n)$, while a finite prefix $W$ of $A_\infty$ is represented by
the projected letter-count vector $\pi_s(\ell(W))$. The purpose of the present section is to determine how each possible terminal block is encoded by a prefix of $A_\infty$. This is the symbolic step that connects the small-weight end of the canonical infinite $H$-expansion with the substitution structure.

The section is divided into four subsections. First, we construct the
expanded fixed word by replacing each letter $A_i$ with the complete
list of semi-proper blocks of digit-length $i$, and we establish the
recursive and length properties of the resulting finite words. Second,
we prove that the expanded fixed word lists
$\TB(0),\TB(1),\TB(2),\ldots$ in this order. Third, we isolate the
boundaries corresponding to the base terminal blocks $b_{i,0}$ and
identify the associated substitution prefixes; this gives the identity
between the stable evaluation of the corresponding integer and the
stable projection of its prefix-count vector. Finally, we recover the
remaining terminal-block conditions $b_{i,j}$ by translation.

\subsection{The expanded fixed word}
\label{subsec:expanded-fixed-word}

Recall that
$$
\rho_i=h_i-1
\qquad \left(1\leqslant i\leqslant L\right).
$$
For each $i\in\{1,\ldots,L\}$, the letter $A_i$ is associated with
the ordered list of all semi-proper blocks of digit-length $i$. We
encode this correspondence by the \emph{block-expansion morphism}
$\mathcal E$, defined by
$$
\mathcal E(A_i)
:=b_{i,0}b_{i,1}\cdots b_{i,\rho_i},
$$
where the blocks are listed in increasing order of their first digit.
We extend $\mathcal E$ to finite and one-sided infinite words by
letterwise concatenation. In particular, since $h_L=1$, one has $\mathcal E(A_L)=b_{L,0}$.

For a word $V$ of semi-proper blocks, let
$\lVert V\rVert$ denote the number of semi-proper
blocks in $V$, rather than the total number of digits contained in them.
By construction,
$$
\left\lVert \mathcal E(A_i)\right\rVert
=\rho_i+1=h_i.
$$

Recall from Section~\ref{sec:general-data} that
$
T_n=\sigma^{n-1}(A_1)
$
for $n\geqslant1$. By Definition~\ref{def:intro-associated-substitution},
these words are successive finite prefixes of the fixed word $A_\infty$.
We therefore set
$
T_\infty:=A_\infty
$
and write
$$
\widehat{T}_n:=\mathcal E(T_n),
\qquad
\widehat{T}_\infty:=\mathcal E(T_\infty).
$$
These are the block expansions of the finite words $T_n$ and of the
fixed word $T_\infty$, respectively.

\begin{example}\label{ex:block-expansion-12}
Consider again the system $g=(1,2)$, for which $h=(2,1)$. Then
$\rho_1=1$, $\rho_2=0$, and
$$
\sigma(A_1)=A_1A_1A_2,
\qquad
\sigma(A_2)=A_1.
$$
The block-expansion morphism is given by
$$
\mathcal E(A_1)=b_{1,0}\,b_{1,1},
\qquad
\mathcal E(A_2)=b_{2,0}.
$$
The first substitution words are
$$
T_1=A_1,
\qquad
T_2=A_1A_1A_2,
\qquad
T_3=A_1A_1A_2A_1A_1A_2A_1.
$$
Applying $\mathcal E$ gives
$$
\widehat T_1=b_{1,0}\,b_{1,1} \qquad \text{and} \qquad
\widehat T_2
=
b_{1,0}\,b_{1,1}\,
b_{1,0}\,b_{1,1}\,
b_{2,0}.
$$
Hence
$$
\widehat T_\infty
=
b_{1,0}\,b_{1,1}\,
b_{1,0}\,b_{1,1}\,
b_{2,0}\,
b_{1,0}\,b_{1,1}\,
b_{1,0}\,b_{1,1}\,
b_{2,0}\,
b_{1,0}\,b_{1,1}\cdots.
$$
\end{example}

We use juxtaposition or a colon to denote
concatenation. For a word $W$, the notation $W^q$ denotes the
concatenation of $q$ copies of $W$, with $W^0$ equal to the empty word.
If $W$ is finite or one-sided infinite, then $W[n]$ denotes the word
formed by its first $n$ entries. For $r\geqslant1$, the notation $W(r)$
denotes the $r$th entry of $W$. Thus $T_\infty(r)$ is a letter $A_i$,
whereas $\widehat{T}_\infty$ is a block word and
$\widehat{T}_\infty(r)$ is a semi-proper block. In particular,
$\widehat{T}_\infty[n]$ consists of the first $n$ semi-proper blocks of
$\widehat{T}_\infty$, whereas $\widehat{T}_\infty(n+1)$ is the single
block in position $n+1$.

The recursive structure of the words $T_s$ mirrors the recurrence
satisfied by the weights $H_s$.

\begin{lemma}\label{lem:recursive-T}
For $2\leqslant s\leqslant L$, one has
\begin{equation}\label{eq:recursive-T-initial}
T_s
=
T_{s-1}^{h_1}:T_{s-2}^{h_2}:\cdots:
T_1^{h_{s-1}}:A_s.
\end{equation}
For $s\geqslant L+1$, one has
\begin{equation}\label{eq:recursive-T}
T_s
=
T_{s-1}^{h_1}:T_{s-2}^{h_2}:\cdots:
T_{s-L}^{h_L}.
\end{equation}
\end{lemma}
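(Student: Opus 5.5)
The plan is to prove both identities at once by induction on $s$, using only the definition $T_s=\sigma(T_{s-1})$ and the shape of $\sigma$. First I would introduce auxiliary words for every $s\geqslant1$ and $1\leqslant i\leqslant L$:
\[
T_s^{(i)}:=\sigma^{s-1}(A_i),
\qquad\text{so that } T_s=T_s^{(1)}.
\]
Applying $\sigma^{s-1}$ to the definition $\sigma(A_i)=A_1^{h_i}A_{i+1}$ gives, for $1\leqslant i<L$,
\[
T_{s+1}^{(i)}=T_s^{h_i}:T_s^{(i+1)}.
\]
Likewise, $\sigma(A_L)=A_1$ gives $T_{s+1}^{(L)}=T_s$.

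Next I would unroll this recursion starting from $T_s=T_{s-1}^{h_1}:T_{s-1}^{(2)}$. Peeling off one index at a time yields, for every $r$ with $r\leqslant s-1$ and $r\leqslant L$,
\[
T_s=T_{s-1}^{h_1}:T_{s-2}^{h_2}:\cdots:T_{s-r+1}^{h_{r-1}}:T_{s-r+1}^{(r)}.
\]
This follows from a short induction on $r$ via the displayed identity $T_{t+1}^{(r)}=T_t^{h_r}:T_t^{(r+1)}$ with $t=s-r$, which is valid as long as $t\geqslant1$ and $r<L$.

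The statement then splits into two cases. In the case $2\leqslant s\leqslant L$, I take $r=s$. The last factor is $T_1^{(s)}=\sigma^0(A_s)=A_s$, and this gives \eqref{eq:recursive-T-initial}. In the case $s\geqslant L+1$, I take $r=L$. The last factor is then $T_{s-L+1}^{(L)}=\sigma^{s-L}(A_L)$, and since $s-L\geqslant1$ this equals $\sigma^{s-L-1}(A_1)=T_{s-L}$. Because $h_L=1$, this factor is exactly $T_{s-L}^{h_L}$, which gives \eqref{eq:recursive-T}.

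The only delicate point is the index bookkeeping. One must check that the unrolling stops exactly at $A_s$ when $s\leqslant L$, since it runs out of iterates before reaching $A_L$. One must also check that it stops at $\sigma(A_L)=A_1$ when $s>L$, which uses $h_L=1$. Beyond this there is no real obstacle, because the argument only uses that $\sigma^{k}$ is a morphism and so respects concatenation.
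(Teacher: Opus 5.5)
Your proof is correct and follows the paper's argument: the paper also applies $\sigma^{r-1}$ to $\sigma(A_i)=A_1^{h_i}A_{i+1}$ to get $\sigma^r(A_i)=T_r^{h_i}:\sigma^{r-1}(A_{i+1})$, unrolls from $T_s=\sigma^{s-1}(A_1)$, and closes the case $s\geqslant L+1$ with $\sigma^{s-L}(A_L)=\sigma^{s-L-1}(A_1)=T_{s-L}$ and $h_L=1$. One small slip: your unrolled formula holds for $r\leqslant\min(s,L)$ rather than $r\leqslant s-1$, since the condition $t\geqslant1$ is only needed to pass from $r$ to $r+1$, and this corrected range is what justifies your later choice $r=s$ when $s\leqslant L$.
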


\begin{proof}
For $1\leqslant i<L$ and $r\geqslant1$, applying
$\sigma^{r-1}$ to
$$
\sigma(A_i)=A_1^{h_i}:A_{i+1}
$$
gives
$$
\sigma^r(A_i)
=
\left(\sigma^{r-1}(A_1)\right)^{h_i}
:\sigma^{r-1}(A_{i+1})
=
T_r^{h_i}:\sigma^{r-1}(A_{i+1}).
$$
Let $2\leqslant s\leqslant L$. Applying this identity successively to
$T_s=\sigma^{s-1}(A_1)$ until the letter $A_s$ is reached yields
$$
T_s
=
T_{s-1}^{h_1}:T_{s-2}^{h_2}:\cdots:
T_1^{h_{s-1}}:A_s.
$$
If $s\geqslant L+1$, the same iteration gives
$$
T_s
=
T_{s-1}^{h_1}:T_{s-2}^{h_2}:\cdots:
T_{s-L+1}^{h_{L-1}}:\sigma^{s-L}(A_L).
$$
Since $\sigma(A_L)=A_1$, one has
$$
\sigma^{s-L}(A_L)
=
\sigma^{s-L-1}(A_1)
=
T_{s-L}.
$$
Finally, $h_L=1$, and therefore
$$
T_s
=
T_{s-1}^{h_1}:T_{s-2}^{h_2}:\cdots:
T_{s-L}^{h_L}.
$$
\end{proof}

The recursive decomposition of $T_s$ induces, after applying
$\mathcal E$, the same recurrence for the number of semi-proper blocks.
This gives the following identification with the weights $H_s$.

\begin{corollary}\label{cor:length-expanded-word}
For every $s\geqslant1$, one has
$$
\left\lVert \widehat{T}_s\right\rVert=H_{s+1}.
$$
\end{corollary}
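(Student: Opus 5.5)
The plan is induction on $s$, applying $\mathcal E$ to the recursive decompositions of Lemma~\ref{lem:recursive-T}. Since $\mathcal E$ is a morphism, $\lVert\mathcal E(UV)\rVert=\lVert\mathcal E(U)\rVert+\lVert\mathcal E(V)\rVert$ for finite words $U,V$. Recall also that $\lVert\mathcal E(A_i)\rVert=h_i$. Consequently, for every finite word $W$,
\[
\lVert\mathcal E(W)\rVert=\sum_{i=1}^L h_i\lvert W\rvert_{A_i}.
\]
We never need this explicitly, however, because additivity over the decompositions is enough.

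\textbf{Base case.} For $s=1$ we have $\widehat T_1=\mathcal E(A_1)$, so $\lVert\widehat T_1\rVert=h_1=H_2$. This uses the initial relation $H_2=h_1H_1$ with $H_1=1$.

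\textbf{Range $2\leqslant s\leqslant L$.} Apply $\mathcal E$ to \eqref{eq:recursive-T-initial} and use additivity together with the induction hypothesis $\lVert\widehat T_r\rVert=H_{r+1}$ for $r<s$. This gives
\[
\lVert\widehat T_s\rVert=h_1H_s+h_2H_{s-1}+\cdots+h_{s-1}H_2+h_s .
\]
The last term $h_s=\lVert\mathcal E(A_s)\rVert$ equals $h_sH_1$, because $H_1=1$. The right-hand side is therefore exactly the defining initial relation $H_{s+1}=\sum_{r=1}^{s}h_rH_{s+1-r}$, valid since $s<L+1$; note the case $s=L$ uses the $r=L$ range of Definition~\ref{def:intro-recurrence-data} with index $r+1=L+1$.

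\textbf{Range $s\geqslant L+1$.} Applying $\mathcal E$ to \eqref{eq:recursive-T} gives
\[
\lVert\widehat T_s\rVert=\sum_{r=1}^{L}h_rH_{s+1-r}.
\]
This equals $H_{s+1}$ by the order-$L$ recurrence, since $s+1-L\geqslant2\geqslant1$.

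The only delicate point is the index bookkeeping at the boundary $s=L$. There I must check that the initial relation of Definition~\ref{def:intro-recurrence-data} for $H_{L+1}$ (the case $r=L$ is excluded there, since the condition is $r<L$) coincides with the recurrence $H_{1+L}=\sum_r h_rH_{1+L-r}$ at $n=1$. These are the same formula, so both descriptions agree and no gap arises. Otherwise the argument is routine.
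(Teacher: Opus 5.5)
Your proof is correct and is essentially the paper's argument: induction on $s$, applying $\mathcal E$ to the two decompositions of Lemma~\ref{lem:recursive-T}, using additivity of $\lVert\cdot\rVert$ and the defining relations for $H_{s+1}$. One small slip: Definition~\ref{def:intro-recurrence-data} has no ``$r=L$ range'' (it requires $r<L$), and, as your last paragraph correctly notes, the case $s=L$ is closed by the order-$L$ recurrence at $n=1$, namely $H_{L+1}=\sum_{r=1}^{L}h_rH_{L+1-r}$.
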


\begin{proof}
We proceed by induction on $s$. For $s=1$, we have
$$
\left\lVert \widehat{T}_1\right\rVert
=
\left\lVert \mathcal E(A_1)\right\rVert
=
h_1
=
H_2.
$$

Assume that the result holds for all positive integers smaller than $s$.
If $2\leqslant s\leqslant L$, Lemma~\ref{lem:recursive-T} gives
$$
\left\lVert \widehat{T}_s\right\rVert
=
\sum_{r=1}^{s-1}
h_r\left\lVert \widehat{T}_{s-r}\right\rVert
+
\left\lVert \mathcal E(A_s)\right\rVert.
$$
By the induction hypothesis and
$\left\lVert \mathcal E(A_s)\right\rVert=h_s$, this becomes
$$
\left\lVert \widehat{T}_s\right\rVert
=
\sum_{r=1}^{s-1}h_rH_{s+1-r}+h_s
=
\sum_{r=1}^{s}h_rH_{s+1-r}
=
H_{s+1}.
$$

If $s\geqslant L+1$, Lemma~\ref{lem:recursive-T} similarly gives
$$
\left\lVert \widehat{T}_s\right\rVert
=
\sum_{r=1}^{L}
h_r\left\lVert \widehat{T}_{s-r}\right\rVert
=
\sum_{r=1}^{L}h_rH_{s+1-r}
=
H_{s+1}.
$$
This completes the induction.
\end{proof}

This prepares the comparison between the expanded fixed word and the
terminal blocks of consecutive integers.

\subsection{Terminal blocks in natural order}

By Corollary~\ref{cor:length-expanded-word}, the word $\widehat T_s$
contains exactly $H_{s+1}$ semi-proper blocks, indexed by the positions
$1,\ldots,H_{s+1}$. We identify not only the block in position
$n+1$, but the whole prefix ending at that block. This combines the
terminal-block statement with the prefix decomposition needed later for
the base terminal classes.

\begin{theorem}\label{thm:terminal-coding}
Let $n>0$ and put
$$
s:=\operatorname{ord}([n]_H^{-1}).
$$
By Theorem~\ref{thm:block-language}, equivalently
$$
H_s\leqslant n<H_{s+1}.
$$
Write
$$
\operatorname{red}([n]_H^{-1})=(a_1,\ldots,a_s)
$$
for the reduced digit word of $n$, and let
$$
C:=\operatorname{red}\!\left(\operatorname{dgt}^{-1}([n]_H^{-1})\right)
$$
be its reduced semi-proper block word. Write
$$
C=C_0:b_{i,j},
$$
where $b_{i,j}$ is the semi-proper block at the high-weight end and
$C_0$ has total digit-length $s-i$, with $C_0$ empty when $i=s$. Let
$
b_{k,\ell}=\TB(n).
$

If $i<s$, then
$$
\widehat T_s[n+1]
=
\left(\widehat T_{s-1}\right)^{a_s}
:\left(\widehat T_{s-2}\right)^{a_{s-1}}
:\cdots:
\left(\widehat T_1\right)^{a_2}
:b_{k,0}b_{k,1}\cdots b_{k,\ell}.
$$
If $i=s$, then necessarily $s\leqslant L$ and
$$
\widehat T_s[n+1]
=
\left(\widehat T_{s-1}\right)^{h_1}
:\left(\widehat T_{s-2}\right)^{h_2}
:\cdots:
\left(\widehat T_1\right)^{h_{s-1}}
:b_{s,0}b_{s,1}\cdots b_{s,j},
$$
where the string of $\widehat T$-factors is empty when $s=1$.

In particular, for every $r\geqslant1$ and every
$0\leqslant n<H_{r+1}$,
$$
\widehat T_r(n+1)=\TB(n),
$$
and consequently, for every $n\in\N_0$,
$$
\widehat T_\infty(n+1)=\TB(n).
$$
\end{theorem}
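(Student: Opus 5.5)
We argue by strong induction on $s$, mirroring the finite-level induction in the proof of Theorem~\ref{thm:block-language} and the decomposition of Lemma~\ref{lem:recursive-T}. The guiding principle is that both objects are organized in the same order. The proof of Theorem~\ref{thm:block-language} sorts the elements of $\mathcal W_m$ by their high-weight block, in the order $b_{1,0},\ldots,b_{1,h_1-1},b_{2,0},\ldots$; each group is an interval of length $H_{m-i+1}$. Lemma~\ref{lem:recursive-T}, after applying $\mathcal E$ and using Corollary~\ref{cor:length-expanded-word}, writes $\widehat T_s$ as a concatenation of $h_1$ copies of $\widehat T_{s-1}$, then $h_2$ copies of $\widehat T_{s-2}$, and so on, followed by $\mathcal E(A_s)$ when $s\leqslant L$. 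Here $\lVert\widehat T_{s-r}\rVert=H_{s-r+1}$. So the strategy is to read off the position of $n+1$ inside $\widehat T_s$ from the high-weight digits of $n$.

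\textbf{Step 1: locating $n$ inside $\widehat T_s$.} Write $n=[C_0]_H+C_{s,i,j}$. Here $C_{s,i,j}=jH_{s-i+1}+\sum_{r=1}^{i-1}h_rH_{s+1-r}$, with $C_0\in\mathcal W_{s-i}$ and $n':=[C_0]_H<H_{s-i+1}$. The digits of $C$ at positions $s,s-1,\ldots,s-i+1$ are $h_1,\ldots,h_{i-1},j$. Hence $a_s=h_1,\ldots,a_{s-i+2}=h_{i-1}$ and $a_{s-i+1}=j$. The prefix $(\widehat T_{s-1})^{h_1}\cdots(\widehat T_{s-i+1})^{h_{i-1}}$ has exactly $\sum_{r<i}h_rH_{s+1-r}$ blocks. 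The next $j$ copies of $\widehat T_{s-i}$ add $jH_{s-i+1}$ blocks.

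\textbf{Step 2: the case $i<s$.} Position $n+1$ lies in the $(j+1)$-st copy of $\widehat T_{s-i}$, at relative position $n'+1$. This requires $j<h_i$, which holds, so this copy exists in the Lemma~\ref{lem:recursive-T} decomposition. We apply the induction hypothesis to $n'$ in $\widehat T_{s-i}$. If $n'>0$, let $s'=\ord([n']_H^{-1})\leqslant s-i$. Since $\widehat T_{s'}$ is a prefix of $\widehat T_{s-i}$, we get $\widehat T_{s-i}[n'+1]=\widehat T_{s'}[n'+1]$. The induction hypothesis gives that prefix as $\prod(\widehat T_{\cdot})^{a_\cdot}$ over the lower digits, ending with $b_{k,0}\cdots b_{k,\ell}$. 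The digits $a_{s-i},\ldots,a_{s'+1}$ are zero, so the missing factors are empty. Concatenating the pieces gives the stated formula, and $\TB(n)=\TB(n')$ because $C_0$ is nonempty. The degenerate case $n'=0$ is handled directly: then $C_0=b_{1,0}^{s-i}$ and the prefix ends with the first block $b_{1,0}$ of $\widehat T_{s-i}$. This matches the formula with $k=1$, $\ell=0$, and digits $a_{s-i},\ldots,a_2$ equal to zero; here one must check that $a_1$ does not appear in the formula.

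\textbf{Step 3: the case $i=s$, and the main obstacle.} When $i=s$ we have $s\leqslant L$, since block lengths are at most $L$. Position $n+1$ then lies inside the trailing $\mathcal E(A_s)=b_{s,0}\cdots b_{s,\rho_s}$, at offset $j$, which gives the second display. The main obstacle is the careful bookkeeping of indices. Three points need checking: that the digit exponents $a_{s},\ldots,a_2$ line up with $\widehat T_{s-1},\ldots,\widehat T_1$, which involves an off-by-one because $\widehat T_1$ corresponds to digit $a_2$; that $a_1$ is absorbed into $\ell$ when $k=1$; and the boundary cases $s=1$ or $s-i=1$.

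\textbf{Step 4: the corollary statements.} In both cases the last block of $\widehat T_s[n+1]$ is $\TB(n)$, so $\widehat T_s(n+1)=\TB(n)$. For general $r$ with $0\leqslant n<H_{r+1}$, note that $s\leqslant r$ and that $\widehat T_s$ is a prefix of $\widehat T_r$. The case $n=0$ holds because both words begin with $b_{1,0}=\TB(0)$. Passing to the limit in $r$ gives $\widehat T_\infty(n+1)=\TB(n)$.
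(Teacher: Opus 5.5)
Your proposal is correct and follows essentially the same route as the paper: you split $\widehat T_s[n+1]$ into the prefix $(\widehat T_{s-1})^{h_1}\cdots(\widehat T_{s-i+1})^{h_{i-1}}(\widehat T_{s-i})^{j}$, which has $C_{s,i,j}$ blocks, followed by $\widehat T_{s-i}[n'+1]$. As in the paper, you treat $n'=0$ directly and handle $n'>0$ by induction, using that $\widehat T_{s'}$ is a prefix of $\widehat T_{s-i}$. The only cosmetic difference is the induction parameter: you use strong induction on $s=\operatorname{ord}([n]_H^{-1})$, which drops to $s'\leqslant s-i<s$, whereas the paper inducts on the number of blocks $\lVert C\rVert$ with the single-block case $i=s$ as its base; both strictly decrease, so either works.
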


\begin{proof}
We use induction on the number $\lVert C\rVert$ of semi-proper blocks
in the reduced block expansion $C$.

Suppose first that $\lVert C\rVert=1$. Then $i=s$, so
$C=b_{s,j}$ and necessarily $s\leqslant L$. If $s=1$, then
$$
\widehat T_1
=
b_{1,0}b_{1,1}\cdots b_{1,h_1-1},
$$
and the asserted prefix formula is immediate. Assume that
$2\leqslant s\leqslant L$. Applying $\mathcal E$ to
\eqref{eq:recursive-T-initial} gives
$$
\widehat T_s
=
\left(\widehat T_{s-1}\right)^{h_1}
:\cdots:
\left(\widehat T_1\right)^{h_{s-1}}
:b_{s,0}b_{s,1}\cdots b_{s,h_s-1}.
$$
The digit word of $b_{s,j}$ is
$
(j,h_{s-1},\ldots,h_1),
$
and therefore
$$
n
=
j+\sum_{r=1}^{s-1}h_rH_{s+1-r}.
$$
By Corollary~\ref{cor:length-expanded-word}, the complete
$\widehat T$-factors preceding $\mathcal E(A_s)$ contain
$
\sum_{r=1}^{s-1}h_rH_{s+1-r}
$
blocks. Hence the first $n+1$ blocks of $\widehat T_s$ are exactly
$$
\left(\widehat T_{s-1}\right)^{h_1}
:\cdots:
\left(\widehat T_1\right)^{h_{s-1}}
:b_{s,0}b_{s,1}\cdots b_{s,j},
$$
which proves the initial case.

Assume now that $\lVert C\rVert>1$. Then $i<s$ and $C_0$ is nonempty.
Set
$$
n_0
:=
\evalH{\operatorname{dgt}(C_0:z)}{H}
$$
and
$$
n_1
:=
\sum_{r=1}^{i-1}h_rH_{s+1-r}
+jH_{s+1-i}.
$$
The two terms are respectively the contributions of the low-weight
word $C_0$ and of the high-weight block $b_{i,j}$, so
$n=n_0+n_1$. By the global bijectivity in
Theorem~\ref{thm:block-language},
$$
[n_0]_H^{-1}=\operatorname{dgt}(C_0:z).
$$
Hence
$\operatorname{ord}([n_0]_H^{-1})\leqslant s-i$. If $n_0>0$, the
order characterization in Theorem~\ref{thm:block-language}
gives
$$
n_0<H_{\operatorname{ord}([n_0]_H^{-1})+1}
\leqslant H_{s-i+1};
$$
for $n_0=0$ the same upper bound is immediate. Thus
$$
0\leqslant n_0<H_{s-i+1}.
$$

The recursive decomposition of $\widehat T_s$ shows that the block word
$$
V
:=
\left(\widehat T_{s-1}\right)^{h_1}
:\cdots:
\left(\widehat T_{s-i+1}\right)^{h_{i-1}}
:\left(\widehat T_{s-i}\right)^j
$$
is the prefix immediately preceding the next copy of
$\widehat T_{s-i}$; when $i=1$, the string of $\widehat T$-factors is empty.
By Corollary~\ref{cor:length-expanded-word},
$$
\lVert V\rVert
=
\sum_{r=1}^{i-1}h_rH_{s+1-r}
+jH_{s+1-i}
=
n_1.
$$
Consequently,
$$
\widehat T_s[n+1]
=
V:\widehat T_{s-i}[n_0+1].
$$

If $n_0=0$, then $C_0$ is a string of copies of $b_{1,0}$, so
$\TB(n)=b_{1,0}$ and
$
\widehat T_{s-i}[1]=b_{1,0}.
$
Assume that $n_0>0$. Put
$$
R:=\operatorname{red}\!\left(
\operatorname{dgt}^{-1}([n_0]_H^{-1})
\right),
$$
and let $r$ be its total digit-length. Since
$[n_0]_H^{-1}=\operatorname{dgt}(C_0:z)$, the uniqueness of the
infinite block decomposition shows that $R$ is obtained from $C_0$ by
removing only copies of $b_{1,0}$ at the high-weight end. Hence
$\lVert R\rVert\leqslant\lVert C_0\rVert<\lVert C\rVert$,
and the induction hypothesis applies to $n_0$. Since
$\widehat T_r$ is a prefix of $\widehat T_{s-i}$, the prefix supplied
by the induction hypothesis is also the prefix
$\widehat T_{s-i}[n_0+1]$. Removing these trailing copies of
$b_{1,0}$ changes only zero coefficients at the high-weight end and
does not alter the first block. Thus, in both cases $n_0=0$ and
$n_0>0$,
$$
\widehat T_{s-i}[n_0+1]
=
\left(\widehat T_{s-i-1}\right)^{a_{s-i}}
:\cdots:
\left(\widehat T_1\right)^{a_2}
:b_{k,0}b_{k,1}\cdots b_{k,\ell},
$$
where $b_{k,\ell}=\TB(n_0)=\TB(n)$; when $n_0=0$, the
$\widehat T$-factors on the right are empty.

Finally, the digits contributed by the high-weight block $b_{i,j}$ are
$$
(a_{s-i+1},\ldots,a_s)
=
(j,h_{i-1},\ldots,h_1).
$$
Combining the last two displayed formulas with the definition of $V$
yields
$$
\widehat T_s[n+1]
=
\left(\widehat T_{s-1}\right)^{a_s}
:\left(\widehat T_{s-2}\right)^{a_{s-1}}
:\cdots:
\left(\widehat T_1\right)^{a_2}
:b_{k,0}b_{k,1}\cdots b_{k,\ell}.
$$
This completes the induction.

The last block in either prefix formula is $\TB(n)$, so
$\widehat T_s(n+1)=\TB(n)$ whenever $H_s\leqslant n<H_{s+1}$.
For $n=0$, one has
$
\widehat T_1(1)=b_{1,0}=\TB(0).
$
Since the words $\widehat T_s$ form an increasing sequence of prefixes,
the finite identity follows for every
$0\leqslant n<H_{r+1}$, and passage to the limit gives the identity in
$\widehat T_\infty$.
\end{proof}

In the Fibonacci case $g=(1,1)$,
Theorem~\ref{thm:terminal-coding} specializes to the
golden-string prefix decomposition of
\cite[Lemma~3.2]{Griffiths2011}. Related correspondences between low
digits of Zeckendorf expansions and letters of the golden string are
given in
\cite[Theorems~1.2 and~1.3]{BustosChuKimLeeShankarTresch2026}.

\subsection{Base terminal classes and prefix boundaries}

Theorem~\ref{thm:terminal-coding} identifies every terminal
block with its position in $\widehat T_\infty$. For a base block
$b_{i,0}$, deleting that final block from the corresponding prefix
leaves a concatenation of complete images under $\mathcal E$. Such a
block prefix can therefore be contracted to a prefix of $T_\infty$,
which is precisely the form needed for the stable Rauzy geometry.

We use the terminal-block coding to define the corresponding
classes of non-negative integers. For $1\leqslant i\leqslant L$,
the \emph{base terminal class} associated with $b_{i,0}$ is
$$
N_i
:=
\left\{
n\in\N_0:\TB(n)=b_{i,0}
\right\},
$$
and we set
$$
N_*:=N_1\sqcup\cdots\sqcup N_L.
$$
More generally, for $1\leqslant i\leqslant L$ and
$0\leqslant j\leqslant\rho_i$, the \emph{terminal class}
associated with $b_{i,j}$ is
$$
N_{i,j}
:=
\left\{
n\in\N_0:\TB(n)=b_{i,j}
\right\}.
$$
Thus $N_i=N_{i,0}$ for every $1\leqslant i\leqslant L$. By the
convention $\TB(0)=b_{1,0}$, one has $0\in N_1$ and
$0\notin N_i$ for every $i>1$.

To pass from a block word written in complete images back to the
corresponding substitution word, we use the inverse of $\mathcal E$
on its image. Let $\mathcal A^*$ denote the set of all finite words
over $\mathcal A$, including the empty word $\varepsilon$. Every word in
$\mathcal E(\mathcal A^*)$ has a unique decomposition into complete
images $\mathcal E(A_i)$. Indeed, the first block of a
nonempty such word is $b_{i,0}$ for a unique $i$, and therefore
determines its first complete image $\mathcal E(A_i)$. Removing this
image and iterating proves uniqueness. We may consequently define
$$
\begin{array}{r@{\;}c@{\quad}c@{\quad}c}
\operatorname{con}:&
\mathcal E(\mathcal A^*)
&\longrightarrow&
\mathcal A^*,\\
&
\mathcal E(A_{i_1}):\cdots:\mathcal E(A_{i_r})
&\longmapsto&
A_{i_1}:\cdots:A_{i_r}.
\end{array}
$$
We also set $\operatorname{con}(\varepsilon)=\varepsilon$.

Since the first digit of $b_{i,j}$ is $j$, an integer
$n\in\N_0$ belongs to $N_*$ exactly when the first digit of its
canonical infinite $H$-expansion vanishes. The prefix formula in
Theorem~\ref{thm:terminal-coding} then gives the required
complete-image decomposition. Let $s\geqslant1$ and let
$0\leqslant n<H_{s+1}$. Write
$$
[n]_H^{-1}=(a_1,a_2,\ldots)\in\mathcal W.
$$
If $n>0$ and $r:=\operatorname{ord}([n]_H^{-1})$, then
Theorem~\ref{thm:block-language} gives
$$
H_r\leqslant n<H_{r+1}.
$$
Since $n<H_{s+1}$, one must have $r\leqslant s$, and therefore
$a_k=0$ for $k>s$. The same conclusion is immediate when $n=0$.

Assume that $n\in N_*$, or equivalently that $a_1=0$. The case
$n=0$ is immediate. For $n>0$, apply
Theorem~\ref{thm:terminal-coding} with the order $r$ above.
Since $\widehat T_r$ is a prefix of $\widehat T_s$ and
$a_{r+1}=\cdots=a_s=0$, the same prefix formula gives
\begin{equation}\label{eq:base-prefix-decomposition}
\widehat T_s[n]
=
\left(\widehat T_{s-1}\right)^{a_s}
:\left(\widehat T_{s-2}\right)^{a_{s-1}}
:\cdots:
\left(\widehat T_1\right)^{a_2}.
\end{equation}
Indeed, the terminal block is of the form $b_{k,0}$, so deleting the
last block from the corresponding prefix $\widehat T_s[n+1]$ leaves
only complete expanded images. Since each factor
$\widehat T_t$ is the complete image $\mathcal E(T_t)$, applying
$\operatorname{con}$ to \eqref{eq:base-prefix-decomposition} yields
\begin{equation}\label{eq:base-prefix-contraction}
\operatorname{con}\left(\widehat T_s[n]\right)
=
T_{s-1}^{a_s}:T_{s-2}^{a_{s-1}}:\cdots:T_1^{a_2}.
\end{equation}

We can now connect the two stable-space constructions introduced in
Section~\ref{sec:general-data}. For $n\in N_*$, the corresponding
position in $\widehat T_\infty$ is a boundary between complete images
$\mathcal E(A_i)$, and the preceding block prefix contracts to a
prefix of $T_\infty$. The next theorem identifies the stable projection
of its letter-count vector with the stable evaluation of $n$.

Write the infinite fixed word as
$$
T_\infty=u_0u_1u_2\cdots.
$$
Recall that, for a finite word $W$ over $\mathcal A$,
$$
\ell(W)
=
\bigl(|W|_{A_1},\ldots,|W|_{A_L}\bigr)^{\mathsf T}
$$
denotes its column vector of letter counts. For $m\geqslant0$, set
$$
p_m
:=
\ell\left(u_0u_1\cdots u_{m-1}\right)\in\N_0^L,
\qquad
p_0=0.
$$

\begin{theorem}\label{thm:fixed-word-bridge}
For $m\geqslant0$, let
$$
c_m
:=
\left\lVert 
\mathcal E(u_0u_1\cdots u_{m-1})
\right\rVert,
$$
where $c_0=0$. Then $(c_m)_{m\geqslant0}$ is the increasing
enumeration of $N_*$. More precisely, if $u_m=A_i$, then
$c_m\in N_i$, and
$$
\evalG(c_m)=\pi_s(p_m).
$$
\end{theorem}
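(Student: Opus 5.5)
Since $\widehat T_\infty=\mathcal E(T_\infty)=\mathcal E(u_0)\mathcal E(u_1)\cdots$ and each $\mathcal E(u_m)$ begins with a base block $b_{i,0}$ and contains no other base block, the base blocks of $\widehat T_\infty$ sit exactly at the positions $c_m+1$. More precisely, block position $c_m+1$ carries the first block of $\mathcal E(u_m)$, namely $b_{i,0}$ when $u_m=A_i$. By Theorem~\ref{thm:terminal-coding}, $\widehat T_\infty(n+1)=\TB(n)$ for all $n\in\N_0$. Hence $\TB(c_m)=b_{i,0}$, so $c_m\in N_i$. Conversely, every $n\in N_*$ has $\widehat T_\infty(n+1)$ equal to a base block, so $n=c_m$ for some $m$. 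The sequence $(c_m)$ is strictly increasing because $\lVert\mathcal E(A_i)\rVert=h_i\geqslant1$. This proves the enumeration statement and the class assignment.

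For the evaluation identity, fix $m$ and put $n=c_m\in N_*$. Choose $s$ with $n<H_{s+1}$, and also large enough that $\lVert\widehat T_s\rVert=H_{s+1}>c_m$; this is possible by Corollary~\ref{cor:length-expanded-word}. Then $\widehat T_s[n]$ is the prefix of $\widehat T_\infty$ consisting of the first $n$ blocks, which is $\mathcal E(u_0\cdots u_{m-1})$. By uniqueness of complete-image decompositions, $\operatorname{con}(\widehat T_s[n])=u_0\cdots u_{m-1}$. On the other hand, \eqref{eq:base-prefix-contraction} gives $\operatorname{con}(\widehat T_s[n])=T_{s-1}^{a_s}\cdots T_1^{a_2}$, where $[n]_H^{-1}=(a_k)$ and $a_1=0$. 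Taking letter counts yields
\[
p_m=\sum_{k=2}^{s}a_k\,\ell(T_{k-1}).
\]
Applying $\pi_s$ and using $\pi_s(\ell(T_{k-1}))=(\eta_j^{k-1})_j$ from Section~\ref{sec:general-data}, we obtain
\[
\pi_s(p_m)=\sum_{k\geqslant2}a_k(\eta_j^{k-1})_j=\evalG(n).
\]
This uses $a_1=0$ and $a_k=0$ for $k>s$. The case $m=0$ is trivial, since $c_0=0$, $p_0=0$, $\evalG(0)=0$ and $u_0=A_1$ with $0\in N_1$.

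The main point requiring care is matching the prefix $\widehat T_s[n]$ with $\mathcal E(u_0\cdots u_{m-1})$. The word $\widehat T_s=\mathcal E(T_s)$ and $T_s$ is a prefix of $T_\infty$, so $\widehat T_s$ is a prefix of $\widehat T_\infty$. The first $n=c_m$ blocks therefore coincide with $\mathcal E(u_0\cdots u_{m-1})$, provided $\lVert\widehat T_s\rVert\geqslant c_m$, which the choice of $s$ guarantees. The rest is bookkeeping already supplied by \eqref{eq:base-prefix-contraction}.
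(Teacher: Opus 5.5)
Your proposal is correct and follows the paper's proof essentially step by step. The positions $c_m+1$ carry the unique base block of each image $\mathcal E(u_m)$, terminal coding identifies that block with $\TB(c_m)$, and the evaluation identity follows by contracting the complete-image prefix via \eqref{eq:base-prefix-contraction}, taking letter counts, and applying $\pi_s$ (your two conditions on $s$ coincide, since $n=c_m$).
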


\begin{proof}
By definition, $\widehat{T}_\infty=\mathcal E(T_\infty)$ is obtained by
replacing each letter $A_i$ by the block word
$$
 \mathcal E(A_i)=b_{i,0}b_{i,1}\cdots b_{i,h_i-1}.
$$
Hence the block in position $c_m+1$ of $\widehat{T}_\infty$ is the
first semi-proper block of $\mathcal E(u_m)$. If $u_m=A_i$, this block
is $b_{i,0}$. By Theorem~\ref{thm:terminal-coding}, it is also
$\TB(c_m)$. Thus $c_m\in N_i$.

Conversely, let $n\in N_i$. By terminal coding, the block in position
$n+1$ of $\widehat{T}_\infty$ is $b_{i,0}$. Among the words
$\mathcal E(A_r)$, the block $b_{i,0}$ occurs only in
$\mathcal E(A_i)$, and there only as the first block. Therefore position
$n+1$ is the beginning of a unique image $\mathcal E(u_m)$ with
$u_m=A_i$, so $n=c_m$. Moreover, if $u_m=A_i$, then
$$
c_{m+1}-c_m
=
\left\lVert \mathcal E(A_i)\right\rVert
=
h_i>0.
$$
Hence $(c_m)_{m\geqslant0}$ is strictly increasing. This proves that it is
the increasing enumeration of $N_*$, with the stated letter class.

It remains to identify the stable evaluation. For $m=0$, one has
$c_0=0$, $p_0=0$, and $\evalG(0)=0$, so the identity is immediate.
Assume now that $m\geqslant1$ and put $n=c_m$. Since $(c_m)$ is
strictly increasing and $c_0=0$, one has $n>0$. Write
$$
[n]_H^{-1}=(a_1,a_2,\ldots)\in\mathcal W.
$$
Since $n\in N_*$, one has $a_1=0$. Choose
$q\geqslant2$ such that $n<H_{q+1}$. The word $\widehat T_q$ is a
prefix of $\widehat T_\infty$ and contains exactly $H_{q+1}$
semi-proper blocks by Corollary~\ref{cor:length-expanded-word}. Hence
\eqref{eq:base-prefix-decomposition} gives
$$
\widehat T_\infty[n]
=
\left(\widehat T_{q-1}\right)^{a_q}
:\left(\widehat T_{q-2}\right)^{a_{q-1}}
:\cdots:
\left(\widehat T_1\right)^{a_2}.
$$
By the definition of $c_m$, the left-hand side is
$\mathcal E(u_0u_1\cdots u_{m-1})$. Applying
$\operatorname{con}$ to both sides gives
$$
u_0u_1\cdots u_{m-1}
=
T_{q-1}^{a_q}:T_{q-2}^{a_{q-1}}:\cdots:T_1^{a_2}.
$$
Taking letter-count vectors yields
$$
p_m
=
\sum_{k=2}^q a_k\,\ell(T_{k-1}).
$$
Applying $\pi_s$ and using the stable-coordinate formula from
Section~\ref{sec:general-data} gives
$$
\pi_s(p_m)
=
\left(
\sum_{k=2}^q a_k\eta_j^{k-1}
\right)_{1\leqslant j\leqslant a+b}.
$$
Since $a_1=0$, adjoining the $k=1$ term does not change the sum.
Therefore
$$
\pi_s(p_m)=\evalG(c_m),
$$
as claimed.
\end{proof}

Theorem~\ref{thm:fixed-word-bridge} completes the bridge for the
base classes: the arithmetic evaluations of their elements are exactly
the stable projections of the corresponding prefix-count vectors. It
remains only to pass from the base block $b_{i,0}$ to an arbitrary
block $b_{i,j}$. Since the varying digit $j$ occupies the smallest
weight $H_1=1$, this final step is a translation by $j$ in the integers
and by $js_0$ in the stable space.

\subsection{Translation to arbitrary terminal classes}

We make this translation explicit at the level of infinite
$H$-expansions and stable evaluations.

\begin{lemma}\label{lem:translated-classes}
Let $1\leqslant i\leqslant L$, let
$0\leqslant j\leqslant\rho_i$, and let $n\in N_{i,j}$. Write the
canonical infinite block expansion of $n$ as
$$
C_n=b_{i,j}:W',
\qquad W'\in\mathbf W,
$$
and set
$$
C_n^*:=b_{i,0}:W'.
$$
Then $C_n^*\in\mathbf W$. Put
$$
n^*:=\evalH{\operatorname{dgt}(C_n^*)}{H}.
$$
Then
$$
n=n^*+j,
\qquad
n^*\in N_i,
$$
and
$$
\evalG(n)=\evalG(n^*)+js_0.
$$
Conversely, for every $m\in N_i$, one has
$$
m+j\in N_{i,j}
\qquad\text{and}\qquad
\evalG(m+j)=\evalG(m)+js_0.
$$
Consequently,
$$
N_{i,j}=N_i+j,
$$
where $N_i+j:=\{m+j:m\in N_i\}$.
\end{lemma}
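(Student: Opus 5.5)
The plan is to work directly with the canonical infinite block expansions and to use the bijectivity and uniqueness statements of Theorem~\ref{thm:block-language} and Lemma~\ref{lem:infinite-block-decomposition}. The membership $C_n^*\in\mathbf W$ is immediate. Replacing the first block $b_{i,j}$ by $b_{i,0}$ keeps a semi-proper block, since $0\leqslant\rho_i$, and it does not touch the tail $W'$, which is eventually $b_{1,0}$.

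The digit sequences $\operatorname{dgt}(C_n)$ and $\operatorname{dgt}(C_n^*)$ differ only in the first digit, which is $j$ versus $0$, because the first digit of $b_{i,j}$ is $j$ and its remaining digits $h_{i-1},\ldots,h_1$ do not depend on $j$. Since $H_1=1$, taking $H$-values gives $n=n^*+j$. Applying $\evalG$ digitwise, the first position is represented by $s_0$ (that is, $\eta_j^0=1$ in every coordinate), so $\evalG(n)=\evalG(n^*)+js_0$. Strictly, this uses that $\operatorname{dgt}(C_n^*)$ is the canonical infinite expansion of $n^*$. That holds because $\operatorname{dgt}(C_n^*)\in\mathcal W$ and the $H$-value map is a bijection from $\mathcal W$ onto $\N_0$, so $[n^*]_H^{-1}=\operatorname{dgt}(C_n^*)$. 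Lemma~\ref{lem:infinite-block-decomposition} then identifies $C_{n^*}=\operatorname{dgt}^{-1}([n^*]_H^{-1})=C_n^*$, whose first block is $b_{i,0}$, and hence $n^*\in N_i$.

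For the converse, take $m\in N_i$ and write $C_m=b_{i,0}:W'$. I then form $b_{i,j}:W'\in\mathbf W$, which is legitimate since $j\leqslant\rho_i$. By the same digit comparison, its digit sequence has $H$-value $m+j$ and stable evaluation $\evalG(m)+js_0$. By global bijectivity it is the canonical expansion of $m+j$, so $\TB(m+j)=b_{i,j}$. The two directions together give $N_{i,j}=N_i+j$.

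The only delicate point is the identification of $\operatorname{dgt}(C^*_n)$, or of its analogue in the converse, as the \emph{canonical} expansion of its value. I will settle it by noting that every element of $\mathbf W$ yields an element of $\mathcal W$ by definition, and then invoking the injectivity of the $H$-value map on $\mathcal W$ from Theorem~\ref{thm:block-language} together with the uniqueness of block decompositions. The case $n=0$ and the case where $W'=z$ need no separate treatment, since the argument applies to them verbatim.
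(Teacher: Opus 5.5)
Your proposal is correct and follows essentially the same route as the paper. Both arguments compare first digits, using $H_1=1$ and the first-position vector $s_0$, and then invoke the uniqueness in Theorem~\ref{thm:block-language}, together with the bijectivity of $\operatorname{dgt}$ from Lemma~\ref{lem:infinite-block-decomposition}, to identify $C_n^*$ and $b_{i,j}:W'$ as the canonical block expansions of $n^*$ and $m+j$.
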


\begin{proof}
The digit sequences $\operatorname{dgt}(C_n)$ and
$\operatorname{dgt}(C_n^*)$ differ only in the smallest-weight digit of
their first blocks. Since $H_1=1$,
$$
n
=
\evalH{\operatorname{dgt}(C_n)}{H}
=
jH_1+\evalH{\operatorname{dgt}(C_n^*)}{H}
=
n^*+j.
$$
By the uniqueness in Theorem~\ref{thm:block-language},
$$
C_{n^*}
=
\operatorname{dgt}^{-1}([n^*]_H^{-1})
=
C_n^*
=
b_{i,0}:W'.
$$
Hence $\TB(n^*)=b_{i,0}$ and therefore $n^*\in N_i$.

The same change in the first digit changes the stable evaluation by
$$
j(1,\ldots,1)=js_0,
$$
and therefore
$$
\evalG(n)=\evalG(n^*)+js_0.
$$

Conversely, let $m\in N_i$. Its canonical infinite block expansion
begins with $b_{i,0}$, so write
$$
C_m=b_{i,0}:W
$$
with $W\in\mathbf W$, and set
$$
C':=b_{i,j}:W.
$$
Then $C'\in\mathbf W$, and
$$
\evalH{\operatorname{dgt}(C')}{H}
=
jH_1+\evalH{\operatorname{dgt}(C_m)}{H}
=
m+j.
$$
Again by uniqueness,
$$
C_{m+j}
=
\operatorname{dgt}^{-1}([m+j]_H^{-1})
=
C'
=
b_{i,j}:W.
$$
Thus $\TB(m+j)=b_{i,j}$, so $m+j\in N_{i,j}$. The same first-digit
change gives
$$
\evalG(m+j)=\evalG(m)+js_0.
$$
This proves the converse and hence the translated-class identity.
\end{proof}

Since every non-negative integer has exactly one terminal block, the
terminal classes form the disjoint partition
$$
\N_0
=
\bigsqcup_{i=1}^{L}
\bigsqcup_{j=0}^{\rho_i}
N_{i,j}.
$$

The results of this section provide the symbolic bridge needed for the
geometric part of the argument. Theorem~\ref{thm:terminal-coding}
identifies terminal blocks with positions in the expanded fixed word,
Theorem~\ref{thm:fixed-word-bridge} identifies the base classes
with boundaries between complete images $\mathcal E(A_i)$ and realizes
their stable evaluations as projected prefix-count vectors, and
Lemma~\ref{lem:translated-classes} reduces every terminal class
to a translate of a base class. The next section places these
prefix-count vectors in the contracting Rauzy geometry.

%% file: sections/04-rauzy-windows.tex
\section{Rauzy windows and base terminal classes}\label{sec:rauzy}

Section~\ref{sec:coding} identified the stable evaluations of the base
terminal classes with projected prefix-count vectors of the substitution
fixed word. We pass from this symbolic-arithmetic correspondence to
its geometric realization.

In the classical Rauzy-fractal construction, prefix-count vectors are
projected onto the contracting space of a Pisot substitution, which in
our setting is denoted by $E_s$, and the closures of the resulting
projected sets give the central Rauzy set and its pieces indexed by the
letters of the substitution
\cite{Rauzy1982,ArnouxIto2001,CanteriniSiegel2001,SiegelThuswaldner2009}.
Here, Theorem~\ref{thm:fixed-word-bridge} shows that the
points associated with the base terminal class $N_i$ are precisely the
projected prefixes immediately preceding an occurrence of $A_i$.
Their closures therefore provide geometric sets encoding the base
terminal classes. We refer to these sets as \emph{Rauzy windows}. They play,
for the present numeration system, the role that residue classes play
for terminal digits in an ordinary base.

The purpose of this section is twofold. First, we isolate the precise
geometric property needed for the arithmetic argument: the central
Rauzy window must be a multiplicity-one fundamental domain for the
natural prefix lattice, meaning that its lattice translates cover the
stable space and overlap only on a set of Lebesgue measure zero, with
its letter-indexed pieces likewise disjoint up to null sets.
Second, assuming this property, we relate the projected prefix orbit to
the linear toral orbit $(ms_0)_{m\geqslant 0}$ and derive
equidistribution for the base terminal classes. The proof of this
geometric statement under Assumption~\ref{ass:pisot} is postponed to
Section~\ref{sec:rauzy-verification}, so that the arithmetic argument
can be developed modularly from the precise geometric input it requires.

\subsection{Rauzy windows and the tiling theorem}

The set $R_i$ is designed to encode the base condition
$\TB(n)=b_{i,0}$.
Indeed, Theorem~\ref{thm:fixed-word-bridge} identifies the points
$\evalG(n)$ for $n\in N_i$ with the projected prefix-count vectors
$\pi_s(p_m)$ for which $u_m=A_i$. We therefore define the windows by
taking the closures of exactly these sets.

Recall that $T_\infty=u_0u_1u_2\cdots$ is the fixed word of $\sigma$,
and that
$$
p_m=\ell\left(u_0u_1\cdots u_{m-1}\right)\in\Z^L
\qquad (m\geqslant 0)
$$
is the prefix-counting vector, with $p_0=0$. Define
$$
R:=\overline{\left\{\pi_s(p_m):m\geqslant 0\right\}}
$$
and
$$
R_i:=
\overline{\left\{\pi_s(p_m):u_m=A_i\right\}}
\qquad (1\leqslant i\leqslant L).
$$
We call $R$ the \emph{central Rauzy window} and
$R_1,\ldots,R_L$ its \emph{letter-indexed pieces}. Since the alphabet
is finite, these definitions give
$$
R=R_1\cup\cdots\cup R_L.
$$

To obtain a compact dynamical model, we quotient the stable space by
the natural difference lattice of prefix counts. Put
$$
D_0:=\left\{x=(x_1,\ldots,x_L)\in\R^L:
x_1+\cdots+x_L=0\right\}
=
\operatorname{span}_{\R}
\{e_i-e_L:1\leqslant i\leqslant L-1\}.
$$
The next lemma shows that the projected difference vectors generate a
full lattice in the contracting space.

\begin{lemma}\label{lem:rauzy-lattice-full-rank}
The vectors
$$
\pi_s(e_1-e_L),\ldots,\pi_s(e_{L-1}-e_L)
$$
form a real basis of $E_s$. Consequently,
$$
\mathcal L:=
\left\langle
\pi_s(e_i-e_L):1\leqslant i\leqslant L-1
\right\rangle_{\Z}
$$
is a full lattice in $E_s$.
\end{lemma}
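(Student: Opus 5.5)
The plan is to show that the real-linear map $\pi_s|_{D_0}:D_0\to E_s$ is injective. Both spaces have real dimension $L-1$: $D_0$ is a hyperplane in $\R^L$, and $E_s=\R^a\times\C^b$ with $a+2b=L-1$. Injectivity will therefore make it a real isomorphism. The vectors $e_i-e_L$ ($1\leqslant i\leqslant L-1$) form a basis of $D_0$, so their images will form a real basis of $E_s$. The $\Z$-span of a real basis is a full lattice, which gives the second assertion.

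\textbf{Step 1: the kernel of $\pi_s$.} Let $v\in\R^L$ with $\pi_s(v)=0$, so $w_j\cdot v=0$ for every $\eta_j\in\Gamma$. Since $v$ is real and $\overline{w_j}$ is an eigenvector for $\overline{\eta_j}$, conjugation gives $\overline{w_j}\cdot v=0$ as well. Hence $v$ is annihilated by eigenvectors of $M$ for all $L-1$ contracting eigenvalues $\gamma_1,\ldots,\gamma_{L-1}$. These eigenvalues are pairwise distinct because $P_h$ is irreducible, hence separable. The corresponding eigenvectors are therefore linearly independent, and $\ker\pi_s$ is the line orthogonal, for the bilinear pairing, to their span. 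That span is the sum of the eigenspaces of $M$ other than the $\varphi$-eigenspace. Consequently $\ker\pi_s=\R u$, where $u$ is a Perron eigenvector of $M^{\mathsf T}$ for $\varphi$. Indeed, $u\cdot w_j=0$ whenever $\eta_j\neq\varphi$: from $M^{\mathsf T}u=\varphi u$ and $Mw_j=\eta_jw_j$ one gets $\varphi\,(u\cdot w_j)=\eta_j\,(u\cdot w_j)$.

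\textbf{Step 2: the kernel meets $D_0$ only at $0$.} Because $M$ has nonnegative entries and is primitive, Perron--Frobenius lets us choose $u$ with strictly positive coordinates. Its coordinate sum is then positive, so $u\notin D_0$, and hence $\ker\pi_s\cap D_0=\{0\}$.

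For a self-contained alternative to Perron--Frobenius, solve $M^{\mathsf T}u=\varphi u$ explicitly. With $u=(u_1,\ldots,u_L)$, the equations read $\varphi u_1=\sum_r h_ru_r$ and $\varphi u_{k}=u_{k-1}$ for $k\geqslant2$. Thus $u_k=\varphi^{1-k}u_1$, and all coordinates are positive once $u_1=1$.

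\textbf{Main obstacle.} The only delicate point is Step 1: justifying that the annihilator of the contracting eigenvectors is exactly one-dimensional. This uses Assumption~\ref{ass:pisot}, through the simplicity of the roots of $P_h$, and requires care with the pairing of conjugate eigenvectors in the complex coordinates of $E_s$. Once that is settled, the rest is linear algebra and positivity.
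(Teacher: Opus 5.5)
Your proposal is correct and follows the paper's proof. Both identify $\ker\pi_s$ with the expanding Perron eigenline of $M^{\mathsf T}$, observe that its positive generator misses $D_0$, and conclude by the dimension count. You also spell out the kernel identification (via the bilinear pairing and the conjugate eigenvectors) that the paper only asserts, and your explicit eigenvector $u=(1,\omega,\ldots,\omega^{L-1})$ is a valid self-contained substitute for the Perron--Frobenius citation.
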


\begin{proof}
Since $P_h$ is irreducible over $\Q$, it is separable in characteristic
zero. Hence $M^T$ has $L$ distinct eigenvalues and is diagonalizable over
$\C$. By the spectral decomposition used in
Section~\ref{sec:general-data} to define $\pi_s$, the coordinate
functionals of $\pi_s$ correspond to all contracting eigenvalues of
$M^T$. Therefore
$
\ker(\pi_s)
$
is the one-dimensional expanding eigenspace of $M^T$.

Since $M^T$ is primitive and non-negative, the Perron--Frobenius
theorem gives a generator of this eigenspace with strictly positive
coordinates. Such a vector cannot belong to
$$
D_0=\{x\in\R^L:x_1+\cdots+x_L=0\}
$$
unless it is zero. Therefore
$$
D_0\cap\ker(\pi_s)=\{0\},
$$
so the restriction
$$
\pi_s|_{D_0}:D_0\longrightarrow E_s
$$
is injective.

Both $D_0$ and $E_s$ have real dimension $L-1$, hence this restriction
is an isomorphism. Since
$$
e_1-e_L,\ldots,e_{L-1}-e_L
$$
is a real basis of $D_0$, their images form a real basis of $E_s$.
Their integer span is therefore a full lattice in $E_s$.
\end{proof}

Let
$$
\mathbb T_{\mathcal L}:=E_s/\mathcal L
$$
and denote by
$q_{\mathcal L}:E_s\to\mathbb T_{\mathcal L}$
the quotient map. By
Lemma~\ref{lem:rauzy-lattice-full-rank},
$\mathbb T_{\mathcal L}$ is a compact real torus of dimension $L-1$.

The algebraic properties needed for the Rauzy-fractal construction have
already been established in Section~\ref{sec:general-data}: under
Assumption~\ref{ass:pisot}, the substitution $\sigma$ is primitive,
unimodular, and irreducible Pisot. We only need to record one additional
coincidence property.

Recall that positive strong coincidence means that for every pair of
letters $A_i,A_j$, there exist an integer $k\geqslant 1$, a letter
$A_r$, and factorizations
$$
\sigma^k(A_i)=P A_r U,
\qquad
\sigma^k(A_j)=Q A_r V,
$$
such that
$$
\ell(P)=\ell(Q).
$$

\begin{lemma}\label{lem:rauzy-hypotheses}
Under Assumption~\ref{ass:pisot}, the substitution $\sigma$ satisfies
positive strong coincidence.
\end{lemma}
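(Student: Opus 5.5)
The plan is to verify positive strong coincidence directly with $k=1$, using only the shape of the substitution. No input from Assumption~\ref{ass:pisot} is needed; it appears in the statement only because it is the standing hypothesis for the Rauzy-fractal framework.

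The key observation is that every image $\sigma(A_i)$ begins with the letter $A_1$.
\begin{itemize}
\item For $1\leqslant i<L$, we have $\sigma(A_i)=A_1^{h_i}A_{i+1}$, and the standing assumption gives $h_i\geqslant1$. So the first letter is $A_1$.
\item For $i=L$, we have $\sigma(A_L)=A_1$ directly.
\end{itemize}
Thus for every $i$ there is a word $U_i$ with $\sigma(A_i)=A_1U_i$. Explicitly, $U_i=A_1^{h_i-1}A_{i+1}$ for $i<L$, and $U_L=\varepsilon$.

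Now fix an arbitrary pair of letters $A_i,A_j$. Take $k=1$, $A_r=A_1$, and $P=Q=\varepsilon$. This gives the factorizations
$$
\sigma(A_i)=\varepsilon\,A_1\,U_i,
\qquad
\sigma(A_j)=\varepsilon\,A_1\,U_j,
$$
and clearly $\ell(P)=\ell(Q)=0$. This is exactly the required condition, which proves the lemma.

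There is no real obstacle here. The only point to be careful about is that positivity of $h_1,\ldots,h_{L-1}$ is used, so that $A_1$ really is the first letter of every $\sigma(A_i)$ with $i<L$. It is also worth noting that the definition requires $k\geqslant1$, which the choice $k=1$ satisfies.
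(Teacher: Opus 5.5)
Your proof is correct and is essentially the paper's own argument: both observe that every $\sigma(A_i)$ begins with $A_1$ and take $k=1$, $A_r=A_1$, $P=Q=\varepsilon$, so $\ell(P)=\ell(Q)=0$. Your remarks that this uses positivity of $h_1,\ldots,h_{L-1}$ (so that $A_1$ really is the first letter of $\sigma(A_i)$ for $i<L$) and does not use Assumption~\ref{ass:pisot} are accurate.
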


\begin{proof}
Let $A_i$ and $A_j$ be two letters. By the definition of $\sigma$, both
$\sigma(A_i)$ and $\sigma(A_j)$ begin with $A_1$. We may therefore take
$k=1$, $A_r=A_1$, and choose both preceding prefixes to be the empty
word. Their letter-count vectors are both equal to $0$. Hence positive
strong coincidence holds already at the first iterate.
\end{proof}

Together with the algebraic properties recalled above, this coincidence
condition places $\sigma$ in the standard Rauzy-fractal framework.
However, the statement needed here is sharper: the central Rauzy window
must tile the stable space by the lattice $\mathcal L$ with multiplicity
one, and the possible ambiguities must be confined to null boundaries.
This is precisely the form that allows toral equidistribution to be
lifted back to the Rauzy window. The following theorem is proved in
Section~\ref{sec:rauzy-verification}; its proof does not depend on the
applications in Sections~\ref{sec:rauzy}--\ref{sec:prime-blocks}.
Here and below, $\lambda_{L-1}$ denotes $(L-1)$-dimensional Lebesgue
measure on $E_s$.

\begin{theorem}\label{thm:rauzy-fundamental-domain}\label{hyp:rauzy-fundamental-domain}
Under Assumption~\ref{ass:pisot}, the central Rauzy window $R$ is compact, has positive
$(L-1)$-dimensional Lebesgue measure, and satisfies
$$
\lambda_{L-1}(\partial R)=0.
$$
For every $1\leqslant i\leqslant L$, the piece $R_i$ has positive
$(L-1)$-dimensional Lebesgue measure and
$$
\lambda_{L-1}(\partial R_i)=0,
$$
and the interiors of $R_1,\ldots,R_L$ are pairwise disjoint. Moreover, the
translates of $R$ by $\mathcal L$ form a multiplicity-one tiling of $E_s$:
one has
$$
R+\mathcal L=E_s,
$$
and the interiors of $R+\ell$, $\ell\in\mathcal L$, are pairwise
disjoint. Equivalently, the quotient map
$$
q_{\mathcal L}:R\longrightarrow\mathbb T_{\mathcal L}
$$
is onto and is one-to-one on $R\setminus\partial R$.
\end{theorem}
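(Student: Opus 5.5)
The plan follows the route announced in the introduction: transfer Barge's pure-discreteness theorem to $\sigma$ by reversing the substitution words, then convert pure discreteness into the lattice-tiling statement with the Rauzy-fractal equivalence of \cite[Theorem~4.8]{BaratBertheLiardetThuswaldner2006}, and finally match the abstract objects with $R$, $R_i$, $E_s$ and $\mathcal L$ as defined here.

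\textbf{Pure discrete spectrum for $\sigma$.} Let $\revsig$ be the reversed substitution, $\revsig(A_i)=A_{i+1}A_1^{h_i}$ for $i<L$ and $\revsig(A_L)=A_1$.
\begin{itemize}
\item Its initial letters are $A_2,\ldots,A_L,A_1$, so it is injective on initial letters.
\item Every image ends with $A_1$ (since $h_i\geqslant1$), so it is constant on final letters.
\item It has the same incidence matrix as $\sigma$, hence is primitive, unimodular and irreducible Pisot under Assumption~\ref{ass:pisot}.
\end{itemize}
These are exactly the hypotheses of \cite[Theorem~3.13]{Barge2016}, which therefore gives pure discrete spectrum for the tiling flow of $\revsig$. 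Reversal identifies the language of $\revsig$ with the mirror of the language of $\sigma$, and the corresponding shifts are conjugate up to time reversal, which preserves spectral type. By the Clark--Sadun equivalence \cite[Theorem~3.1]{ClarkSadun2003} between symbolic and tiling spectra for irreducible Pisot substitutions, the symbolic system of $\sigma$ then has pure discrete spectrum.

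\textbf{Tiling conclusions.} By Lemma~\ref{lem:rauzy-hypotheses}, $\sigma$ satisfies strong coincidence. Under this condition the standard Rauzy-fractal theory for primitive unimodular irreducible Pisot substitutions gives the following unconditionally:
\begin{itemize}
\item the central fractal and its pieces are compact, equal the closure of their interiors, and have boundaries of measure zero;
\item the pieces are disjoint up to null sets, via the graph-directed set equation;
\item the translates by the lattice generated by $\pi_s(e_i-e_L)$ form a covering of constant multiplicity.
\end{itemize}
The equivalence in \cite[Theorem~4.8]{BaratBertheLiardetThuswaldner2006} then upgrades pure discreteness to multiplicity one, giving $R+\mathcal L=E_s$ with interiors of distinct translates pairwise disjoint. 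Compactness of $R$ also follows directly: writing $\pi_s(p_m)$ via Theorem~\ref{thm:fixed-word-bridge} as $\evalG(c_m)$, a finite digit sum with bounded digits and weights $\eta_j^{k-1}$ of modulus less than $1$, shows that $R$ is bounded. Positive measure of $R$ follows from covering $E_s$ by countably many translates, and positive measure of each $R_i$ follows from the set equation together with primitivity.

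\textbf{Matching conventions and the quotient map.} This step carries the real work. The literature defines the fractal using the contracting hyperplane of the column-incidence matrix with projection along the expanding eigenvector. Our $\pi_s$ is built from right eigenvectors $w_j$ of $M$, i.e. left eigenvectors of $M^{\mathsf T}$, which kill exactly the expanding eigenline, as shown in Lemma~\ref{lem:rauzy-lattice-full-rank}. Consequently $\pi_s$ differs from the standard projection by a linear isomorphism of $E_s$. That isomorphism carries the standard pieces to $R_i$ and the standard lattice to $\mathcal L$, and it preserves null sets, interiors and tilings. We must also check that the standard pieces use prefixes before an occurrence of $A_i$, as $R_i$ does, and not prefixes including it; if the convention differs, the two versions are related by the translation $\pi_s(e_i)$, which is harmless.

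Finally, the quotient statement follows. Surjectivity of $q_{\mathcal L}|_R$ is exactly $R+\mathcal L=E_s$. For injectivity on $R\setminus\partial R$, suppose $x,x+\ell$ both lie in the interior of $R$ with $\ell\neq0$. A small ball around $x$ would then lie in both $R$ and $R-\ell$, contradicting disjointness of the interiors of the translates.
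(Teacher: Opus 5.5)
Your proposal is correct and follows essentially the same route as the paper: Barge's theorem applied to the reversed substitution $\revsig$, transfer back to $\sigma$ by word reversal and the symbolic--tiling equivalence, then strong coincidence together with \cite[Theorem~4.8]{BaratBertheLiardetThuswaldner2006}, and finally a linear change of coordinates matching the standard Rauzy objects with $R$, $R_i$, and $\mathcal L$. The only differences are minor. You obtain compactness directly from bounded digits and $\lvert\eta_j\rvert<1$, and you cite the null-boundary property of $R$ and $R_i$ from the literature. The paper instead derives $\lambda_{L-1}(\partial R)=0$ from the multiplicity-one tiling by a local-finiteness argument, and obtains $\lambda_{L-1}(\partial R_i)=0$ from that together with the measure-disjointness of the letter subtiles.
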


The proof is given in Proposition~\ref{prop:automatic-rauzy-hypothesis}
of Section~\ref{sec:rauzy-verification}. In the arguments through
Section~\ref{sec:prime-blocks}, we use the conclusions of
Theorem~\ref{thm:rauzy-fundamental-domain} without invoking its proof.

For the recurring example $g=(1,3,2)$,
Figure~\ref{fig:rauzy-lattice-translates-132} illustrates the
multiplicity-one tiling in
Theorem~\ref{thm:rauzy-fundamental-domain}.

\begin{figure}[!ht]
\centering
\includegraphics[width=0.8\textwidth]{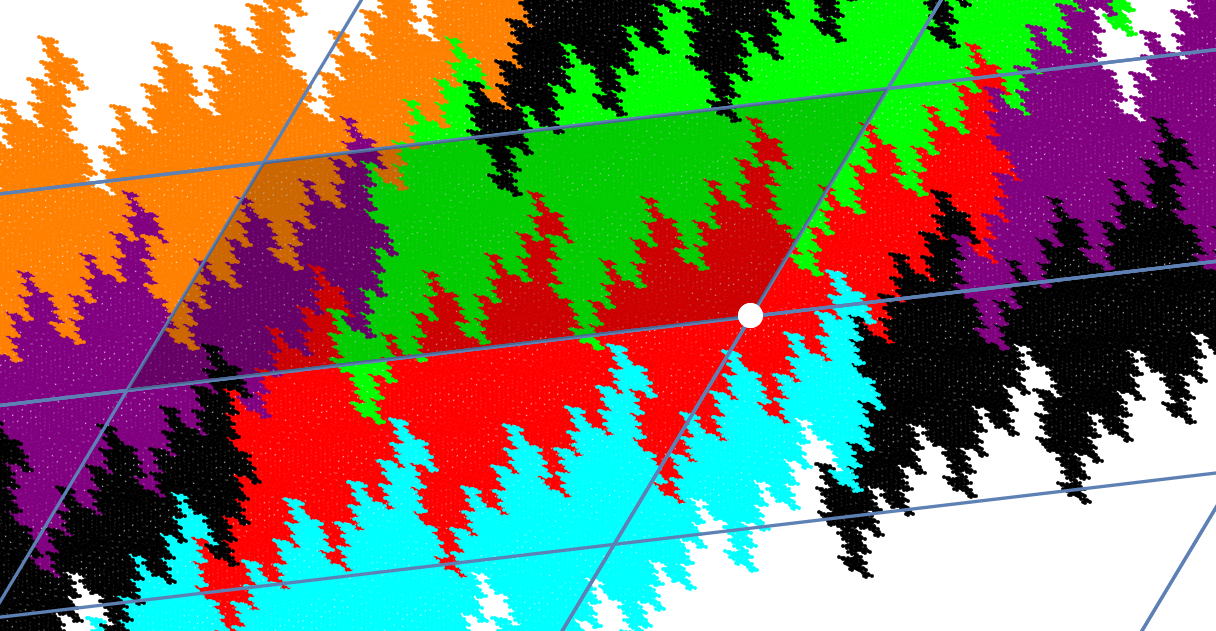}
\caption{Lattice translates of the central Rauzy window $R$ and a
fundamental parallelogram for $g=(1,3,2)$.}
\label{fig:rauzy-lattice-translates-132}
\end{figure}

For this example, $E_s=\C$ and
\[
 R=R_1\cup R_2\cup R_3,
\]
where $R_1,R_2,R_3$ are the red, green, and cyan base pieces,
respectively, in Figure~\ref{fig:intro-expanded-rauzy-132}.
Thus the central Rauzy window $R$ is exactly the union of these three
pieces. In Figure~\ref{fig:rauzy-lattice-translates-132}, this same
union is shown entirely in red, since the colors there have a different
meaning.

The lattice $\mathcal L$ is generated by
\[
 \pi_s(e_1-e_3)=\eta_1-1,
 \qquad
 \pi_s(e_2-e_3)=\eta_1^2-2\eta_1-1.
\]
The corresponding fundamental parallelogram $D$ is shown slightly
darker, and the white point marks the origin. If the central red copy
of $R$ is cut according to the lattice cells $D+\ell$,
$\ell\in\mathcal L$, then each resulting piece can be translated by a
lattice vector back into $D$. In this way, the translated pieces fill
$D$ up to their boundaries. In particular, the green region in $D$ is
obtained from the large red part of $R$ lying below $D$, the purple
region from the red part lying to the right of $D$, and the orange
region from the small red part lying on the other side of the origin
relative to $D$. This illustrates geometrically the identifications
induced by the quotient map
\[
 E_s\to E_s/\mathcal L.
\]

Theorem~\ref{thm:rauzy-fundamental-domain} provides the only
geometric input in the arguments that follow. Its role is precise: it identifies normalized Lebesgue
measure on $R$ with Haar measure on the torus $E_s/\mathcal L$, while
the null-boundary and disjointness properties make the correspondence
between the torus, the central Rauzy window, and its letter-indexed
pieces unambiguous almost everywhere. We next derive the consequences
needed for the base terminal classes before turning to the proof of
Theorem~\ref{thm:rauzy-fundamental-domain} in
Section~\ref{sec:rauzy-verification}.

\subsection{The prefix orbit on the Rauzy torus}

The lattice $\mathcal L$ was chosen so that each prefix-count vector is
congruent, after stable projection, to a vector depending only on the
length of the prefix. This turns the prefix orbit $\pi_s(p_m)$ into the
linear toral orbit $ms_0$, where $s_0=(1,\ldots,1)\in E_s$.

\begin{lemma}\label{lem:prefix-toral-congruence}
For every $m\geqslant 0$ one has
$$
\pi_s(p_m)\equiv ms_0\pmod{\mathcal L}.
$$
\end{lemma}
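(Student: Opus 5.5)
The plan is a direct computation: $p_m$ has coordinate sum $m$, so $p_m-me_L$ lies in $D_0\cap\Z^L$, and its stable projection lies in $\mathcal L$.

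First, the length count. Since $p_m=\ell(u_0\cdots u_{m-1})$ counts the letters of a prefix of length $m$, we have
\[
\sum_{i=1}^L (p_m)_i = m .
\]

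Second, rewrite $p_m$ in the basis $e_i-e_L$. Writing $p_m=\sum_{i=1}^L (p_m)_i e_i$ and replacing each $e_i$ by $(e_i-e_L)+e_L$ gives
\[
p_m=\sum_{i=1}^{L-1}(p_m)_i\,(e_i-e_L)+\Bigl(\sum_{i=1}^{L}(p_m)_i\Bigr)e_L
=\sum_{i=1}^{L-1}(p_m)_i\,(e_i-e_L)+m\,e_L .
\]
The coefficients $(p_m)_i$ are non-negative integers.

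Third, project. Because $\pi_s$ is real-linear,
\[
\pi_s(p_m)=\sum_{i=1}^{L-1}(p_m)_i\,\pi_s(e_i-e_L)+m\,\pi_s(e_L).
\]
The first sum is an integer combination of the generators of $\mathcal L$ from Lemma~\ref{lem:rauzy-lattice-full-rank}, so it lies in $\mathcal L$. The second term equals $ms_0$ by the normalization $s_0=\pi_s(e_L)=(1,\ldots,1)$ fixed in Section~\ref{sec:general-data}. Hence $\pi_s(p_m)\equiv ms_0\pmod{\mathcal L}$. The case $m=0$ is trivial, since $p_0=0$.

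No step is a real obstacle. The only point to check is that $s_0=\pi_s(e_L)$, which holds because each eigenvector $w_j$ has last coordinate $1$, so $w_j\cdot e_L=1$ for every $j$.
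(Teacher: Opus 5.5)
Your proof is correct and follows the paper's argument essentially verbatim: the coordinate sum of $p_m$ is $m$, so $p_m-me_L=\sum_{i=1}^{L-1}(p_m)_i(e_i-e_L)$ projects into $\mathcal L$, and $\pi_s(e_L)=s_0$. Your check that $w_j\cdot e_L=1$ follows from the normalization of the eigenvectors is also the justification the paper relies on.
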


\begin{proof}
Write
$$
p_m=v_1(m)e_1+\cdots+v_L(m)e_L,
$$
where $v_i(m)$ is the number of occurrences of $A_i$ in
$u_0\cdots u_{m-1}$. Since
$$
v_1(m)+\cdots+v_L(m)=m,
$$
we have
$$
p_m-me_L=\sum_{i=1}^{L-1}v_i(m)(e_i-e_L).
$$
Applying $\pi_s$ gives
$$
\pi_s(p_m)-m\pi_s(e_L)\in\mathcal L.
$$
Since $\pi_s(e_L)=s_0$, this proves the congruence.
\end{proof}

Thus, after passing to the quotient by $\mathcal L$, the projected
prefix orbit is exactly the cyclic orbit generated by $s_0$. We first
establish the equidistribution of this toral orbit.

\begin{proposition}\label{prop:torus-equidistribution}
The sequence $(ms_0)_{m\geqslant 0}$ is equidistributed in
$\mathbb T_{\mathcal L}$ with respect to Haar measure.
\end{proposition}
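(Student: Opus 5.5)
By Weyl's criterion, it suffices to show that $\chi(s_0)\neq 1$ for every nontrivial character $\chi$ of $\mathbb T_{\mathcal L}$. For then $\frac1N\sum_{m<N}\chi(ms_0)$ is a geometric sum bounded by $2/(N\lvert 1-\chi(s_0)\rvert)\to0$. Equivalently, the plan is to show that no nonzero integer multiple of $s_0$ lies in $\mathcal L$, and more strongly that the closed subgroup generated by $s_0$ is the whole torus.

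We work through the isomorphism $\pi_s|_{D_0}:D_0\to E_s$ from Lemma~\ref{lem:rauzy-lattice-full-rank}. It carries the lattice $\Lambda:=D_0\cap\Z^L$, spanned by the $e_i-e_L$, onto $\mathcal L$. Hence $\mathbb T_{\mathcal L}\cong D_0/\Lambda$. Let $v\in D_0$ be the unique vector with $\pi_s(v)=s_0=\pi_s(e_L)$. Then $e_L-v\in\ker\pi_s$, which is the Perron--Frobenius line $\R u$ of $M^{\mathsf T}$, with $u$ positive. Thus $v=e_L-tu$, and the condition $\sum v_i=0$ forces $t=1/\sum_i u_i$. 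After the change of coordinates $D_0\cong\R^{L-1}$ given by $(x_1,\dots,x_L)\mapsto(x_1,\dots,x_{L-1})$, the lattice $\Lambda$ becomes $\Z^{L-1}$. The rotation vector is then $-\bigl(u_1,\dots,u_{L-1}\bigr)/\sum_i u_i$, that is, minus the vector of letter frequencies $f_1,\dots,f_{L-1}$ of $A_\infty$.

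Weyl's criterion therefore reduces to showing that $1,f_1,\dots,f_{L-1}$ are linearly independent over $\Q$. Normalize $u$ so that $u\in\Q(\varphi)^L$; this is possible since $u$ solves $(M^{\mathsf T}-\varphi)u=0$. For instance, taking last coordinate $1$ and solving the companion structure explicitly gives coordinates that are polynomials in $\varphi^{-1}$ of strictly increasing degree. Concretely, from $M^{\mathsf T}u=\varphi u$ one gets $u_{i+1}=\varphi^{-1}u_i$ after a suitable normalization, up to the exact shape of the companion matrix. So $u_i=\varphi^{-(i-1)}$ up to a common factor, and hence $f_i=\omega^{i-1}/\sum_k\omega^{k-1}$.

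Suppose $c_0+\sum_{i<L}c_if_i=0$ with rational $c_i$. Multiplying by $\sum_k\omega^{k-1}$ gives the polynomial relation $c_0\sum_{k=1}^{L}\omega^{k-1}+\sum_{i=1}^{L-1}c_i\omega^{i-1}=0$ in $\omega$, of degree at most $L-1$. Under Assumption~\ref{ass:pisot}, $\omega=\varphi^{-1}$ has degree $L$. Hence every coefficient of this polynomial vanishes. The coefficient of $\omega^{L-1}$ is $c_0$, so $c_0=0$, and then all $c_i=0$.

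The main obstacle is getting the eigenvector coordinates of $M^{\mathsf T}$ exactly right. The key point is to confirm that they span $1,\omega,\dots,\omega^{L-1}$, or at least a $\Q$-basis of $\Q(\varphi)$. If the explicit form turns out messier than $\omega^{i-1}$, I would instead argue abstractly. The coordinates $u_1,\dots,u_L$ are $\Q$-linearly independent: a rational relation $r^{\mathsf T}u=0$ would, by applying Galois conjugation, give $r^{\mathsf T}u^{(\gamma)}=0$ for every eigenvector. Since these eigenvectors form a basis of $\C^L$, this forces $r=0$. Irreducibility of $P_h$ enters exactly here. Independence of $1,f_1,\dots,f_{L-1}$ then follows from that of $\sum_k u_k,u_1,\dots,u_{L-1}$, which is an invertible rational change of basis of $u_1,\dots,u_L$.
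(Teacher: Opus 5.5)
Your argument is correct, and it takes a genuinely different route from the paper.

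\textbf{The paper's route.} The paper obtains density of the orbit geometrically. By Lemma~\ref{lem:prefix-toral-congruence}, $q_{\mathcal L}(R)$ lies in the closure of $\{q_{\mathcal L}(ms_0)\}$. The covering part of Theorem~\ref{thm:rauzy-fundamental-domain} then gives $q_{\mathcal L}(R)=\mathbb T_{\mathcal L}$. Finally, density of a cyclic orbit on a compact torus is upgraded to equidistribution via Weyl.

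\textbf{Your route.} You transport the problem through $\pi_s|_{D_0}$ to $D_0/(D_0\cap\Z^L)\cong\R^{L-1}/\Z^{L-1}$. You identify the rotation vector as $-(f_1,\ldots,f_{L-1})$, minus the letter frequencies, and prove rational independence of $1,f_1,\ldots,f_{L-1}$ directly from the fact that $\varphi$ has degree $L$. Your eigenvector computation is exact, not merely ``up to the shape of the companion matrix.'' Since $M_{i,i+1}=1$, the $(i+1)$st row of $M^{\mathsf T}u=\varphi u$ reads $u_i=\varphi u_{i+1}$. This gives $u_i=\omega^{i-1}u_1$ and $f_i=\omega^{i-1}/\sum_k\omega^{k-1}$, in agreement with the frequency formula the paper records in Section~\ref{sec:prime-blocks}. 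Your fallback Galois-conjugation argument is also valid, but it is not needed.

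\textbf{What each buys.} The paper's proof is one line once the tiling theorem is in hand. However, it makes this arithmetic statement logically depend on Barge's theorem and the Rauzy-fractal equivalence, even though only the covering $R+\mathcal L=E_s$ is used. Your proof is elementary and uses only irreducibility of $P_h$ together with Lemma~\ref{lem:rauzy-lattice-full-rank}. It also makes the toral rotation explicit, and it directly supplies the irrationality of $\chi(a)$ needed in Lemma~\ref{lem:prime-rauzy-torus}. In your version, the Rauzy geometry enters only afterwards, in Proposition~\ref{prop:prefix-rauzy-equidistribution}, to lift toral equidistribution back to $R$.
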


\begin{proof}
By Lemma~\ref{lem:prefix-toral-congruence},
$$
q_{\mathcal L}\bigl(\pi_s(p_m)\bigr)
=
q_{\mathcal L}(ms_0)
$$
for every $m\geqslant 0$. Since
$$
R=
\overline{\{\pi_s(p_m):m\geqslant 0\}}
$$
and $q_{\mathcal L}$ is continuous, we have
$$
q_{\mathcal L}(R)
\subset
\overline{
\{q_{\mathcal L}(ms_0):m\geqslant 0\}
}.
$$
By Theorem~\ref{thm:rauzy-fundamental-domain},
$q_{\mathcal L}(R)=\mathbb T_{\mathcal L}$. Hence the cyclic orbit
$$
\{q_{\mathcal L}(ms_0):m\geqslant 0\}
$$
is dense in $\mathbb T_{\mathcal L}$. For a translation on a compact torus, density of the cyclic orbit is
equivalent, by Weyl's criterion, to equidistribution with respect to
Haar measure \cite[Chapter~1]{KuipersNiederreiter1974}.
\end{proof}

For $g=(1,3,2)$, Figure~\ref{fig:prefix-orbit-quotient-132}
illustrates the prefix orbit before and after quotienting by $\mathcal L$.

\begin{figure}[!htbp]
\centering
\includegraphics[width=0.96\textwidth]{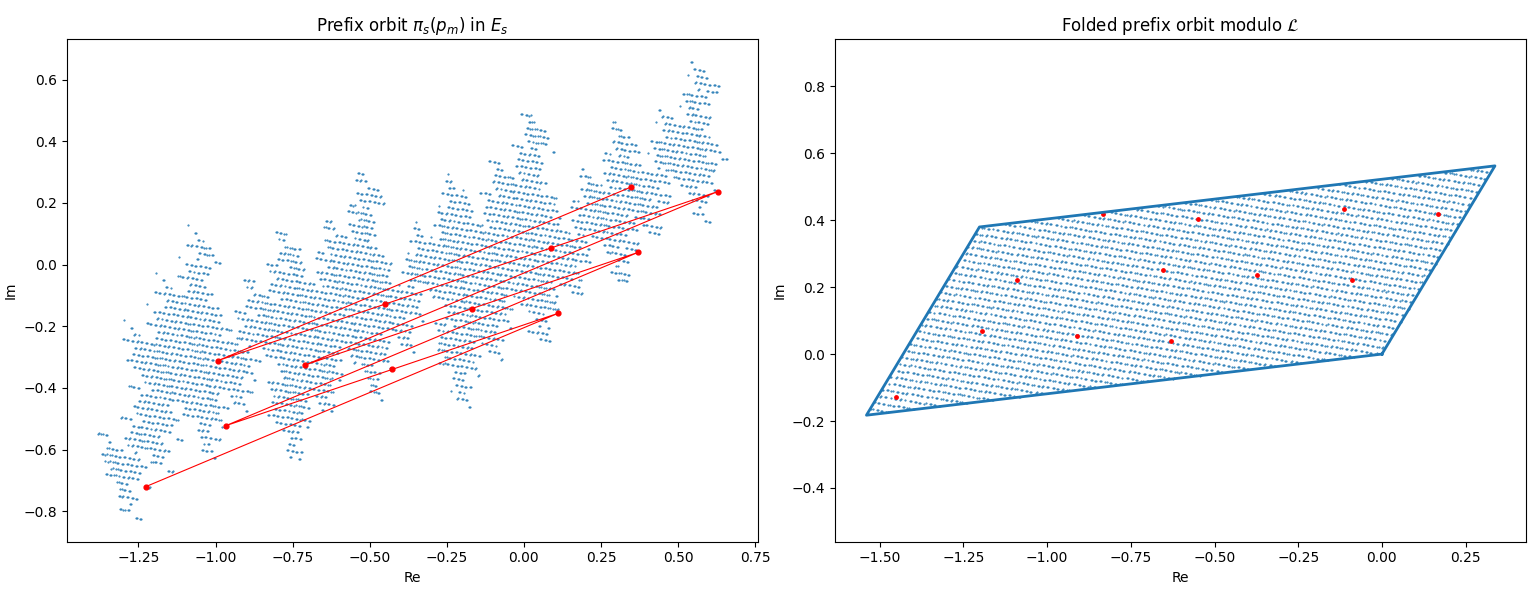}
\caption{The prefix orbit before and after quotienting by $\mathcal L$
for $g=(1,3,2)$.}
\label{fig:prefix-orbit-quotient-132}
\end{figure}
\FloatBarrier

The left-hand panel shows the projected prefix points $\pi_s(p_m)$ in
the contracting space $E_s$, with a short run of consecutive indices
marked in red. The right-hand panel shows the same points folded into
a fundamental parallelogram of $\mathcal L$. By
Lemma~\ref{lem:prefix-toral-congruence}, the linear orbit
$(ms_0\bmod\mathcal L)_{m\geqslant0}$ gives exactly the same points on
the quotient as the right-hand panel.

Next, we transfer this equidistribution statement from the torus back to
the central Rauzy window. The multiplicity-one property in
Theorem~\ref{thm:rauzy-fundamental-domain} provides an
almost-everywhere inverse to the quotient map on $R$, while the
null-boundary condition controls the exceptional points.

\begin{proposition}\label{prop:prefix-rauzy-equidistribution}
The sequence $(\pi_s(p_m))_{m\geqslant 0}$ is equidistributed in $R$
with respect to normalized Lebesgue measure on $R$.
\end{proposition}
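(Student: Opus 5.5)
We transfer Proposition~\ref{prop:torus-equidistribution} from the torus back to $R$, using the almost-everywhere inverse of $q_{\mathcal L}$ on $R$ supplied by Theorem~\ref{thm:rauzy-fundamental-domain}. The target statement is that, for every continuous $f:E_s\to\C$,
$$
\frac1N\sum_{m<N}f(\pi_s(p_m))\longrightarrow\frac{1}{\lambda_{L-1}(R)}\int_R f\,d\lambda_{L-1}.
$$
By the standard approximation argument, it suffices to prove the corresponding statement for indicator functions of Borel sets $B\subset R$ with $\lambda_{L-1}(\partial B)=0$. These are the $R$-continuity sets. Since every $\pi_s(p_m)$ lies in $R$, a sandwich argument with continuous functions then gives the full conclusion.

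The central observation is that, by Theorem~\ref{thm:rauzy-fundamental-domain}, the restriction $q_{\mathcal L}|_R$ is surjective and injective off $\partial R$. Moreover, $\lambda_{L-1}(\partial R)=0$, and $\lambda_{L-1}(R)$ equals the covolume of $\mathcal L$. Hence $q_{\mathcal L}$ pushes normalized Lebesgue measure on $R$ forward to Haar measure on $\mathbb T_{\mathcal L}$.

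Fix $B\subset R$ with null boundary. I would compare the event $\pi_s(p_m)\in B$ with the event $q_{\mathcal L}(ms_0)\in q_{\mathcal L}(B)$. By Lemma~\ref{lem:prefix-toral-congruence}, the latter is $q_{\mathcal L}(\pi_s(p_m))\in q_{\mathcal L}(B)$.

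The inclusion $\pi_s(p_m)\in B\Rightarrow q_{\mathcal L}(\pi_s(p_m))\in q_{\mathcal L}(B)$ is trivial. The converse can fail only when $\pi_s(p_m)$ and some point of $B$ are distinct but congruent modulo $\mathcal L$. By multiplicity one, this forces $q_{\mathcal L}(\pi_s(p_m))\in q_{\mathcal L}(\partial R)$.

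The two counts therefore differ by at most $\#\{m<N: q_{\mathcal L}(ms_0)\in q_{\mathcal L}(\partial R)\}$. I will bound this error set by a closed $\varepsilon$-neighborhood $K_\varepsilon$ of $q_{\mathcal L}(\partial R)$. The set $q_{\mathcal L}(\partial R)$ is compact and Haar-null, so $K_\varepsilon$ has small Haar measure. Applying Proposition~\ref{prop:torus-equidistribution} to $K_\varepsilon$, which is closed, via the upper-semicontinuity form of equidistribution, the upper density of the error set is at most $\mathrm{Haar}(K_\varepsilon)$. This tends to $0$ as $\varepsilon\to0$.

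Proposition~\ref{prop:torus-equidistribution} also applies to $q_{\mathcal L}(B)$. For this we need $q_{\mathcal L}(B)$ to be a continuity set, which I would check by noting that $\partial q_{\mathcal L}(B)\subset q_{\mathcal L}(\partial B)\cup q_{\mathcal L}(\partial R)$. This holds because $q_{\mathcal L}$ is a local homeomorphism on $\operatorname{int}R$, and the right-hand side is null since $q_{\mathcal L}$ is locally measure-preserving. Equidistribution then gives density $\mathrm{Haar}(q_{\mathcal L}(B))=\lambda_{L-1}(B)/\lambda_{L-1}(R)$, using injectivity off $\partial R$.

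The main obstacle is the bookkeeping at the boundary. One must make sure the orbit does not charge $q_{\mathcal L}(\partial R)$, and the neighborhood argument above handles this without knowing whether orbit points actually land there. One must also check the identity $\lambda_{L-1}(R)=\covol(\mathcal L)$, which follows from the tiling statement by a standard counting argument on large balls. Everything else is formal.
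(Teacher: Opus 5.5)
Your proof is correct and follows the same route as the paper. Both arguments combine the congruence $\pi_s(p_m)\equiv ms_0 \pmod{\mathcal L}$, the almost-everywhere inverse of $q_{\mathcal L}|_R$ from Theorem~\ref{thm:rauzy-fundamental-domain}, the density-zero exceptional set $q_{\mathcal L}(\partial R)$, and the identification of normalized Lebesgue measure on $R$ with Haar measure. The difference is only one of formulation: you push continuity sets $B\mapsto q_{\mathcal L}(B)$ forward to the torus, while the paper pulls continuous test functions back to Riemann-integrable functions $\widetilde f$ there. Your $\varepsilon$-neighbourhood step is also unnecessary, since the compact null set $q_{\mathcal L}(\partial R)$ is itself a continuity set and can be tested directly.
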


\begin{proof}
Let $B:=q_{\mathcal L}(\partial R)\subset\mathbb T_{\mathcal L}$.
Since $\partial R$ is compact and has Lebesgue measure zero, and since
$q_{\mathcal L}$ is a local isometry, the set $B$ is compact and has
Haar measure zero. By Proposition~\ref{prop:torus-equidistribution},
$$
\frac{1}{M}
\#\left\{
0\leqslant m<M:
q_{\mathcal L}(ms_0)\in B
\right\}
\longrightarrow 0.
$$

Let $f$ be a continuous function on $R$. For $t\in\mathbb T_{\mathcal L}\setminus B$,
Theorem~\ref{thm:rauzy-fundamental-domain} gives a unique
point $x_t\in R\setminus\partial R$ such that
$q_{\mathcal L}(x_t)=t$. Define
$$
\widetilde f(t):=f(x_t)
\qquad (t\notin B),
$$
and set $\widetilde f(t):=0$ for $t\in B$.

The map $t\mapsto x_t$ is continuous on
$\mathbb T_{\mathcal L}\setminus B$. Indeed, locally around
$x_t\in\operatorname{int}(R)$, the quotient map $q_{\mathcal L}$ is a
homeomorphism onto its image, and uniqueness of the representative in
$R\setminus\partial R$ shows that $t\mapsto x_t$ is its local inverse.
Hence $\widetilde f$ is continuous on
$\mathbb T_{\mathcal L}\setminus B$. Since $B$ has Haar measure zero,
$\widetilde f$ is Riemann integrable on
$\mathbb T_{\mathcal L}$. Therefore
Proposition~\ref{prop:torus-equidistribution} gives
$$
\frac{1}{M}
\sum_{m<M}
\widetilde f\bigl(q_{\mathcal L}(ms_0)\bigr)
\longrightarrow
\int_{\mathbb T_{\mathcal L}}
\widetilde f\,d\nu_{\mathcal L},
$$
where $\nu_{\mathcal L}$ denotes normalized Haar measure.

Now suppose that $\pi_s(p_m)\notin\partial R$. By
Lemma~\ref{lem:prefix-toral-congruence},
$$
q_{\mathcal L}\bigl(\pi_s(p_m)\bigr)
=
q_{\mathcal L}(ms_0),
$$
and the representative of this torus point in
$R\setminus\partial R$ is unique. Hence
$$
\widetilde f\bigl(q_{\mathcal L}(ms_0)\bigr)
=
f\bigl(\pi_s(p_m)\bigr).
$$
If instead $\pi_s(p_m)\in\partial R$, then
$q_{\mathcal L}(ms_0)\in B$ by the same congruence. Thus the exceptional
indices have asymptotic density zero. Since both $f$ and
$\widetilde f$ are bounded, it follows that
$$
\frac{1}{M}\sum_{m<M}f\bigl(\pi_s(p_m)\bigr)
-
\frac{1}{M}\sum_{m<M}
\widetilde f\bigl(q_{\mathcal L}(ms_0)\bigr)
\longrightarrow 0.
$$

Finally, because $R$ is a multiplicity-one fundamental domain for
$\mathcal L$, the quotient map sends normalized Lebesgue measure on
$R$ to normalized Haar measure on $\mathbb T_{\mathcal L}$. Therefore
$$
\int_{\mathbb T_{\mathcal L}}
\widetilde f\,d\nu_{\mathcal L}
=
\frac{1}{\operatorname{vol}(R)}
\int_R f(x)\,dx.
$$
Combining the preceding limits gives
$$
\frac{1}{M}
\sum_{m<M}
f\bigl(\pi_s(p_m)\bigr)
\longrightarrow
\frac{1}{\operatorname{vol}(R)}
\int_R f(x)\,dx,
$$
which is the asserted equidistribution.
\end{proof}

We use this equidistribution result to recover the distribution of the base terminal classes.

\subsection{Equidistribution of the base terminal classes}

We have so far obtained equidistribution for all prefix points in the
central Rauzy set. The symbolic bridge from Section~\ref{sec:coding}
now turns this dynamical statement into an arithmetic one: the prefix
before $u_m$ corresponds to the integer $c_m$, and the next letter
selects its base terminal class.

The index $m$ always refers to the prefix ending just before the letter
$u_m$: the vector $p_m$ counts $u_0\cdots u_{m-1}$, while the next letter
$u_m$ determines the Rauzy piece. The sequence $c_m$ and the
prefix-evaluation bridge are given by
Theorem~\ref{thm:fixed-word-bridge}.

\begin{proposition}\label{prop:base-class-equidistribution}
The sequence $(\evalG(c_m))_{m\geqslant 0}$ is equidistributed in $R$
with respect to normalized Lebesgue measure. Moreover, for every
$1\leqslant i\leqslant L$, the subsequence
$
\bigl(\evalG(c_m)\bigr)_{m:\,c_m\in N_i}
$
is equidistributed in $R_i$ with respect to the probability measure
$
\operatorname{vol}(R_i)^{-1}\lambda_{L-1}|_{R_i},
$
and
$$
\lim_{M\to\infty}
\frac{1}{M}
\#\{0\leqslant m<M:c_m\in N_i\}
=
\frac{\operatorname{vol}(R_i)}{\operatorname{vol}(R)}.
$$
\end{proposition}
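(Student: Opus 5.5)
The plan is to transport everything through the bridge of Theorem~\ref{thm:fixed-word-bridge}. That theorem gives $\evalG(c_m)=\pi_s(p_m)$ for every $m\geqslant0$. It also gives $c_m\in N_i$ exactly when $u_m=A_i$. The first assertion is therefore literally Proposition~\ref{prop:prefix-rauzy-equidistribution}. For the remaining assertions, replace the condition $c_m\in N_i$ by $u_m=A_i$ throughout. It then suffices to show two things: the prefix points $\pi_s(p_m)$ with $u_m=A_i$ have asymptotic density $\operatorname{vol}(R_i)/\operatorname{vol}(R)$ among all $m$, and they are equidistributed in $R_i$.

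The key step is to show that, outside a density-zero set of indices, the next letter is determined by the position of the prefix point: $u_m=A_i$ if and only if $\pi_s(p_m)\in\operatorname{int}(R_i)$.

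\begin{itemize}
\item \emph{Forward direction.} If $u_m=A_i$, then $\pi_s(p_m)\in R_i$ by the definition of $R_i$.
\item \emph{Converse.} Suppose $\pi_s(p_m)\in\operatorname{int}(R_i)$ but $u_m=A_k$ with $k\neq i$. Then $\pi_s(p_m)\in R_k\cap\operatorname{int}(R_i)$. By Theorem~\ref{thm:rauzy-fundamental-domain} the interiors of the pieces are disjoint, and the pieces are the closures of their interiors up to their null boundaries. Hence $R_k\cap\operatorname{int}(R_i)\subset\partial R_k$.
\end{itemize}

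The exceptional indices are therefore those $m$ with $\pi_s(p_m)\in\bigcup_k\partial R_k$. This union is a compact null set. Equidistribution of the prefix points (Proposition~\ref{prop:prefix-rauzy-equidistribution}) says that their empirical measures converge weakly to normalized Lebesgue measure on $R$. By the Portmanteau upper bound on closed sets, the frequency of visits to a closed null set tends to $0$. So the exceptional indices have density zero.

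With this in hand, two limit statements follow.

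\begin{itemize}
\item \emph{Density.} Apply equidistribution to the indicator of $R_i$. This set has null boundary, so its indicator is Riemann integrable. We get
$$
\frac1M\#\{m<M:u_m=A_i\}\longrightarrow\frac{\lambda_{L-1}(R_i)}{\lambda_{L-1}(R)},
$$
up to the density-zero error above. Since $\lambda_{L-1}(R_i)>0$, the limit is positive.
\item \emph{Equidistribution in $R_i$.} Take a continuous $f$ on $R_i$ and extend it by Tietze to a continuous function on $R$. Then $f\cdot\mathbf 1_{R_i}$ is Riemann integrable on $R$. Apply Proposition~\ref{prop:prefix-rauzy-equidistribution} to it, using the same exceptional-set argument, and divide by the density just computed. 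This yields
$$
\frac{1}{\operatorname{vol}(R_i)}\int_{R_i}f\,d\lambda_{L-1}.
$$
Here the counting along the subsequence is normalized by its own length, which is asymptotically the density times $M$.
\end{itemize}

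The main obstacle is justifying the step ``$u_m=A_k$ and $\pi_s(p_m)\in\operatorname{int}(R_i)$ forces a boundary point''. This needs each $R_k$ to equal the closure of its interior, or at least $R_k\setminus\overline{\operatorname{int}R_k}$ to be null. Theorem~\ref{thm:rauzy-fundamental-domain} gives only positive measure, null boundary and disjoint interiors. I would circumvent this by noting that $R_k\cap\operatorname{int}(R_i)$ is contained in $R_k\setminus\operatorname{int}(R_k)=\partial R_k$ whenever it meets no point of $\operatorname{int}(R_k)$. Moreover, $\operatorname{int}(R_k)\cap\operatorname{int}(R_i)=\emptyset$, so $R_k\cap\operatorname{int}(R_i)\cap\operatorname{int}(R_k)=\emptyset$. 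Hence the inclusion $R_k\cap\operatorname{int}(R_i)\subset\partial R_k$ holds directly, and the argument closes without that extra regularity.
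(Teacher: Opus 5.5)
Your proposal is correct and follows essentially the same route as the paper. You transport through $\evalG(c_m)=\pi_s(p_m)$ and $c_m\in N_i\Leftrightarrow u_m=A_i$, discard the density-zero set of indices with $\pi_s(p_m)\in\partial R_1\cup\cdots\cup\partial R_L$, and apply the restriction principle to the continuity set $R_i$. Your closing observation that $R_k\cap\operatorname{int}(R_i)\subset\partial R_k$ follows from disjointness of interiors alone is exactly the paper's remark that every point of $R$ off the union of the boundaries lies in exactly one $R_i$, so no regularity beyond Theorem~\ref{thm:rauzy-fundamental-domain} is needed.
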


\begin{proof}
The first assertion follows from
Proposition~\ref{prop:prefix-rauzy-equidistribution} and the identity
$$
 \evalG(c_m)=\pi_s(p_m)
$$
from Theorem~\ref{thm:fixed-word-bridge}.

Fix $i$ and put
$$
 B_*:=\partial R_1\cup\cdots\cup\partial R_L.
$$
By Theorem~\ref{thm:rauzy-fundamental-domain}, the set $B_*$
has Lebesgue measure zero. Since the pieces cover $R$ and have pairwise
disjoint interiors, every point of $R\setminus B_*$ belongs to exactly
one $R_i$. On the other hand, Theorem~\ref{thm:fixed-word-bridge}
gives
$$
 c_m\in N_i\quad\Longleftrightarrow\quad u_m=A_i,
$$
and the definition of $R_i$ implies
$$
 u_m=A_i\quad\Longrightarrow\quad \pi_s(p_m)\in R_i.
$$
Consequently, whenever $\pi_s(p_m)\notin B_*$, one has
$$
 c_m\in N_i\quad\Longleftrightarrow\quad \pi_s(p_m)\in R_i.
$$
By Proposition~\ref{prop:prefix-rauzy-equidistribution} and
$\lambda_{L-1}(B_*)=0$, the indices for which
$\pi_s(p_m)\in B_*$ have asymptotic density zero. Thus the index sets defined by $c_m\in N_i$ and by
$\pi_s(p_m)\in R_i$ differ only by a set of asymptotic density zero.
Since $R_i$ is a continuity set for normalized Lebesgue measure on $R$
and $\operatorname{vol}(R_i)>0$, the standard restriction principle
for equidistributed sequences gives equidistribution in $R_i$ with
respect to normalized Lebesgue measure, together with
$$
\lim_{M\to\infty}\frac{1}{M}
\#\left\{0\leqslant m<M:c_m\in N_i\right\}
=
\frac{\vol(R_i)}{\vol(R)}.
$$
\end{proof}

The output of this section concerns only the base classes $N_i$, but it
contains the essential geometric information: their evaluations fill
the Rauzy pieces $R_i$ with the correct normalized measures. The next
section uses the translation identity $N_{i,j}=N_i+j$ and the
self-affine structure of the block language to reconstruct every
terminal class and to assemble a single full window for all integers.

%% file: sections/05-full-terminal-partition.tex
\section{The full terminal partition}\label{sec:recombination}

Section~\ref{sec:rauzy} produced geometric windows only for the base
blocks $b_{i,0}$. The Prime Number Theorem for Terminal Blocks
(Theorem~\ref{thm:intro-several-blocks}), however, must distinguish every
terminal block $b_{i,j}$ and, in its refined form, every fixed finite
word of consecutive terminal blocks. The purpose of this section is to
reconstruct all these classes without repeating the Rauzy analysis.

The translation identity from Section~\ref{sec:coding} first turns
$N_{i,j}$ into a translate of $N_i$, and hence turns $R_i$ into the
translated window $R_{i,j}$. We then use the block-prefixing operation to
show that the base pieces are self-affine copies of one full set $\widetilde R$.
This gives both the volume of every terminal window and a measurable
partition of $\widetilde R$. After proving equidistribution of all integer
evaluations in the full window, we iterate the same affine coding to
construct cylinder windows for arbitrary finite terminal block words.

For $1\leqslant i\leqslant L$ and $0\leqslant j\leqslant \rho_i=h_i-1$, put
$$
 R_{i,j}:=js_0+R_i.
$$

\begin{proposition}\label{prop:translated-classes}
For every $i$ and $j$ with $0\leqslant j\leqslant \rho_i$, the sequence $(\evalG(n))_{n\in N_{i,j}}$, taken in increasing order of $n$, is equidistributed in $R_{i,j}$ with respect to normalized Lebesgue measure.
\end{proposition}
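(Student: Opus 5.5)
The plan is to transport Proposition~\ref{prop:base-class-equidistribution} through the translation bijection of Lemma~\ref{lem:translated-classes}. By that lemma, $N_{i,j}=N_i+j$, and the map $m\mapsto m+j$ is an increasing bijection $N_i\to N_{i,j}$ satisfying $\evalG(m+j)=\evalG(m)+js_0$. So if $n_1<n_2<\cdots$ enumerates $N_i$, then $n_1+j<n_2+j<\cdots$ enumerates $N_{i,j}$ in increasing order. The corresponding evaluation sequence is therefore $(\evalG(n_k)+js_0)_{k\geqslant1}$, which is the translate by $js_0$ of the sequence $(\evalG(n_k))_{k\geqslant1}$.

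Next I would note that the enumeration of $N_i$ in increasing order is exactly the subsequence $(c_m)_{m:\,c_m\in N_i}$ of Proposition~\ref{prop:base-class-equidistribution}. This uses the fact that $(c_m)$ is the increasing enumeration of $N_*$ (Theorem~\ref{thm:fixed-word-bridge}), so extracting the indices with $c_m\in N_i$ preserves increasing order. By that proposition, $(\evalG(n_k))_k$ is equidistributed in $R_i$ with respect to $\vol(R_i)^{-1}\lambda_{L-1}|_{R_i}$, which is well defined since $\vol(R_i)>0$ by Theorem~\ref{thm:rauzy-fundamental-domain}.

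Finally, translation by $js_0$ is a homeomorphism $\tau\colon R_i\to R_{i,j}$ that preserves Lebesgue measure. Fix a continuous $f$ on $R_{i,j}$. Then $f\circ\tau$ is continuous on $R_i$, so
\[
\frac1K\sum_{k\leqslant K}f\bigl(\evalG(n_k)+js_0\bigr)\longrightarrow\frac{1}{\vol(R_i)}\int_{R_i}f(x+js_0)\,dx=\frac{1}{\vol(R_{i,j})}\int_{R_{i,j}}f(y)\,dy .
\]
This is exactly the claimed equidistribution in $R_{i,j}$.

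There is no serious obstacle. The only point needing care is that the indexing matches: the order of $N_{i,j}$ must correspond to the order of the subsequence of $(c_m)$ lying in $N_i$. Both facts have already been established, by the monotonicity of $m\mapsto m+j$ and by Theorem~\ref{thm:fixed-word-bridge}.
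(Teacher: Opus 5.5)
Your proposal is correct and matches the paper's proof step for step. Like the paper, you identify the increasing enumeration of $N_i$ with the subsequence of $(c_m)$ lying in $N_i$ via Theorem~\ref{thm:fixed-word-bridge}, apply Proposition~\ref{prop:base-class-equidistribution}, and then transport the result by the order-preserving shift $n\mapsto n+j$ and the translation $x\mapsto x+js_0$ from Lemma~\ref{lem:translated-classes}.
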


\begin{proof}
By Theorem~\ref{thm:fixed-word-bridge}, the sequence
$(c_m)_{m\geqslant0}$ is the increasing enumeration of $N_*$. Hence, for
$j=0$, the subsequence considered in
Proposition~\ref{prop:base-class-equidistribution} is precisely
$(\evalG(n))_{n\in N_i}$ with $n$ taken in increasing order.

Now let $j>0$. By Lemma~\ref{lem:translated-classes},
$$
 N_{i,j}=N_i+j
$$
and
$$
 \evalG(n+j)=\evalG(n)+js_0\qquad \left(n\in N_i\right).
$$
Since $n\mapsto n+j$ preserves the natural order, the sequence indexed by
$N_{i,j}$ is obtained from the sequence indexed by $N_i$ by the fixed
translation $x\mapsto x+js_0$. This translation sends normalized Lebesgue
measure on $R_i$ to normalized Lebesgue measure on $R_{i,j}$, and the result
follows.
\end{proof}

\subsection{Self-affine structure and volumes}

Translations account for the varying first digit $j$, but they do not
yet determine the relative sizes of terminal classes of different
lengths $i$. For this we return to the block language. Prefixing the
fixed block $b_{i,0}$ shifts every remaining digit by $i$ positions;
in stable coordinates this becomes the contraction $K^i$. This is the
source of the factor $\omega^i$ in the final prime asymptotic.

Let
$$
 \widetilde R:=\overline{\left\{\evalG(n):n\in\N_0\right\}}.
$$

Define the stable contraction
\begin{equation}\label{def:K}
 K\left(\left(x_j\right)_j\right):=\left(\eta_j x_j\right)_j
\end{equation}
and, for $1\leqslant i\leqslant L$, define
$$
 t_i:=\sum_{\ell=1}^{i-1}h_{i-\ell}\left(\eta_j^\ell\right)_j,
$$
with the convention $t_1=0$. This is the stable contribution of the fixed small-weight block $b_{i,0}$.

The digit shift underlying this self-affine structure is useful in its own
right. We record it as an arithmetic function.

\begin{definition}\label{def:arithmetic-shift}
For $n\in\N_0$, write
\[
[n]_H^{-1}=(a_k)_{k\geqslant1}.
\]
We define
\[
\Phi(n):=\sum_{k\geqslant1}a_kH_{k+1}.
\]
Thus $\Phi$ shifts every coefficient of the canonical infinite
$H$-expansion by one weight. The definition is unambiguous because
Theorem~\ref{thm:block-language} gives a unique canonical
expansion, and the sum is finite because this expansion is eventually
zero. In the Zeckendorf case this is the classical Zeckendorf shift
studied in \cite{Kimberling1995}; the definition above is its
recurrence-numeration analogue.
\end{definition}

If $C_n=\operatorname{dgt}^{-1}([n]_H^{-1})$, then
$b_{1,0}:C_n\in\mathbf W$ has digit sequence
$(0,a_1,a_2,\ldots)$ and $H$-value $\Phi(n)$. Hence, by uniqueness,
\[
[\Phi(n)]_H^{-1}=(0,a_1,a_2,\ldots),
\]
so the iterates $\Phi^q$ shift the canonical expansion by exactly $q$
digit positions.

For example, in the order-two system $g=(1,2)$ one has
$\Phi(0)=0$, $\Phi(1)=H_2=2$, and $\Phi(2)=H_3=5$.

The base terminal classes are obtained by combining this shift with a
fixed small-weight block.

\begin{lemma}\label{lem:tail-stability}
For each $1\leqslant i\leqslant L$, the map
\[
\Psi_i:\N_0\longrightarrow N_i,
\qquad
\Psi_i(n):=\evalH{b_{i,0}}{H}+\Phi^i(n),
\]
is a bijection.
\end{lemma}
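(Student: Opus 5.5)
The plan is to identify $\Psi_i$ with the block-prefixing operation $C\mapsto b_{i,0}:C$ on $\mathbf W$. Then bijectivity will follow from the uniqueness statements already proved: Theorem~\ref{thm:block-language} and Lemma~\ref{lem:infinite-block-decomposition}.

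First, fix $n\in\N_0$ with $[n]_H^{-1}=(a_k)_{k\geqslant1}$ and $C_n=\operatorname{dgt}^{-1}([n]_H^{-1})$. The word $b_{i,0}:C_n$ lies in $\mathbf W$: it is a block word that is eventually $b_{1,0}$. Its digit sequence is
\[
(0,h_{i-1},\ldots,h_1,a_1,a_2,\ldots),
\]
so its $H$-value is
\[
\evalH{b_{i,0}}{H}+\sum_{k\geqslant1}a_kH_{k+i}.
\]
Iterating the observation after Definition~\ref{def:arithmetic-shift} that $[\Phi(n)]_H^{-1}=(0,a_1,a_2,\ldots)$ gives $[\Phi^i(n)]_H^{-1}=(0^i,a_1,a_2,\ldots)$. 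Hence $\Phi^i(n)=\sum_k a_kH_{k+i}$, and the $H$-value above equals $\Psi_i(n)$. By uniqueness in Theorem~\ref{thm:block-language}, $C_{\Psi_i(n)}=b_{i,0}:C_n$. Its first block is $b_{i,0}$, so $\TB(\Psi_i(n))=b_{i,0}$, that is, $\Psi_i(n)\in N_i$.

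For injectivity, suppose $\Psi_i(n)=\Psi_i(n')$. Then $b_{i,0}:C_n=b_{i,0}:C_{n'}$ as elements of $\mathbf W$, by uniqueness of the canonical block expansion. Removing the first block gives $C_n=C_{n'}$, hence $n=n'$. Equivalently, one can argue at the level of digit sequences: the shifted sequences agree, so the original sequences agree, and injectivity of the $H$-value map then gives $n=n'$.

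For surjectivity, take $m\in N_i$. Then $C_m=b_{i,0}:W'$ for some infinite block word $W'$. Since $C_m$ is eventually $b_{1,0}$, so is $W'$, and therefore $W'\in\mathbf W$. Put $n:=\evalH{\operatorname{dgt}(W')}{H}$. By Theorem~\ref{thm:block-language} and the bijectivity of $\operatorname{dgt}$ from Lemma~\ref{lem:infinite-block-decomposition}, $C_n=W'$. The first step then gives $C_{\Psi_i(n)}=b_{i,0}:W'=C_m$, so $\Psi_i(n)=m$.

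The only real care needed is the bookkeeping in the first step. One has to check that prepending $b_{i,0}$ shifts the remaining digits by exactly $i=\len(b_{i,0})$ positions, and that this matches $\Phi^i$ rather than some differently normalized shift. This in turn requires that every word of $\mathbf W$, not just every reduced word, is a valid canonical expansion. That is exactly the content of the global bijection in Theorem~\ref{thm:block-language}, so I expect no genuine obstacle.
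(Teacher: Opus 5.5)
Your proof is correct and follows the paper's argument: identify $\Psi_i(n)$ with the $H$-value of the prefixed block word $b_{i,0}:C_n$, and obtain surjectivity by stripping the leading $b_{i,0}$ from $C_m$, with the uniqueness in Theorem~\ref{thm:block-language} doing the work. You are more explicit than the paper in two places: you show why $\Phi^i$ is exactly the $i$-fold digit shift, and you prove injectivity, which the paper leaves implicit.
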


\begin{proof}
Let $n\in\N_0$ and write its infinite block expansion as
\[
C_n=\operatorname{dgt}^{-1}([n]_H^{-1}).
\]
Prefixing $b_{i,0}$ gives the infinite block word
$b_{i,0}:C_n\in\mathbf W$. Its first block is $b_{i,0}$, so its
$H$-value belongs to $N_i$. Since the prefix occupies the first $i$
digit positions and the remaining digits are shifted by $i$ positions,
this $H$-value is exactly
\[
\evalH{b_{i,0}}{H}+\Phi^i(n)=\Psi_i(n).
\]

Conversely, if $m\in N_i$, then its infinite block expansion begins
with $b_{i,0}$ and can be written uniquely as
\[
C_m=b_{i,0}:C
\]
with $C\in\mathbf W$. By Theorem~\ref{thm:block-language}, the
digit sequence $\operatorname{dgt}(C)$ is the infinite $H$-expansion
of a unique $n\in\N_0$. The preceding construction then gives
$\Psi_i(n)=m$. Hence $\Psi_i$ is bijective.
\end{proof}

We translate this block-prefixing bijection into stable coordinates.

\begin{proposition}\label{prop:self-affine-pieces}
For every $1\leqslant i\leqslant L$,
$$
 R_i=t_i+K^i(\widetilde R).
$$
\end{proposition}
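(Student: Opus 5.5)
The plan is to prove the identity first at the level of the unclosed point sets, and then pass to closures. Concretely, I will show
$$
\left\{\evalG(m):m\in N_i\right\}
=
t_i+K^i\left(\left\{\evalG(n):n\in\N_0\right\}\right).
$$
The left-hand side has closure $R_i$. Indeed, Theorem~\ref{thm:fixed-word-bridge} gives $\evalG(c_m)=\pi_s(p_m)$ with $c_m\in N_i$ exactly when $u_m=A_i$, and $(c_m)$ enumerates $N_*$. So $\{\evalG(m):m\in N_i\}=\{\pi_s(p_m):u_m=A_i\}$, whose closure is $R_i$ by definition. The right-hand side is the image of the set whose closure is $\widetilde R$, under the map $x\mapsto t_i+K^i x$.

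For the pointwise identity I use the bijection $\Psi_i:\N_0\to N_i$ of Lemma~\ref{lem:tail-stability}. Since $\Psi_i$ is onto $N_i$, it suffices to check
$$
\evalG(\Psi_i(n))=t_i+K^i\evalG(n)\qquad(n\in\N_0).
$$
Write $[n]_H^{-1}=(a_k)_{k\geqslant1}$. From the proof of Lemma~\ref{lem:tail-stability}, the canonical expansion of $\Psi_i(n)$ is the digit sequence of $b_{i,0}:C_n$. That sequence is
$$
(0,h_{i-1},\ldots,h_1,a_1,a_2,\ldots),
$$
by the uniqueness in Theorem~\ref{thm:block-language}. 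Evaluating coordinatewise at $\eta_j$:
\begin{itemize}
\item digit position $\ell+1$ carries $h_{i-\ell}$ for $1\leqslant\ell\leqslant i-1$, which contributes $\sum_{\ell=1}^{i-1}h_{i-\ell}\eta_j^{\ell}$, i.e.\ exactly the $j$th coordinate of $t_i$;
\item digit $a_k$ sits at position $k+i$, which contributes $\sum_k a_k\eta_j^{k+i-1}=\eta_j^i\operatorname{eval}_{\eta_j}(n)$, i.e.\ the $j$th coordinate of $K^i\evalG(n)$.
\end{itemize}
This is a finite-sum computation and needs no convergence argument.

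The remaining step is to pass to closures. The map $F_i(x)=t_i+K^ix$ is a real-linear map followed by a translation on $E_s$. It is continuous, and it is a homeomorphism because all $\eta_j\neq0$, since $P_h(0)=-1$. A homeomorphism commutes with closure, so $\overline{F_i(S)}=F_i(\overline S)$ with $S=\{\evalG(n):n\in\N_0\}$, which gives $R_i=t_i+K^i(\widetilde R)$.

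The only point needing care is the identification of the left-hand set with the set defining $R_i$. This relies on Theorem~\ref{thm:fixed-word-bridge} enumerating all of $N_i$ and not merely a subset, which it does.
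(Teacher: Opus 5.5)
Your proof is correct and follows the same route as the paper. You identify $R_i$ with the closure of $\{\evalG(n):n\in N_i\}$ via Theorem~\ref{thm:fixed-word-bridge}, transport through the bijection $\Psi_i$ using $\evalG(\Psi_i(n))=t_i+K^i\evalG(n)$, and pass to closures; your digit-position check and the remark that $x\mapsto t_i+K^ix$ is a homeomorphism (since $P_h(0)=-1$ forces $\eta_j\neq0$) spell out steps the paper leaves implicit.
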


\begin{proof}
By Theorem~\ref{thm:fixed-word-bridge}, $R_i$ is the closure of the stable evaluations of the integers in $N_i$. Indeed, the sequence $(c_m)_{m\geqslant 0}$ enumerates $N_*$, and the condition $u_m=A_i$ is equivalent to $c_m\in N_i$.

Let $m\in\N_0$ and put $n:=\Psi_i(m)\in N_i$. Prefixing
$b_{i,0}$ fixes the first $i$ digits and shifts the infinite
$H$-expansion of $m$ by $i$ positions. Therefore, in stable coordinates,
\[
\evalG(n)=t_i+K^i\evalG(m).
\]
Since $\Psi_i$ is a bijection from $\N_0$ onto $N_i$, taking closures gives
$$
 \overline{\left\{\evalG(n):n\in N_i\right\}}=t_i+K^i\overline{\left\{\evalG(m):m\in\N_0\right\}},
$$
which is the desired identity.
\end{proof}

The self-affine identity immediately gives the corresponding volume scaling.

\begin{corollary}\label{cor:self-affine-volume}
One has
$$
 \vol(R_i)=\omega^i\vol(\widetilde R)\qquad \left(1\leqslant i\leqslant L\right).
$$
\end{corollary}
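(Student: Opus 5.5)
Corollary~\ref{cor:self-affine-volume} follows directly from Proposition~\ref{prop:self-affine-pieces}, which gives $R_i=t_i+K^i(\widetilde R)$. Translation by $t_i$ preserves Lebesgue measure, so it suffices to compute how the real-linear map $K^i$ scales volume on $E_s$. The volume computation splits into three steps.

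First, I compute $|\det_{\R}K|$ on $E_s=\R^a\times\C^b$. On each real coordinate, $K$ multiplies by the real conjugate $\eta_j$, so its determinant factor is $|\eta_j|$. On each complex coordinate, $K$ is multiplication by $\eta_j$, viewed as a real-linear map of $\C\cong\R^2$, and its real determinant is $|\eta_j|^2=\eta_j\overline{\eta_j}$. Taking the product, $|\det_{\R}K|$ equals the product of the absolute values of all $L-1$ contracting conjugates $\gamma_1,\ldots,\gamma_{L-1}$, since each non-real pair is counted via $|\eta_j|^2$.

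Second, I identify this product using the characteristic polynomial. The roots of $P_h$ are $\varphi,\gamma_1,\ldots,\gamma_{L-1}$, and $P_h(0)=-1$, so the product of all roots has absolute value $1$. Hence $\prod_j|\gamma_j|=\varphi^{-1}=\omega$, and therefore $|\det_{\R}K^i|=\omega^i$.

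Third, by the change-of-variables formula for linear maps, $\lambda_{L-1}(K^i(S))=|\det K^i|\,\lambda_{L-1}(S)$ for every measurable $S$. Here $\widetilde R$ is compact: it is a closure of a bounded set, because $|\eta_j|<1$ and the digits are bounded, so the evaluations are bounded by a geometric series. Hence $\widetilde R$ is measurable, and we obtain $\vol(R_i)=\omega^i\vol(\widetilde R)$.

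The only step needing care is the real determinant on the complex factors, where one must use $|\eta|^2$ rather than $|\eta|$. Once that is handled, the identity $\prod|\gamma_j|=\omega$ comes from unimodularity.
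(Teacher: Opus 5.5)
Your proof is correct and follows essentially the paper's argument: you use translation invariance, the computation $\abs{\det_{\R}K}=\prod\abs{\gamma_j}=\omega$ (with $\abs{\eta_j}^2$ on the complex factors and $P_h(0)=-1$), and change of variables applied to Proposition~\ref{prop:self-affine-pieces}. The only difference is how measurability of $\widetilde R$ is justified: you get it, together with $\vol(\widetilde R)<\infty$, directly from boundedness of the digits and $\abs{\eta_j}<1$, whereas the paper deduces it from Theorem~\ref{thm:rauzy-fundamental-domain} via $R_1=K(\widetilde R)$; your route is more elementary.
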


\begin{proof}
As a real-linear map on $E_s$, one has
$$
 \abs{\det_{\mathbb R} K}=\abs{\gamma_1\cdots\gamma_{L-1}}=\omega.
$$
Indeed, each non-real coordinate contributes $\abs{\eta_j}^2$ to the real determinant, so the first equality is the product over all contracting conjugates. The product of all roots of $P_h$ is $(-1)^{L+1}h_L$, with $h_L=1$, and therefore the product of the contracting roots has absolute value $\varphi^{-1}=\omega$.

Theorem~\ref{thm:rauzy-fundamental-domain} gives measurability of the pieces $R_i$. Since $R_1=K(\widetilde R)$ and $K$ is invertible, $\widetilde R$ is measurable. Applying Proposition~\ref{prop:self-affine-pieces} and the change-of-variables formula gives
$$
 \vol(R_i)=\abs{\det_{\mathbb R} K}^i\vol(\widetilde R)=\omega^i\vol(\widetilde R).
$$
\end{proof}

Next, we assemble the translated pieces into a single full window.

\subsection{The full window and integer equidistribution}

The pieces $R_{i,j}$ now have the expected volumes. To use them as
terminal windows, we must still show that they cover the full set and
overlap only on null sets. The Pisot identity
$\sum_i h_i\omega^i=1$ supplies exactly the volume balance required for
this measurable partition.

\begin{proposition}\label{prop:measurable-partition}
The pieces $R_{i,j}$ form a measurable partition of $\widetilde R$ up to null sets:
$$
 \widetilde R=\bigcup_{i=1}^{L}\bigcup_{j=0}^{\rho_i}R_{i,j},
$$
with pairwise intersections of measure zero. Their boundaries have measure zero.
\end{proposition}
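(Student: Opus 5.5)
The plan is to prove the set identity directly and then get the null-overlap statement by a volume count. No new geometry is needed beyond Theorem~\ref{thm:rauzy-fundamental-domain}.

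\emph{Covering.} Each $n\in\N_0$ lies in exactly one class $N_{i,j}$, by the disjoint partition recorded after Lemma~\ref{lem:translated-classes}. Hence
$$
\{\evalG(n):n\in\N_0\}=\bigcup_{i,j}\{\evalG(n):n\in N_{i,j}\}.
$$
By Lemma~\ref{lem:translated-classes}, $\{\evalG(n):n\in N_{i,j}\}=js_0+\{\evalG(m):m\in N_i\}$. By Theorem~\ref{thm:fixed-word-bridge}, the closure of the latter set is $R_i$. The union is finite, so closure commutes with it, and taking closures gives $\widetilde R=\bigcup_{i,j}(js_0+R_i)=\bigcup_{i,j}R_{i,j}$.

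\emph{Boundaries.} Each $R_{i,j}$ is a translate of $R_i$, so $\partial R_{i,j}=js_0+\partial R_i$, which is null by Theorem~\ref{thm:rauzy-fundamental-domain}. The same theorem gives $\lambda_{L-1}(R_i)>0$. Since $R_1=K(\widetilde R)$ by Proposition~\ref{prop:self-affine-pieces}, we also get $0<\vol(\widetilde R)<\infty$, because $\widetilde R$ is compact as a finite union of compact sets. Finally, $\partial\widetilde R\subset\bigcup\partial R_{i,j}$, so $\partial\widetilde R$ is null as well.

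\emph{Null overlaps.} Corollary~\ref{cor:self-affine-volume} and translation invariance give
$$
\sum_{i=1}^L\sum_{j=0}^{\rho_i}\vol(R_{i,j})=\sum_{i=1}^L h_i\omega^i\vol(\widetilde R)=\vol(\widetilde R),
$$
using the characteristic identity $\sum_i h_i\omega^i=1$. Write $\chi_{i,j}$ for the indicator of $R_{i,j}$, and set $F:=\sum_{i,j}\chi_{i,j}$. On $\widetilde R$ we have $F\geqslant1$ by the covering, and $\int_{\widetilde R}F\,d\lambda_{L-1}=\vol(\widetilde R)$. Therefore $\int_{\widetilde R}(F-1)\,d\lambda_{L-1}=0$ with $F-1\geqslant0$, so $F=1$ almost everywhere. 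This says exactly that pairwise intersections are null.

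The step I expect to be most delicate is the measurability and finiteness bookkeeping: checking that $\widetilde R$ is bounded (each $\evalG(n)$ is a sum $\sum a_k\eta_j^{k-1}$ with bounded digits and $\abs{\eta_j}<1$), and that Corollary~\ref{cor:self-affine-volume} indeed provides finite, positive volumes. Once these are in hand, the volume-balance argument closes the proof with no further Rauzy geometry.
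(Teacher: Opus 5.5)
Your proposal is correct and follows the paper's proof essentially step by step: the covering comes from taking closures over the finitely many translated classes $N_{i,j}=N_i+j$, the boundaries are null because each $R_{i,j}$ is a translate of $R_i$, and the null overlaps follow from the volume balance $\sum_i h_i\omega^i\vol(\widetilde R)=\vol(\widetilde R)$ supplied by Corollary~\ref{cor:self-affine-volume}. Your multiplicity-function version of the overlap step, and your extra checks that $0<\vol(\widetilde R)<\infty$ and $\lambda_{L-1}(\partial\widetilde R)=0$, only make explicit what the paper's subadditivity argument leaves implicit; the paper records the fact that $\partial\widetilde R$ is null separately, in Proposition~\ref{prop:terminal-cylinder-window}.
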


\begin{proof}
The classes $N_{i,j}$ partition $\N_0$. By Lemma~\ref{lem:translated-classes},
$$
 \overline{\left\{\evalG(n):n\in N_{i,j}\right\}}=js_0+\overline{\left\{\evalG(n):n\in N_i\right\}}=R_{i,j}.
$$
Since there are only finitely many classes, taking closures gives
$$
 \widetilde R=\bigcup_{i=1}^{L}\bigcup_{j=0}^{\rho_i}R_{i,j}.
$$

The boundary assertion follows from the boundary-null assumption for the Rauzy pieces $R_i$ and the fact that translations preserve null boundaries. It remains to prove that the overlaps are null. By Corollary~\ref{cor:self-affine-volume},
$$
 \sum_{i=1}^{L}\sum_{j=0}^{\rho_i}\vol(R_{i,j})
 =\sum_{i=1}^{L}h_i\vol(R_i)
 =\sum_{i=1}^{L}h_i\omega^i\vol(\widetilde R).
$$
The Pisot identity gives
$$
 \sum_{i=1}^{L}h_i\omega^i=1.
$$
Therefore the sum of the volumes of the finitely many pieces is exactly $\vol(\widetilde R)$. Since their union is $\widetilde R$, finite subadditivity can be an equality only if the pairwise overlaps have measure zero.
\end{proof}

For the recurring example $g=(1,3,2)$,
Figure~\ref{fig:full-terminal-window-132} illustrates this partition.
Here $R=R_1\cup R_2\cup R_3$, while the remaining terminal pieces are
obtained by translating the relevant base pieces by multiples of $s_0$.

\begin{figure}[!ht]
\centering
\includegraphics[width=\textwidth]{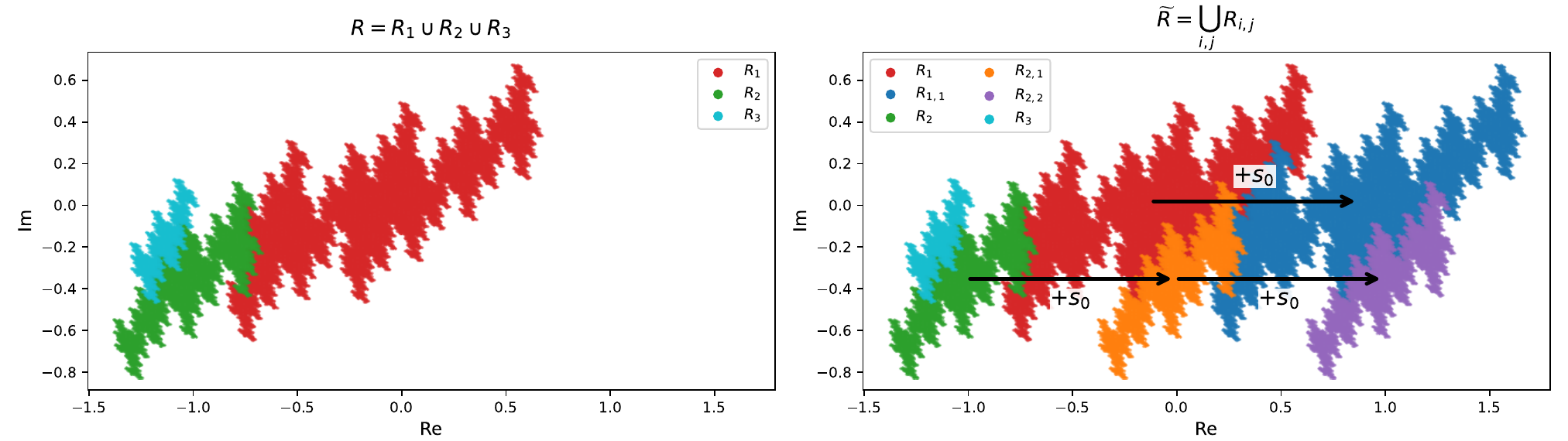}
\caption{The central Rauzy window $R$ and the full terminal window
$\widetilde R$ for $g=(1,3,2)$.}
\label{fig:full-terminal-window-132}
\end{figure}

With this measurable partition in hand, we can combine the classwise
equidistribution statements.

\begin{proposition}\label{prop:full-equidistribution}
The sequence $(\evalG(n))_{n\geqslant0}$ is equidistributed in $\widetilde R$
with respect to normalized Lebesgue measure.
\end{proposition}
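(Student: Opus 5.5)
The plan is to assemble the classwise equidistribution statements of Proposition~\ref{prop:translated-classes} into a single statement, using the densities of the classes $N_{i,j}$ in $\N_0$ and the measurable partition of Proposition~\ref{prop:measurable-partition}.

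\textbf{Step 1: densities of the terminal classes.} Let $X\to\infty$. By Theorem~\ref{thm:fixed-word-bridge} we have $c_{m+1}-c_m=h_i$ when $u_m=A_i$. Proposition~\ref{prop:base-class-equidistribution} says the letter $A_i$ occurs among $u_0,\dots,u_{M-1}$ with frequency $f_i:=\vol(R_i)/\vol(R)$. Hence
\[
c_M/M\to\textstyle\sum_i h_if_i=:\kappa.
\]
The integers up to $c_M$ consist of the blocks $\{c_m,\dots,c_m+h_i-1\}$, and by Lemma~\ref{lem:translated-classes} each such block contributes exactly one element to each $N_{i,j}$ with $0\leqslant j\leqslant\rho_i$. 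Therefore $N_{i,j}$ has natural density $d_{i}:=f_i/\kappa$, independent of $j$. Interpolating between consecutive $c_M$ costs at most $\max h_i$ integers, which is negligible.

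\textbf{Step 2: identify the densities with volumes.} Using Corollary~\ref{cor:self-affine-volume}, $f_i=\omega^i\vol(\widetilde R)/\vol(R)$. The identity $\sum h_i\omega^i=1$ then gives $\kappa=\vol(\widetilde R)/\vol(R)$, hence
\[
d_i=\omega^i=\frac{\vol(R_{i,j})}{\vol(\widetilde R)}.
\]

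\textbf{Step 3: combine.} Take $f$ continuous on $\widetilde R$ and split $\sum_{n<X}f(\evalG(n))$ according to the finitely many classes $N_{i,j}$. For each class, Proposition~\ref{prop:translated-classes} together with Step~1 gives
\[
\frac1X\sum_{n<X,\,n\in N_{i,j}}f(\evalG(n))\to \omega^i\cdot\frac{1}{\vol(R_{i,j})}\int_{R_{i,j}}f=\frac{1}{\vol(\widetilde R)}\int_{R_{i,j}}f.
\]
Here we use that a sequence equidistributed along $N_{i,j}$ in increasing order, weighted by its counting density, gives exactly this limit. Summing over $(i,j)$ and using that the overlaps are null (Proposition~\ref{prop:measurable-partition}), the right-hand sides add up to $\vol(\widetilde R)^{-1}\int_{\widetilde R}f$.

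The main point requiring care is Step~1: converting frequencies indexed by $m$ into natural densities in $n$. This is handled through the linear growth $c_M\sim\kappa M$ and the fact that each block $\{c_m,\dots,c_m+h_i-1\}$ is bounded and contains exactly one element of each relevant class. The remaining steps are routine bookkeeping.
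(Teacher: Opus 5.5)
Your proposal is correct and follows essentially the same route as the paper: both derive the natural densities of the classes $N_{i,j}$ from the base-letter frequencies of Proposition~\ref{prop:base-class-equidistribution} and the translation identity $N_{i,j}=N_i+j$, identify these densities with $\vol(R_{i,j})/\vol(\widetilde R)$, and then split the average of $f(\evalG(n))$ over the finitely many classes using Proposition~\ref{prop:translated-classes} and the null-overlap partition. The differences are only bookkeeping. You parametrize by the prefix length via $c_M\sim\kappa M$ and read off $\omega^i$ from Corollary~\ref{cor:self-affine-volume} together with $\sum_i h_i\omega^i=1$. The paper instead works with the counting functions $M_{i,j}(X)$ and the identity $X=\sum_{i,j}M_{i,j}(X)$, and takes the volume normalization from Proposition~\ref{prop:measurable-partition}.
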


\begin{proof}
For integers $X\geqslant1$, set
\[
 M_{i,j}(X):=\#\{n<X:n\in N_{i,j}\},
 \qquad
 M_i(X):=M_{i,0}(X),
\]
and
\[
 M(X):=\sum_{i=1}^{L}M_i(X).
\]
Thus $M(X)$ is the number of elements of $N_*$ below $X$.

Since $(c_m)_{m\geqslant0}$ is the increasing enumeration of $N_*$,
\[
 M_i(X)=\#\{0\leqslant m<M(X):c_m\in N_i\}.
\]
Hence, by Proposition~\ref{prop:base-class-equidistribution},
\[
 M_i(X)=q_iM(X)+o(M(X)),
 \qquad
 q_i:=\frac{\vol(R_i)}{\vol(R)}.
\]

For $0\leqslant j\leqslant \rho_i$, the identity
$N_{i,j}=N_i+j$ gives
\[
 M_{i,j}(X)=M_i(X-j).
\]
Since $j$ is fixed and $M(X-j)=M(X)+O(1)$, we obtain
\[
 M_{i,j}(X)=q_iM(X)+o(M(X))
 \qquad (X\to+\infty).
\]

The classes $N_{i,j}$ partition $\N_0$. Therefore, for every integer
$X\geqslant1$,
\[
 X
 =\sum_{i=1}^{L}\sum_{j=0}^{\rho_i}M_{i,j}(X)
 =\left(\sum_{i=1}^{L}h_iq_i\right)M(X)+o(M(X)).
\]
Consequently, as $X\to+\infty$,
\[
 \frac{M(X)}{X}
 \longrightarrow
 \frac{1}{\sum_{r=1}^{L}h_rq_r},
\]
and therefore, for every $i,j$,
\[
 \frac{M_{i,j}(X)}{X}
 \longrightarrow
 \frac{q_i}{\sum_{r=1}^{L}h_rq_r}
 =
 \frac{\vol(R_i)}
 {\sum_{r=1}^{L}h_r\vol(R_r)}.
\]

By Proposition~\ref{prop:measurable-partition},
\[
 \vol(\widetilde R)
 =\sum_{r=1}^{L}\sum_{j=0}^{\rho_r}\vol(R_{r,j})
 =\sum_{r=1}^{L}h_r\vol(R_r),
\]
while $\vol(R_{i,j})=\vol(R_i)$. Therefore
\[
 \frac{M_{i,j}(X)}{X}
 \longrightarrow
 \frac{\vol(R_{i,j})}{\vol(\widetilde R)}
 \qquad (X\to+\infty).
\]

Let $f\in C(\widetilde R)$. Since the limiting density of every
$N_{i,j}$ is positive, $M_{i,j}(X)>0$ for all sufficiently large $X$.
By Proposition~\ref{prop:translated-classes}, for every $i,j$,
\[
 \frac{1}{M_{i,j}(X)}
 \sum_{\substack{n<X\\ n\in N_{i,j}}}
 f(\evalG(n))
 \longrightarrow
 \frac{1}{\vol(R_{i,j})}
 \int_{R_{i,j}}f\,d\lambda_{L-1}
 \qquad (X\to+\infty).
\]
Since the classes $N_{i,j}$ partition $\N_0$,
\[
 \frac{1}{X}\sum_{0\leqslant n<X}f(\evalG(n))
 =
 \sum_{i=1}^{L}\sum_{j=0}^{\rho_i}
 \frac{M_{i,j}(X)}{X}
 \left(
 \frac{1}{M_{i,j}(X)}
 \sum_{\substack{n<X\\ n\in N_{i,j}}}
 f(\evalG(n))
 \right).
\]
Passing to the limit gives
\[
 \frac{1}{X}\sum_{0\leqslant n<X}f(\evalG(n))
 \longrightarrow
 \frac{1}{\vol(\widetilde R)}
 \sum_{i=1}^{L}\sum_{j=0}^{\rho_i}
 \int_{R_{i,j}}f\,d\lambda_{L-1}.
\]
Finally, the sets $R_{i,j}$ partition $\widetilde R$ up to null sets, so
\[
 \frac{1}{X}\sum_{0\leqslant n<X}f(\evalG(n))
 \longrightarrow
 \frac{1}{\vol(\widetilde R)}
 \int_{\widetilde R}f\,d\lambda_{L-1}
 \qquad (X\to+\infty),
\]
which proves the desired equidistribution.
\end{proof}

We have now treated the classes determined by a single terminal block;
we next extend the construction to longer terminal block words.

\subsection{Longer terminal block words}

We prescribe several terminal blocks at once. For fixed $e\geqslant1$
and $B\in\mathcal B_e$, define
\[
N_B:=\{n\in\N_0:\TBe{e}(n)=B\},
\]
and set $N_{\varepsilon}:=\N_0$. Because $\TBe{e}(n)$ is defined from
the infinite block expansion, these classes are defined for every
$n\in\N_0$, including $n=0$.

Fix $e\geqslant0$ and $B\in\mathcal B_e$, and put $m:=\len(B)$. The
underlying operation is again a digit shift: prefixing $B$ fixes the
first $m$ digit positions and shifts the remaining infinite expansion
by $m$ positions. Equivalently, the map
\[
\Psi_B(r):=\evalH{B}{H}+\Phi^m(r)
\qquad (r\in\N_0)
\]
is a bijection from $\N_0$ onto $N_B$. Indeed, if
$C_r=\operatorname{dgt}^{-1}([r]_H^{-1})$, then the infinite block
expansion of $\Psi_B(r)$ is $B:C_r$; conversely, deleting the first $e$
blocks from the infinite expansion of an element of $N_B$ recovers a
unique tail in $\mathbf W$ and hence a unique $r\in\N_0$.

Moreover, $\Psi_B$ preserves the natural order. Prefixing the same
block word $B$ shifts the largest digit position at which two infinite
coefficient sequences differ by $m$ positions without changing the
comparison at that position. By the order characterization in
Theorem~\ref{thm:block-language}, this implies that $\Psi_B$ is
strictly increasing. This order preservation will allow us to transfer
the integer equidistribution directly to $N_B$.

We record the same shift formally before specialization at the
contracting conjugates. Let
\[
\mathbf X=(X_1,\ldots,X_{a+b})
\]
be a tuple of independent indeterminates. If
$[n]_H^{-1}=(a_k)_{k\geqslant1}$, define the \emph{formal stable coding}
by
\[
\evalX(n)
:=
\left(
\sum_{k\geqslant1}a_kX_j^{k-1}
\right)_{1\leqslant j\leqslant a+b}
\in \prod_{j=1}^{a+b}\R[X_j].
\]

Let
\[
\spG:
\prod_{j=1}^{a+b}\R[X_j]
\longrightarrow E_s
\]
be the coordinatewise specialization
\[
\spG\bigl((F_j(X_j))_{j=1}^{a+b}\bigr)
:=
\bigl(F_j(\eta_j)\bigr)_{j=1}^{a+b}.
\]
Then $\spG(\evalX(n))=\evalG(n)$.

For $\mathbf F=(F_j(X_j))_{j=1}^{a+b}$, define the formal shift
\[
\Phi_{\mathbf X}(\mathbf F)
:=
\bigl(X_jF_j(X_j)\bigr)_{j=1}^{a+b}.
\]
Thus
\[
\evalX(\Phi^q(n))=\Phi_{\mathbf X}^q(\evalX(n))
\]
for $q\geqslant0$, and specialization gives
\[
\spG\!\left(\Phi_{\mathbf X}^q(\mathbf F)\right)=K^q\spG(\mathbf F).
\]

If $(d_1,\ldots,d_m)$ is the digit word of $B$, define its
\emph{formal prefix vector} and \emph{specialized prefix vector} by
\[
V_B(\mathbf X)
:=
\left(
\sum_{r=1}^{m}d_rX_j^{r-1}
\right)_{1\leqslant j\leqslant a+b}
\qquad\text{and}\qquad
v_B:=\spG\left(V_B(\mathbf X)\right).
\]
For $B=\varepsilon$, set $V_{\varepsilon}(\mathbf X):=0$ and
$v_{\varepsilon}:=0$.

Finally, define the formal cylinder and its stable window by
\[
\mathcal C_B:=\{\evalX(n):n\in N_B\}
\qquad\text{and}\qquad
R_B:=\overline{\spG(\mathcal C_B)}.
\]
In particular, $R_{\varepsilon}=\widetilde R$.

Prefixing a terminal block word is represented formally by the
following affine cylinder identity.

\begin{proposition}\label{prop:terminal-cylinder-window}
Let $e\geqslant0$ and $B\in\mathcal B_e$. Then
\[
\mathcal C_B
=
V_B(\mathbf X)+\Phi_{\mathbf X}^{\len(B)}\mathcal C_{\varepsilon}.
\]
Consequently,
\[
R_B=v_B+K^{\len(B)}(\widetilde R).
\]
The sequence $(\evalG(n))_{n\in N_B}$, taken in increasing order of
$n$, is equidistributed in $R_B$ with respect to normalized Lebesgue
measure. Moreover,
\[
\vol(R_B)=\omega^{\len(B)}\vol(\widetilde R),
\]
and $\partial R_B$ has measure zero.
\end{proposition}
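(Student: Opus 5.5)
The proof combines three ingredients: the order-preserving bijection $\Psi_B\colon\N_0\to N_B$ recorded just before the statement, the formal shift identities, and the integer equidistribution of Proposition~\ref{prop:full-equidistribution}.

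\emph{Step 1: the formal identity.} Let $m:=\len(B)$ and $n=\Psi_B(r)$. The infinite block expansion of $n$ is $B:C_r$, so the digit sequence of $n$ is $(d_1,\ldots,d_m,a_1,a_2,\ldots)$, where $(a_k)=[r]_H^{-1}$. Reading off the formal stable coding coordinatewise gives
\[
\evalX(n)=V_B(\mathbf X)+\Phi_{\mathbf X}^{m}\bigl(\evalX(r)\bigr).
\]
Since $\Psi_B$ is onto $N_B$, this yields $\mathcal C_B=V_B(\mathbf X)+\Phi_{\mathbf X}^m\mathcal C_\varepsilon$.

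\emph{Step 2: the window identity.} Specialization is additive and satisfies $\spG\circ\Phi_{\mathbf X}^m=K^m\circ\spG$. Hence $\spG(\mathcal C_B)=v_B+K^m\spG(\mathcal C_\varepsilon)$. The map $x\mapsto v_B+K^mx$ is a homeomorphism of $E_s$, because $K$ is invertible; the $\eta_j$ are nonzero since $\varphi$ is a unit. It therefore commutes with closures, which gives $R_B=v_B+K^m(\widetilde R)$.

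\emph{Step 3: volume and boundary.} Corollary~\ref{cor:self-affine-volume} gives $\lvert\det_{\R}K\rvert=\omega$, so $\vol(R_B)=\omega^m\vol(\widetilde R)$. The boundary of $R_B$ is the affine image of $\partial\widetilde R$, so it suffices to know that $\lambda_{L-1}(\partial\widetilde R)=0$. This follows from Proposition~\ref{prop:measurable-partition}: we have $\widetilde R=\bigcup R_{i,j}$, a finite union, so $\partial\widetilde R\subset\bigcup\partial R_{i,j}$, and each $\partial R_{i,j}$ is null.

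\emph{Step 4: equidistribution in $R_B$.} Enumerate $N_B$ increasingly. Because $\Psi_B$ is strictly increasing and bijective, the $r$-th element of $N_B$ (counting from $r=0$) is $\Psi_B(r)$. Therefore
\[
\evalG(\Psi_B(r))=v_B+K^m\evalG(r).
\]
Take $f\in C(R_B)$ and set $g:=f\circ(v_B+K^m\,\cdot)\in C(\widetilde R)$. The Ces\`aro averages of $f$ along $(\evalG(n))_{n\in N_B}$ are exactly the averages of $g(\evalG(r))$ over $r<M$. By Proposition~\ref{prop:full-equidistribution}, these converge to
\[
\frac{1}{\vol(\widetilde R)}\int_{\widetilde R}g\,d\lambda_{L-1}.
\]
The change of variables $y=v_B+K^mx$ has Jacobian $\omega^m$, which turns this limit into $\vol(R_B)^{-1}\int_{R_B}f\,d\lambda_{L-1}$. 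This is the asserted equidistribution.

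The main point needing care is Step 4's reliance on the order preservation of $\Psi_B$. Equidistribution is a statement about the sequence in increasing order of $n$, so we need the increasing enumeration of $N_B$ to coincide with $r\mapsto\Psi_B(r)$. This is supplied by the comparison argument via $\prec$ and Theorem~\ref{thm:block-language}, stated just before the proposition. The only other subtle point is the null boundary of $\widetilde R$, which Step 3 handles through the finite partition.
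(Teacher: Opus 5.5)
Your proposal is correct and follows the paper's proof essentially step for step: you derive the formal identity from the bijection $\Psi_B$, pass to closures under the affine homeomorphism $x\mapsto v_B+K^{\len(B)}x$, transport equidistribution through the order-preserving bijection, obtain the volume from $\lvert\det_{\R}K\rvert=\omega$, and get the null boundary of $\widetilde R$ from the finite covering by the $R_{i,j}$. You spell out the change of variables in the equidistribution step and the invertibility of $K$ a little more explicitly than the paper does, but the argument is the same.
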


\begin{proof}
Put $m:=\len(B)$. The bijection $\Psi_B:\N_0\to N_B$ gives, for every
$r\in\N_0$,
\[
\evalX(\Psi_B(r))
=
V_B(\mathbf X)+\Phi_{\mathbf X}^{m}\evalX(r).
\]
Since every element of $N_B$ is obtained from a unique $r$, this proves
the exact identity
\[
\mathcal C_B
=
V_B(\mathbf X)+\Phi_{\mathbf X}^{m}\mathcal C_{\varepsilon}.
\]
After specialization,
\[
\spG(\mathcal C_B)
=
v_B+K^m\spG(\mathcal C_{\varepsilon}).
\]
The affine map $x\mapsto v_B+K^mx$ is a homeomorphism, so taking
closures gives
\[
R_B=v_B+K^m(\widetilde R).
\]

Because $\Psi_B$ preserves the natural order, the sequence indexed by
$N_B$ is exactly the affine image of the full integer sequence. Hence
Proposition~\ref{prop:full-equidistribution} gives the claimed
equidistribution in $R_B$.

Since $\abs{\det_{\mathbb R}K}=\omega$,
\[
\vol(R_B)
=
\abs{\det_{\mathbb R}K}^{m}\vol(\widetilde R)
=
\omega^{\len(B)}\vol(\widetilde R).
\]
Finally, Proposition~\ref{prop:measurable-partition} gives a
finite covering of $\widetilde R$ by the sets $R_{i,j}$, and hence
\[
\partial\widetilde R
\subseteq
\bigcup_{i=1}^{L}\bigcup_{j=0}^{\rho_i}\partial R_{i,j}.
\]
The right-hand side has measure zero. Since
\[
\partial R_B=v_B+K^m(\partial\widetilde R),
\]
the boundary of $R_B$ also has measure zero.
\end{proof}

For fixed $e$, the cylinder classes now partition the integers exactly,
and the corresponding windows partition the full window up to null
sets.

\begin{proposition}\label{prop:terminal-cylinder-partition}
For every fixed $e\geqslant1$, the family
\[
(R_B)_{B\in\mathcal B_e}
\]
forms a measurable partition of $\widetilde R$ up to null sets.
\end{proposition}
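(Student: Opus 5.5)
The argument mirrors Proposition~\ref{prop:measurable-partition}: we show that the windows $R_B$ cover $\widetilde R$, and then use a volume count to see that the overlaps must be null.

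\emph{Covering.} Since every $n\in\N_0$ has exactly one $e$-terminal block word, the classes $N_B$, $B\in\mathcal B_e$, partition $\N_0$. The set $\mathcal B_e$ is finite. Therefore
\[
\{\evalG(n):n\in\N_0\}=\bigcup_{B\in\mathcal B_e}\spG(\mathcal C_B),
\]
and since a finite union of closures is the closure of the union, $\widetilde R=\bigcup_{B\in\mathcal B_e}R_B$. Each $R_B$ is measurable, being the compact image of $\widetilde R$ under an affine homeomorphism, and by Proposition~\ref{prop:terminal-cylinder-window} its boundary is null.

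\emph{Volume identity.} By Proposition~\ref{prop:terminal-cylinder-window}, $\vol(R_B)=\omega^{\len(B)}\vol(\widetilde R)$. It therefore suffices to prove
\[
\sum_{B\in\mathcal B_e}\omega^{\len(B)}=1 .
\]
We argue by induction on $e$. The case $e=0$ is trivial. For the inductive step, every $B\in\mathcal B_e$ with $e\geqslant1$ is uniquely a concatenation $B'b_{i,j}$ with $B'\in\mathcal B_{e-1}$, and for each $i$ there are exactly $h_i$ choices of $j$. Hence
\[
\sum_{B\in\mathcal B_e}\omega^{\len(B)}=\Bigl(\sum_{B'\in\mathcal B_{e-1}}\omega^{\len(B')}\Bigr)\Bigl(\sum_{i=1}^{L}h_i\omega^i\Bigr),
\]
which equals $1$ by the induction hypothesis and the Pisot identity $\sum_i h_i\omega^i=1$. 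Equivalently, the sum is $\bigl(\sum_i h_i\omega^i\bigr)^e=1$.

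\emph{Conclusion.} The union equals $\widetilde R$, and the volumes of the finitely many pieces sum to exactly $\vol(\widetilde R)$. We have $\vol(\widetilde R)<\infty$ because $\widetilde R$ is compact; in fact $\widetilde R$ is a finite union of translates of the compact sets $R_i$. Thus inclusion–exclusion forces every pairwise intersection $R_B\cap R_{B'}$ with $B\neq B'$ to be null. We also need $\vol(\widetilde R)>0$, so that the normalization is meaningful; this follows from Corollary~\ref{cor:self-affine-volume} together with the positivity of $\vol(R_i)$ in Theorem~\ref{thm:rauzy-fundamental-domain}.

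Alternatively, one can argue by induction through the affine maps. Apply $x\mapsto v_{B'}+K^{\len(B')}x$ to the partition of Proposition~\ref{prop:measurable-partition}, and check that $v_{B'}+K^{\len(B')}R_{b}=R_{B'b}$ for a single block $b$. Both routes are routine. The only point needing care is the unique decomposition $B=B'b$ and the resulting counting of $h_i$ choices, which is immediate from the definition of $\mathcal B_e$ as ordered $e$-tuples of semi-proper blocks. I expect no real obstacle.
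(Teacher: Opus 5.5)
Your proof is correct and follows the paper's argument essentially verbatim. You cover $\widetilde R$ using the partition $\N_0=\bigsqcup_{B\in\mathcal B_e}N_B$ and the fact that closure commutes with a finite union, then apply Proposition~\ref{prop:terminal-cylinder-window} with $\sum_{B\in\mathcal B_e}\omega^{\len(B)}=\bigl(\sum_i h_i\omega^i\bigr)^e=1$, so that equality in finite subadditivity forces the overlaps to be null.
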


\begin{proof}
The classes $N_B$, with $B\in\mathcal B_e$, form a disjoint partition
of $\N_0$, because every infinite block expansion has exactly one
initial word of $e$ blocks. Therefore
\[
\widetilde R
=
\overline{
\bigcup_{B\in\mathcal B_e}
\{\evalG(n):n\in N_B\}
}.
\]
Since $\mathcal B_e$ is finite, the closure commutes with the union, and
hence
\[
\widetilde R=\bigcup_{B\in\mathcal B_e}R_B.
\]
There are $h_i$ semi-proper blocks of digit-length $i$. Since
$\len$ is additive under concatenation, it follows that
\[
\sum_{B\in\mathcal B_e}\omega^{\len(B)}
=
\left(\sum_{i=1}^{L}h_i\omega^i\right)^e
=1.
\]
By Proposition~\ref{prop:terminal-cylinder-window}, the sum of the
volumes of the finitely many sets $R_B$ is therefore exactly
$\vol(\widetilde R)$. Since their union is $\widetilde R$, all pairwise
overlaps have measure zero. Their boundaries are null by the same
proposition.
\end{proof}

We have now associated every fixed terminal block word with a cylinder
window $R_B$, whose relative volume is $\omega^{\len(B)}$. To pass from
integers to primes, it remains to represent the sequence $\evalG(n)$ by
a linear orbit on a compact torus. The next section constructs the
lattice adapted to the arithmetic weights $H_k$ and proves the required
toral congruence.

%% file: sections/06-toral-model.tex
\section{A toral model for the full window}\label{sec:volume}

Throughout this section, we work under
Assumption~\ref{ass:pisot}. By
Theorem~\ref{thm:rauzy-fundamental-domain}, the required
multiplicity-one Rauzy geometry is automatic under this assumption.
The proof of that theorem is given in
Section~\ref{sec:rauzy-verification}.

The previous section associated with every fixed terminal block word
$B$ a cylinder window $R_B\subset \widetilde R$ and showed that
\[
\frac{\vol(R_B)}{\vol(\widetilde R)}
=
\omega^{\len(B)}.
\]
Thus the relative volume associated with $B$ is already known. To turn
this geometric quantity into the density of primes with terminal block
word $B$, we place the windows on a torus and relate the digit-dependent
point $\evalG(n)$ to a linear orbit.

For this purpose, we seek a lattice $\mathcal T\subset E_s$ such that
\begin{equation}\label{eq:toral-congruence-goal}
\evalG(n)\equiv ns_0\pmod{\mathcal T}.
\end{equation}
If $\widetilde R$ projects almost everywhere injectively onto
$E_s/\mathcal T$, then each cylinder window $R_B$ determines a toral
region with the same relative measure. The congruence above then turns
the terminal-block condition into membership of the linear orbit
$ns_0$ in the corresponding toral region, outside a null exceptional
set. Once the prime-indexed orbit $ps_0$ is shown to be equidistributed,
that exceptional set is negligible and the required prime density
follows.

The Rauzy lattice $\mathcal L$ is natural for substitution prefixes and
linearizes the prefix index $m$, but it does not directly express the
arithmetic value of an arbitrary integer. We therefore introduce a
second lattice $\mathcal T$, adapted to the recurrence weights, for which
the integer parameter $n$ itself becomes linear on the torus.

The argument has three stages. First, we compare the covolumes of
$\mathcal L$ and $\mathcal T$ and show that $\widetilde R$ has exactly the
volume of one fundamental domain for $\mathcal T$. Second, we prove the
arithmetic congruence~\eqref{eq:toral-congruence-goal}. Third, after identifying
$E_s/\mathcal T$ with $\R^d/\Z^d$, we prove the rational independence
needed for the prime orbit and show that the projection of
$\widetilde R$ to the torus is one-to-one almost everywhere.

\subsection{The two lattices and the covolume problem}

We begin with the first step described above: we construct the
arithmetic lattice $\mathcal T$ and compare its covolume with that of
the Rauzy lattice $\mathcal L$. Define
\[
\mathcal T
:=
\sum_{r=1}^d \Z Q_r(\Gamma),
\qquad d:=L-1,
\]
where
\[
Q_r(X)
:=
X^{-r}+g_2X^{-(r-1)}+g_3X^{-(r-2)}+\cdots+g_rX^{-1}
\qquad (1\leqslant r\leqslant d).
\]
Equivalently, in the $h$-notation,
\[
Q_r(X)
=
X^{-r}+h_{L-1}X^{-(r-1)}+h_{L-2}X^{-(r-2)}+\cdots+h_{L-r+1}X^{-1}.
\]
For $r=1$, this is simply $Q_1(X)=X^{-1}$. The generators of
$\mathcal T$ are chosen so that the stable contribution of the
recurrence weights $H_k$ can later be compared with $H_ks_0$ modulo
$\mathcal T$.

The Rauzy lattice gives the starting point for the covolume comparison.
By Theorem~\ref{thm:rauzy-fundamental-domain} and
Corollary~\ref{cor:self-affine-volume},
\[
\covol(\mathcal L)
=
\vol(R)
=
\sum_{i=1}^{L}\vol(R_i)
=
\left(\sum_{i=1}^{L}\omega^i\right)\vol(\widetilde R).
\]
Hence the desired identity
$\vol(\widetilde R)=\covol(\mathcal T)$ will follow from the next
comparison.

\begin{proposition}\label{prop:covol-ratio}
The two lattices $\mathcal L$ and $\mathcal T$ satisfy
\[
\frac{\covol(\mathcal L)}{\covol(\mathcal T)}
=
\omega+\omega^2+\cdots+\omega^L.
\]
\end{proposition}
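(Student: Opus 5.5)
The plan is to realize both $\mathcal L$ and $\mathcal T$ as stable projections of rank-$d$ sublattices of one full lattice in $\R^L$. The covolume ratio then reduces to comparing how the two hyperplanes cut the expanding direction. Set $d=L-1$ and put $w_\varphi$ for the right Perron eigenvector of $M$, normalized by last coordinate $1$. Consider the full Minkowski-type map
\[
\Pi:\R^L\to \R\times E_s,\qquad v\mapsto \bigl(w_\varphi\cdot v,\ \pi_s(v)\bigr).
\]
It is a linear isomorphism because $M$ is diagonalizable with distinct eigenvalues, and $\Pi(\Z^L)$ is a full lattice of some covolume $c_0$.

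First I would recall the standard fibration formula. Let $H=\ker\lambda$ be a rational hyperplane with $\lambda$ primitive integral, so that $\Z^L=(\Z^L\cap H)\oplus\Z v_0$ with $\lambda(v_0)=1$. Suppose $\pi_s|_H$ is injective. Then $\pi_s(\Z^L\cap H)$ is a lattice in $E_s$, and its covolume equals $c_0\cdot|\lambda(w^\ast)|/|w_\varphi\cdot w^\ast|$ up to a factor independent of $H$. Here $w^\ast$ spans $\ker\pi_s$, the expanding eigenline of $M^{\mathsf T}$. The point is that a fundamental domain of $\Z^L$ is the product of one for $\Z^L\cap H$ with a segment along $\ker\pi_s$ of $\lambda$-length $1$. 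For $\mathcal L$ I take $H=D_0$ and $\lambda=$ the coordinate sum. Lemma~\ref{lem:rauzy-lattice-full-rank} already gives injectivity, so $\covol(\mathcal L)\propto |\sum_i w^\ast_i|$.

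Second, and this is the step I expect to be the main obstacle, I must exhibit $\mathcal T$ in the same form. I would compute the eigenvector coordinates explicitly. From $Mw=\eta w$ with $w_L=1$, one gets $w_1=\eta$ and $w_i=\eta^{-1}(h_i\eta+w_{i+1})$. Hence $(w_j)_i$ is an integer Laurent polynomial in $\eta_j$, and these are exactly tails of the form $\eta^{-r}+h_{L-1}\eta^{-(r-1)}+\cdots$ multiplied by a power of $\eta$. I would then find integer vectors $v_r\in\Z^L$ with $\pi_s(v_r)=Q_r(\Gamma)$, essentially suitable differences of standard basis vectors twisted by $M^{-1}$. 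Next I would check that $v_1,\dots,v_d$ form a $\Z$-basis of $\Z^L\cap\ker\lambda'$ for an explicit primitive functional $\lambda'$. I expect $\lambda'$ to be the functional recording the $H$-value shift that makes $\evalG(n)-ns_0$ land in $\mathcal T$. The unimodularity $h_L=1$ should make this index-one claim a triangular determinant computation. If a direct basis check fails, I would fall back on computing $|\det|$ of the $d\times d$ integer change-of-basis matrix between $\{v_r\}$ and a basis of $\Z^L\cap\ker\lambda'$.

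Finally, dividing the two fibration formulas gives
\[
\frac{\covol(\mathcal L)}{\covol(\mathcal T)}=\frac{\bigl|\sum_i w^\ast_i\bigr|}{|\lambda'(w^\ast)|}.
\]
The left Perron vector $w^\ast$ of $M$ (equivalently, the Perron vector of $M^{\mathsf T}$) can be written explicitly: solving $M^{\mathsf T}w^\ast=\varphi w^\ast$ gives $w^\ast_i=\sum_{k\geqslant i}h_k\omega^{k-i+1}$ up to scaling, by the same back-substitution. With the identity $1=\sum_k h_k\omega^k$, the numerator and denominator should reduce, after normalizing $\lambda'(w^\ast)=1$, to $\sum_i w^\ast_i=\omega+\omega^2+\cdots+\omega^L$. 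This last step is routine telescoping once $\lambda'$ is identified.
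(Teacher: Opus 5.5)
Your fibration framework is sound and genuinely different from the paper. The paper writes both covolumes as $c_\Gamma\lvert\Delta(\cdot)\rvert$. It evaluates $\Delta(Q_1,\ldots,Q_d)$ by row scaling and an anti-triangular coefficient matrix, and $\Delta(P_1,\ldots,P_d)$ by a telescoping column operation modulo $P_L^{\ast}$. Your formula $\covol(\pi_s(\Z^L\cap\ker\lambda))=c_0\lvert\lambda(w^\ast)\rvert/\lvert w_\varphi\cdot w^\ast\rvert$ is exact. To see this, take a fundamental domain $F_H$ of $\Z^L\cap H$ in $H$; then $F_H+[0,1)\,w^\ast/\lambda(w^\ast)$ is a fundamental domain of $\Z^L$.

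However, the final step fails as written. The vector $\bigl(\sum_{k\geqslant i}h_k\omega^{k-i+1}\bigr)_i$ is $\omega w_\varphi$, the Perron eigenvector of $M$ itself, not of $M^{\mathsf T}$. Since $(M^{\mathsf T}x)_1=\sum_ih_ix_i$ and $(M^{\mathsf T}x)_j=x_{j-1}$ for $j\geqslant2$, the correct kernel direction is $w^\ast=(\omega,\omega^2,\ldots,\omega^L)$ up to scaling, which is the letter-frequency vector. The two vectors agree only for $L=2$, where $M$ is symmetric. For $g=(1,3,2)$, your vector paired with the correct $\lambda'$ below gives a ratio of about $0.432$, not $\omega+\omega^2+\omega^3\approx0.464$.

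The step you flag as the main obstacle is also left as an expectation, and the suggested twist by $M^{-1}$ is a red herring. The step follows at once from the recurrence you already wrote. With $w_L=1$, the relation $w_i(\eta)=h_i+\eta^{-1}w_{i+1}(\eta)$ gives
\[
w_i(\eta)-h_i=h_{i+1}\eta^{-1}+\cdots+h_{L-1}\eta^{-(L-1-i)}+\eta^{-(L-i)}=Q_{L-i}(\eta),
\]
so $\pi_s(e_i-h_ie_L)=Q_{L-i}(\Gamma)$ for $1\leqslant i\leqslant L-1$. Because $h_L=1$, the vectors $e_i-h_ie_L$ form a $\Z$-basis of $\Z^L\cap\ker\lambda'$, where $\lambda'(v)=\sum_ih_iv_i$ is the block-count functional $\ell(W)\mapsto\lVert\mathcal E(W)\rVert$. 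Hence $\mathcal T=\pi_s(\Z^L\cap\ker\lambda')$ with index one, and no change-of-basis determinant is needed.

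With $w^\ast=(\omega,\ldots,\omega^L)$ you get $\lambda'(w^\ast)=\sum_ih_i\omega^i=1\neq0$, so transversality holds, and $\lambda(w^\ast)=\omega+\cdots+\omega^L$, exactly as you intended. Once repaired, your argument replaces the paper's two determinant evaluations by a single evaluation at the frequency vector. It also explains the constant as the ratio of letters to expanded blocks, i.e.\ the density of $N_*$ in $\N_0$.
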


To prove the proposition, it is convenient to express both covolumes by
determinants in the full list of contracting roots. For a Laurent
polynomial $F$, write
\[
F(\widetilde\Gamma)
=
\left(F(\gamma_1),\ldots,F(\gamma_d)\right),
\]
and, for Laurent polynomials $F_1,\ldots,F_d$, set
\[
\Delta(F_1,\ldots,F_d)
:=
\det\left(F_j(\gamma_i)\right)_{1\leqslant i,j\leqslant d},
\]
where $i$ is the row index and $j$ the column index. Also put
\[
V:=\Delta(1,X,\ldots,X^{d-1}).
\]

The generators of the Rauzy lattice $\mathcal L$ have stable-coordinate
expressions
\[
P_r(\Gamma),\qquad 1\leqslant r\leqslant d,
\]
where, for $1\leqslant r\leqslant L$,
\[
P_r(X)
:=
X^r-h_1X^{r-1}-h_2X^{r-2}-\cdots-h_{r-1}X-1.
\]
For completeness, let us verify this identification. Fix $\eta\in\Gamma$
and let $w(\eta)$ be the normalized right eigenvector of $M$ used in
Section~\ref{sec:general-data}, so that its last coordinate is $1$. The
last row of $Mw(\eta)=\eta w(\eta)$ gives $w_1(\eta)=\eta$, while the
$r$th row, for $1\leqslant r<L$, gives
\[
w_{r+1}(\eta)=\eta w_r(\eta)-h_r\eta.
\]
Induction therefore yields
\[
w_r(\eta)=\eta^r-h_1\eta^{r-1}-\cdots-h_{r-1}\eta
=P_r(\eta)+1.
\]
Since $\pi_s(e_L)$ has every coordinate equal to $1$, the $\eta$-coordinate
of $\pi_s(e_r-e_L)$ is $w_r(\eta)-1=P_r(\eta)$. Hence indeed
\[
\pi_s(e_r-e_L)=P_r(\Gamma).
\]
Thus $P_1(X)=X-1$, while
\[
P_L(X)=X^L-h_1X^{L-1}-\cdots-h_{L-1}X-1=P_h(X)
\]
is the characteristic polynomial introduced in
Section~\ref{sec:introduction}.

Since $P_L$ is separable, the roots $\gamma_1,\ldots,\gamma_d$ are
 distinct, so $V\neq0$.

We use $\Delta$ to express the two covolumes. More precisely, if
$F_1(\Gamma),\ldots,F_d(\Gamma)$ generate a full-rank lattice in $E_s$,
then its covolume is
\[
c_\Gamma\abs{\Delta(F_1,\ldots,F_d)},
\]
where $c_\Gamma>0$ depends only on the choice of real coordinates on
$E_s$, and not on the polynomials $F_1,\ldots,F_d$. Indeed, passing from
the full list of complex conjugates to the real coordinates
$\R^a\times\C^b$ amounts to one fixed real-linear change of coordinates.

Applying this to the two lattices gives
\[
\covol(\mathcal L)
=c_\Gamma\abs{\Delta(P_1,\ldots,P_d)},
\qquad
\covol(\mathcal T)
=c_\Gamma\abs{\Delta(Q_1,\ldots,Q_d)}.
\]
The common factor $c_\Gamma$ therefore cancels, so it remains only to
compute the two determinants.

\subsection{The covolume comparison and the toral volume}

We first compute the determinant associated with the arithmetic lattice $\mathcal T$.

\begin{lemma}\label{lem:Qdet}
One has
$$
 \abs{\Delta(Q_1,\ldots,Q_d)}=\varphi^d\abs{V}.
$$
\end{lemma}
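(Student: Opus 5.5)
The plan is to reduce $\Delta(Q_1,\ldots,Q_d)$ to a Vandermonde determinant in the inverse variables by two elementary operations: a unitriangular column reduction, followed by factoring a power of $\gamma_i$ out of each row. The only arithmetic input is the identity $\abs{\gamma_1\cdots\gamma_d}=\omega$, which was already used in the proof of Corollary~\ref{cor:self-affine-volume}.

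First I handle the columns. In the monomial basis $X^{-1},X^{-2},\ldots,X^{-d}$, each generator
\[
Q_r(X)=X^{-r}+g_2X^{-(r-1)}+\cdots+g_rX^{-1}
\]
equals $X^{-r}$ plus an integer combination of $X^{-1},\ldots,X^{-(r-1)}$. So the change of basis from $(X^{-1},\ldots,X^{-d})$ to $(Q_1,\ldots,Q_d)$ is upper unitriangular. Since $\Delta$ is multilinear and alternating in its column arguments, subtracting multiples of earlier columns from later ones (inductively in $r$) gives
\[
\Delta(Q_1,\ldots,Q_d)=\Delta(X^{-1},X^{-2},\ldots,X^{-d}).
\]

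Next I handle the rows. Row $i$ of this matrix is $(\gamma_i^{-1},\ldots,\gamma_i^{-d})=\gamma_i^{-d}\,(\gamma_i^{d-1},\ldots,\gamma_i,1)$; the $\gamma_i$ are nonzero because $P_h(0)=-1$. Pulling $\gamma_i^{-d}$ out of every row gives
\[
\Delta(X^{-1},\ldots,X^{-d})=\Bigl(\prod_{i=1}^d\gamma_i\Bigr)^{-d}\,\Delta(X^{d-1},\ldots,X,1).
\]
The remaining determinant differs from $V=\Delta(1,X,\ldots,X^{d-1})$ only by reversing the column order, so it equals $\pm V$.

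Finally I take absolute values. The product of all roots of $P_h$ has absolute value $h_L=1$, and the dominant root is $\varphi$, so $\abs{\gamma_1\cdots\gamma_d}=\varphi^{-1}=\omega$. Hence $\abs{\Delta(Q_1,\ldots,Q_d)}=\omega^{-d}\abs{V}=\varphi^d\abs{V}$. There is no genuine obstacle here; the only place needing care is the direction of the triangularity, namely checking that every lower term of $Q_r$ involves only $X^{-k}$ with $k<r$, so the column reduction is legitimate.
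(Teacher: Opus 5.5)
Your proof is correct and is essentially the paper's argument. The paper first multiplies row $i$ by $\gamma_i^d$, which turns the $Q_r$ into polynomials $S_r=X^dQ_r$ whose coefficient matrix in the basis $1,X,\ldots,X^{d-1}$ is anti-triangular with unit anti-diagonal; you do the unitriangular column reduction in the basis $X^{-1},\ldots,X^{-d}$ first, then factor out $\gamma_i^{-d}$ and reverse the columns. These are the same two operations performed in the opposite order.
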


\begin{proof}
Multiply the row corresponding to $\gamma_i$ by $\gamma_i^d$. Since
\[
\varphi\gamma_1\cdots\gamma_d=(-1)^{L+1},
\]
we have
\[
\abs{\gamma_1\cdots\gamma_d}=\varphi^{-1}.
\]
Thus these row multiplications change the absolute value of the
determinant by the factor $\varphi^{-d}$. The transformed columns are
the polynomials
\[
S_r(X):=X^dQ_r(X)
=X^{d-r}+g_2X^{d-r+1}+\cdots+g_rX^{d-1}
\]
for $1\leqslant r\leqslant d$.

Let $A$ be the coefficient matrix of $S_1,\ldots,S_d$ in the monomial
basis
\[
1,X,\ldots,X^{d-1},
\]
with the coefficient vector of $S_r$ as its $r$th column. Then $A$ is
anti-triangular with all anti-diagonal entries equal to $1$, so
$\det(A)=\pm1$. If
\[
\mathcal V:=\left(\gamma_i^{j-1}\right)_{1\leqslant i,j\leqslant d},
\]
then
\[
\left(S_j(\gamma_i)\right)_{1\leqslant i,j\leqslant d}
=\mathcal V A.
\]
Consequently,
\[
\abs{\Delta(S_1,\ldots,S_d)}
=\abs{\det(\mathcal V)}\abs{\det(A)}
=\abs{V}.
\]
Since
\[
\Delta(S_1,\ldots,S_d)
=\left(\gamma_1\cdots\gamma_d\right)^d
\Delta(Q_1,\ldots,Q_d),
\]
the claimed identity follows.
\end{proof}

In particular, the vectors
$Q_1(\Gamma),\ldots,Q_d(\Gamma)$ form a real basis of $E_s$.
Hence $\mathcal T$ is indeed a full-rank lattice in $E_s$.

It remains to measure the Rauzy lattice in the same coordinates. We next compute its corresponding determinant.

Let
$$
 P_L^*(X):=\frac{P_L(X)}{X-\varphi}.
$$
The contracting roots $\gamma_1,\ldots,\gamma_d$ are the roots of $P_L^*$. Write
$$
 P_L^*(X)=X^d+a_1X^{d-1}+a_2X^{d-2}+\cdots+a_d.
$$
A direct division gives
$$
 a_j=\varphi^j-h_1\varphi^{j-1}-h_2\varphi^{j-2}-\cdots-h_j\qquad \left(1\leqslant j\leqslant d\right),
$$
where the last coefficient also satisfies $a_d=\varphi^{-1}$.

\begin{lemma}\label{lem:Pdet}
One has
$$
 \abs{\Delta(P_1,\ldots,P_d)}=
 \frac{\varphi^d+\varphi^{d-1}+\cdots+\varphi+1}{\varphi}\abs{V}.
$$
\end{lemma}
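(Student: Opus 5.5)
The plan is to imitate the proof of Lemma~\ref{lem:Qdet}: express the columns $P_1,\ldots,P_d$, evaluated at $\gamma_1,\ldots,\gamma_d$, in the monomial basis $1,X,\ldots,X^{d-1}$. Then $\Delta(P_1,\ldots,P_d)=V\det(C)$ for an explicit coefficient matrix $C$, and it remains to compute $\det(C)$.

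\textbf{Step 1 (the columns $r<d$).} For $1\leqslant r\leqslant d-1$, the polynomial $P_r(X)=X^r-h_1X^{r-1}-\cdots-h_{r-1}X-1$ already has degree at most $d-1$. Its coefficient vector has leading entry $1$ in the $X^r$ slot and constant entry $-1$.

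\textbf{Step 2 (the column $r=d$).} The polynomial $P_d$ has degree $d$. Since $P_L^*(\gamma_i)=0$ for every $i$, one may replace $P_d$ by $\widetilde P_d:=P_d-P_L^*$ without changing the determinant. This has degree at most $d-1$, and its coefficient of $X^{d-j}$ is
\[
-h_j-a_j\quad (1\leqslant j\leqslant d-1),
\qquad
-1-a_d\quad\text{for the constant term}.
\]
Here the $a_j$ are given by the division formula $a_j=\varphi^j-h_1\varphi^{j-1}-\cdots-h_j$ stated just before the lemma, with $a_d=\varphi^{-1}$.

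\textbf{Step 3 (computing $\det C$).} The first $d-1$ columns of $C$ are unipotent and shifted: column $r$ has its pivot $1$ in row $X^r$ and zeros above it. The only obstruction to triangularity is therefore the constant row together with the full last column. I would do column operations from right to left. First subtract multiples of columns $d-1,\ldots,1$ from the last column to clear its entries in rows $X^{d-1},\ldots,X^{1}$. Since column $r$ has a $1$ in row $X^r$ and zeros in higher rows, this is a back-substitution: start at row $X^{d-1}$ and move down one row at a time. Call the resulting multipliers $c_{d-1},\ldots,c_1$. Once this is done, $\det C=\pm\kappa$, where $\kappa$ is the constant entry of the modified last column. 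Explicitly,
\[
\kappa=-1-a_d+\sum_{r=1}^{d-1}c_r.
\]

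The main obstacle is to identify $\kappa$ in closed form. The back-substitution gives $c_r$ as a linear combination of the $h$'s and $a$'s. Inserting $a_j=\varphi^j-\sum_{t\leqslant j}h_t\varphi^{j-t}$, I expect the $h$-terms to cancel recursively, leaving $c_r=-\varphi^{d-r}$ (up to a uniform sign). Then
\[
\kappa=\pm\bigl(1+\varphi^{-1}+\varphi+\cdots+\varphi^{d-1}\bigr)=\pm\frac{\varphi^d+\cdots+\varphi+1}{\varphi},
\]
using $a_d=\varphi^{-1}$.

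I would first check this guess in the case $L=2$. There $P_1(\gamma)=\gamma-1=-\varphi^{-1}-1$, so $\abs{P_1(\gamma)}=(\varphi+1)/\varphi$, as required. I would then prove the cancellation by induction on $r$, using only the recursion $a_{j}=\varphi a_{j-1}-h_j$, which follows at once from the division formula. If the back-substitution bookkeeping becomes awkward, the fallback is the equivalent evaluation trick: $\kappa$ is the value at $X=\varphi$ of a suitable combination of the $P_r$, once $\varphi$ is eliminated using $P_h(\varphi)=0$. Combining $\abs{\det C}=\abs{\kappa}$ with $\Delta(P_1,\ldots,P_d)=V\det(C)$ then gives the lemma.
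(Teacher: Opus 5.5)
Your proposal is correct and follows the paper's proof essentially step for step. The paper also reduces $P_d$ modulo $P_L^*$, performs the column operation $C_d\leftarrow C_d+\sum_{r<d}\varphi^{d-r}C_r$ (your multipliers $c_r=-\varphi^{d-r}$), and expands along the resulting last column, whose only nonzero entry is the constant term $-(\varphi^d+\cdots+\varphi+1)/\varphi$. The cancellation you left as an expectation does hold: for $1\leqslant k\leqslant d$, the coefficient of $X^k$ in $\sum_{r=1}^{d}\varphi^{d-r}P_r$ is $\varphi^{d-k}-\sum_{t=1}^{d-k}h_t\varphi^{d-k-t}=a_{d-k}$, which is exactly the coefficient of $X^k$ in $P_L^*$ (with $a_0=1$).
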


\begin{proof}
All computations take place in the quotient ring
$$
 \C[X]/(P_L^*).
$$
Let $C_1,\ldots,C_d$ be the coefficient columns of $P_1,\ldots,P_d$ in the monomial basis
$$
 1,X,\ldots,X^{d-1},
$$
where $P_d$ is reduced modulo $P_L^*$. For $1\leqslant r<d$, the column $C_r$ is read directly from the polynomial $P_r$. The last column is obtained by using
$$
 X^d\equiv -a_1X^{d-1}-a_2X^{d-2}-\cdots-a_d\pmod{P_L^*}.
$$
The key elementary cancellation is
$$
 P_d+\varphi P_{d-1}+\varphi^2P_{d-2}+\cdots+\varphi^{d-1}P_1
 \equiv
 -\frac{\varphi^d+\varphi^{d-1}+\cdots+\varphi+1}{\varphi}
 \pmod{P_L^*}.
$$
Indeed, after substituting the above reduction of $X^d$, the coefficient of each positive power of $X$ cancels telescopically. The constant term is
$$
 -a_d-1-\varphi-\varphi^2-\cdots-\varphi^{d-1}
 =-\frac{1}{\varphi}-1-\varphi-\cdots-\varphi^{d-1},
$$
which is the displayed constant.

Therefore the column operation
$$
 C_d\longleftarrow C_d+\varphi C_{d-1}+\varphi^2C_{d-2}+\cdots+\varphi^{d-1}C_1
$$
changes only the last column and replaces it by a column whose only nonzero entry is the constant entry
$$
 -\frac{\varphi^d+\varphi^{d-1}+\cdots+\varphi+1}{\varphi}.
$$
The first $d-1$ columns, restricted to the rows corresponding to $X,X^2,\ldots,X^{d-1}$, form an upper triangular matrix with diagonal entries equal to $1$. Expanding the determinant along the modified last column gives
$$
 \det(C_1,\ldots,C_d)=(-1)^d\frac{\varphi^d+\varphi^{d-1}+\cdots+\varphi+1}{\varphi}.
$$
Since
$$
 \Delta(P_1,\ldots,P_d)=\det(C_1,\ldots,C_d)V,
$$
the result follows after taking absolute values.
\end{proof}

We can now prove Proposition~\ref{prop:covol-ratio}.

\begin{proof}[Proof of Proposition~\ref{prop:covol-ratio}]
By Lemmas~\ref{lem:Qdet} and~\ref{lem:Pdet},
$$
 \frac{\covol(\mathcal L)}{\covol(\mathcal T)}
 =
 \frac{\abs{\Delta(P_1,\ldots,P_d)}}{\abs{\Delta(Q_1,\ldots,Q_d)}}
 =
 \frac{\varphi^d+\varphi^{d-1}+\cdots+\varphi+1}{\varphi^{d+1}}.
$$
Since $d=L-1$, this is
$$
 \varphi^{-1}+\varphi^{-2}+\cdots+\varphi^{-L}=\omega+\omega^2+\cdots+\omega^L.
$$
\end{proof}

The determinant comparison fixes the relative covolumes of the two lattices. Next, we normalize the arithmetic lattice to the standard integer lattice. Since $Q_1(\Gamma),\ldots,Q_d(\Gamma)$ form a real basis of $E_s$, there is a unique real-linear map
$$
 \tau:E_s\longrightarrow \R^d
$$
determined by
$$
 \tau\left(Q_r(\Gamma)\right)=e_r\qquad \left(1\leqslant r\leqslant d\right),
$$
where $e_1,\ldots,e_d$ is the standard basis of $\R^d$. Equivalently,
$$
 \mathcal T=\tau^{-1}\left(\Z^d\right).
$$
Therefore, for every measurable $A\subset E_s$,
$$
 \vol(\tau(A))=\frac{\lambda_{L-1}(A)}{\covol(\mathcal T)}.
$$

In these coordinates, the desired covolume identity becomes a unit-volume statement for the full window.

\begin{proposition}\label{prop:tau-volume}
One has
$$
 \vol(\tau(\widetilde R))=1.
$$
\end{proposition}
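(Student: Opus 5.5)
The plan is to combine the covolume comparison of Proposition~\ref{prop:covol-ratio} with the volume bookkeeping already available from the Rauzy tiling. By the scaling rule for $\tau$ recorded just before the statement, we have $\vol(\tau(\widetilde R))=\lambda_{L-1}(\widetilde R)/\covol(\mathcal T)$. The goal is therefore to show that $\vol(\widetilde R)=\covol(\mathcal T)$. Measurability of $\widetilde R$ is not an issue: it is a finite union of translates of the measurable pieces $R_i$ by Proposition~\ref{prop:measurable-partition}, and it is also compact as a closure of a bounded set.

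The first step expresses $\covol(\mathcal L)$ through $\vol(\widetilde R)$. Theorem~\ref{thm:rauzy-fundamental-domain} says that $R$ is a multiplicity-one fundamental domain for $\mathcal L$, so $\covol(\mathcal L)=\vol(R)$. The same theorem says the pieces $R_1,\ldots,R_L$ cover $R$, have pairwise disjoint interiors and null boundaries, hence overlap only on null sets. This gives $\vol(R)=\sum_{i=1}^L\vol(R_i)$. Corollary~\ref{cor:self-affine-volume} then turns this into $\covol(\mathcal L)=(\omega+\omega^2+\cdots+\omega^L)\vol(\widetilde R)$.

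The second step invokes Proposition~\ref{prop:covol-ratio}, which gives $\covol(\mathcal L)=(\omega+\cdots+\omega^L)\covol(\mathcal T)$. Since $\omega>0$, the common factor is nonzero, and dividing yields $\vol(\widetilde R)=\covol(\mathcal T)$. Hence $\vol(\tau(\widetilde R))=1$.

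The only delicate point is justifying that the overlaps among the $R_i$ are null, rather than merely knowing that their interiors are disjoint. For this we use that each $\partial R_i$ is null: two pieces can only meet in their interiors (excluded) or on a boundary, so every intersection $R_i\cap R_k$ with $i\neq k$ lies in $\partial R_i\cup\partial R_k$. Beyond this, the proof is a direct combination of results already established.
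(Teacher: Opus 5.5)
Your proof is correct and follows essentially the same route as the paper: you write $\covol(\mathcal L)=\vol(R)=\sum_i\vol(R_i)=(\omega+\cdots+\omega^L)\vol(\widetilde R)$ using Theorem~\ref{thm:rauzy-fundamental-domain} and Corollary~\ref{cor:self-affine-volume}, compare this with Proposition~\ref{prop:covol-ratio}, and cancel the positive factor. Your extra observation that $R_i\cap R_k\subseteq\partial R_i\cup\partial R_k$ for $i\neq k$ correctly justifies the additivity $\vol(R)=\sum_i\vol(R_i)$, which the paper uses without comment.
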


\begin{proof}
By Corollary~\ref{cor:self-affine-volume},
$$
 \vol(R_i)=\omega^i\vol(\widetilde R).
$$
Since $R$ is a fundamental domain for $\mathcal L$,
$$
 \covol(\mathcal L)=\vol(R)=\sum_{i=1}^{L}\omega^i\vol(\widetilde R).
$$
The covolume ratio gives
$$
 \covol(\mathcal L)=\left(\sum_{i=1}^{L}\omega^i\right)\covol(\mathcal T).
$$
Canceling $\sum_{i=1}^{L}\omega^i>0$ yields
$$
 \vol(\widetilde R)=\covol(\mathcal T).
$$
Since $\mathcal T=\tau^{-1}\left(\Z^d\right)$, this is equivalent to $\vol(\tau(\widetilde R))=1$.
\end{proof}

\subsection{The arithmetic congruence modulo $\mathcal T$}

The preceding subsection shows that $\widetilde R$ has exactly the
volume of one fundamental domain for $\mathcal T$. This identifies
$\mathcal T$ as a lattice of the correct geometric size, but it does
not yet explain why it is the right lattice for the arithmetic
parameter $n$. It remains to establish this second property.

The key point is first checked for a single weight $H_k$: the
contracting vector attached to the $k$th digit position differs from
$H_ks_0$ by an element of $\mathcal T$. Once this is known, summing
these relations with the digits of the canonical infinite
$H$-expansion turns the digit-dependent quantity $\evalG(n)$ into the
linear quantity $ns_0$ modulo $\mathcal T$.

The toral congruence is purely algebraic: it uses only the recurrence
for $H_k$, the relation $P_L(\gamma)=0$, and the definition of
$\mathcal T$.

\begin{lemma}\label{lem:single-weight-congruence}
For every $k\geqslant 1$, one has
$$
 \left(\eta_j^{k-1}\right)_j-H_ks_0\in\mathcal T.
$$
\end{lemma}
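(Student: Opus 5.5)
The plan is to work formally in the ring of Laurent polynomials and specialize at the contracting roots only at the end. For $k\geqslant1$ put
\[
D_k(X):=X^{k-1}-H_k\in\Z[X,X^{-1}].
\]
The claim says exactly that the vector $D_k(\Gamma)=\bigl(D_k(\eta_j)\bigr)_j$ lies in $\mathcal T$. Let $\mathcal T'$ be the $\Z$-span of $Q_1,\ldots,Q_d$ in the quotient ring $\mathcal O:=\Z[X,X^{-1}]/(P_L)$. Specialization $X\mapsto\eta_j$ is a ring homomorphism $\mathcal O\to\C$ for every $j$, since $P_L(\eta_j)=0$ and $\eta_j\neq0$. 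Carried out in all coordinates at once, it sends $\mathcal T'$ into $\mathcal T$. It therefore suffices to show that $D_k\in\mathcal T'$ for every $k\geqslant1$.

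The first step is to record the relation $P_L(X)=0$ in the form $X^{m}=\sum_{r=1}^{L}h_rX^{m-r}$ in $\mathcal O$, valid for every $m\in\Z$; this follows by multiplying by a power of $X$, which is invertible in $\mathcal O$. The argument then proceeds by induction on $k$.

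\textbf{The case $k\geqslant L+1$.} Take $m=k-1$ in the relation. Combined with the recurrence $H_k=\sum_{r=1}^{L}h_rH_{k-r}$, it gives
\[
D_k=\sum_{r=1}^{L}h_rD_{k-r}.
\]
Since $\mathcal T'$ is an additive group, this case follows from the induction hypothesis.

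\textbf{The case $k=1$.} Here $D_1=1-H_1=0$.

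\textbf{The cases $2\leqslant k\leqslant L$.} The initial relation $H_k=\sum_{r=1}^{k-1}h_rH_{k-r}$ yields
\[
D_k-\sum_{r=1}^{k-1}h_rD_{k-r}=X^{k-1}-\sum_{r=1}^{k-1}h_rX^{k-1-r}.
\]
By the relation with $m=k-1$, the right-hand side equals $\sum_{r=k}^{L}h_rX^{k-1-r}$ in $\mathcal O$.

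The main point to check is that this last Laurent polynomial is exactly one of the generators. Write it as $h_LX^{-(L-k+1)}+h_{L-1}X^{-(L-k)}+\cdots+h_kX^{-1}$ and compare with the $h$-form
\[
Q_s(X)=X^{-s}+h_{L-1}X^{-(s-1)}+\cdots+h_{L-s+1}X^{-1}.
\]
With $s=L-k+1$ and $h_L=1$, the two expressions coincide term by term. Moreover $1\leqslant s\leqslant d$ precisely when $2\leqslant k\leqslant L$. Hence $D_k\in\mathcal T'$ by induction in these cases too.

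No analytic input is needed. The only delicate point is the index bookkeeping just described: the tail of the characteristic identity must match exactly the generator $Q_{L-k+1}$ of $\mathcal T$.
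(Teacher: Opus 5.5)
Your proof is correct and follows essentially the same route as the paper: define $D_k=X^{k-1}-H_k$, use the initial relations together with the tail of the characteristic identity to produce the generator $Q_{L-k+1}$ for $2\leqslant k\leqslant L$, and use the order-$L$ recurrence for $k>L$. The only difference is cosmetic. You work formally in $\Z[X,X^{-1}]/(P_L)$ and specialize at the end, whereas the paper divides $P_L(\gamma)=0$ by $\gamma^{L-r}$ at each contracting root directly.
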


\begin{proof}
For $k\geqslant1$, put
\[
D_k(X):=X^{k-1}-H_k.
\]
We prove that the vector
\[
D_k(\Gamma):=\left(D_k(\eta_j)\right)_j
\]
belongs to $\mathcal T$.

For $k=1$, this is immediate because $H_1=1$. Now let
$k=r+1$ with $1\leqslant r\leqslant L-1$. For every contracting root
$\gamma$, dividing $P_L(\gamma)=0$ by $\gamma^{L-r}$ gives
\[
\gamma^r-h_1\gamma^{r-1}-\cdots-h_r
=
h_{r+1}\gamma^{-1}+\cdots+h_L\gamma^{-(L-r)}
=
Q_{L-r}(\gamma).
\]
For $1\leqslant q\leqslant r$, notice that
\[
h_q\gamma^{r-q}
=
h_qD_{r+1-q}(\gamma)+h_qH_{r+1-q}.
\]
Using the initial recurrence
\[
H_{r+1}=\sum_{q=1}^{r}h_qH_{r+1-q},
\]
we therefore obtain
\[
D_{r+1}(\gamma)
=
\sum_{q=1}^{r}h_qD_{r+1-q}(\gamma)
+
Q_{L-r}(\gamma).
\]
Applying this identity simultaneously to all contracting conjugates
gives
\[
D_{r+1}(\Gamma)
=
\sum_{q=1}^{r}h_qD_{r+1-q}(\Gamma)
+
Q_{L-r}(\Gamma).
\]
Induction on $r$ now gives
$D_k(\Gamma)\in\mathcal T$ for $1\leqslant k\leqslant L$.

For $k>L$, each sequence $(\eta_j^{k-1})_{k\geqslant1}$ and the
sequence $(H_k)_{k\geqslant1}$ satisfy the same order-$L$ recurrence.
Hence the vectors $D_k(\Gamma)$ satisfy
\[
D_{n+L}(\Gamma)
=
h_1D_{n+L-1}(\Gamma)+\cdots+h_LD_n(\Gamma)
\qquad (n\geqslant1).
\]
Since the first $L$ vectors lie in the lattice $\mathcal T$, all later
ones do as well. This proves the claim.
\end{proof}

The preceding lemma gives the required congruence separately for each
digit position. Since the canonical infinite $H$-expansion has finite
support, we may now sum these relations with the corresponding digit
coefficients.

\begin{proposition}\label{prop:toral-congruence}
For every $n\in\N_0$,
\[
\evalG(n)\equiv ns_0\pmod{\mathcal T}.
\]
\end{proposition}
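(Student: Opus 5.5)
The plan is to sum the single-weight congruences of Lemma~\ref{lem:single-weight-congruence}, weighted by the digits of the canonical infinite $H$-expansion of $n$. Write $[n]_H^{-1}=(a_k)_{k\geqslant1}$, which has finite support. By the definition of the stable evaluation in Section~\ref{sec:general-data},
\[
\evalG(n)=\sum_{k\geqslant1}a_k\left(\eta_j^{k-1}\right)_j ,
\]
and by the defining property of the canonical expansion (Theorem~\ref{thm:block-language}),
\[
n=\evalH{[n]_H^{-1}}{H}=\sum_{k\geqslant1}a_kH_k .
\]

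The steps are the following. First, form the difference
\[
\evalG(n)-ns_0=\sum_{k\geqslant1}a_k\Bigl(\left(\eta_j^{k-1}\right)_j-H_ks_0\Bigr).
\]
Here we use linearity of multiplication by $s_0$ together with $\sum_k a_kH_k=n$. Second, note that this is a finite sum, since only finitely many $a_k$ are nonzero. Third, observe that each bracketed term lies in $\mathcal T$ by Lemma~\ref{lem:single-weight-congruence}, and each coefficient $a_k$ is a non-negative integer. Since $\mathcal T$ is a subgroup of $E_s$, closed under integer linear combinations, the difference lies in $\mathcal T$.

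The case $n=0$ is consistent with this argument: there all $a_k$ vanish, and $\evalG(0)=0$. It is therefore covered with no separate treatment.

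There is no real obstacle, since all the algebraic content is already in Lemma~\ref{lem:single-weight-congruence}. The only care needed is bookkeeping. We must confirm that the first digit position is weighted by $s_0=(\eta_j^{0})_j$, matching $k=1$ with $H_1=1$. We must also confirm that the $k$th position is weighted by $(\eta_j^{k-1})_j$, consistent with the index convention in the lemma. Both conventions agree with the formula for $\evalG$ given in Section~\ref{sec:general-data}.
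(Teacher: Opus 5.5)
Your proposal is correct and is essentially the paper's own proof. Both write $\evalG(n)-ns_0$ as the finite integer combination $\sum_{k\geqslant1}a_k\bigl((\eta_j^{k-1})_j-H_ks_0\bigr)$, using $n=\sum_k a_kH_k$, and then apply Lemma~\ref{lem:single-weight-congruence} to each term.
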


\begin{proof}
Let $(a_k)_{k\geqslant1}$ be the digit sequence such that
\[
[n]_H^{-1}=(a_k)_{k\geqslant1}.
\]
By definition,
\[
\evalG(n)-ns_0
=\sum_{k\geqslant1}a_k
\left(\left(\eta_j^{k-1}\right)_j-H_ks_0\right).
\]
The sum is finite, the coefficients $a_k$ are integers, and each summand lies in $\mathcal T$ by the preceding lemma. Therefore the whole difference lies in $\mathcal T$.
\end{proof}

We express this congruence in standard torus coordinates. Recall
that the linear isomorphism
\[
\tau:E_s\longrightarrow\R^d
\]
sends $\mathcal T$ onto $\Z^d$. Put
\[
\mathbf v:=\tau(s_0)=(v_1,\ldots,v_d).
\]
Applying $\tau$ to Proposition~\ref{prop:toral-congruence} gives
\[
\tau(\evalG(n))\equiv n\mathbf v\pmod{\Z^d}.
\]
Thus, after quotienting by $\mathcal T$, all dependence on the digits
of $n$ disappears: the point represented by $\evalG(n)$ is simply the
$n$th point of the linear orbit generated by $\mathbf v$ on
$\R^d/\Z^d$. This is the arithmetic linearization that will allow us
to replace integer evaluations, and later prime evaluations, by a
toral orbit.

Two properties remain to be checked before this coding can be used for
primes.

\subsection{Rational independence and almost-everywhere injectivity}

The next section will apply Vinogradov's theorem through Weyl's
criterion to the prime orbit $p\mathbf v$ modulo $\Z^d$. The required
nondegeneracy is that every nonzero integer frequency
$a\in\Z^d$ gives an irrational number $a\cdot\mathbf v$; equivalently,
$1,v_1,\ldots,v_d$ must be linearly independent over $\Q$.

There is a second issue. Passing from $\widetilde R$ to the quotient
torus must not lose the terminal-block information on a set of
positive measure. We therefore also need the projection
$\widetilde R\to E_s/\mathcal T$ to be one-to-one almost everywhere.
Then a toral point determines, outside a null set, a unique point of
$\widetilde R$ and hence the cylinder window to which it belongs.

These two facts are obtained from the same comparison. On the one hand,
Proposition~\ref{prop:full-equidistribution} describes the
limiting distribution of the integer evaluations inside
$\widetilde R$. On the other hand, the toral congruence identifies
their images with the orbit $n\mathbf v$. Comparing these two
descriptions first forces rational independence of $\mathbf v$, and
then, using $\vol(\tau(\widetilde R))=1$, forces multiplicity one
almost everywhere.

\begin{proposition}\label{prop:rational-independence}
The numbers
$$
 1,v_1,
 \ldots,v_d
$$
are linearly independent over $\Q$.
\end{proposition}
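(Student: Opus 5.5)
Following the paper's announced strategy, I would compare the two descriptions of the integer orbit. I would argue by contradiction: suppose there is a nonzero $a\in\Z^d$ and an integer $c$ with $a\cdot\mathbf v=c/N$ for some $N\geqslant1$, i.e. $a\cdot\mathbf v\in\Q$. Consider the character $\chi(x):=e\bigl(a\cdot x\bigr)$ on $\R^d/\Z^d$ and the continuous function $f:=\chi\circ\tau$ on $E_s$, which is $\mathcal T$-periodic. By Proposition~\ref{prop:toral-congruence}, $f(\evalG(n))=e(n\,a\cdot\mathbf v)$. This is a periodic sequence in $n$ with period $N$.

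Next I apply Proposition~\ref{prop:full-equidistribution} to the function $\overline{f}^{\,k}\cdot$ (more precisely to $f$ itself, and also to functions of the form $f\cdot\mathbf 1$ restricted suitably). The simplest route is to use $F:=f^N$... but $f^N(\evalG(n))=1$ identically, so equidistribution yields $\int_{\widetilde R}(f^N-1)\,d\lambda_{L-1}=0$, i.e. $\operatorname{Re}f^N=1$ almost everywhere on $\widetilde R$. Since $f^N=e(Na\cdot\tau(x))$ is continuous and $\widetilde R$ has positive measure, this forces $Na\cdot\tau(x)\in\Z$ on a set of positive Lebesgue measure. But $x\mapsto Na\cdot\tau(x)$ is a nonzero real-linear functional on $E_s$ (as $\tau$ is an isomorphism and $Na\neq0$), so its preimage of $\Z$ is a countable union of affine hyperplanes, which has measure zero. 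This contradiction gives the claim. Note that if $a\cdot\mathbf v$ is rational I may take $N$ to be its denominator, so that $Na\cdot\mathbf v\in\Z$ and hence $f^N(\evalG(n))=e(nNa\cdot\mathbf v)=1$ exactly.

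The only delicate point is ensuring positive measure of $\widetilde R$, which follows from Proposition~\ref{prop:tau-volume} ($\vol(\tau(\widetilde R))=1$) or directly from Theorem~\ref{thm:rauzy-fundamental-domain} and Corollary~\ref{cor:self-affine-volume}. Everything else is a direct application of equidistribution to the continuous function $f^N$ on the compact set $\widetilde R$, so I expect no real obstacle beyond keeping the rational-versus-integer normalization straight.
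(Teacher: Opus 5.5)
Your argument is correct and is essentially the paper's proof: a rational relation would trap the orbit $\tau(\evalG(n))\equiv n\mathbf v \pmod{\Z^d}$ in a null set. That contradicts the absolutely continuous limit law on $\widetilde R$ given by Proposition~\ref{prop:full-equidistribution}, with $\lambda_{L-1}(\widetilde R)>0$ supplied by Proposition~\ref{prop:tau-volume}.

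The paper phrases this on the torus, via the closed subgroup $\{y:\ a\cdot y\equiv 0\pmod{1}\}$ and the periodized density of the pushforward measure. You instead test against the single continuous function $e(Na\cdot\tau(x))$ and use that the level sets of a nonzero linear functional are Lebesgue-null. This is the same idea in a slightly more direct form, and the false starts in your second paragraph can simply be deleted.
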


\begin{proof}
Set
\[
A:=\tau(\widetilde R)\subset\R^d
\]
and let
\[
q:\R^d\longrightarrow\R^d/\Z^d
\]
be the quotient map. By Proposition~\ref{prop:tau-volume},
$\vol(A)=1$. Proposition~\ref{prop:full-equidistribution} and
the continuity of $q\circ\tau$ imply that
\[
q\!\left(\tau(\evalG(n))\right)
\]
is equidistributed with respect to the pushforward $\nu$ of Lebesgue
measure on $A$ under $q$. By the toral congruence, this is the same
sequence as $n\mathbf v\pmod{\Z^d}$.

Suppose, to the contrary, that $1,v_1,\ldots,v_d$ satisfy a nontrivial
rational relation. Clearing denominators, there exist $a_0\in\Z$ and
a nonzero vector $a\in\Z^d$ such that
\[
a_0+a\cdot\mathbf v=0.
\]
Then, for every $n\geqslant0$,
\[
a\cdot(n\mathbf v)=-na_0\in\Z,
\]
so the entire toral orbit is contained in the proper closed subgroup
\[
H:=\left\{y\in\R^d/\Z^d:a\cdot y=0\pmod1\right\}.
\]
Consequently, its limiting measure $\nu$ is supported on $H$.

On the other hand, the pushforward of Lebesgue measure on $A$ under
$q$ is absolutely continuous with respect to Haar measure on
$\R^d/\Z^d$: its density is the periodized indicator
\[
m_A(y):=\sum_{z\in\Z^d}\mathbf 1_A(\widetilde y+z),
\]
where $\widetilde y$ is any representative of $y$. Since $H$ has Haar
measure zero, this gives $\nu(H)=0$, contradicting the fact that
$\nu$ is supported on $H$. Therefore $1,v_1,\ldots,v_d$ are linearly
independent over $\Q$.
\end{proof}

We have therefore proved that the linear orbit
$n\mathbf v\pmod{\Z^d}$ is Haar equidistributed. We next compare this
Haar distribution with the distribution obtained by projecting
Lebesgue measure from $\tau(\widetilde R)$. Since
$\vol(\tau(\widetilde R))=1$, equality of the two measures will force
almost every torus point to have exactly one representative in
$\tau(\widetilde R)$.

\begin{proposition}\label{prop:ae-injectivity}
The projection
$$
 \widetilde R\longrightarrow E_s/\mathcal T
$$
is one-to-one almost everywhere.
\end{proposition}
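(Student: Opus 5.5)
We work in the coordinates given by $\tau$. Put $A:=\tau(\widetilde R)\subset\R^d$ and let $q:\R^d\to\R^d/\Z^d$ be the quotient map. Proving the proposition amounts to showing that the multiplicity function
\[
m_A(y):=\sum_{z\in\Z^d}\mathbf 1_A(\widetilde y+z)
\]
equals $1$ for Haar-almost every $y\in\R^d/\Z^d$, where $\widetilde y$ is any representative of $y$. Indeed, $m_A(y)$ counts the preimages of $y$ in $A$. Since $\tau$ is a linear isomorphism carrying $\mathcal T$ onto $\Z^d$, this is exactly almost-everywhere injectivity of $\widetilde R\to E_s/\mathcal T$.

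The plan is to compare two descriptions of the limiting distribution of the same sequence.

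First, as in the proof of Proposition~\ref{prop:rational-independence}, Proposition~\ref{prop:full-equidistribution} together with continuity of $q\circ\tau$ shows that $q(\tau(\evalG(n)))$ is equidistributed with respect to $\nu$, the pushforward of Lebesgue measure on $A$ under $q$. Since $A$ is compact of volume one (Proposition~\ref{prop:tau-volume}), $\nu$ is a probability measure with density $m_A$ with respect to Haar measure $\mu$. This density is bounded, because a compact set meets only finitely many translates of a fundamental cell.

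Second, the toral congruence (Proposition~\ref{prop:toral-congruence}, transported by $\tau$) identifies this sequence with the orbit $n\mathbf v \bmod \Z^d$. By Proposition~\ref{prop:rational-independence} and Weyl's criterion, that orbit is equidistributed with respect to $\mu$.

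Limits of equidistribution are unique: both measures are determined by the averages of continuous functions along the same sequence. Hence $\nu=\mu$, so $m_A=1$ almost everywhere.

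Concretely, testing against characters $e(a\cdot y)$ with $a\in\Z^d$ shows that the Fourier coefficients of $m_A$ agree with those of the constant function $1$. Here $m_A\in L^1(\mu)$, with $\int m_A\,d\mu=\vol(A)=1$. Fourier uniqueness in $L^1$ then gives $m_A=1$ $\mu$-almost everywhere.

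Finally, translate back: the set of torus points with two or more preimages in $\tau(\widetilde R)$ is Haar-null, and $\tau^{-1}$ maps this conclusion to $E_s/\mathcal T$. I would also note that the points of $\widetilde R$ mapping to this null set form a Lebesgue-null subset of $\widetilde R$. This holds because $q\circ\tau$ is locally measure-preserving and the preimage is a countable union of translates of a null set.

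The main obstacle is the measure-theoretic bookkeeping in the first step. One must check that equidistribution in $\widetilde R$ with respect to normalized Lebesgue measure pushes forward correctly to $\nu$ on the torus, which requires continuity of $q\circ\tau$ and $\vol(A)=1$, so that $\nu$ is a probability measure. One must also check that $m_A$ is a genuine density, which uses measurability of $\widetilde R$ from Corollary~\ref{cor:self-affine-volume}. Once these are in place, uniqueness of weak limits does the rest.
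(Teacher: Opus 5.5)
Your proposal is correct and follows essentially the same route as the paper: you compare the Haar equidistribution of $n\mathbf v$ (from rational independence and the toral congruence) with the pushforward of Lebesgue measure on $\tau(\widetilde R)$ (from Proposition~\ref{prop:full-equidistribution}), conclude that the pushforward is Haar, and use $\vol(\tau(\widetilde R))=1$ to force the multiplicity function to equal $1$ almost everywhere. Your explicit Fourier-uniqueness check and your local measure-preservation argument for the null preimage are harmless variants of the paper's own step, which observes that this preimage is null because the pushforward measure is Haar.
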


\begin{proof}
Put again $A:=\tau(\widetilde R)$ and let
$q:\R^d\to\R^d/\Z^d$ be the quotient map. By
Proposition~\ref{prop:rational-independence},
the numbers $1,v_1,\ldots,v_d$ are linearly independent over $\Q$.
Weyl's criterion \cite[Chapter~1]{KuipersNiederreiter1974} therefore
gives Haar equidistribution of
\[
n\mathbf v\pmod{\Z^d}.
\]
By Proposition~\ref{prop:toral-congruence}, the same is true for
$q(\tau(\evalG(n)))$.

On the other hand, Proposition~\ref{prop:full-equidistribution}
shows that this projected sequence is equidistributed with respect to
the pushforward of normalized Lebesgue measure on $A$. Hence that
pushforward is Haar measure.

As in the preceding proof, its density with respect to Haar measure is
the multiplicity function
\[
m(y):=\#\left(A\cap q^{-1}(y)\right).
\]
Proposition~\ref{prop:tau-volume} gives $\vol(A)=1$, so no
additional normalization factor occurs. Since the pushforward measure
is Haar measure, we obtain
\[
m(y)=1
\]
for almost every $y\in\R^d/\Z^d$. Hence the set of torus points with
more than one representative in $A$ has Haar measure zero. Its
preimage in $A$ also has Lebesgue measure zero, because the pushforward
of Lebesgue measure on $A$ is Haar measure. After removing this null
set, $q$ is one-to-one on $A$. Transporting the statement back through
the linear isomorphism $\tau$ proves that
\[
\widetilde R\longrightarrow E_s/\mathcal T
\]
is one-to-one almost everywhere.
\end{proof}

\subsection{A general sampling principle}

The preceding arguments do not depend on sampling the orbit at every
integer. Once the toral orbit attached to a sequence of indices is Haar
equidistributed, the same coding transfers that equidistribution to the
full window and hence to every terminal cylinder. We record this
principle because it separates the geometric part of the paper from the
particular arithmetic sequence used to sample it.

To use this toral model along subsequences, we first isolate the small
exceptional set on which the coding may fail to be one-to-one.
Let
\[
\operatorname{rem}:\R^d\longrightarrow\R^d/\Z^d
\]
be the quotient map and set
\[
F:=\operatorname{rem}\circ\tau:
\widetilde R\longrightarrow\R^d/\Z^d.
\]

\begin{lemma}\label{lem:regular-toral-coding}
Suppose Assumption~\ref{ass:pisot} holds. Then
$F(\widetilde R)=\R^d/\Z^d$. Moreover,
\[
E:=F(\partial\widetilde R)
\]
is a compact Haar-null set, and every point of
$(\R^d/\Z^d)\setminus E$ has a unique representative in
$\widetilde R$. The inverse representative map is continuous on
$(\R^d/\Z^d)\setminus E$.
\end{lemma}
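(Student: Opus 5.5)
The plan is to combine three earlier facts: the volume identity $\vol(A)=1$ for $A:=\tau(\widetilde R)$ (Proposition~\ref{prop:tau-volume}), almost-everywhere injectivity (Proposition~\ref{prop:ae-injectivity}), and the structure of $\widetilde R$ as a compact set with null boundary (Proposition~\ref{prop:measurable-partition} and Theorem~\ref{thm:rauzy-fundamental-domain}). Compactness of $\widetilde R$ holds because it is a closure of a bounded set: the digits are bounded by $\max h_i$ and $\abs{\eta_j}<1$. So $F$ is continuous on a compact set and $E=F(\partial\widetilde R)$ is compact. Since $\tau$ is linear and $\operatorname{rem}$ is a local isometry, $E$ is Haar-null, because $\lambda_{L-1}(\partial\widetilde R)=0$ and $\partial\widetilde R\subseteq\bigcup\partial R_{i,j}$.

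\textbf{Surjectivity.} By Proposition~\ref{prop:toral-congruence}, $F(\evalG(n))=n\mathbf v \bmod \Z^d$. This orbit is dense by Proposition~\ref{prop:rational-independence} and Weyl. The set $F(\widetilde R)$ is compact and contains the orbit, hence equals the torus.

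\textbf{Uniqueness off $E$.} This is the main step. First I would show that the interior translates $\operatorname{int}(A)+z$, $z\in\Z^d$, are pairwise disjoint. If $\operatorname{int}(A)\cap(\operatorname{int}(A)+z)\neq\emptyset$ for some $z\neq0$, the intersection is a nonempty open set, hence of positive measure. Every point of it would give a torus point with at least two representatives in $A$. That contradicts the multiplicity $m(y)=1$ almost everywhere established in the proof of Proposition~\ref{prop:ae-injectivity}.

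Now take $y\notin E$. By surjectivity it has a representative $x\in A$. Since $y\notin E$, no representative lies on $\partial A=\tau(\partial\widetilde R)$, so every representative lies in $\operatorname{int}(A)$. Two distinct representatives would differ by some $z\neq0$ and would both be interior points, contradicting the disjointness just proved. Hence the representative is unique.

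\textbf{Continuity.} Let $y_0\notin E$ with representative $x_0\in\operatorname{int}(A)$. Choose a ball $U\subset\operatorname{int}(A)$ around $x_0$, small enough that $\operatorname{rem}|_U$ is a homeomorphism onto an open set $V\ni y_0$. For $y\in V\setminus E$, the point $(\operatorname{rem}|_U)^{-1}(y)$ lies in $A$ and is a representative of $y$. By uniqueness it is the representative. So the inverse map agrees locally with a continuous local inverse, and is therefore continuous. Transporting back through $\tau^{-1}$ gives the statement for $\widetilde R$.

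The main obstacle is the uniqueness step, specifically turning the almost-everywhere multiplicity one into disjointness of the interior translates. The argument given above is the one I would use: an overlap of interiors is open, so it would have positive measure, which the almost-everywhere statement rules out.
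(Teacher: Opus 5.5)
Your proposal is correct and follows essentially the paper's argument. Uniqueness off $E$ comes from the fact that an overlap of interiors would be open, which contradicts almost-everywhere injectivity, and continuity comes from the local inverse of the quotient map. The only deviation is in surjectivity: you use density of the orbit $n\mathbf v$ inside the closed set $F(\widetilde R)$, whereas the paper uses that the pushforward of Lebesgue measure on $\widetilde R$ is Haar, so the closed set $F(\widetilde R)$ has full measure; both are one-line arguments.
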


\begin{proof}
By Proposition~\ref{prop:terminal-cylinder-window} with
$B=\varepsilon$, the boundary $\partial\widetilde R$ has Lebesgue
measure zero. Since $\widetilde R$ is compact and $F$ is Lipschitz, the
set $E=F(\partial\widetilde R)$ is compact and Haar-null.

Propositions~\ref{prop:full-equidistribution},
\ref{prop:toral-congruence}, and
\ref{prop:rational-independence} show that the pushforward
of normalized Lebesgue measure on $\widetilde R$ under $F$ is Haar
measure on $\R^d/\Z^d$; this is also the measure comparison used in the
proof of Proposition~\ref{prop:ae-injectivity}. Consequently,
$F(\widetilde R)$ has full Haar measure. It is compact, hence closed, so
its complement is an open set of Haar measure zero and must be empty.
Thus $F$ is onto.

Now let $y\notin E$. Any representative of $y$ in $\widetilde R$ lies
in the interior of $\widetilde R$. If $y$ had two distinct
representatives $x,x'\in\operatorname{int}(\widetilde R)$, then, since
$F$ is locally a homeomorphism, a whole neighborhood of $y$ would have
at least two representatives in $\widetilde R$. This would contradict
the almost-everywhere injectivity of
Proposition~\ref{prop:ae-injectivity}. Hence the representative
is unique.

Finally, let $y\notin E$ and let $x$ be its unique representative.
Since $F=\operatorname{rem}\circ\tau$ is a local homeomorphism, there
is an open neighborhood $U\subset\operatorname{int}(\widetilde R)$ of
$x$ such that $F|_U$ is a homeomorphism onto an open neighborhood $V$ of
$y$. As $E$ is closed and $y\notin E$, we may shrink $V$, and $U$
accordingly, so that $V\cap E=\varnothing$. Every point of $V$ then has
a unique representative in $\widetilde R$, so the global inverse
representative map agrees on $V$ with $(F|_U)^{-1}$. It is therefore
continuous at $y$.
\end{proof}

The regular coding lemma immediately yields the following transfer
principle.

\begin{proposition}
\label{prop:toral-sampling-principle}
Suppose Assumption~\ref{ass:pisot} holds. Let
$(K_n)_{n\geqslant1}$ be a sequence in $\N_0$ such that
\[
K_n\mathbf v\pmod{\Z^d}
\]
is Haar equidistributed in $\R^d/\Z^d$. Then
$\bigl(\evalG(K_n)\bigr)_{n\geqslant1}$ is equidistributed in
$\widetilde R$ with respect to normalized Lebesgue measure. Moreover,
for every fixed $e\geqslant1$ and every $B\in\mathcal B_e$,
\[
\frac1N
\#\left\{1\leqslant n\leqslant N:\TBe{e}(K_n)=B\right\}
\longrightarrow
\omega^{\len(B)}.
\]
\end{proposition}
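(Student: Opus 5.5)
The plan is to push the assumed Haar equidistribution of $K_n\mathbf v$ back to $\widetilde R$ through the regular toral coding of Lemma~\ref{lem:regular-toral-coding}, and then read off cylinder frequencies from the volumes of the windows $R_B$. Let $G:(\R^d/\Z^d)\setminus E\to\widetilde R$ be the continuous inverse representative map given by that lemma. By Proposition~\ref{prop:toral-congruence}, $F(\evalG(K_n))=K_n\mathbf v\bmod\Z^d$. Whenever $K_n\mathbf v\notin E$, uniqueness of the representative gives $\evalG(K_n)=G(K_n\mathbf v)$. Conversely, if $\evalG(K_n)\in\partial\widetilde R$ then $K_n\mathbf v\in E$.

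First I show that the exceptional indices are negligible. The set $E$ is compact and Haar-null, so for every $\epsilon>0$ there is an open neighborhood $U_\epsilon\supset E$ of Haar measure less than $\epsilon$. Equidistribution then gives $\limsup_N N^{-1}\#\{n\leqslant N:K_n\mathbf v\in U_\epsilon\}\leqslant\epsilon$, so the indices with $K_n\mathbf v\in E$ have density zero.

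Next, fix $f\in C(\widetilde R)$ and define $\widetilde f:=f\circ G$ off $E$ and $\widetilde f:=0$ on $E$. This function is bounded, and it is continuous off a closed Haar-null set, hence Riemann integrable. Haar equidistribution therefore yields
\[
\frac1N\sum_{n\leqslant N} f(\evalG(K_n))\longrightarrow\int\widetilde f\,d\mathrm{Haar}.
\]
The discrepancy between the two averages is supported on the density-zero exceptional indices, so it does not affect the limit. It remains to identify the limit. As in the proofs of Propositions~\ref{prop:ae-injectivity} and~\ref{lem:regular-toral-coding}, the pushforward of normalized Lebesgue measure on $\widetilde R$ under $F$ is Haar measure, and $F$ is injective off a null set. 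Hence $\int\widetilde f\,d\mathrm{Haar}=\vol(\widetilde R)^{-1}\int_{\widetilde R}f$, which proves the first claim.

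For the cylinder statement, fix $B\in\mathcal B_e$. By Proposition~\ref{prop:terminal-cylinder-partition} the windows $R_{B'}$, $B'\in\mathcal B_e$, partition $\widetilde R$ up to null sets and have null boundaries. Set $Z:=\bigcup_{B'}\partial R_{B'}$, a closed null set. By definition $\evalG(K_n)\in R_{\TBe{e}(K_n)}$. If $\evalG(K_n)\notin Z$, then it lies in the interior of that window and in no other window, since a point of $R_{B'}\setminus\partial R_{B'}$ that also lay in $R_{B''}$ would make the interiors of the two windows overlap in an open set, which would have positive measure. Thus, outside $Z$, $\TBe{e}(K_n)=B$ exactly when $\evalG(K_n)\in R_B$. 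Since $\lambda(Z)=0$, the equidistribution just proved gives density zero for the indices with $\evalG(K_n)\in Z$, by the same neighborhood argument. Because $R_B$ is a continuity set, the frequency of $\evalG(K_n)\in R_B$ tends to $\vol(R_B)/\vol(\widetilde R)=\omega^{\len(B)}$ by Proposition~\ref{prop:terminal-cylinder-window}.

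The main obstacle I expect is the middle step: making rigorous that $\widetilde f$ integrates to the normalized Lebesgue integral over $\widetilde R$. This requires the pushforward identity together with almost-everywhere injectivity, and care with the boundary set $E$. I would rely on the measure comparison already established in the proof of Lemma~\ref{lem:regular-toral-coding} rather than recompute it.
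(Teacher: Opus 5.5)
Your proposal is correct and follows essentially the same route as the paper. Like the paper, you lift $f\in C(\widetilde R)$ to a Riemann-integrable function $\widetilde f$ through the unique representative off $E=F(\partial\widetilde R)$, identify the limit using $F(\evalG(K_n))=K_n\mathbf v$ and the fact that $F$ pushes normalized Lebesgue measure to Haar measure, and then read off the cylinder frequencies $\vol(R_B)/\vol(\widetilde R)=\omega^{\len(B)}$ after discarding a closed null exceptional set. The only differences are minor: the paper discards cylinder boundaries together with pairwise overlaps, while your interior argument shows the boundaries alone suffice, and for the density-zero step it is cleanest to apply the closed-set bound $\limsup\le\mu(E)=0$ directly rather than to an open neighborhood.
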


\begin{proof}
Let $F=\operatorname{rem}\circ\tau$ and let
$E=F(\partial\widetilde R)$ be as in
Lemma~\ref{lem:regular-toral-coding}. For
$f\in C(\widetilde R)$, define $\widetilde f$ on the torus by
\[
\widetilde f(y):=f(x_y)\qquad(y\notin E),
\]
where $x_y$ is the unique representative of $y$ in $\widetilde R$, and
put $\widetilde f=0$ on $E$. By the lemma, $\widetilde f$ is bounded and
continuous outside the compact Haar-null set $E$, hence is Riemann
integrable. Proposition~\ref{prop:toral-congruence} gives
\[
F(\evalG(K_n))=K_n\mathbf v\pmod{\Z^d}.
\]
Since an equidistributed sequence visits the Haar-continuity set $E$
with density zero, the averages of $f(\evalG(K_n))$ and
$\widetilde f(K_n\mathbf v)$ differ by $o(1)$. Equidistribution on the
torus and the measure identification in
Lemma~\ref{lem:regular-toral-coding} therefore give
\[
\frac1N\sum_{n\leqslant N}f(\evalG(K_n))
\longrightarrow
\frac1{\vol(\widetilde R)}\int_{\widetilde R}f(x)\,dx.
\]
This proves the first assertion.

For the terminal-block statement, fix $e$ and $B$. By
Proposition~\ref{prop:terminal-cylinder-partition}, the cylinder
windows $R_C$, $C\in\mathcal B_e$, form a measurable partition of
$\widetilde R$ up to null overlaps, while
Proposition~\ref{prop:terminal-cylinder-window} gives null boundary
for every $R_C$. Hence the possible ambiguity between the integer class
$N_B$ and membership in $R_B$ is contained in a finite union of
cylinder boundaries and pairwise intersections. This exceptional set is
compact and Lebesgue-null. The first part therefore gives
\[
\frac1N
\#\{n\leqslant N:\TBe{e}(K_n)=B\}
\longrightarrow
\frac{\vol(R_B)}{\vol(\widetilde R)}
=\omega^{\len(B)}.
\]
\end{proof}

The first application is polynomial sampling. Unlike prime sampling,
this uses only the classical Weyl theorem.

\begin{corollary}
\label{cor:polynomial-terminal-sampling}
Suppose Assumption~\ref{ass:pisot} holds. Let
$P\in\Z[t]$ be nonconstant and suppose that $P(n)\in\N_0$ for all
sufficiently large $n$. Choose $n_0$ such that this holds for every
$n\geqslant n_0$. Then, for every fixed $e\geqslant1$ and every
$B\in\mathcal B_e$,
\[
\frac1N
\#\left\{n_0\leqslant n\leqslant N:\TBe{e}(P(n))=B\right\}
\longrightarrow
\omega^{\len(B)}.
\]
\end{corollary}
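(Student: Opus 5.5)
The proof reduces to Proposition~\ref{prop:toral-sampling-principle} with $K_n:=P(n+n_0-1)$. It therefore suffices to show that the sequence $P(n)\mathbf v \pmod{\Z^d}$, for $n\geqslant n_0$, is Haar equidistributed in $\R^d/\Z^d$. Once this is known, the proposition gives
\[
\frac1N\#\{n\leqslant N:\TBe{e}(K_n)=B\}\longrightarrow\omega^{\len(B)}.
\]
Shifting the index by the fixed amount $n_0-1$ changes the counts only by $O(1)$, which gives the stated limit.

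For the toral equidistribution I will use Weyl's criterion. Fix a nonzero frequency $a\in\Z^d$ and put $\alpha:=a\cdot\mathbf v$. By Proposition~\ref{prop:rational-independence}, the numbers $1,v_1,\ldots,v_d$ are linearly independent over $\Q$, so $\alpha$ is irrational. The Weyl sum attached to $a$ is
\[
\frac1N\sum_{n\leqslant N}e\bigl(a\cdot P(n)\mathbf v\bigr)=\frac1N\sum_{n\leqslant N}e\bigl(\alpha P(n)\bigr).
\]
Write $P(t)=c_kt^k+\cdots+c_0$ with $k\geqslant1$ and $c_k\in\Z\setminus\{0\}$. The real polynomial $\alpha P(t)$ then has the irrational leading coefficient $c_k\alpha$. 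Weyl's polynomial equidistribution theorem therefore shows that $\alpha P(n)$ is equidistributed modulo $1$, so the Weyl sum above tends to $0$. Since this holds for every nonzero $a$, the sequence $P(n)\mathbf v$ is equidistributed on the torus. Discarding the finitely many indices $n<n_0$ does not affect the limit.

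The only delicate point is ensuring that the irrationality is passed correctly to the leading coefficient. Integrality of the $c_k$ is what makes $c_k\alpha$ irrational, and nonconstancy of $P$ guarantees $k\geqslant1$. Everything else is direct quotation of the sampling principle.
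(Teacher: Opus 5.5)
Your proposal is correct and matches the paper's proof: irrationality of $\mathbf m\cdot\mathbf v$ from Proposition~\ref{prop:rational-independence}, then Weyl's polynomial theorem with Weyl's criterion to get Haar equidistribution of $P(n)\mathbf v$, then Proposition~\ref{prop:toral-sampling-principle} applied to the tail $K_n=P(n+n_0-1)$. The reindexing changes the count by only $O(1)$, as you note.
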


\begin{proof}
For every nonzero $\mathbf m\in\Z^d$, Proposition~\ref{prop:rational-independence}
shows that $\mathbf m\cdot\mathbf v$ is irrational. Hence the
nonconstant polynomial
\[
(\mathbf m\cdot\mathbf v)P(t)
\]
has an irrational nonconstant coefficient. Weyl's polynomial
equidistribution theorem therefore gives
\[
P(n)\mathbf v\pmod{\Z^d}
\]
Haar equidistributed on the torus. The tail sequence
$K_n:=P(n+n_0-1)$ takes values in $\N_0$ and remains Haar
equidistributed modulo the torus, so
Proposition~\ref{prop:toral-sampling-principle} applies. Reindexing this
tail proves the stated limit.
\end{proof}

The same argument on a product torus yields joint independence for
polynomial samples that are algebraically independent at the elementary
linear level needed by Weyl's criterion.

\begin{corollary}
\label{cor:joint-polynomial-terminal-sampling}
Suppose Assumption~\ref{ass:pisot} holds. Let
$P_1,\ldots,P_r\in\Z[t]$ be such that each $P_j(n)$ is non-negative for
all sufficiently large $n$, and choose $n_0$ so that this holds for every
$j$ and every $n\geqslant n_0$. Suppose that the nonconstant polynomials
\[
P_1-P_1(0),\ldots,P_r-P_r(0)
\]
are linearly independent over $\Q$. For fixed
$e_1,\ldots,e_r\geqslant1$ and $B_j\in\mathcal B_{e_j}$,
\[
\frac1N\#\left\{n_0\leqslant n\leqslant N:
\TBe{e_j}(P_j(n))=B_j\ \text{for all }1\leqslant j\leqslant r
\right\}
\longrightarrow
\prod_{j=1}^r\omega^{\len(B_j)}.
\]
\end{corollary}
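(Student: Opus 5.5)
The plan is to run the argument of Corollary~\ref{cor:polynomial-terminal-sampling} on the product torus $(\R^d/\Z^d)^r$. Put $n'=n+n_0-1$ and consider the sequence
\[
Y_n:=\bigl(P_1(n')\mathbf v,\ldots,P_r(n')\mathbf v\bigr)\pmod{(\Z^d)^r}.
\]
I will show that $(Y_n)$ is Haar equidistributed on $(\R^d/\Z^d)^r$. Then I will transfer this through the coding map $F^{\times r}$, with $F=\operatorname{rem}\circ\tau$, exactly as in the proof of Proposition~\ref{prop:toral-sampling-principle}.

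For the equidistribution, fix a nonzero frequency $(\mathbf m_1,\ldots,\mathbf m_r)\in(\Z^d)^r$. The phase is the polynomial
\[
G(t):=\sum_{j=1}^r(\mathbf m_j\cdot\mathbf v)P_j(t).
\]
Write $\theta_j:=\mathbf m_j\cdot\mathbf v$. By Proposition~\ref{prop:rational-independence}, each $\theta_j$ is either $0$ (when $\mathbf m_j=0$) or irrational. The key point is that $G-G(0)=\sum_j\theta_j\bigl(P_j-P_j(0)\bigr)$ is a nonconstant polynomial with some irrational non-constant coefficient.

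To see this, suppose every non-constant coefficient of $G$ were rational. Take a $\Q$-basis $1,\xi_1,\ldots$ of the $\Q$-span of $1,v_1,\ldots,v_d$. Then each $\theta_j$ is an integer combination $\theta_j=c_{j0}+\sum_k c_{jk}v_k$. Comparing the coefficients of $v_k$, and using the independence of $1,v_1,\ldots,v_d$ over $\Q$, gives $\sum_j(\mathbf m_j)_k\bigl(P_j-P_j(0)\bigr)=0$ for each $k$. By the assumed $\Q$-linear independence of the polynomials $P_j-P_j(0)$, every $(\mathbf m_j)_k$ then vanishes. This contradicts the nonzero frequency.

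So $G$ has an irrational non-constant coefficient, and Weyl's polynomial theorem gives $\frac1N\sum_{n\le N}e(G(n'))\to0$. Weyl's criterion then yields equidistribution of $(Y_n)$.

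For the transfer, take the exceptional set $E^{(r)}:=\bigcup_j \{y: y_j\in E\}$, with $E$ as in Lemma~\ref{lem:regular-toral-coding}. This is a compact Haar-null subset of the product torus, and outside it the inverse representative map is continuous coordinatewise. By Proposition~\ref{prop:toral-congruence}, $F(\evalG(P_j(n')))=P_j(n')\mathbf v$. Hence $(\evalG(P_j(n')))_j$ is equidistributed in $\widetilde R^{\,r}$ with respect to normalized product Lebesgue measure, by the same Riemann-integrability argument.

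Now take the product window $R_{B_1}\times\cdots\times R_{B_r}$. Propositions~\ref{prop:terminal-cylinder-window} and~\ref{prop:terminal-cylinder-partition} show that it has null boundary and that its ambiguity with the set of tuples in $N_{B_1}\times\cdots\times N_{B_r}$ lies in a null set. Its relative measure is $\prod_j\omega^{\len(B_j)}$, which gives the stated limit after reindexing.

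The main obstacle is the irrationality step: the rational independence of $1,v_1,\ldots,v_d$ has to be combined with the linear independence of the $P_j-P_j(0)$ so that the coefficient bookkeeping is correct. The remainder is a direct product version of the argument already given.
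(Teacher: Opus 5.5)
Your proposal is correct and follows the paper's proof. Both apply Weyl's criterion on $(\R^d/\Z^d)^r$ and use the fact that the $t^k$-coefficient of the phase is $\bigl(\sum_j c_{j,k}\mathbf m_j\bigr)\cdot\mathbf v$, which your expansion in the basis $1,v_1,\ldots,v_d$ reproduces; combined with Proposition~\ref{prop:rational-independence} and the $\Q$-independence of the $P_j-P_j(0)$, this gives an irrational nonconstant coefficient, and the proof of Proposition~\ref{prop:toral-sampling-principle} is then applied coordinatewise to the product of the cylinder windows. Your explicit product exceptional set and per-coordinate ambiguity set only spell out what the paper leaves under ``coordinatewise''.
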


\begin{proof}
Consider the orbit
\[
n\longmapsto
\bigl(P_1(n)\mathbf v,\ldots,P_r(n)\mathbf v\bigr)
\pmod{(\Z^d)^r}.
\]
For a nonzero frequency
$(\mathbf m_1,\ldots,\mathbf m_r)\in(\Z^d)^r$, the corresponding
scalar phase is
\[
Q(t):=\sum_{j=1}^r(\mathbf m_j\cdot\mathbf v)P_j(t).
\]
Write
\[
P_j(t)-P_j(0)=\sum_{k\geqslant1}c_{j,k}t^k,
\qquad c_{j,k}\in\Z.
\]
The coefficient of $t^k$ in the nonconstant part of $Q$ is
\[
\left(\sum_{j=1}^r c_{j,k}\mathbf m_j\right)\cdot\mathbf v.
\]
If all the vectors $\sum_j c_{j,k}\mathbf m_j$ were zero, then,
after taking any fixed coordinate in $\Z^d$, the resulting coefficients
would give a $\Q$-linear relation among
$P_1-P_1(0),\ldots,P_r-P_r(0)$. Their assumed independence would force
every coordinate of every $\mathbf m_j$ to vanish, contrary to the
choice of a nonzero frequency. Hence, for some $k$, the vector
\[
\mathbf a:=\sum_{j=1}^r c_{j,k}\mathbf m_j
\]
is nonzero. Proposition~\ref{prop:rational-independence}
then makes the corresponding coefficient $\mathbf a\cdot\mathbf v$
irrational. Weyl's theorem therefore gives Haar equidistribution on the
product torus. Applying the proof of
Proposition~\ref{prop:toral-sampling-principle} coordinatewise and then
to the product of the cylinder windows gives the asserted product
measure.
\end{proof}

In the Zeckendorf system, Corollary~\ref{cor:polynomial-terminal-sampling}
settles the polynomial terminal-block question suggested by numerical
experiments in \cite[Section~8]{Chang2025TerminalDigits}. Earlier work
of Drmota and Steiner \cite{DrmotaSteiner2002} established strong
limit laws for Zeckendorf digit statistics along polynomial sequences;
the present statement concerns the local terminal-cylinder law instead.

We have now completed the reduction from terminal blocks to a linear
toral orbit and isolated the resulting transfer mechanism. The
covolume comparison shows that the full window $\widetilde R$ has the
volume of one fundamental domain for $\mathcal T$; the arithmetic
congruence identifies $\evalG(n)$ with $n\mathbf v$ on the standard
torus; and the final results show that any equidistributed sampling of
this toral orbit inherits the cylinder-volume law. The next section
specializes the principle to the primes and derives its further
prime-specific consequences.

%% file: sections/07-prime-terminal-blocks.tex
\section{Prime sampling and terminal blocks}\label{sec:prime-blocks}

Throughout this section, we work under
Assumption~\ref{ass:pisot}, and $p$ denotes a prime number.
Theorem~\ref{thm:rauzy-fundamental-domain}, proved in
Section~\ref{sec:rauzy-verification}, supplies the geometric input.

Two prime-sampling problems are governed by the Rauzy geometry developed
above. For terminal blocks, the toral sampling principle of
Section~\ref{sec:volume} reduces the problem to the orbit
\[
p\mathbf v\;(\mathrm{mod}\ \Z^d),
\qquad
\mathbf v=\tau(s_0),
\]
since \eqref{eq:toral-congruence-goal} gives
\[
\tau(\evalG(n))\equiv n\mathbf v\;(\mathrm{mod}\ \Z^d).
\]
The Rauzy torus of Section~\ref{sec:rauzy} also codes the substitution
fixed word itself through the orbit generated by $s_0$. For these two
basic prime-sampling statements, the only additional analytic input is
Vinogradov's equidistribution theorem for irrational multiples of the
primes. We first establish prime equidistribution on the relevant tori,
then derive prime genericity for the substitution fixed word, and finally
return to terminal blocks and their consequences.

\subsection{Prime equidistribution on the torus}

We begin with the toral orbit itself.

\begin{lemma}\label{lem:prime-toral-equidistribution}
The sequence
\[
p\mathbf v\;(\mathrm{mod}\ \Z^d)
\]
is equidistributed in $\R^d/\Z^d$ with respect to Haar measure.
\end{lemma}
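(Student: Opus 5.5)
The plan is to use Weyl's criterion on $\R^d/\Z^d$ and reduce the lemma to Vinogradov's theorem on exponential sums over primes. Haar equidistribution of $p\mathbf v \pmod{\Z^d}$, with $p$ running over the primes in increasing order, is equivalent to the following: for every nonzero frequency $\mathbf m\in\Z^d$,
\[
\frac{1}{\pi(X)}\sum_{p\leqslant X}e\bigl(p\,\mathbf m\cdot\mathbf v\bigr)\longrightarrow 0
\qquad (X\to+\infty),
\]
where $e(t)=e^{2\pi i t}$. So it suffices to treat one scalar phase $\alpha:=\mathbf m\cdot\mathbf v$ at a time.

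The first step is to check that each such $\alpha$ is irrational. This is exactly Proposition~\ref{prop:rational-independence}: since $1,v_1,\ldots,v_d$ are linearly independent over $\Q$ and $\mathbf m\neq0$, a rational value $\alpha=a/b$ would give the nontrivial relation $-a+b\,\mathbf m\cdot\mathbf v=0$. Hence $\alpha\notin\Q$.

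The second step applies Vinogradov's theorem in the following form: for every irrational $\alpha$, the sequence $(p\alpha)$ is equidistributed modulo~$1$. Equivalently,
\[
\sum_{p\leqslant X}e(p\alpha)=o\bigl(\pi(X)\bigr).
\]
This is usually stated for the von Mangoldt weighted sum $\sum_{n\leqslant X}\Lambda(n)e(n\alpha)=o(X)$. I would pass from that form to the unweighted prime sum in two moves:
\begin{itemize}
\item discard prime powers, which contribute $O(\sqrt X\log X)$;
\item apply partial summation, removing the weight $\log p$ and turning $o(X)$ into $o(X/\log X)=o(\pi(X))$.
\end{itemize}
Combining this with the first step verifies Weyl's criterion for every nonzero $\mathbf m$, which proves the lemma.

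There is no real obstacle, since the substantive input, rational independence, is already established in Proposition~\ref{prop:rational-independence}. The only step needing care is the routine partial summation. Minor arcs handle every irrational $\alpha$ qualitatively through Dirichlet approximation with denominators tending to infinity, so no Diophantine condition on $\alpha$ is needed.
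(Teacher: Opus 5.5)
Your proposal is correct and follows the paper's proof: Proposition~\ref{prop:rational-independence} makes $\mathbf m\cdot\mathbf v$ irrational for every nonzero $\mathbf m\in\Z^d$, Vinogradov's theorem makes the corresponding prime exponential sums $o(\pi(X))$, and the multidimensional Weyl criterion concludes. Your extra steps (discarding prime powers and using partial summation to pass from the von Mangoldt-weighted form) are routine details that the paper leaves implicit.
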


\begin{proof}
By Proposition~\ref{prop:rational-independence}, the
numbers
\[
1,v_1,\ldots,v_d
\]
are linearly independent over $\Q$. Hence, for every nonzero
$\mathbf m\in\Z^d$, the number $\mathbf m\cdot\mathbf v$ is irrational.
Vinogradov's theorem \cite{Vinogradov1954} gives
\[
\frac{1}{\pi(X)}
\sum_{p\leqslant X}
 e^{2\pi i p\,\mathbf m\cdot\mathbf v}
\longrightarrow 0
\qquad (X\to+\infty).
\]
The multidimensional Weyl criterion therefore gives the asserted equidistribution on the torus.
\end{proof}

\subsection{Prime sampling of the substitution fixed word}

The same Rauzy torus also gives a prime-number theorem that does not
involve terminal blocks. Recall that
\[
A_\infty=u_0u_1u_2\cdots
\]
is the fixed word of the substitution, that $p_m$ is the letter-count
vector of the prefix $u_0\cdots u_{m-1}$, and that
\[
\pi_s(p_m)\equiv ms_0\;(\mathrm{mod}\ \mathcal L)
\]
by Lemma~\ref{lem:prefix-toral-congruence}. Put
\[
a:=q_{\mathcal L}(s_0)\in \mathbb T_{\mathcal L}.
\]
Proposition~\ref{prop:torus-equidistribution} says that the orbit $(ma)_{m\geqslant0}$
is Haar equidistributed. Notice that this codes the original position $m$ in
$A_\infty$ directly: it does not pass through the expanded-word position
$c_m$ from Theorem~\ref{thm:fixed-word-bridge}. The same toral orbit
can therefore be sampled directly at prime indices.

\begin{lemma}\label{lem:prime-rauzy-torus}
Suppose Assumption~\ref{ass:pisot} holds. The sequence
\[
pa\qquad (p\ \mathrm{prime})
\]
is Haar equidistributed in $\mathbb T_{\mathcal L}$.
\end{lemma}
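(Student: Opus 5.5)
The plan is to reduce the statement to Vinogradov's theorem through Weyl's criterion on the torus $\mathbb T_{\mathcal L}$, as in Lemma~\ref{lem:prime-toral-equidistribution}. The one new input is rational independence for the generator $a=q_{\mathcal L}(s_0)$.

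Fix a real-linear isomorphism $\tau_{\mathcal L}:E_s\to\R^d$ with $\tau_{\mathcal L}(\mathcal L)=\Z^d$, for instance the one sending $\pi_s(e_i-e_L)$ to the standard basis vectors (possible by Lemma~\ref{lem:rauzy-lattice-full-rank}). Put $\mathbf w:=\tau_{\mathcal L}(s_0)$, so that $ma$ corresponds to $m\mathbf w \bmod \Z^d$. The characters of $\mathbb T_{\mathcal L}$ are $y\mapsto e^{2\pi i\,\mathbf m\cdot y}$ with $\mathbf m\in\Z^d$. By the multidimensional Weyl criterion it therefore suffices to show that
\[
\frac1{\pi(X)}\sum_{p\leqslant X}e^{2\pi i p\,\mathbf m\cdot\mathbf w}\longrightarrow 0
\qquad(\mathbf m\neq0).
\]
Vinogradov's theorem \cite{Vinogradov1954} gives exactly this once every $\mathbf m\cdot\mathbf w$ with $\mathbf m\neq 0$ is irrational, that is, once $1,w_1,\ldots,w_d$ are linearly independent over $\Q$.

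This rational independence is the only real step, and I would obtain it from Proposition~\ref{prop:torus-equidistribution}. That proposition gives Haar equidistribution of $(ma)_{m\geqslant0}$, and it rests on Theorem~\ref{thm:rauzy-fundamental-domain}. Suppose instead that $a_0+\mathbf m\cdot\mathbf w=0$ for some $a_0\in\Z$ and nonzero $\mathbf m\in\Z^d$. Then $\mathbf m\cdot(m\mathbf w)\in\Z$ for all $m$, so the whole orbit lies in the proper closed subgroup $\{y:\mathbf m\cdot y\equiv0\pmod 1\}$. This subgroup has Haar measure zero. Taking a continuous bump function that equals $1$ on it and has small Haar integral then contradicts Haar equidistribution. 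Equivalently, the integer orbit is dense only if no nontrivial character is trivial on $a$. Evaluating the Weyl sum directly gives the same contradiction: for integer $m$ it equals $e^{-2\pi i m a_0}=1$, so the character averages are identically $1$ and do not tend to $0$. Hence $1,w_1,\ldots,w_d$ are $\Q$-linearly independent.

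Feeding this into Vinogradov's estimate for each nonzero $\mathbf m$ and applying Weyl's criterion gives Haar equidistribution of $(pa)$ on $\mathbb T_{\mathcal L}$. No step looks like a serious obstacle, since the geometric input is already packaged in Proposition~\ref{prop:torus-equidistribution}. The one point of care is applying Vinogradov's theorem in its qualitative form, $\sum_{p\leqslant X}e^{2\pi i p\theta}=o(\pi(X))$ for every irrational $\theta$, which is the version already used in Lemma~\ref{lem:prime-toral-equidistribution}.
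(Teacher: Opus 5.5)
Your proof is correct and follows the paper's argument. Both extract irrationality of $\chi(a)$ for every nontrivial character $\chi$ of $\mathbb T_{\mathcal L}$ from Proposition~\ref{prop:torus-equidistribution}, then conclude with Vinogradov's theorem and Weyl's criterion. The differences are cosmetic: you work in coordinates with $\mathcal L\mapsto\Z^d$ and argue via a proper closed subgroup, as in Proposition~\ref{prop:rational-independence}, whereas the paper argues coordinate-free that density of the orbit forces $(\chi(a)^m)_{m\geqslant0}$ to be dense in $\mathbb S^1$.
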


\begin{proof}
Let $\chi$ be a nontrivial character of the torus
$\mathbb T_{\mathcal L}$. Since the orbit $(ma)_{m\geqslant0}$ is dense by
Proposition~\ref{prop:torus-equidistribution}, its image
$(\chi(a)^m)_{m\geqslant0}$ is dense in the image of $\chi$. A nontrivial
character of a connected torus has image $\mathbb S^1$, so if
\[
\chi(a)=e^{2\pi i\alpha},
\]
then $\alpha$ is irrational. Vinogradov's theorem
\cite{Vinogradov1954} therefore gives
\[
\frac1{\pi(X)}\sum_{p\leqslant X}\chi(pa)
=
\frac1{\pi(X)}\sum_{p\leqslant X}e^{2\pi i p\alpha}
\longrightarrow0.
\]
The Weyl criterion on compact tori
\cite[Chapter~1]{KuipersNiederreiter1974} now proves the claim.
\end{proof}

For a finite word $W=A_{i_0}\cdots A_{i_{r-1}}$ over $\mathcal A$, write
\[
\freq(W)
:=
\lim_{N\to\infty}
\frac1N
\#\left\{0\leqslant m<N:
 u_mu_{m+1}\cdots u_{m+r-1}=W\right\},
\]
whenever the limit exists. We also write $\Lambda$ for the von Mangoldt
function,
\[
\Lambda(n):=
\begin{cases}
\log p,& n=p^k\ \text{for some prime }p\text{ and }k\geqslant1,\\
0,& \text{otherwise}.
\end{cases}
\]
The next theorem shows at the same time that these factor frequencies
exist and that prime starting positions see the same frequencies.

\begin{theorem}
\label{thm:prime-fixed-word}
Suppose Assumption~\ref{ass:pisot} holds. For every
finite word $W=A_{i_0}\cdots A_{i_{r-1}}$ over $\mathcal A$, the frequency
$\freq(W)$ exists and
\[
\frac1{\pi(X)}
\#\left\{p\leqslant X:
 u_pu_{p+1}\cdots u_{p+r-1}=W\right\}
\longrightarrow
\freq(W).
\]
Moreover,
\[
\frac1X
\sum_{\substack{n\leqslant X\\
 u_nu_{n+1}\cdots u_{n+r-1}=W}}
\Lambda(n)
\longrightarrow
\freq(W).
\]
\end{theorem}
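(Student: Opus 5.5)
The plan is to code the event $u_m\cdots u_{m+r-1}=W$ by membership of $\pi_s(p_m)$ in a subset of $R$, and then pass to the torus $\mathbb T_{\mathcal L}$. Put
\[
R_W:=\overline{\{\pi_s(p_m):u_m\cdots u_{m+r-1}=W\}}.
\]
If $u_m\cdots u_{m+r-1}=W$, then $\pi_s(p_{m+k})=\pi_s(p_m)+\pi_s(\ell(A_{i_0}\cdots A_{i_{k-1}}))$ for $0\leqslant k<r$. Hence
\[
R_W\subset S_W:=\bigcap_{k=0}^{r-1}\Bigl(R_{i_k}-\pi_s\bigl(\ell(A_{i_0}\cdots A_{i_{k-1}})\bigr)\Bigr).
\]
The first aim is the converse up to a null set: if $\pi_s(p_m)$ avoids a fixed Lebesgue-null closed set $Z_W$, then $\pi_s(p_m)\in S_W$ forces $u_m\cdots u_{m+r-1}=W$. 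The argument is by induction on $k$. Suppose the first $k$ letters agree with $W$. Then $\pi_s(p_{m+k})$ is the translated point, and it lies in $R_{i_k}$. Provided it avoids $B_*=\bigcup_i\partial R_i$ and the pairwise overlaps, the disjointness of interiors in Theorem~\ref{thm:rauzy-fundamental-domain} forces $u_{m+k}=A_{i_k}$, because $\pi_s(p_{m+k})\in R_{u_{m+k}}$. So $Z_W$ is the union of $B_*$ translated back by the finitely many vectors $\pi_s(\ell(A_{i_0}\cdots A_{i_{k-1}}))$, together with the overlaps $R_i\cap R_j$ translated the same way. By Theorem~\ref{thm:rauzy-fundamental-domain} these sets are closed and null. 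Also $\partial S_W$ is null, as it lies in a finite union of translated boundaries.

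Next I transfer this to the torus, following the proof of Proposition~\ref{prop:prefix-rauzy-equidistribution}. Let $B':=q_{\mathcal L}(\partial R\cup Z_W\cup\partial S_W)$. This is a compact Haar-null set, because $q_{\mathcal L}$ is a local isometry. For $t\notin B'$, let $x_t\in\operatorname{int}R$ be its unique representative. Define $G:=\{t\notin B':x_t\in S_W\}$. This set is open in the complement of $B'$ and its boundary is contained in $B'$, so $\mathbf 1_G$ is Riemann integrable. By Lemma~\ref{lem:prefix-toral-congruence}, whenever $q_{\mathcal L}(ms_0)\notin B'$ we have $x_{q_{\mathcal L}(ms_0)}=\pi_s(p_m)$, and therefore
\[
u_m\cdots u_{m+r-1}=W\iff q_{\mathcal L}(ma)\in G .
\]
Proposition~\ref{prop:torus-equidistribution} now shows that $\freq(W)$ exists and equals $\nu_{\mathcal L}(G)=\vol(S_W)/\vol(R)$, since indices with $ma\in B'$ have density zero.

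For primes I use Lemma~\ref{lem:prime-rauzy-torus}: the sequence $pa$ is Haar equidistributed, so the primes $p$ with $pa\in B'$ have relative density zero among primes. This holds because $B'$ is closed and null, so it can be enclosed in open sets of arbitrarily small measure with null boundary. The rest of the primes satisfy the displayed equivalence with $m=p$. Since $\mathbf 1_G$ is Riemann integrable, the prime counting statement follows with limit $\nu_{\mathcal L}(G)=\freq(W)$.

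The $\Lambda$-weighted version follows by partial summation from the prime count together with $\pi(X)\sim X/\log X$, after discarding prime powers $p^k$ with $k\geqslant2$, which contribute $O(\sqrt X\log X)$. I expect the main obstacle to be the first step, namely making rigorous that membership in $S_W$ recovers the whole factor and not just the letter $u_m$. The delicate points are the precise choice of the null exceptional set $Z_W$ and confirming that the translated points $\pi_s(p_{m+k})$ really are prefix points whose letter is determined by the piece containing them. Everything after that is the same Riemann-integrability transfer already carried out in Section~\ref{sec:rauzy}.
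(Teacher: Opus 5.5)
Your proposal is correct. Its architecture (letter pieces of $R$, Lemma~\ref{lem:prefix-toral-congruence}, Riemann-integrable coding on $\mathbb T_{\mathcal L}$, then Proposition~\ref{prop:torus-equidistribution} and Lemma~\ref{lem:prime-rauzy-torus}) is the same as the paper's. The only real difference is how you assemble the factor indicator.

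The paper never leaves the torus. With $E_\sigma=q_{\mathcal L}(\partial R\cup\bigcup_i\partial R_i)$, it defines letter indicators $g_i$ with $\mathbf 1_{\{u_m=A_i\}}=g_i(ma)$ for $ma\notin E_\sigma$, and then sets $G_W(y)=\prod_{j<r}g_{i_j}(y+ja)$. Since $(m+j)a$ codes position $m+j$ directly, each letter of the factor is read off independently, outside the null set $\bigcup_j(E_\sigma-ja)$. No induction is needed, so the step you single out as the main obstacle never arises.

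You instead translate inside $E_s$ by the prefix-count vectors $\pi_s(\ell(A_{i_0}\cdots A_{i_{k-1}}))$. These reproduce $\pi_s(p_{m+k})$ only once the earlier letters are known to match, which is why you need the induction and the translated overlap set $Z_W$. Your handling of that step is sound, since $R_i\cap R_j\subset\partial R_i\cup\partial R_j$ is null by the disjointness of interiors. Because these vectors are congruent to $ks_0$ modulo $\mathcal L$, your Euclidean translations become the paper's toral shifts by $ka$ after quotienting. Your window $G$ and the paper's set $\{G_W=1\}$ then agree up to a Haar-null set, since both are Jordan measurable and code the same event along the equidistributed orbit $ma$.

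What your route buys is an explicit geometric formula, $\freq(W)=\vol(S_W)/\vol(R)$. It exhibits the frequency as the volume of an intersection of translated subtiles inside the Rauzy window itself, which is the classical Rauzy-fractal description of factor frequencies. The paper leaves this as the toral integral $\int G_W\,d\nu_{\mathcal L}$, but its argument is shorter.
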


\begin{proof}
Set
\[
E_\sigma
:=q_{\mathcal L}\!\left(
\partial R\cup\bigcup_{i=1}^L\partial R_i
\right)
\subset\mathbb T_{\mathcal L}.
\]
By Theorem~\ref{thm:rauzy-fundamental-domain}, the set inside
parentheses is compact and Lebesgue-null. Since the quotient map is locally
a linear isometry, $E_\sigma$ is compact and Haar-null.

For $y\notin E_\sigma$, let $x_y\in R\setminus\partial R$ be the unique
representative of $y$. Since $y$ also avoids the images of all
$\partial R_i$, the point $x_y$ belongs to exactly one of the sets
$R_i\setminus\partial R_i$. Define
\[
g_i(y):=
\begin{cases}
1,& y\notin E_\sigma\ \text{and }x_y\in R_i,\\
0,& \text{otherwise}.
\end{cases}
\]
The inverse representative map is continuous away from
$q_{\mathcal L}(\partial R)$, as in the proof of
Proposition~\ref{prop:prefix-rauzy-equidistribution}. Hence each $g_i$ is
continuous outside $E_\sigma$ and is therefore Riemann integrable.

The proof of Proposition~\ref{prop:base-class-equidistribution}, together
with Lemma~\ref{lem:prefix-toral-congruence}, gives
\[
\mathbf 1_{\{u_m=A_i\}}
=
g_i(ma)
\]
whenever $ma\notin E_\sigma$. Now let
$W=A_{i_0}\cdots A_{i_{r-1}}$ and define
\[
G_W(y):=\prod_{j=0}^{r-1}g_{i_j}(y+ja).
\]
This is Riemann integrable, since its discontinuity set is contained in
\[
\bigcup_{j=0}^{r-1}(E_\sigma-ja),
\]
a finite union of Haar-null sets. Outside this exceptional set one has
\[
G_W(ma)
=
\mathbf 1_{\{u_mu_{m+1}\cdots u_{m+r-1}=W\}}.
\]
Proposition~\ref{prop:torus-equidistribution} shows that the exceptional indices
$m$ have density zero. Therefore
\[
\freq(W)
=
\int_{\mathbb T_{\mathcal L}}G_W\,d\nu_{\mathcal L}.
\]
In particular, the frequency exists.

Lemma~\ref{lem:prime-rauzy-torus} gives the same equidistribution along
the primes. The prime indices for which $pa$ belongs to the exceptional
set again have relative density zero, and hence
\[
\frac1{\pi(X)}
\#\left\{p\leqslant X:
 u_pu_{p+1}\cdots u_{p+r-1}=W\right\}
\longrightarrow
\int_{\mathbb T_{\mathcal L}}G_W\,d\nu_{\mathcal L}
=
\freq(W).
\]

For the von Mangoldt formulation, let $\delta=\freq(W)$.
The first part gives
\[
\#\{p\leqslant X:u_p\cdots u_{p+r-1}=W\}
=\delta\pi(X)+o(\pi(X)).
\]
Partial summation and the prime number theorem therefore yield
\[
\sum_{\substack{p\leqslant X\\u_p\cdots u_{p+r-1}=W}}\log p
=\delta X+o(X).
\]
The total von Mangoldt weight of prime powers $p^k\leqslant X$ with
$k\geqslant2$ is $o(X)$, so adjoining them proves the final assertion.
\end{proof}

For single letters the limiting constants are explicit. The normalized
Perron--Frobenius frequency vector of the primitive substitution satisfies
$f_{i+1}=\omega f_i$ for $1\leqslant i<L$. Hence
\[
\freq(A_i)
=
\frac{\omega^{i-1}}{1+\omega+\cdots+\omega^{L-1}}
=
\frac{\omega^i}{\omega+\omega^2+\cdots+\omega^L}.
\]
Consequently,
\[
\#\{p\leqslant X:u_p=A_i\}
\sim
\frac{\omega^i}{\omega+\omega^2+\cdots+\omega^L}\,\pi(X).
\]
For the Tribonacci substitution
\[
A_1\mapsto A_1A_2,\qquad
A_2\mapsto A_1A_3,\qquad
A_3\mapsto A_1,
\]
one has $h_1=h_2=h_3=1$. If $\beta$ denotes the Tribonacci constant,
then $\omega=\beta^{-1}$ and
$\omega+\omega^2+\omega^3=1$, so
\[
\#\{p\leqslant X:u_p=A_i\}
\sim \beta^{-i}\pi(X)
\qquad (i=1,2,3).
\]

\emph{In particular, this proves the prime-number theorem for the
Tribonacci fixed word suggested in
\cite[Section~9.2]{DrmotaMuellnerSpiegelhofer2025}.}

The same toral coding also gives polynomial sampling of the fixed word
with essentially no additional geometric input, including polynomial
values at prime arguments.

\begin{corollary}
\label{cor:polynomial-fixed-word-sampling}
Suppose Assumption~\ref{ass:pisot} holds. Let
$P\in\Z[t]$ be nonconstant and suppose that $P(n)\in\N_0$ for all
sufficiently large $n$. Choose $n_0$ such that this holds for every
$n\geqslant n_0$. Then, for every finite word
$W=A_{i_0}\cdots A_{i_{r-1}}$ over $\mathcal A$,
\[
\frac1N
\#\left\{n_0\leqslant n\leqslant N:
 u_{P(n)}u_{P(n)+1}\cdots u_{P(n)+r-1}=W\right\}
\longrightarrow
\freq(W).
\]
Moreover,
\[
\frac1{\pi(X)}
\#\left\{n_0\leqslant p\leqslant X:
 u_{P(p)}u_{P(p)+1}\cdots u_{P(p)+r-1}=W\right\}
\longrightarrow
\freq(W).
\]
\end{corollary}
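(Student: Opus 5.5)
The plan is to reuse the toral coding from the proof of Theorem~\ref{thm:prime-fixed-word} and change only the sampling sequence. Let $a=q_{\mathcal L}(s_0)$, $E_\sigma$ and $G_W$ be as in that proof. There $G_W$ was shown to be Riemann integrable on $\mathbb T_{\mathcal L}$, with $\int G_W\,d\nu_{\mathcal L}=\freq(W)$. Moreover, $G_W(ma)$ equals the indicator of $u_m\cdots u_{m+r-1}=W$ whenever $ma$ lies outside the Haar-null compact set $\bigcup_{j<r}(E_\sigma-ja)$. Both assertions therefore reduce to two statements: the sequences $P(n)a$ for $n\geqslant n_0$, and $P(p)a$ for primes $p\geqslant n_0$, are Haar equidistributed in $\mathbb T_{\mathcal L}$. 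Once that is known, these sequences visit the compact null exceptional set with density zero, so we may replace the indicator by $G_W$. Riemann integrability of $G_W$ then gives convergence of the averages to $\int G_W\,d\nu_{\mathcal L}=\freq(W)$. The finitely many indices below $n_0$, and the primes below $n_0$, do not affect the limits.

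For equidistribution we apply the Weyl criterion on $\mathbb T_{\mathcal L}$. Let $\chi$ be a nontrivial character. As in the proof of Lemma~\ref{lem:prime-rauzy-torus}, density of $(ma)$ from Proposition~\ref{prop:torus-equidistribution} shows that $\chi(a)=e^{2\pi i\alpha}$ with $\alpha$ irrational. Then $\chi(P(n)a)=e^{2\pi i\alpha P(n)}$. The polynomial $\alpha P(t)$ has leading coefficient $\alpha c$, where $c\in\Z\setminus\{0\}$ is the leading coefficient of $P$, and $\alpha c$ is irrational. Any nonconstant coefficient of $\alpha P(t)$ is an integer multiple of $\alpha$, so it is irrational whenever it is nonzero.

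Weyl's polynomial theorem then gives $\frac1N\sum_{n\leqslant N}e^{2\pi i\alpha P(n)}\to0$, which proves the first assertion. For prime arguments we use Rhin's theorem (the prime-argument analogue due to Vinogradov and Rhin): if some nonconstant coefficient of a real polynomial is irrational, then $\frac1{\pi(X)}\sum_{p\leqslant X}e^{2\pi i\alpha P(p)}\to0$. This yields equidistribution of $P(p)a$ and hence the second assertion.

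The main obstacle is this prime-polynomial exponential sum. It is the only genuinely new analytic input, and it must be cited rather than derived from Vinogradov's linear theorem. Everything else is bookkeeping: we need $P(n)\in\N_0$ so that $u_{P(n)}$ is defined, which is why we restrict to $n\geqslant n_0$, and we must check that $P(n)a$ is indeed the toral point attached to position $P(n)$. The latter follows from Lemma~\ref{lem:prefix-toral-congruence} with $m=P(n)$.
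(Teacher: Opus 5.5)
Your proposal is correct and follows essentially the same route as the paper. Both arguments get irrationality of $\alpha$ from the density argument of Lemma~\ref{lem:prime-rauzy-torus}, then apply Weyl's polynomial theorem and Rhin's theorem to obtain Haar equidistribution of $P(n)a$ and $P(p)a$, and finally use the Riemann-integrable coding function $G_W$ from the proof of Theorem~\ref{thm:prime-fixed-word}.
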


\begin{proof}
Let $\chi$ be a nontrivial character of $\mathbb T_{\mathcal L}$ and
write $\chi(a)=e^{2\pi i\alpha}$. As in the proof of
Lemma~\ref{lem:prime-rauzy-torus}, one has $\alpha\notin\Q$. Hence the
nonconstant polynomial $\alpha P(t)$ has an irrational nonconstant
coefficient. Weyl's polynomial equidistribution theorem gives Haar
equidistribution of $P(n)a$ on $\mathbb T_{\mathcal L}$, while Rhin's
theorem \cite{Rhin1973} gives the same conclusion for $P(p)a$ with $p$
prime. Removing the finitely many integer or prime arguments below $n_0$
does not affect either limit. Applying the Riemann-integrable coding
function $G_W$ from the proof of Theorem~\ref{thm:prime-fixed-word} gives
both limits.
\end{proof}

\subsection{Prime terminal blocks and their distribution}

We return from the toral orbit to terminal block words. The general
sampling principle of Proposition~\ref{prop:toral-sampling-principle}
already packages the passage from toral equidistribution to the
terminal-cylinder law, so only the prime equidistribution input remains.

\begin{theorem}\label{thm:prime-terminal-blocks}
Suppose Assumption~\ref{ass:pisot} holds. Let
$e\geqslant1$ and let $B\in\mathcal B_e$. Then
\[
\#\left\{p\leqslant X:\TBe{e}(p)=B\right\}
\sim \omega^{\len(B)}\pi(X).
\]
More generally, if $\mathcal C\subset\mathcal B_e$, then
\[
\#\left\{p\leqslant X:
\TBe{e}(p)\in\mathcal C\right\}
\sim
\left(\sum_{B\in\mathcal C}\omega^{\len(B)}\right)\pi(X).
\]
In particular, taking $e=1$ gives, for every
$1\leqslant i\leqslant L$ and $0\leqslant j\leqslant h_i-1$,
\[
\#\left\{p\leqslant X:\TB(p)=b_{i,j}\right\}
\sim \omega^i\pi(X).
\]
\end{theorem}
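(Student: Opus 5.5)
The plan is to apply the toral sampling principle of Proposition~\ref{prop:toral-sampling-principle} with the sequence of primes as the sampling sequence. Enumerate the primes increasingly as $K_1=2<K_2=3<K_3=5<\cdots$, so that $K_n$ is the $n$th prime and takes values in $\N_0$.

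First, Lemma~\ref{lem:prime-toral-equidistribution} shows that $K_n\mathbf v \pmod{\Z^d}$ is Haar equidistributed in $\R^d/\Z^d$. Its input is the rational independence of $1,v_1,\ldots,v_d$ from Proposition~\ref{prop:rational-independence}, combined with Vinogradov's theorem and Weyl's criterion. So the hypothesis of Proposition~\ref{prop:toral-sampling-principle} holds. That proposition then gives, for every fixed $e\geqslant1$ and $B\in\mathcal B_e$,
\[
\frac1N\#\{1\leqslant n\leqslant N:\TBe{e}(K_n)=B\}\longrightarrow\omega^{\len(B)}.
\]

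Next, pass from the counting index $N$ to the size parameter $X$ by taking $N=\pi(X)$. The set $\{n\leqslant\pi(X)\}$ indexes exactly the primes $p\leqslant X$, and $\pi(X)\to\infty$. This yields $\#\{p\leqslant X:\TBe{e}(p)=B\}=\omega^{\len(B)}\pi(X)+o(\pi(X))$, which is the first asymptotic, since $\omega^{\len(B)}>0$.

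For a subset $\mathcal C\subset\mathcal B_e$, note that the classes $N_B$ with $B\in\mathcal B_e$ are pairwise disjoint. Since $\mathcal B_e$ is finite, summing the finitely many asymptotics over $B\in\mathcal C$ gives the stated constant $\sum_{B\in\mathcal C}\omega^{\len(B)}$. When $\mathcal C$ is empty, both sides are zero and the statement is read as trivial. The special case $e=1$, $B=b_{i,j}$ has $\len(b_{i,j})=i$, which gives the factor $\omega^i$.

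There is no real obstacle, because all the geometric and analytic work is already packaged in the earlier results. The only point needing care is the reindexing from $N$ to $X$. The toral sampling principle is stated for indexed sequences, and the reindexing is legitimate precisely because the primes are enumerated in increasing order, so that the first $\pi(X)$ terms are exactly the primes up to $X$.
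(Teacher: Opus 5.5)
Your proposal is correct and follows the paper's proof essentially verbatim: enumerate the primes increasingly, invoke Lemma~\ref{lem:prime-toral-equidistribution} for Haar equidistribution of $p_n\mathbf v \pmod{\Z^d}$, apply Proposition~\ref{prop:toral-sampling-principle}, take $N=\pi(X)$, and sum over the finitely many disjoint classes indexed by $\mathcal C$. Your remark on the empty $\mathcal C$ is a harmless convention that the paper leaves implicit.
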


\begin{proof}
Enumerate the primes increasingly as $p_1<p_2<\cdots$. By
Lemma~\ref{lem:prime-toral-equidistribution},
\[
p_n\mathbf v\;(\mathrm{mod}\ \Z^d)
\]
is Haar equidistributed. Proposition~\ref{prop:toral-sampling-principle}
therefore gives, for every $B\in\mathcal B_e$,
\[
\frac1N
\#\left\{1\leqslant n\leqslant N:\TBe{e}(p_n)=B\right\}
\longrightarrow
\omega^{\len(B)}.
\]
Taking $N=\pi(X)$ gives the first assertion. The statement for
$\mathcal C\subset\mathcal B_e$ follows by summing over
$B\in\mathcal C$, and the case $e=1$ gives the final assertion.
\end{proof}

It is useful to compare Theorem~\ref{thm:prime-fixed-word},
which describes factors of the substitution fixed word at prime positions,
with Theorem~\ref{thm:prime-terminal-blocks},
which gives the distribution of terminal blocks of primes.
In the Tribonacci case, the two codings can be matched explicitly.
This comparison shows first that the two theorems agree for single
letters, and then gives explicit prime frequencies for factors of
length two.

Since $h_1=h_2=h_3=1$, each Tribonacci letter has a single expanded
block,
\[
\mathcal E(A_i)=b_{i,0},
\]
and therefore
\[
u_n=A_i
\quad\Longleftrightarrow\quad
\TB(n)=b_{i,0}.
\]
Thus the single-letter prime-number theorem of
Theorem~\ref{thm:prime-fixed-word} is exactly the $e=1$ case of
Theorem~\ref{thm:prime-terminal-blocks} after this relabeling.

For factors of length two, the correspondence is still explicit, but
carries begin to appear.  Writing
\[
b_1=(0),\qquad b_2=(0,1),\qquad b_3=(0,1,1),
\]
one checks that
\[
\begin{aligned}
 u_nu_{n+1}=A_1A_1 &\iff \TBe{2}(n)=b_1b_3,\\
 u_nu_{n+1}=A_1A_2 &\iff \TBe{2}(n)=b_1b_1,\\
 u_nu_{n+1}=A_1A_3 &\iff \TBe{2}(n)=b_1b_2,
\end{aligned}
\]
while $A_2A_1$ corresponds to the three cylinders $b_2b_j$ and
$A_3A_1$ to the three cylinders $b_3b_j$, $1\leqslant j\leqslant3$.
Since Theorem~\ref{thm:prime-fixed-word} identifies the ordinary
factor frequencies with the corresponding frequencies at prime starting
positions, Theorem~\ref{thm:prime-terminal-blocks} makes these
common values explicit:
\[
\freq(A_1A_1)=\omega^4,\qquad
\freq(A_1A_2)=\omega^2,\qquad
\freq(A_1A_3)=\omega^3,
\]
and
\[
\freq(A_2A_1)=\omega^2,\qquad
\freq(A_3A_1)=\omega^3,
\]
where the last two identities use
\[
\omega+\omega^2+\omega^3=1.
\]

This comparison also clarifies why the direct prefix-torus proof of
Theorem~\ref{thm:prime-fixed-word} is preferable for general factors. Already at length two, the translation from fixed-word factors to
terminal cylinders involves carries; for longer factors it requires
increasingly more bookkeeping, and for a nontrivial block expansion
such as $g=(1,3,2)$ it also involves the time change $c_m$ from
Theorem~\ref{thm:fixed-word-bridge}. The prefix-torus argument avoids these complications and treats arbitrary
fixed factors directly. In the Tribonacci case, the single-letter
specialization is the fixed-word statement suggested in
\cite[Section~9.2]{DrmotaMuellnerSpiegelhofer2025}; it is distinct from the
broader Tribonacci sum-of-digits problem posed in
\cite[Problem~8]{DrmotaMuellnerSpiegelhofer2025}.

A different use of Theorem~\ref{thm:prime-terminal-blocks} is to
compute frequencies of concrete patterns in the expanded word
$\widehat T_\infty=\mathcal E(A_\infty)$.  We illustrate this for the
recurring example $g=(1,3,2)$.  By
Theorem~\ref{thm:terminal-coding},
\[
\widehat T_\infty(n+1)=\TB(n).
\]
A direct check of the block coding gives
\[
\TB(n)\TB(n+1)\TB(n+2)\TB(n+3)
=(0)(1)(0)(1)
\]
if and only if
\[
\TBe{2}(n)\in
\bigl\{
(0)(0),\ (0)(0,2),\ (0)(1,2),\ (0)(0,3,2)
\bigr\}.
\]
The four cylinders have total digit-lengths $2,3,3,4$.  Hence both the
natural frequency of this pattern and its relative frequency at prime
starting positions are
\[
\omega^2+2\omega^3+\omega^4.
\]
Thus a finite pattern in the expanded word can, when its carry structure
is explicit, be reduced to a finite union of terminal cylinders.

The factor $\omega^{\len(B)}$ in
Theorem~\ref{thm:prime-terminal-blocks} has a natural
probabilistic interpretation. Define a probability measure $\nu$ on the
finite set $\mathcal B$ of semi-proper blocks by
\[
\nu(b_{i,j}):=\omega^i
\qquad
(1\leqslant i\leqslant L,\ 0\leqslant j\leqslant h_i-1).
\]
This is indeed a probability measure, since
\[
\sum_{b\in\mathcal B}\nu(b)
=
\sum_{i=1}^L h_i\omega^i
=1.
\]

\begin{corollary}
\label{cor:bernoulli-prime-terminal-blocks}
Suppose Assumption~\ref{ass:pisot} holds. Fix
$e\geqslant1$ and semi-proper blocks $b_1,\ldots,b_e\in\mathcal B$.
Then
\[
\frac{1}{\pi(X)}
\#\left\{
p\leqslant X:
\TBe{e}(p)=b_1\cdots b_e
\right\}
\longrightarrow
\prod_{r=1}^e\nu(b_r).
\]
Equivalently, if a prime $p\leqslant X$ is chosen uniformly from the
primes up to $X$, then, for every fixed $e$, the joint distribution of
its first $e$ terminal blocks converges, as $X\to+\infty$, to that of
$e$ independent $\nu$-distributed blocks.
\end{corollary}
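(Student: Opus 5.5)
This corollary is a direct specialization of Theorem~\ref{thm:prime-terminal-blocks}. The plan is to identify the limiting constant $\omega^{\len(B)}$ with the product $\prod_r\nu(b_r)$ and then read the result as a statement about distributions.

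First, fix $e\geqslant1$ and blocks $b_1,\ldots,b_e\in\mathcal B$, and put $B:=b_1\cdots b_e\in\mathcal B_e$. By Definition~\ref{def:terminal-block-words}, $\TBe{e}(p)=B$ means precisely that the first $e$ blocks of the canonical infinite block expansion $C_p$ are $b_1,\ldots,b_e$ in this order. This is unambiguous because block decompositions are unique (Lemma~\ref{lem:infinite-block-decomposition}). Theorem~\ref{thm:prime-terminal-blocks} gives
\[
\frac1{\pi(X)}\#\{p\leqslant X:\TBe{e}(p)=B\}\longrightarrow\omega^{\len(B)}.
\]
Since $\len$ is additive, $\len(B)=\sum_r\len(b_r)$. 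Writing $b_r=b_{i_r,j_r}$, we get
\[
\omega^{\len(B)}=\prod_{r=1}^e\omega^{i_r}=\prod_{r=1}^e\nu(b_r),
\]
which is the first assertion.

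Second, for the probabilistic reformulation, let $\mu_X$ be the law of $\TBe{e}(p)$ for $p$ uniform among primes $p\leqslant X$. This is a probability measure on the finite set $\mathcal B_e$, which we identify with $\mathcal B^e$ through the unique block factorization. The first part shows that $\mu_X(\{B\})\to\nu^{\otimes e}(\{B\})$ for each of the finitely many atoms. Because the space is finite, pointwise convergence of atoms is the same as convergence in distribution. It remains to note that $\nu^{\otimes e}$ is itself a probability measure. This follows from $\sum_b\nu(b)=\sum_ih_i\omega^i=1$, the same identity that appears in the proof of Proposition~\ref{prop:terminal-cylinder-partition}.

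There is no real obstacle here. The only point needing care is the identification $\mathcal B_e\cong\mathcal B^e$, and injectivity of the concatenation map is supplied by the unique decomposition of semi-proper block words.
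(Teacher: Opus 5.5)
Your proposal is correct and follows the paper's proof: apply Theorem~\ref{thm:prime-terminal-blocks} to $B=b_1\cdots b_e$, then use additivity of $\len$ to rewrite $\omega^{\len(B)}$ as $\prod_{r}\nu(b_r)$. Your extra remarks make explicit what the paper leaves implicit in the ``equivalently'' reformulation, namely the identification $\mathcal B_e\cong\mathcal B^e$ via unique block decomposition and the fact that convergence of atoms on a finite set is convergence in distribution.
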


\begin{proof}
By additivity of digit-length,
\[
\len(b_1\cdots b_e)=\sum_{r=1}^e\len(b_r).
\]
Theorem~\ref{thm:prime-terminal-blocks} therefore gives
\[
\omega^{\len(b_1\cdots b_e)}
=
\prod_{r=1}^e\omega^{\len(b_r)}
=
\prod_{r=1}^e\nu(b_r),
\]
which is the asserted product law.
\end{proof}

Thus, in the sense of finite-dimensional distributions, the process of
successive terminal blocks of a uniformly sampled prime is
asymptotically Bernoulli. In particular, the successive terminal blocks
become asymptotically independent, even though they arise from a
deterministic numeration system.

The main theorem also determines the distribution of the coefficient
of the smallest weight $H_1=1$, which is the closest analogue here of
fixing the final digit in an ordinary integer base.

\begin{corollary}
\label{cor:prime-small-weight-digit}
Suppose Assumption~\ref{ass:pisot} holds. Put
\[
h_{\max}:=\max_{1\leqslant i\leqslant L}h_i.
\]
For every integer $j$ with $0\leqslant j<h_{\max}$, one has
\[
\frac{1}{\pi(X)}\#\left\{p\leqslant X:a_1(p)=j\right\}
\longrightarrow \sum_{\substack{1\leqslant i\leqslant L\\ h_i>j}}
\omega^i,
\]
where $a_1(p)$ denotes the coefficient of the smallest weight $H_1=1$
in the reduced $H$-expansion of $p$.
\end{corollary}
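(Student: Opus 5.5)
The plan is to reduce the statement to Theorem~\ref{thm:prime-terminal-blocks} with $e=1$, by summing over the semi-proper blocks whose first digit equals $j$.

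First, identify $a_1(p)$ with the first digit of the terminal block. The canonical infinite block expansion $C_p$ begins with $\TB(p)=b_{i,j'}$. By definition, $b_{i,j'}=(j',h_{i-1},\ldots,h_1)$, and for $i=1$ it is $b_{1,j'}=(j')$, so its first digit is $j'$. That digit is the coefficient of $H_1$ in the canonical infinite expansion. The reduced expansion $\operatorname{red}([p]_H^{-1})$ has the same first coefficient: for $p>0$ it is a truncation of the infinite expansion at the last nonzero digit, hence of length at least $1$. Therefore
\[
\{p\leqslant X:a_1(p)=j\}
=\bigsqcup_{\substack{1\leqslant i\leqslant L\\ h_i>j}}\{p\leqslant X:\TB(p)=b_{i,j}\},
\]
because $b_{i,j}$ exists exactly when $0\leqslant j\leqslant h_i-1$, that is, when $h_i>j$. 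The union is disjoint since each integer has exactly one terminal block.

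Second, apply Theorem~\ref{thm:prime-terminal-blocks} with $e=1$ and
\[
\mathcal C:=\{b_{i,j}:1\leqslant i\leqslant L,\ h_i>j\}\subset\mathcal B_1.
\]
This gives asymptotic count $\bigl(\sum_{h_i>j}\omega^{i}\bigr)\pi(X)$, which after dividing by $\pi(X)$ is the claim. The hypothesis $j<h_{\max}$ only guarantees that $\mathcal C$ is nonempty, so the limit is positive. For $j\geqslant h_{\max}$ the set would be empty and the limit $0$.

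The only point needing care is the first step, namely that the coefficient of $H_1$ in the reduced expansion equals the first digit of $\TB(p)$. This is immediate from the definitions of $\operatorname{red}$, $\operatorname{dgt}$ and $b_{i,j}$. Beyond that there is no real obstacle: the corollary is a finite sum of cases of the already-proved single-block prime asymptotic.
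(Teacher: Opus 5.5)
Your proposal is correct and is essentially the paper's proof. Both arguments identify $a_1(p)$ with the first digit $j$ of $\TB(p)=b_{i,j}$, split the condition $a_1(p)=j$ into the disjoint single-block classes with $h_i>j$, and sum the $e=1$ case of Theorem~\ref{thm:prime-terminal-blocks}. Your extra check that the reduced and infinite expansions share the $H_1$-coefficient is a detail the paper leaves implicit.
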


\begin{proof}
The first digit of the terminal block $b_{i,j}$ is exactly $j$.
Therefore the condition $a_1(p)=j$ is the disjoint union of the
terminal-block conditions
\[
\TB(p)=b_{i,j}
\qquad
\left(1\leqslant i\leqslant L,\ h_i>j\right).
\]
Summing Theorem~\ref{thm:prime-terminal-blocks} over these
indices gives the result. The limiting constants are normalized by
\[
\sum_{i=1}^{L} h_i\omega^i=1.
\]
\end{proof}

\subsection{Direct consequences of toral equidistribution}

We first collect the consequences which follow by combining the toral
sampling principle with standard one-parameter equidistribution or
orthogonality inputs. These arguments do not require any new
higher-order prime-pattern machinery beyond the results explicitly
cited below.

The same transfer principle applies when the toral orbit is
sampled at polynomial values of the primes. Thus it remains to establish
the required toral equidistribution for $P(p)\mathbf v$. This is provided
by Rhin's theorem \cite{Rhin1973}: if a real polynomial has at least one
irrational nonconstant coefficient, then its values at the primes are
uniformly distributed modulo one.

\begin{corollary}
\label{cor:prime-polynomial-terminal-sampling}
Suppose Assumption~\ref{ass:pisot} holds. Let $P\in\Z[t]$ be nonconstant and suppose that there exists $p_0$ such that
\(
P(p)\in\N_0
\)
for every prime $p\geqslant p_0$. Then, for every fixed $e\geqslant1$ and
$B\in\mathcal B_e$,
\[
\frac1{\pi(X)}
\#\left\{p_0\leqslant p\leqslant X:\TBe{e}(P(p))=B\right\}
\longrightarrow
\omega^{\len(B)}.
\]
\end{corollary}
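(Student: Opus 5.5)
The plan is to apply the general sampling principle, Proposition~\ref{prop:toral-sampling-principle}, to the sequence obtained by enumerating the primes $p\geqslant p_0$ in increasing order. Write these as $p_{(1)}<p_{(2)}<\cdots$ and set $K_n:=P(p_{(n)})$. By hypothesis, $K_n\in\N_0$ for every $n$. The only input still missing is the Haar equidistribution of $K_n\mathbf v \pmod{\Z^d}$ on $\R^d/\Z^d$.

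For that equidistribution we use the multidimensional Weyl criterion. Fix a nonzero frequency $\mathbf m\in\Z^d$ and put $\alpha:=\mathbf m\cdot\mathbf v$. By Proposition~\ref{prop:rational-independence}, $\alpha$ is irrational. Write $P(t)=\sum_{k=0}^{D}c_kt^k$ with $c_k\in\Z$ and $D\geqslant1$. The real polynomial $\alpha P(t)$ has the nonconstant coefficient $\alpha c_D$. This coefficient is irrational, because $c_D$ is a nonzero integer and $\alpha\notin\Q$. Rhin's theorem \cite{Rhin1973} then gives that $\alpha P(p)$ is uniformly distributed modulo one along the primes, so
\[
\frac{1}{\pi(X)}\sum_{p\leqslant X}e^{2\pi i\,\mathbf m\cdot (P(p)\mathbf v)}\longrightarrow0 .
\]
Discarding the finitely many primes below $p_0$ changes the sum by $O(1)$, which is $o(\pi(X))$. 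Since this holds for every nonzero $\mathbf m$, the sequence $(K_n\mathbf v)$ is Haar equidistributed on the torus.

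Proposition~\ref{prop:toral-sampling-principle} now yields, for every $e\geqslant1$ and $B\in\mathcal B_e$,
\[
\frac1N\#\{1\leqslant n\leqslant N:\TBe{e}(K_n)=B\}\longrightarrow\omega^{\len(B)}.
\]
Take $N=\pi(X)-\pi(p_0-1)$, which equals $\pi(X)+O(1)$. Then replace the normalization $N$ by $\pi(X)$; the error is negligible because $\pi(X)\to\infty$. This gives the stated limit.

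The only step that is not purely formal is the check that Rhin's hypothesis holds. Here a single irrational nonconstant coefficient is enough. This is exactly what the rational independence of $1,v_1,\ldots,v_d$ supplies for every nonzero frequency $\mathbf m$, so no obstacle is expected beyond citing Rhin's theorem correctly.
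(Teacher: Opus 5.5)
Your proposal is correct and follows the paper's proof essentially step for step. Rhin's theorem, applied to $(\mathbf m\cdot\mathbf v)P(t)$ via Proposition~\ref{prop:rational-independence}, together with the Weyl criterion gives Haar equidistribution of $P(p)\mathbf v$; then Proposition~\ref{prop:toral-sampling-principle} with $K_n=P(r_n)$, where $r_n$ runs over the primes $\geqslant p_0$, and the renormalization $N=\pi(X)+O(1)$ finish the argument exactly as in the paper.
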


\begin{proof}
For every nonzero $\mathbf m\in\Z^d$, the polynomial
\[
(\mathbf m\cdot\mathbf v)P(t)
\]
has an irrational nonconstant coefficient by
Proposition~\ref{prop:rational-independence}. Rhin's
equidistribution theorem for polynomial values along the primes
\cite{Rhin1973} therefore gives
\[
(\mathbf m\cdot\mathbf v)P(p)\;(\mathrm{mod}\ \Z)
\]
equidistributed in $\R/\Z$. By the multidimensional Weyl criterion,
\[
P(p)\mathbf v\;(\mathrm{mod}\ \Z^d)
\]
is Haar equidistributed.

Let $r_1<r_2<\cdots$ be the primes at least $p_0$. Then
\[
P(r_n)\mathbf v\;(\mathrm{mod}\ \Z^d)
\]
is Haar equidistributed, so
Proposition~\ref{prop:toral-sampling-principle}, applied with
$K_n=P(r_n)$, gives
\[
\frac1N
\#\left\{1\leqslant n\leqslant N:
\TBe{e}(P(r_n))=B\right\}
\longrightarrow
\omega^{\len(B)}.
\]
Taking $N=\#\{p_0\leqslant p\leqslant X\}$ and using
\[
\frac{\#\{p_0\leqslant p\leqslant X\}}{\pi(X)}\longrightarrow1
\]
yields the claimed result.
\end{proof}

Polynomial sampling in recurrence numeration has substantial precedent.
Drmota--Steiner \cite{DrmotaSteiner2002} treat Zeckendorf expansions
along polynomial sequences, while Steiner \cite{Steiner2002Parry}
studies Parry expansions of polynomial sequences and polynomial values
at primes using the associated toral Markov partitions. The new point
of Corollary~\ref{cor:prime-polynomial-terminal-sampling} is that the toral coding developed in
Sections~\ref{sec:general-data}--\ref{sec:volume} transfers this
equidistribution input to terminal blocks, yielding the exact local law
\[
\TBe{e}(P(p))=B
\quad\text{with limiting frequency}\quad
\omega^{\len(B)},
\]
including for systems in which the canonical expansion need not be greedy.

A second consequence of the same toral mechanism concerns congruence
conditions on the primes. Restricting the primes to a fixed reduced
residue class preserves the required toral equidistribution, and hence
the terminal-block distribution. In particular, terminal-block
statistics are asymptotically independent of any fixed congruence
condition.

\begin{corollary}\label{cor:congruence-terminal-independence}
Suppose Assumption~\ref{ass:pisot} holds. Fix
$q\geqslant2$ and $a\in\Z$ with $(a,q)=1$. Let $e\geqslant1$ and
$B\in\mathcal B_e$. Then
\[
\#\left\{
p\leqslant X:
p\equiv a\;(\mathrm{mod}\ q),\
\TBe{e}(p)=B
\right\}
\sim
\frac{\omega^{\len(B)}}{\phi(q)}\,\pi(X).
\]
Equivalently, among primes in any fixed reduced residue class modulo
$q$, the limiting distribution of terminal block words is the same as
among all primes.
\end{corollary}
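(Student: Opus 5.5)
The plan is to apply Proposition~\ref{prop:toral-sampling-principle} to the primes in the fixed class $a\bmod q$. Enumerate these primes increasingly as $r_1<r_2<\cdots$ and put $K_n:=r_n$. The terminal-block law then reduces to a single input: the orbit $r_n\mathbf v\pmod{\Z^d}$ must be Haar equidistributed. Once this holds, the proposition gives
\[
\frac1N\#\{1\leqslant n\leqslant N:\TBe{e}(r_n)=B\}\longrightarrow\omega^{\len(B)}.
\]
Taking $N=\#\{p\leqslant X:p\equiv a\ (\mathrm{mod}\ q)\}\sim\pi(X)/\phi(q)$, which is the prime number theorem in arithmetic progressions, yields the stated asymptotic. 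The equivalent formulation in the statement is simply a rewording of this limit.

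For the equidistribution I will use the multidimensional Weyl criterion. Fix a nonzero $\mathbf m\in\Z^d$ and set $\alpha:=\mathbf m\cdot\mathbf v$, which is irrational by Proposition~\ref{prop:rational-independence}. I must show
\[
\sum_{\substack{p\leqslant X\\ p\equiv a\ (q)}}e^{2\pi i p\alpha}=o(\pi(X)).
\]
I will detect the congruence with additive characters:
\[
\mathbf 1_{p\equiv a\ (q)}=\frac1q\sum_{t=0}^{q-1}e^{2\pi i t(p-a)/q}.
\]
This turns the restricted sum into the combination
\[
\frac1q\sum_{t=0}^{q-1}e^{-2\pi i ta/q}\sum_{p\leqslant X}e^{2\pi i p(\alpha+t/q)}.
\]
Each shifted frequency $\alpha+t/q$ is still irrational. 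Vinogradov's theorem \cite{Vinogradov1954}, exactly as used in Lemma~\ref{lem:prime-toral-equidistribution}, then makes every inner sum $o(\pi(X))$. Since there are only $q$ terms, the whole combination is $o(\pi(X))$, which is what Weyl's criterion needs.

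There is no serious obstacle. The one point to check is that Proposition~\ref{prop:toral-sampling-principle} applies to an arbitrary Haar-equidistributed index sequence in $\N_0$, so it covers this subsequence of the primes; it does, as stated. One could instead use Dirichlet characters, but that would require Vinogradov's theorem twisted by a character. The additive-character route needs only the untwisted theorem at rational shifts of $\alpha$, so I prefer it.
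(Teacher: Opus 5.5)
Your proposal is correct and follows the paper's proof essentially step for step. Both arguments detect the residue class with additive characters and apply Vinogradov's theorem at the irrational shifts $\alpha+h/q$. Both then use the multidimensional Weyl criterion, normalizing by $\pi(X;q,a)\sim\pi(X)/\phi(q)$ so that $o(\pi(X))$ suffices, and finish by applying Proposition~\ref{prop:toral-sampling-principle} to the increasing enumeration $r_n$ of the primes $\equiv a \pmod q$.
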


\begin{proof}
Write
\[
\pi(X;q,a):=
\#\{p\leqslant X:p\equiv a\;(\mathrm{mod}\ q)\}.
\]
We first show that the restricted toral orbit
\[
p\mathbf v\;(\mathrm{mod}\ \Z^d),
\qquad
p\equiv a\;(\mathrm{mod}\ q),
\]
is Haar equidistributed. Let $\mathbf m\in\Z^d\setminus\{0\}$ and put
$\alpha:=\mathbf m\cdot\mathbf v$. By
Proposition~\ref{prop:rational-independence}, $\alpha$ is
irrational. Additive orthogonality gives
\[
\sum_{\substack{p\leqslant X\\ p\equiv a\;(\mathrm{mod}\ q)}}
e^{2\pi i p\alpha}
=
\frac1q\sum_{h=0}^{q-1}
e^{-2\pi i ha/q}
\sum_{p\leqslant X}
e^{2\pi i p(\alpha+h/q)}.
\]
Every number $\alpha+h/q$ is irrational, so Vinogradov's theorem
\cite{Vinogradov1954} makes each inner sum $o(\pi(X))$. Hence the
left-hand side is $o(\pi(X))$. The prime number theorem in arithmetic
progressions gives
\[
\pi(X;q,a)\sim\frac{\pi(X)}{\phi(q)}.
\]
Therefore
\[
\frac{1}{\pi(X;q,a)}
\sum_{\substack{p\leqslant X\\ p\equiv a\;(\mathrm{mod}\ q)}}
e^{2\pi i p\alpha}
\longrightarrow 0,
\]
and the multidimensional Weyl criterion gives Haar equidistribution of
the restricted toral orbit.

Let $r_1<r_2<\cdots$ denote the primes satisfying
\(
r_n\equiv a \pmod q.
\)
For this sequence, the toral equidistribution established above allows us
to apply Proposition~\ref{prop:toral-sampling-principle} with
$K_n=r_n$. Hence
\[
\frac1N
\#\left\{1\leqslant n\leqslant N:\TBe{e}(r_n)=B\right\}
\longrightarrow
\omega^{\len(B)}.
\]
Taking $N=\pi(X;q,a)$ and using the prime number theorem in arithmetic
progressions gives the asserted asymptotic.
\end{proof}

For comparison, the integer analogue follows directly from
Proposition~\ref{prop:toral-sampling-principle}. If $q\geqslant1$, then
$q\mathbf v$ is still totally irrational, and hence
\[
(qn+a)\mathbf v
=
n(q\mathbf v)+a\mathbf v
\pmod{\Z^d}
\]
is Haar equidistributed. Therefore, for every fixed $q\geqslant1$,
$a\in\N_0$, $e\geqslant1$, and $B\in\mathcal B_e$,
\[
\frac1N\#\{1\leqslant n\leqslant N:
\TBe{e}(qn+a)=B\}
\longrightarrow \omega^{\len(B)}.
\]
Thus every fixed arithmetic progression of the integers inherits the
same terminal-cylinder law.

The same toral coding also records correlations between terminal
conditions at finitely many fixed shifts. To formulate this precisely,
let
\[
F=\operatorname{rem}\circ\,\tau:
\widetilde R\longrightarrow \R^d/\Z^d
\]
be the quotient map from Lemma~\ref{lem:regular-toral-coding}, and write
$\mu_{\mathrm{Haar}}$ for normalized Haar measure on $\R^d/\Z^d$.
For $B\in\mathcal B_e$, let $\Omega_B$ denote, up to a Haar-null set,
the set of torus points whose unique representative in $\widetilde R$
belongs to $R_B$. Then
\[
\mu_{\mathrm{Haar}}(\Omega_B)
=
\frac{\vol(R_B)}{\vol(\widetilde R)}
=
\omega^{\len(B)}.
\]
By Lemma~\ref{lem:regular-toral-coding} and the null-boundary property
of $R_B$, the sets $\Omega_B$ may be chosen to be Haar-continuity
sets.

\begin{corollary}
\label{cor:fixed-shift-terminal-correlations}
Suppose Assumption~\ref{ass:pisot} holds. Fix
$r\geqslant1$, shifts $t_1,\ldots,t_r\in\N_0$, integers
$e_1,\ldots,e_r\geqslant1$, and words
$B_j\in\mathcal B_{e_j}$. Then
\[
\frac1N
\#\left\{1\leqslant n\leqslant N:
\TBe{e_j}(n+t_j)=B_j\ \text{for all }j\right\}
\longrightarrow
\mu_{\mathrm{Haar}}\!\left(
\bigcap_{j=1}^r(\Omega_{B_j}-t_j\mathbf v)
\right),
\]
and also
\[
\frac1{\pi(X)}
\#\left\{p\leqslant X:
\TBe{e_j}(p+t_j)=B_j\ \text{for all }j\right\}
\longrightarrow
\mu_{\mathrm{Haar}}\!\left(
\bigcap_{j=1}^r(\Omega_{B_j}-t_j\mathbf v)
\right).
\]
Thus the integer and prime fixed-shift correlation laws are governed by
the same finite intersection of translated terminal cylinders on the
torus.
\end{corollary}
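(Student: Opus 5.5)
The plan is to apply the toral coding from Lemma~\ref{lem:regular-toral-coding} to the joint orbit $(n+t_1)\mathbf v,\ldots,(n+t_r)\mathbf v$. By Proposition~\ref{prop:toral-congruence}, for every $n$ and every $j$,
\[
F(\evalG(n+t_j))=(n+t_j)\mathbf v=n\mathbf v+t_j\mathbf v\pmod{\Z^d}.
\]
So all $r$ conditions are read off from the single toral point $y_n:=n\mathbf v$. The $j$th condition depends on $y_n$ only through the shifted point $y_n+t_j\mathbf v$.

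First I will show that, outside a Haar-null exceptional set,
\[
\TBe{e_j}(n+t_j)=B_j \iff y_n+t_j\mathbf v\in\Omega_{B_j}.
\]
Let $E_j\subset\R^d/\Z^d$ be the compact null set consisting of the points whose representative in $\widetilde R$ is not unique (contained in $F(\partial\widetilde R)$), together with the $F$-images of all boundaries $\partial R_C$ and all pairwise overlaps $R_C\cap R_{C'}$ for $C\ne C'$ in $\mathcal B_{e_j}$. This is the same exceptional set as in the proof of Proposition~\ref{prop:toral-sampling-principle}. Off $E_j$, the unique representative of $F(\evalG(m))$ in $\widetilde R$ is $\evalG(m)$ itself. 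Moreover $\evalG(m)\in R_{\TBe{e_j}(m)}$, by the definition of $R_B$ as a closure. Hence the class of $m$ is exactly the $C$ with the representative in $R_C$, which is the displayed equivalence.

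Put $G(y):=\prod_{j}\mathbf 1_{\Omega_{B_j}}(y+t_j\mathbf v)$. Each $\Omega_{B_j}$ is a Haar-continuity set, and a translate of a continuity set is again one. A finite product of indicators of continuity sets is Riemann integrable, since its discontinuity set lies in the union of the translated boundaries. Hence $G$ is Riemann integrable, and its integral equals the Haar measure of $\bigcap_j(\Omega_{B_j}-t_j\mathbf v)$.

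The counting function equals $G(y_n)$ except when $y_n\in\bigcup_j(E_j-t_j\mathbf v)$. That union is a compact null set, so by equidistribution it is visited with density zero.

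For the integer statement, Haar equidistribution of $n\mathbf v$ follows from Proposition~\ref{prop:rational-independence} and Weyl's theorem. For the prime statement, it follows from Lemma~\ref{lem:prime-toral-equidistribution}. In both cases the exceptional indices have density zero, and averaging the Riemann-integrable function $G$ gives the stated limit.

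The one point needing care is the choice of $\Omega_B$. I would fix $\Omega_B:=F(\operatorname{int}R_B)\setminus E$, with $E=F(\partial\widetilde R)$ as in Lemma~\ref{lem:regular-toral-coding}. This is open, and its boundary is contained in the null set $F(\partial R_B)\cup E$. Both that boundary and the exceptional sets $E_j$ must be genuinely compact null sets so that "density zero visits" applies. This holds because $F$ is Lipschitz, every $\partial R_C$ is compact and null (Proposition~\ref{prop:terminal-cylinder-window}), and every overlap $R_C\cap R_{C'}$ is compact and null (Proposition~\ref{prop:terminal-cylinder-partition}).
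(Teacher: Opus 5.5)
Your proposal is correct and follows the paper's proof essentially step by step: the same exceptional set built from $F(\partial\widetilde R)$ and the $F$-images of the cylinder boundaries, the same reduction of the joint condition to membership of $n\mathbf v$ in $\bigcap_j(\Omega_{B_j}-t_j\mathbf v)$, and the same appeal to Haar equidistribution of $n\mathbf v$ and $p\mathbf v$. Your explicit choice $\Omega_B=F(\operatorname{int}R_B)\setminus F(\partial\widetilde R)$ fills in a detail the paper leaves implicit. Including the overlaps $R_C\cap R_{C'}$ in the exceptional set is harmless but redundant, since these overlaps already lie in $\partial R_C\cup\partial R_{C'}$.
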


\begin{proof}
Set
\[
\mathcal E:=
F(\partial\widetilde R)
\cup
\bigcup_{j=1}^r
\bigcup_{C\in\mathcal B_{e_j}}
F(\partial R_C).
\]
By Lemma~\ref{lem:regular-toral-coding} and
Proposition~\ref{prop:terminal-cylinder-window}, the set $\mathcal E$ is compact
and Haar-null, since the families $\mathcal B_{e_j}$ are finite.
Hence $\mathcal E$ is a Haar-continuity set.

If
\[
m\mathbf v\;(\mathrm{mod}\ \Z^d)\notin\mathcal E,
\]
then the torus point has a unique representative in $\widetilde R$,
and this representative belongs to exactly one cylinder of each level
$e_j$. Therefore
\[
\TBe{e_j}(m)=B_j
\quad\Longleftrightarrow\quad
m\mathbf v\;(\mathrm{mod}\ \Z^d)\in\Omega_{B_j}.
\]
Since
\[
(m+t_j)\mathbf v
=
m\mathbf v+t_j\mathbf v
\pmod{\Z^d},
\]
the simultaneous conditions
\[
\TBe{e_j}(m+t_j)=B_j
\qquad (1\leqslant j\leqslant r)
\]
are equivalent to
\[
m\mathbf v\;(\mathrm{mod}\ \Z^d)
\in
D:=
\bigcap_{j=1}^r(\Omega_{B_j}-t_j\mathbf v),
\]
unless
\[
m\mathbf v\;(\mathrm{mod}\ \Z^d)
\in
\bigcup_{j=1}^r(\mathcal E-t_j\mathbf v).
\]
The latter set is a finite union of translates of the compact
Haar-null set $\mathcal E$, hence is itself a Haar-continuity set of
measure zero.

Moreover, $D$ is a Haar-continuity set. Indeed, each
$\Omega_{B_j}$ is one, and
\[
\partial D
\subseteq
\bigcup_{j=1}^r
\bigl(\partial\Omega_{B_j}-t_j\mathbf v\bigr),
\]
whose Haar measure is zero.

The orbit $n\mathbf v$ is Haar equidistributed by
Proposition~\ref{prop:rational-independence} and the
multidimensional Weyl criterion, while the prime orbit $p\mathbf v$ is
Haar equidistributed by
Lemma~\ref{lem:prime-toral-equidistribution}. Applying these two
equidistribution statements to $D$, and using that the exceptional set
above has Haar measure zero, gives both limits.
\end{proof}

A different consequence of the toral coding comes from combining it
with classical exponential-sum estimates. The indicator of a fixed
terminal cylinder can be represented, outside a null exceptional set,
by a Riemann-integrable function on the torus. Fourier approximation
then allows one to use Davenport's estimate for the M\"obius function.
This gives the following local-cylinder analogue, in the present Pisot
family, of the M\"obius phenomena studied for Zeckendorf digit
functions by Drmota, M\"ullner, and Spiegelhofer
\cite{DrmotaMuellnerSpiegelhofer2018}.

\begin{corollary}
\label{cor:mobius-terminal-cylinders}
Suppose Assumption~\ref{ass:pisot} holds. For every
fixed $e\geqslant1$ and $B\in\mathcal B_e$,
\[
\sum_{n\leqslant X}\mu(n)\,
\mathbf 1_{\{\TBe{e}(n)=B\}}
=o(X),
\qquad X\to+\infty,
\]
where $\mu$ is the M\"obius function.
\end{corollary}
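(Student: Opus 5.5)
The plan is to transfer the cylinder indicator to the torus and then apply Davenport's estimate. By Lemma~\ref{lem:regular-toral-coding} and Proposition~\ref{prop:terminal-cylinder-window}, the function
\[
g_B(y):=\mathbf 1_{\Omega_B}(y)\qquad(y\in\R^d/\Z^d)
\]
is Riemann integrable, because $\Omega_B$ is a Haar-continuity set. Moreover, by Proposition~\ref{prop:toral-congruence},
\[
\mathbf 1_{\{\TBe{e}(n)=B\}}=g_B(n\mathbf v\bmod\Z^d)
\]
whenever $n\mathbf v\bmod\Z^d$ avoids the compact Haar-null set
\[
\mathcal E:=F(\partial\widetilde R)\cup\bigcup_{C\in\mathcal B_e}F(\partial R_C).
\]
The first step is to show that this exceptional set is negligible. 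Since $n\mathbf v$ is Haar equidistributed (Proposition~\ref{prop:rational-independence} and Weyl's criterion), and $\mathcal E$ is a null continuity set, the number of $n\leqslant X$ with $n\mathbf v\in\mathcal E$ is $o(X)$. As $\abs{\mu}\leqslant1$, these indices contribute only $o(X)$, so it suffices to prove
\[
\sum_{n\leqslant X}\mu(n)\,g_B(n\mathbf v)=o(X).
\]

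Fix $\varepsilon>0$. Since $g_B$ is Riemann integrable, the standard sandwich construction gives trigonometric polynomials $T^-\leqslant g_B\leqslant T^+$ on the torus with $\int(T^+-T^-)\,d\mu_{\mathrm{Haar}}<\varepsilon$. To get these, first squeeze $g_B$ between continuous functions whose integrals differ by less than $\varepsilon/2$, then apply Fejér approximation with a small uniform shift. Writing $g_B=T^-+(g_B-T^-)$, where $0\leqslant g_B-T^-\leqslant T^+-T^-$, we obtain
\[
\Bigl|\sum_{n\leqslant X}\mu(n)g_B(n\mathbf v)\Bigr|
\leqslant\Bigl|\sum_{n\leqslant X}\mu(n)T^-(n\mathbf v)\Bigr|+\sum_{n\leqslant X}(T^+-T^-)(n\mathbf v).
\]
By equidistribution of $n\mathbf v$, the second sum is at most $(\varepsilon+o(1))X$.

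Next we expand $T^-(y)=\sum_{\mathbf m\in S}c_{\mathbf m}e^{2\pi i\mathbf m\cdot y}$ over a finite frequency set $S$. This reduces the first sum to finitely many sums $\sum_{n\leqslant X}\mu(n)e^{2\pi in\,\mathbf m\cdot\mathbf v}$. For $\mathbf m=0$, the sum is $\sum_{n\leqslant X}\mu(n)=o(X)$ by the prime number theorem. For $\mathbf m\neq0$, Davenport's estimate
\[
\sup_{\alpha\in\R}\Bigl|\sum_{n\leqslant X}\mu(n)e^{2\pi in\alpha}\Bigr|\ll_A X(\log X)^{-A}
\]
gives $o(X)$ uniformly in $\alpha$. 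Irrationality of $\mathbf m\cdot\mathbf v$ is therefore not even needed here. Letting $X\to\infty$ and then $\varepsilon\to0$ yields the claim.

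The main obstacle is constructing the trigonometric sandwich correctly for a merely Riemann-integrable indicator on a $d$-dimensional torus. This is standard, and I would justify it by the continuous-sandwich plus Fejér argument sketched above. Beyond this, the only ingredient from the paper is the almost-everywhere identification of $\{\TBe{e}(n)=B\}$ with $\{n\mathbf v\in\Omega_B\}$, which is already contained in the proof of Corollary~\ref{cor:fixed-shift-terminal-correlations}.
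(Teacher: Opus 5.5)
Your proposal is correct and follows the paper's proof essentially step for step: you discard the density-zero set of $n$ with $n\mathbf v\in\mathcal E$, approximate the Riemann-integrable $g_B=\mathbf 1_{\Omega_B}$ by trigonometric polynomials using equidistribution of $n\mathbf v$, and bound each Fourier mode with Davenport's uniform estimate. The only difference is cosmetic: the paper uses an $L^1$-close continuous $f$ and then a uniformly close trigonometric polynomial $T$, whereas your one-sided sandwich $T^-\leqslant g_B\leqslant T^+$ needs equidistribution only for continuous test functions.
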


\begin{proof}
Set
\[
g_B:=\mathbf 1_{\Omega_B}.
\]
Since $\Omega_B$ is a Haar-continuity set, $g_B$ is bounded and
Riemann integrable on $\R^d/\Z^d$.

Let
\[
E_e
:=
F(\partial\widetilde R)
\cup
\bigcup_{C\in\mathcal B_e}F(\partial R_C).
\]
By Proposition~\ref{prop:terminal-cylinder-partition},
Proposition~\ref{prop:toral-congruence}, and
Lemma~\ref{lem:regular-toral-coding}, one has
\[
\mathbf 1_{\{\TBe{e}(n)=B\}}
=
g_B(n\mathbf v)
\]
whenever $n\mathbf v\notin E_e$. Indeed, outside
$F(\partial\widetilde R)$ the torus point $n\mathbf v$ has the unique
representative $\evalG(n)$ in $\widetilde R$, while outside the
cylinder boundaries the measurable partition
$(R_C)_{C\in\mathcal B_e}$ determines its unique terminal class.

The set $E_e$ is compact and Haar-null: the family $\mathcal B_e$ is
finite, every $\partial R_C$ has Lebesgue measure zero by
Proposition~\ref{prop:terminal-cylinder-window}, and $F$ is Lipschitz.
Since $n\mathbf v$ is Haar equidistributed, the set of integers $n$
for which $n\mathbf v\in E_e$ has density zero. As
$|\mu(n)|\leqslant1$, its M\"obius-weighted contribution is therefore
$o(X)$.

It remains to show that
\[
\sum_{n\leqslant X}\mu(n)g_B(n\mathbf v)=o(X).
\]
Let $\varepsilon>0$. Since $g_B$ is Riemann integrable, choose a
continuous function $f$ on $\R^d/\Z^d$ such that
\[
\int_{\R^d/\Z^d}|g_B-f|\,d\mu_{\mathrm{Haar}}<\varepsilon.
\]
The equidistribution of $n\mathbf v$ then gives
\[
\limsup_{X\to+\infty}
\frac1X
\sum_{n\leqslant X}
|g_B(n\mathbf v)-f(n\mathbf v)|
\leqslant\varepsilon.
\]

Next choose a trigonometric polynomial $T$ such that
\[
\|f-T\|_\infty<\varepsilon.
\]
Each Fourier mode of $T(n\mathbf v)$ has the form
\[
e^{2\pi i n\alpha},
\qquad
\alpha=\mathbf m\cdot\mathbf v,
\]
and Davenport's uniform estimate \cite{Davenport1937},
\[
\sup_{\alpha\in\R}
\left|
\sum_{n\leqslant X}\mu(n)e^{2\pi i n\alpha}
\right|
\ll_A
\frac{X}{(\log X)^A}
\]
for every $A>0$, gives
\[
\sum_{n\leqslant X}\mu(n)T(n\mathbf v)=o(X).
\]
Consequently,
\[
\limsup_{X\to+\infty}
\frac1X
\left|
\sum_{n\leqslant X}\mu(n)g_B(n\mathbf v)
\right|
\leqslant 2\varepsilon.
\]
Since $\varepsilon>0$ is arbitrary, the desired estimate follows.
\end{proof}

\subsection{Selected transfers from established prime theorems}

We finish with two representative transfers from established prime
theorems. Chebotarev shows that the terminal condition is compatible
with a genuinely arithmetic Frobenius constraint, while the
Green--Tao theorem shows that every fixed terminal prime class retains
long additive structure.

For the Chebotarev transfer, the qualitative irrationality supplied by
Proposition~\ref{prop:rational-independence} is no longer
sufficient: we need a quantitative Diophantine bound for the linear
forms $\mathbf m\cdot\mathbf v$. This follows from the algebraicity of
the coordinates of $\mathbf v$.

\begin{proposition}\label{prop:algebraic-toral-direction}
Each coordinate $v_r$ is algebraic. Moreover, there exist constants
$c_{\mathbf v}>0$ and $\kappa_{\mathbf v}\geqslant1$, depending only on
the numeration system, such that
\[
\bigl\|\mathbf m\cdot\mathbf v\bigr\|_{\R/\Z}
\geqslant
c_{\mathbf v}
\bigl(1+\|\mathbf m\|_\infty\bigr)^{-\kappa_{\mathbf v}}
\qquad
(\mathbf m\in\Z^d\setminus\{\mathbf0\}).
\]
In particular, for every fixed $\mathbf m\neq\mathbf0$, the number
$\alpha_{\mathbf m}:=\mathbf m\cdot\mathbf v$ is algebraic and
irrational, and there exists $c_{\mathbf m}>0$ such that
\[
\|q\alpha_{\mathbf m}\|_{\R/\Z}
\geqslant
c_{\mathbf m}q^{-\kappa_{\mathbf v}}
\qquad (q\geqslant1).
\]
Thus $\alpha_{\mathbf m}$ has finite Diophantine type.
\end{proposition}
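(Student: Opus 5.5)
The plan has three parts: show the coordinates of $\mathbf v$ lie in a fixed number field, turn that into a Liouville-type lower bound, and then specialise to multiples of a fixed frequency.

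\emph{Algebraicity.} Since $\tau$ is the real-linear map sending $Q_r(\Gamma)$ to $e_r$, the vector $\mathbf v=\tau(s_0)$ is the unique real solution of
\[
s_0=\sum_{r=1}^d v_rQ_r(\Gamma).
\]
Passing to the full list of contracting conjugates $\gamma_1,\ldots,\gamma_d$, this becomes the complex linear system
\[
\sum_{r=1}^d v_rQ_r(\gamma_i)=1\qquad(1\leqslant i\leqslant d).
\]
The coefficient matrix of this system is invertible by Lemma~\ref{lem:Qdet}, so Cramer's rule expresses each $v_r$ as a ratio of determinants. Those determinants have entries in the splitting field $K$ of $P_h$. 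Hence every $v_r$ lies in $K$, and in particular is algebraic. One could refine this and show $v_r\in\Q(\varphi)$ using Galois equivariance of the system under permutations of the $\gamma_i$, but that is not needed.

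\emph{Liouville-type bound.} Fix $\mathbf m\neq\mathbf0$ and let $k\in\Z$ be the integer nearest to $\mathbf m\cdot\mathbf v$. Put
\[
\beta:=\mathbf m\cdot\mathbf v-k\in K .
\]
By Proposition~\ref{prop:rational-independence}, $\beta\neq0$. The basic estimate is the product formula for the nonzero algebraic number $\beta$:
\[
|N_{K/\Q}(D\beta)|\geqslant1,
\]
where $D$ is a fixed common denominator of the $v_r$.

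Write $\beta=\sum_r m_rv_r-k$. Every Galois conjugate of $\beta$ has the form $\sum_r m_r\sigma(v_r)-k$, so its absolute value is at most $C(1+\|\mathbf m\|_\infty)$; here we use $|k|\ll1+\|\mathbf m\|_\infty$. With $n:=[K:\Q]$, the norm inequality gives
\[
|D\beta|\cdot\bigl(DC(1+\|\mathbf m\|_\infty)\bigr)^{n-1}\geqslant1 .
\]
Since $|\beta|=\|\mathbf m\cdot\mathbf v\|_{\R/\Z}$, this yields the claimed bound with $\kappa_{\mathbf v}=n-1$, or $\kappa_{\mathbf v}=1$ if $n=1$, and a suitable constant $c_{\mathbf v}$.

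\emph{The "in particular" clause.} Apply the bound to the frequency $q\mathbf m\neq\mathbf0$. Since $\|q\mathbf m\|_\infty\leqslant q\|\mathbf m\|_\infty$, we get
\[
\|q\alpha_{\mathbf m}\|_{\R/\Z}\geqslant c_{\mathbf v}\bigl(1+q\|\mathbf m\|_\infty\bigr)^{-\kappa_{\mathbf v}}\geqslant c_{\mathbf m}q^{-\kappa_{\mathbf v}} .
\]
The number $\alpha_{\mathbf m}$ is algebraic because it is a $\Z$-combination of the $v_r$. It is irrational by Proposition~\ref{prop:rational-independence}.

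\emph{Main obstacle.} The only delicate point is the uniform control of the Galois conjugates of $\beta$. This rests on the fact that the $v_r$ are fixed elements of one number field, so the constants $C$ and $D$ are independent of $\mathbf m$. Beyond that, the argument is the standard Liouville estimate.
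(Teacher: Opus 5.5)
Your proof is correct and follows essentially the same route as the paper. The steps match: Cramer's rule gives algebraicity of the $v_r$; Proposition~\ref{prop:rational-independence} shows that $D(\mathbf m\cdot\mathbf v-k)$ is a nonzero algebraic integer; bounding its other conjugates by $O(1+\|\mathbf m\|_\infty)$ in the norm inequality gives the Liouville-type estimate; and applying it to $q\mathbf m$ gives the final clause. The only difference is cosmetic: you work in the splitting field $K$ of $P_h$ rather than in $\Q(v_1,\ldots,v_d)$, which may enlarge the exponent $\kappa_{\mathbf v}=[K:\Q]-1$. This is harmless, since $K\ni\varphi$ forces $[K:\Q]\geqslant L\geqslant 2$, so your hedge for the case $n=1$ is never needed.
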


\begin{proof}
Recall that the arithmetic lattice $\mathcal T$ introduced in
Section~\ref{sec:volume} is generated by
\[
Q_1(\Gamma),\ldots,Q_d(\Gamma),
\]
where
\[
Q_r(\Gamma)=\bigl(Q_r(\eta_\nu)\bigr)_\nu
\]
and the $\eta_\nu$ are the contracting conjugates of $\varphi$.
Since each $Q_r$ is a Laurent polynomial with integer coefficients,
its values at the $\eta_\nu$ are algebraic; in the real coordinates of
$E_s=\R^a\times\C^b$, their real and imaginary parts are therefore
algebraic. The vector $s_0$ also has algebraic coordinates.

By Lemma~\ref{lem:Qdet},
$Q_1(\Gamma),\ldots,Q_d(\Gamma)$ form a real basis of $E_s$. Hence the
identity
\[
s_0=\sum_{r=1}^d v_rQ_r(\Gamma)
\]
and Cramer's rule show that $v_1,\ldots,v_d$ are algebraic.

Let
\[
\mathbb K:=\Q(v_1,\ldots,v_d),
\]
and choose a positive integer $D$ such that $Dv_r$ is an algebraic
integer for every $r$. Given $\mathbf m\neq\mathbf0$, choose
$a_0\in\Z$ so that
\[
|a_0+\mathbf m\cdot\mathbf v|
=
\|\mathbf m\cdot\mathbf v\|_{\R/\Z}.
\]
By Proposition~\ref{prop:rational-independence},
$a_0+\mathbf m\cdot\mathbf v\neq0$. Therefore
\[
\alpha:=D(a_0+\mathbf m\cdot\mathbf v)
\]
is a nonzero algebraic integer, and hence
\[
|N_{\mathbb K/\Q}(\alpha)|\geqslant1.
\]

The choice of $a_0$ gives
\[
|a_0|\ll_{\mathbf v}1+\|\mathbf m\|_\infty.
\]
Consequently, for every embedding
$\sigma:\mathbb K\hookrightarrow\C$ distinct from the identity,
\[
|\sigma(\alpha)|
\ll_{\mathbf v,\mathbb K}
1+\|\mathbf m\|_\infty.
\]
Taking the product over all embeddings of $\mathbb K$ yields
\[
|a_0+\mathbf m\cdot\mathbf v|
\gg_{\mathbf v,\mathbb K}
\bigl(1+\|\mathbf m\|_\infty\bigr)^{1-[\mathbb K:\Q]}.
\]
Thus the asserted estimate holds with
\[
\kappa_{\mathbf v}=[\mathbb K:\Q]-1.
\]
By Proposition~\ref{prop:rational-independence},
$\mathbb K\neq\Q$, and hence $\kappa_{\mathbf v}\geqslant1$.

Finally, fix $\mathbf m\neq\mathbf0$ and put
\[
\alpha_{\mathbf m}:=\mathbf m\cdot\mathbf v.
\]
The algebraicity of the coordinates $v_r$ shows that
$\alpha_{\mathbf m}$ is algebraic, while
Proposition~\ref{prop:rational-independence} shows that
it is irrational. Applying the preceding estimate to the nonzero
integer vector $q\mathbf m$, with $q\geqslant1$, gives
\[
\|q\alpha_{\mathbf m}\|_{\R/\Z}
=
\|(q\mathbf m)\cdot\mathbf v\|_{\R/\Z}
\geqslant
c_{\mathbf v}
\bigl(1+q\|\mathbf m\|_\infty\bigr)^{-\kappa_{\mathbf v}}
\gg_{\mathbf m,\mathbf v}
q^{-\kappa_{\mathbf v}}.
\]
Thus $\alpha_{\mathbf m}$ has finite Diophantine type.
\end{proof}

The asymptotic independence from fixed congruence classes obtained above
has a Chebotarev analogue. Ji, Kazdan, and McDonald
\cite{JiKazdanMcDonald2020} studied primes subject simultaneously to
one-dimensional Beatty restrictions and Chebotarev conditions, and
proved the exponential-sum estimate that we use as the analytic input
below. Combined with the multidimensional toral coding developed here,
this allows us to impose a prescribed Frobenius conjugacy class on the
terminal-block problem.

Let $E/\Q$ be a finite Galois extension with Galois group $G$, and let
$C\subseteq G$ be a conjugacy class. For a prime $p$ unramified in
$E$, let
\[
\operatorname{Frob}_p(E/\Q)
\]
denote its Frobenius conjugacy class in $G$. We write
\[
\operatorname{Li}(X):=\int_2^X\frac{dt}{\log t}.
\]

\begin{theorem}\label{thm:terminal-chebotarev}
Suppose Assumption~\ref{ass:pisot} holds. Let
$E/\Q$ and $C$ be as above. For every fixed $e\geqslant1$ and
$B\in\mathcal B_e$,
\[
\#\left\{
p\leqslant X:
\operatorname{Frob}_p(E/\Q)=C,\ 
\TBe{e}(p)=B
\right\}
\sim
\frac{|C|}{|G|}
\omega^{\len(B)}
\operatorname{Li}(X).
\]
Equivalently, the terminal block and the Frobenius conjugacy class are
asymptotically independent. The same statement holds for any fixed
union of conjugacy classes.
\end{theorem}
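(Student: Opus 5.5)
The plan is to follow the toral sampling argument of Proposition~\ref{prop:toral-sampling-principle}, now run along the subsequence of primes with $\operatorname{Frob}_p(E/\Q)=C$. Let $r_1<r_2<\cdots$ enumerate these primes, discarding the finitely many ramified ones. By the Chebotarev density theorem,
\[
\#\{n: r_n\leqslant X\}\sim \frac{|C|}{|G|}\operatorname{Li}(X),
\]
so the count in question equals this Chebotarev count times the proportion of $n\leqslant N$ with $\TBe{e}(r_n)=B$. Proposition~\ref{prop:toral-sampling-principle} with $K_n=r_n$ turns that proportion into $\omega^{\len(B)}$, provided the restricted toral orbit $r_n\mathbf v \pmod{\Z^d}$ is Haar equidistributed. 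Taking $N=\#\{r_n\leqslant X\}$ then gives the theorem. For a fixed union of conjugacy classes we simply sum over the classes.

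The whole proof therefore reduces to equidistribution, which we check with the multidimensional Weyl criterion. We must show that for every nonzero $\mathbf m\in\Z^d$, with $\alpha=\mathbf m\cdot\mathbf v$,
\[
\sum_{\substack{p\leqslant X\\ \operatorname{Frob}_p(E/\Q)=C}} e^{2\pi i p\alpha}=o(\operatorname{Li}(X)).
\]
Proposition~\ref{prop:algebraic-toral-direction} makes each such $\alpha$ algebraic and irrational, of finite Diophantine type. This is the hypothesis under which Ji, Kazdan and McDonald \cite{JiKazdanMcDonald2020} prove a Vinogradov-type bound for exponential sums over primes in a Chebotarev class. That bound is exactly the displayed estimate, possibly with a power-of-log saving. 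Only one frequency $\alpha$ is needed at a time, and Weyl's criterion uses only finitely many $\mathbf m$ per test function, so no uniformity in $\mathbf m$ is required.

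The main obstacle is matching the hypotheses of the Ji--Kazdan--McDonald estimate to our setting. Their theorem is phrased for Beatty sequences, so the exponential-sum input has to be extracted in the form above. It may require $\alpha$ of finite type or bounded partial quotients rather than mere irrationality, which is why Proposition~\ref{prop:algebraic-toral-direction} was proved. If their statement needs a specific Diophantine exponent, first try to verify it directly from the bound $\|q\alpha_{\mathbf m}\|\gg q^{-\kappa_{\mathbf v}}$. Failing that, a fallback is to detect the Frobenius condition through Artin $L$-functions and character sums, combined with the Vaughan-identity treatment of type I and II sums twisted by $e(p\alpha)$. Finite type of $\alpha$ is then what controls the minor-arc savings in that argument.

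The remaining steps are routine. Primes hitting the Haar-null exceptional set $E=F(\partial\widetilde R)$ or the cylinder boundaries have relative density zero by the same equidistribution. And the ratio of $\operatorname{Li}(X)$ to $\pi(X)$ tends to $1$, so the asymptotic may be stated with either normalization.
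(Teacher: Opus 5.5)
Your proposal is correct and follows the paper's route. The paper also combines Chebotarev for the main count with Proposition~\ref{prop:algebraic-toral-direction}, which shows that $\alpha_{\mathbf m}=\mathbf m\cdot\mathbf v$ has finite Diophantine type, and with the Ji--Kazdan--McDonald estimate. It then applies Weyl's criterion along the Chebotarev primes and finishes with Proposition~\ref{prop:toral-sampling-principle} for $K_n=r_n$. The paper uses their Lemma~2.6 with modulus $q=1$ and Fourier parameter $k=1$, which gives a power saving $X^{1-\eta_{\mathbf m}}$ for the $\log p$-weighted sum; partial summation then yields $o(\operatorname{Li}(X))$. So your hedging about a specific Diophantine exponent and your Artin $L$-function fallback are not needed.
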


\begin{proof}
Let $\mathbf m\in\Z^d\setminus\{\mathbf0\}$ and put
\[
\alpha_{\mathbf m}:=\mathbf m\cdot\mathbf v.
\]
By Proposition~\ref{prop:algebraic-toral-direction},
$\alpha_{\mathbf m}$ is an algebraic irrational of finite
Diophantine type. The exponential-sum estimate of Ji, Kazdan, and
McDonald \cite[Lemma~2.6]{JiKazdanMcDonald2020}, applied with
$\gamma=\alpha_{\mathbf m}$, modulus $q=1$, and Fourier parameter
$k=1$, gives some $\eta_{\mathbf m}>0$ such that
\[
\sum_{\substack{p\leqslant X\\
\operatorname{Frob}_p(E/\Q)=C}}
(\log p)\,
e^{2\pi i p\alpha_{\mathbf m}}
\ll_{E,\mathbf m}
X^{1-\eta_{\mathbf m}}.
\]
Partial summation therefore gives
\[
\sum_{\substack{p\leqslant X\\
\operatorname{Frob}_p(E/\Q)=C}}
e^{2\pi i p\alpha_{\mathbf m}}
=
o\!\left(\operatorname{Li}(X)\right).
\]

By the Chebotarev density theorem,
\[
\#\left\{
p\leqslant X:
\operatorname{Frob}_p(E/\Q)=C
\right\}
\sim
\frac{|C|}{|G|}\operatorname{Li}(X).
\]
Hence, by the multidimensional Weyl criterion, the sequence
\[
p\mathbf v\;(\mathrm{mod}\ \Z^d),
\qquad
\operatorname{Frob}_p(E/\Q)=C,
\]
is Haar equidistributed on $\R^d/\Z^d$.

Let $r_1<r_2<\cdots$ denote the unramified primes satisfying
\[
\operatorname{Frob}_{r_n}(E/\Q)=C.
\]
For this sequence, the toral equidistribution established above allows
us to apply Proposition~\ref{prop:toral-sampling-principle} with
$K_n=r_n$. Hence
\[
\frac1N
\#\left\{
1\leqslant n\leqslant N:
\TBe{e}(r_n)=B
\right\}
\longrightarrow
\omega^{\len(B)}.
\]
Taking
\[
N=
\#\left\{
p\leqslant X:
\operatorname{Frob}_p(E/\Q)=C
\right\}
\]
and using the Chebotarev asymptotic above gives
\[
\#\left\{
p\leqslant X:
\operatorname{Frob}_p(E/\Q)=C,\ 
\TBe{e}(p)=B
\right\}
\sim
\frac{|C|}{|G|}
\omega^{\len(B)}
\operatorname{Li}(X).
\]
The statement for a fixed union of conjugacy classes follows by
summing over its finitely many disjoint classes.
\end{proof}

In particular, let $f\in\Z[x]$ be a fixed separable polynomial. For all
but finitely many primes $p$, the factorization type of $f$ modulo $p$
is determined by the Frobenius conjugacy class in the Galois group of
its splitting field. Since each prescribed factorization type that
occurs corresponds to a fixed union of conjugacy classes,
Theorem~\ref{thm:terminal-chebotarev} shows that it is asymptotically
independent of the terminal block of $p$.

We finally record a combinatorial consequence of the positive relative
density of each terminal prime class. Let
\[
\mathcal P_B
:=
\left\{
p:\TBe{e}(p)=B
\right\}.
\]
By Theorem~\ref{thm:prime-terminal-blocks},
$\mathcal P_B$ has positive relative density inside the primes.
Shao's refinement of the Green--Tao theorem
\cite{ShaoNarrow2017} then gives not only arbitrarily long arithmetic
progressions in $\mathcal P_B$, but also a quantitative bound for their
common difference.

\begin{corollary}
\label{cor:green-tao-terminal-cylinders}
Suppose Assumption~\ref{ass:pisot} holds. Let
$e\geqslant1$, $B\in\mathcal B_e$, and $k\geqslant3$. Put
\[
L_k:=(k-1)2^{k-2}.
\]
Then, for every sufficiently large $X$, there exist integers
$a\geqslant1$ and $r\geqslant1$ such that
\[
a,\ a+r,\ \ldots,\ a+(k-1)r
\]
all belong to $\mathcal P_B\cap[1,X]$ and
\[
r\ll_{B,k}(\log X)^{L_k}.
\]
In particular, every fixed terminal block cylinder contains arbitrarily
long arithmetic progressions of primes with polylogarithmically small
common difference.
\end{corollary}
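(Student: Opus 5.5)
This corollary follows by feeding the positive relative density from Theorem~\ref{thm:prime-terminal-blocks} into Shao's theorem. By Theorem~\ref{thm:prime-terminal-blocks}, with $\delta:=\omega^{\len(B)}>0$,
\[
\#\bigl(\mathcal P_B\cap[1,X]\bigr)=\bigl(\delta+o(1)\bigr)\pi(X).
\]
So for all sufficiently large $X$, the set $A_X:=\mathcal P_B\cap[1,X]$ satisfies $|A_X|\geqslant(\delta/2)\pi(X)$. The required density depends only on $B$, through $\len(B)$.

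We then invoke Shao's result \cite{ShaoNarrow2017} in the form we need. For fixed $k\geqslant3$ and fixed $\alpha>0$, there is $X_0(\alpha,k)$ with the following property: if $X\geqslant X_0$ and $A\subseteq\{p\leqslant X\}$ has $|A|\geqslant\alpha\pi(X)$, then $A$ contains a nontrivial $k$-term arithmetic progression $a,a+r,\ldots,a+(k-1)r$ with
\[
1\leqslant r\ll_{\alpha,k}(\log X)^{L_k},\qquad L_k=(k-1)2^{k-2}.
\]
Applying this with $\alpha=\delta/2$ and $A=A_X$ gives the progression inside $\mathcal P_B\cap[1,X]$. The implied constant depends on $\delta$ and $k$, hence on $B$ and $k$ only. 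All terms are primes, so $a\geqslant2\geqslant1$, and nontriviality gives $r\geqslant1$.

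The "in particular" statement follows by letting $k$ be arbitrary and $X\to\infty$.

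The main obstacle is bookkeeping rather than mathematics. We must check that Shao's theorem is stated for subsets of primes up to $X$ of relative density $\alpha$, uniformly in $X$ once $X$ is large, and with the exponent $(k-1)2^{k-2}$ on the common difference. If Shao's normalization differs, for instance if it measures density relative to a $W$-tricked set, or phrases the exponent through a different parameter, we would translate it. The translation uses only the lower bound $|A_X|\gg_B\pi(X)$, which is all Theorem~\ref{thm:prime-terminal-blocks} provides. No uniformity in $B$ is needed, since $B$ and $k$ are fixed.
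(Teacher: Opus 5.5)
Your proposal is correct and follows essentially the same route as the paper. You take the positive relative density $\omega^{\len(B)}$ from Theorem~\ref{thm:prime-terminal-blocks} and feed it into Shao's narrow-progression theorem for positive-density subsets of the primes, so the implied constant depends only on $B$ and $k$.
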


\begin{proof}
By Theorem~\ref{thm:prime-terminal-blocks},
\[
\frac{\#(\mathcal P_B\cap[1,X])}{\pi(X)}
\longrightarrow
\omega^{\len(B)}>0.
\]
Hence, for all sufficiently large $X$,
$\mathcal P_B\cap[1,X]$ has relative density inside the primes bounded
below by a positive constant depending only on $B$. Shao's theorem~\cite{ShaoNarrow2017} on
narrow arithmetic progressions in positive-density subsets of the
primes therefore applies and gives the stated
bound with
\[
L_k=(k-1)2^{k-2}.
\]
\end{proof}

This completes the proof of the main prime terminal-block theorem and
of the selected consequences under Assumption~\ref{ass:pisot}.
The next section proves the geometric result stated in
Theorem~\ref{thm:rauzy-fundamental-domain} and then gives simple algebraic
conditions ensuring that Assumption~\ref{ass:pisot} holds.

%% file: sections/08-rauzy-verification.tex
\section{Automatic Rauzy geometry and algebraic criteria}\label{sec:rauzy-verification}

Section~\ref{sec:prime-blocks} derives the prime terminal-block theorem from
Theorem~\ref{thm:rauzy-fundamental-domain}. Here we prove that theorem
from Assumption~\ref{ass:pisot}. The key point is that, after reversing
the substitution words, the companion substitution falls under Barge's
pure-discreteness theorem~\cite[Theorem~3.13]{Barge2016}.

We then give simple coefficient conditions ensuring that
Assumption~\ref{ass:pisot} itself holds, together with two explicit infinite
families illustrating phenomena beyond the classical $\beta$-substitution
setting. The first contains the recurring example $g=(1,3,2)$ and is non-$\beta$ for $L\geqslant3$; the second consists of Pisot units failing the finiteness property~(F). Finally, we translate these criteria back to the original
recurrence coefficients and obtain unconditional cases of the results of
Sections~\ref{sec:volume} and~\ref{sec:prime-blocks}. The finite
$\beta$-expansion interpretation and the balanced-pair criterion provide
complementary perspectives on these algebraic cases.

\subsection{Automatic Rauzy geometry for the companion family}

We now prove the geometric statement announced in
Theorem~\ref{thm:rauzy-fundamental-domain}.

\begin{proposition}\label{prop:automatic-rauzy-hypothesis}
Under Assumption~\ref{ass:pisot}, the central Rauzy window is a
multiplicity-one fundamental domain for the associated lattice, with
the null-boundary and disjointness properties stated in
Theorem~\ref{thm:rauzy-fundamental-domain}.
\end{proposition}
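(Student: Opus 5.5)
The plan follows the route announced in the introduction: (i) obtain pure discrete spectrum from Barge's theorem applied to the reversed substitution; (ii) transfer it back to $\sigma$; (iii) convert pure discreteness into the multiplicity-one tiling via the standard Rauzy-fractal equivalence; (iv) check that this tiling is literally the statement of Theorem~\ref{thm:rauzy-fundamental-domain} in our stable coordinates and for our lattice $\mathcal L$.

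\emph{Step 1: pure discreteness for the reversed substitution.} Let $\revsig$ be obtained from $\sigma$ by reversing each image word:
\[
\revsig(A_i)=A_{i+1}A_1^{h_i}\quad(1\leqslant i<L),\qquad \revsig(A_L)=A_1.
\]
The first letters of the images are $A_2,\ldots,A_L,A_1$, all distinct, so $\revsig$ is injective on initial letters. Every image ends with $A_1$ (since $h_i\geqslant1$), so $\revsig$ is constant on final letters. Its incidence matrix is the same as that of $\sigma$. Hence under Assumption~\ref{ass:pisot} it is primitive, unimodular and irreducible Pisot, and \cite[Theorem~3.13]{Barge2016} gives pure discrete spectrum for the tiling flow of $\revsig$.

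\emph{Step 2: transfer back to $\sigma$.} The language of $\revsig$ is the mirror image of the language of $\sigma$, because $\revsig^k(A_i)$ is the reversal of $\sigma^k(A_i)$. Word reversal therefore conjugates the two-sided subshift of $\revsig$ to that of $\sigma$ with the shift inverted. Inverting the shift preserves the spectral type: it conjugates the Koopman operator to its adjoint up to a unitary, so eigenfunctions correspond and pure point spectrum is preserved. By the Clark--Sadun equivalence \cite[Theorem~3.1]{ClarkSadun2003}, symbolic and tiling pure discreteness agree for irreducible Pisot substitutions. Thus $\sigma$ has pure discrete spectrum.

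\emph{Step 3: pure discreteness gives the tiling.} Since $\sigma$ is unimodular irreducible Pisot and satisfies strong coincidence by Lemma~\ref{lem:rauzy-hypotheses}, \cite[Theorem~4.8]{BaratBertheLiardetThuswaldner2006} applies. It shows that the central Rauzy fractal, with its pieces, is a multiplicity-one fundamental domain for the lattice generated by the projections of $e_i-e_j$. The same framework provides the remaining properties. The set $R$ is compact, as the closure of a bounded set: bounded because $\pi_s(p_m)=\sum$ of geometrically contracting terms via the Dumont--Thomas prefix expansion. The pieces $R_i$ satisfy the graph-directed set equation, have nonempty interior and positive measure, and have null boundary (standard for unimodular Pisot Rauzy fractals). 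Strong coincidence gives disjoint interiors of the $R_i$.

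\emph{Step 4: matching conventions.} This is the step I expect to be the main obstacle, since it is bookkeeping rather than mathematics. The literature builds the Rauzy fractal by projecting along the expanding eigenline onto the contracting hyperplane, using the column-incidence convention. We instead use the functionals $v\mapsto w_j\cdot v$ with $w_j$ right eigenvectors of $M$ (our row-incidence matrix). I would verify that $\ker\pi_s$ is exactly the expanding eigenline of $M^{\mathsf T}$, as in the proof of Lemma~\ref{lem:rauzy-lattice-full-rank}. Then $\pi_s$ equals the standard projection followed by a real-linear isomorphism onto $E_s$. Both the prefix points $\pi_s(p_m)$ and the lattice $\mathcal L=\langle\pi_s(e_i-e_L)\rangle_\Z$ (which coincides with $\langle\pi_s(e_i-e_j)\rangle_\Z$) are the images of the standard objects under this isomorphism. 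A linear isomorphism preserves compactness, null boundaries, positivity of measure, disjointness of interiors and the multiplicity-one tiling property, so $R$ and the $R_i$ inherit every assertion of Theorem~\ref{thm:rauzy-fundamental-domain}.

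Finally, the equivalent quotient formulation is immediate from the tiling. The relation $R+\mathcal L=E_s$ gives surjectivity of $q_{\mathcal L}$. For injectivity on $R\setminus\partial R$: if two points of $R\setminus\partial R$ differed by a nonzero $\ell\in\mathcal L$, then $\operatorname{int}R\cap\operatorname{int}(R+\ell)$ would be nonempty. This would contradict disjointness, provided that $R$ is the closure of its interior, which holds for these self-affine Rauzy pieces.
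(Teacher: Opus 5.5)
Your proposal is correct and follows the paper's proof essentially step for step. The chain is the same: reversal plus Barge's theorem, then the symbolic--tiling equivalence combined with the mirror conjugacy of the two subshifts, then the Barat--Berth\'e--Liardet--Thuswaldner equivalence under strong coincidence, and finally transport through the linear isomorphism relating the standard stable projection to $\pi_s$. There are two minor differences. The paper derives the null boundaries of $R$ and the $R_i$ from the multiplicity-one tiling itself, using local finiteness and measure-disjointness of the subtiles, rather than quoting them as standard. Also, your proviso that $R$ be the closure of its interior is unnecessary in the injectivity step, since $R\setminus\partial R=\operatorname{int}R$ and a nonempty open overlap already has positive measure.
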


\begin{proof}
Under Assumption~\ref{ass:pisot}, Section~\ref{sec:general-data} shows
that $\sigma$ is primitive, unimodular, and irreducible Pisot. Let
$\revsig$ be obtained by reversing every substitution word. Thus
\[
\revsig(A_i)=A_{i+1}A_1^{h_i}
\quad(1\leqslant i<L),
\qquad
\revsig(A_L)=A_1.
\]
Reversal does not change letter counts, so $\revsig$ has the
same incidence matrix as $\sigma$. In particular, it is primitive and has
Pisot inflation factor $\varphi$. Its initial letters are
\[
A_2,A_3,\ldots,A_L,A_1,
\]
so $\revsig$ is injective on initial letters, while every image
ends in $A_1$, so it is constant on final letters. Barge's
theorem~\cite[Theorem~3.13]{Barge2016} therefore gives pure discrete
spectrum for the one-dimensional tiling dynamical system
\(
(\Omega_{\revsig},\R).
\)

The transfer back to $\sigma$ uses both dynamical systems associated with a
substitution. Given a substitution $\tau$, we use the standard notation
$(X_\tau,s)$ for its symbolic substitution system, with its $\Z$-action
generated by the shift, and $(\Omega_\tau,\R)$ for its associated
one-dimensional tiling system, with its translation action. For an irreducible Pisot substitution, pure discrete
spectrum of these two systems is equivalent. Sirvent--Solomyak
\cite[Corollary~5.2]{SirventSolomyak2002} proved the implication from the
tiling $\R$-action to the symbolic $\Z$-action. In their Added in Proof, they
refer to Clark--Sadun \cite[Theorem~3.1]{ClarkSadun2003} for the converse
implication, and hence for the full equivalence. The same equivalence is
stated in the discussion of the symbolic and tiling versions of the Pisot
Substitution Conjecture in
\cite[Section~4, discussion after Conjecture~4.2]{AkiyamaBargeBertheLeeSiegel2015}.

Since $\revsig$ has the same irreducible Pisot incidence matrix as
$\sigma$, the symbolic--tiling equivalence transfers Barge's conclusion to the
symbolic system $(X_{\revsig},s)$. By construction,
\[
(\revsig)^n(A_i)
=
\operatorname{rev}\!\bigl(\sigma^n(A_i)\bigr)
\qquad(n\geqslant0).
\]
Hence the languages of $\sigma$ and $\revsig$ are exchanged by
word reversal. Reversal of a bi-infinite word conjugates the shift on one
system with the inverse shift on the other. Pure discrete spectrum is
unchanged when the acting transformation is replaced by its inverse, so
$(X_\sigma,s)$ has pure discrete spectrum. Applying the same
symbolic--tiling equivalence once more, $(\Omega_\sigma,\R)$ has pure
discrete spectrum.

We have thus obtained pure discrete spectrum for the original substitution
$\sigma$. To deduce Theorem~\ref{thm:rauzy-fundamental-domain},
it remains to translate the standard Rauzy-fractal consequences of pure
discreteness into the specific stable coordinates, Rauzy pieces, and lattice
introduced in Section~\ref{sec:rauzy}. With the usual column-incidence
convention, the substitution matrix is $M^T$, and the abelianization of a
prefix $W$ is exactly the vector $\ell(W)$ used in
Section~\ref{sec:general-data}. The standard contracting space and $E_s$ are
two realizations of the stable space associated with the non-dominant
spectrum of $M^T$. Hence there is an invertible real-linear map $A$ from the
standard contracting space onto $E_s$ such that
\[
A\bigl(\pi_s^{\mathrm{std}}(v)\bigr)=\pi_s(v)
\qquad(v\in\R^L).
\]
Thus, for every prefix-count vector, the standard stable projection and our
projection $\pi_s$ represent the same point after the change of coordinates
$A$. It follows directly from the prefix definitions that $A$ sends the
standard prefix Rauzy fractal and its letter-indexed subtiles onto the sets
$R$ and $R_i$ of Section~\ref{sec:rauzy}. Likewise, it sends the standard
difference lattice generated by the vectors
$\pi_s^{\mathrm{std}}(e_i-e_L)$ onto the lattice generated by
$\pi_s(e_i-e_L)$. Therefore the standard Rauzy-fractal statements transport
to our coordinates without changing tiling multiplicities or
measure-theoretic disjointness.

Lemma~\ref{lem:rauzy-hypotheses} gives strong coincidence for
$\sigma$. For an irreducible unimodular Pisot substitution satisfying strong
coincidence, pure discrete spectrum is equivalent to the associated Rauzy
fractal generating a lattice tiling of the contracting space
\cite[Theorem~4.8]{BaratBertheLiardetThuswaldner2006}; see also
\cite[Section~3.4]{AkiyamaBargeBertheLeeSiegel2015}. By
Lemma~\ref{lem:rauzy-lattice-full-rank}, the transported lattice in our
coordinates is precisely
\[
\mathcal L
=
\left\langle
\pi_s(e_i-e_L):1\leqslant i<L
\right\rangle_{\Z}.
\]
Consequently,
\[
R+\mathcal L=E_s,
\]
and distinct lattice translates of $R$ overlap only on a set of Lebesgue
measure zero.

It remains to verify the topological and boundary properties required in
Theorem~\ref{thm:rauzy-fundamental-domain}. For a primitive unit
Pisot substitution, the central tile and all its subtiles are compact and
equal to the closures of their interiors
\cite[Theorem~2.6]{SiegelThuswaldner2009}. Moreover, strong coincidence
makes the letter subtiles pairwise disjoint in measure; see
\cite[Section~5.1]{AkiyamaBargeBertheLeeSiegel2015}. Since $\sigma$ is
primitive, every $R_i$ is nonempty; hence each $R_i$ has nonempty interior
and therefore positive Lebesgue measure. Thus
$R=R_1\cup\cdots\cup R_L$ is compact and has positive Lebesgue measure.

The multiplicity-one tiling also yields the null-boundary assertions
directly. Since $R$ is compact, the lattice family
$(R+\ell)_{\ell\in\mathcal L}$ is locally finite. If
$x\in\partial R$, choose points $x_n\notin R$ with $x_n\to x$. Since
$R+\mathcal L=E_s$, each $x_n$ belongs to some translate
$R+\ell_n$ with $\ell_n\neq0$. By local finiteness, after passing to a
subsequence we may assume that $\ell_n=\ell$ is fixed. Since
$R+\ell$ is closed,
\[
x\in R\cap(R+\ell),
\qquad
\ell\in\mathcal L\setminus\{0\}.
\]
Only finitely many translates $R+\ell$ meet $R$, and therefore
\[
\partial R
\subseteq
\bigcup_{\substack{\ell\in\mathcal L\setminus\{0\}\\
R\cap(R+\ell)\neq\varnothing}}
R\cap(R+\ell).
\]
Each intersection in this finite union has Lebesgue measure zero by the
multiplicity-one tiling. Hence
\[
\lambda_{L-1}(\partial R)=0.
\]

Similarly, fix $1\leqslant i\leqslant L$ and let
$x\in\partial R_i\setminus\partial R$. Then
$x\in\operatorname{int}R$. Since $R_i$ is closed, we have $x\in R_i$.
Moreover, $x$ is not an interior point of $R_i$, so points of
$R\setminus R_i$ approach $x$. As the finitely many closed sets
$R_1,\ldots,R_L$ cover $R$, after passing to a subsequence these points
belong to one fixed $R_j$, with $j\neq i$. Closedness of $R_j$ then gives
$x\in R_j$. Hence
\[
\partial R_i
\subseteq
\partial R\cup
\bigcup_{j\neq i}(R_i\cap R_j).
\]
The letter subtiles are pairwise disjoint in measure, so together with
$\lambda_{L-1}(\partial R)=0$ this yields
\[
\lambda_{L-1}(\partial R_i)=0
\qquad(1\leqslant i\leqslant L).
\]
The same measure-disjointness also implies that the interiors of
$R_1,\ldots,R_L$ are pairwise disjoint, since any nonempty intersection of
two such interiors would be a nonempty open set and hence would have
positive Lebesgue measure.

It remains only to match the measure-theoretic lattice tiling with the
pointwise formulation of
Theorem~\ref{thm:rauzy-fundamental-domain}. The covering
$R+\mathcal L=E_s$ implies that
\[
q_{\mathcal L}:R\longrightarrow E_s/\mathcal L
\]
is onto. If two distinct points
$x,y\in R\setminus\partial R=\operatorname{int}R$ had the same image,
then
\[
\ell:=x-y\in\mathcal L\setminus\{0\},
\]
and therefore
\[
x\in
\operatorname{int}R\cap
\bigl(\operatorname{int}R+\ell\bigr).
\]
This is a nonempty open intersection and hence has positive Lebesgue
measure, contradicting the multiplicity-one lattice tiling. Thus
$q_{\mathcal L}$ is one-to-one on $R\setminus\partial R$.

The same argument shows that the interiors of the translates
$R+\ell$, $\ell\in\mathcal L$, are pairwise disjoint. Together with the
properties established above, these are precisely the assertions of
Theorem~\ref{thm:rauzy-fundamental-domain}.
\end{proof}

Proposition~\ref{prop:automatic-rauzy-hypothesis} proves
Theorem~\ref{thm:rauzy-fundamental-domain}. Consequently, the results
of Sections~\ref{sec:rauzy}--\ref{sec:prime-blocks} require no
additional geometric assumption beyond Assumption~\ref{ass:pisot}.

It remains to determine when Assumption~\ref{ass:pisot} itself follows
directly from the recurrence coefficients. We therefore give simple sufficient conditions and two explicit infinite families: a family that is non-$\beta$ for $L\geqslant3$ and contains the recurring example $g=(1,3,2)$, and a family of Pisot units failing the finiteness property~(F).

\subsection{Algebraic sufficient conditions}

We record two classical coefficient criteria ensuring
Assumption~\ref{ass:pisot}, followed by an infinite family lying outside
these two branches when $L\geqslant3$. No new geometric argument is needed
here.

\begin{proposition}\label{prop:automatic-pisot-beta-branches}
Assume either the monotone condition
\[
 h_1\geqslant h_2\geqslant\cdots\geqslant h_L=1,
 \tag{M}
\]
or the dominant-coefficient condition
\[
 h_1>h_2+\cdots+h_L.
 \tag{D}
\]
Then the dominant root $\varphi$ of $P_h$ is a Pisot number and
$P_h$ is its minimal polynomial. In particular,
Assumption~\ref{ass:pisot} holds automatically.
\end{proposition}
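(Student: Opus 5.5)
The plan is to prove two facts about $P_h(X)=X^L-h_1X^{L-1}-\cdots-h_{L-1}X-1$ under either (M) or (D): every root other than $\varphi$ lies strictly inside the unit disc, and $P_h$ is irreducible over $\Q$. Irreducibility then follows from the Pisot property by a standard argument. $P_h$ is monic with integer coefficients and constant term $-1$. Suppose $P_h=FG$ with $F,G\in\Z[X]$ monic and nonconstant. The factor not containing $\varphi$, say $G$, has all its roots of modulus $<1$. Yet $|G(0)|$ is a nonzero integer, because $|F(0)G(0)|=1$. This forces $|G(0)|=1$, which is impossible when every root of $G$ has modulus $<1$. So the whole proof reduces to locating the roots of $P_h$.

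For branch (D) I would apply Rouché's theorem on the unit circle $|X|=1$. Compare $P_h$ with $f(X)=-h_1X^{L-1}$. On the circle,
\[
|P_h(X)-f(X)|=\bigl|X^L-h_2X^{L-2}-\cdots-1\bigr|\leqslant 1+h_2+\cdots+h_L.
\]
Condition (D) only gives $h_1>h_2+\cdots+h_L$, so I need the strict bound $1+h_2+\cdots+h_L<h_1$, or else a separate treatment of equality. Since the $h_r$ are integers, (D) gives $h_1\geqslant h_2+\cdots+h_L+1$. The equality case $h_1=1+h_2+\cdots+h_L$ must be handled separately. In that case $P_h(1)=1-h_1-\cdots-h_L=-2(h_2+\cdots+h_L)\neq0$. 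Equality in the triangle inequality on the circle would need all the terms $X^L,-h_2X^{L-2},\ldots,-1$ to be aligned with each other and opposite to $h_1X^{L-1}$, and I would check that this cannot happen. The alternative is to use the symmetric Rouché theorem (Estermann's form, strict inequality $|P_h-f|<|P_h|+|f|$), which suffices here. Either way $P_h$ has exactly $L-1$ roots in $|X|<1$. Since $\varphi>1$, the remaining root is $\varphi$, which gives the Pisot property.

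For branch (M) I would use the classical Brauer argument. Multiply by $(X-1)$:
\[
(X-1)P_h(X)=X^{L+1}-(h_1+1)X^L+(h_1-h_2)X^{L-1}+\cdots+(h_{L-1}-h_L)X+h_L.
\]
Under (M) all the coefficients $h_r-h_{r+1}$ and $h_L=1$ are nonnegative, and their sum is $h_1$, which is less than $h_1+1$. Rouché against $-(h_1+1)X^L$ on the circle $|X|=1$ then gives $|{\rm rest}|\leqslant 1+h_1$, and again the bound is not strict. The standard fix is to apply Rouché on $|X|=r$ with $r$ slightly greater than 1. That shows $(X-1)P_h$ has exactly $L$ roots in $|X|\leqslant1$, one of which is the root $X=1$, so $P_h$ has $L-1$ roots in the closed disc. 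The remaining task is to exclude roots on $|X|=1$. If $|z|=1$ and $P_h(z)=0$, then $z^L=h_1z^{L-1}+\cdots+1$ with $|z^L|=1$, while the right-hand side has modulus at most $\sum h_r\geqslant2$. This alone is not a contradiction, so I would instead argue with the factored form: equality in the triangle inequality forces all the terms $z^{L+1},\ldots$ to be aligned, which gives $z=1$, and $P_h(1)\neq0$ excludes that. This same boundary-exclusion step is the main obstacle in both branches, and I expect it to reduce each time to an equality case of the triangle inequality.

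Both branches could also be dispatched by citing the classical literature (Brauer; and Frougny--Solomyak / Hollander, already cited in the paper for (F)) rather than redoing Rouché. I would present the Rouché arguments briefly and cite those references for the boundary cases.
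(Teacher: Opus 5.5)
Your argument is correct and has the same skeleton as the paper's proof. You locate the roots under (M) or (D), then deduce irreducibility from $P_h(0)=-1$ by the constant-term argument, which is exactly the one the paper uses. The difference is that the paper gets the root location by citing Brauer's theorem for (M) and Berwald's criterion for (D), whereas you reprove both with Rouch\'e.

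For (M), your route is essentially Brauer's proof:
\begin{itemize}
\item The comparison on $|X|=r$ works for $r$ slightly above $1$. The two sides $(h_1+1)r^L$ and $r^{L+1}+\sum_{k=1}^{L-1}(h_k-h_{k+1})r^{L-k}+1$ agree at $r=1$, and their derivatives there differ by $h_1+\cdots+h_L-1>0$.
\item On $|z|=1$, alignment of all terms with the positive constant $h_L=1$ forces $z^{L+1}=1$. It then forces $(h_1+1)z^L=h_1+1$, hence $z=1$.
\end{itemize}

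For (D), the paper's form of Berwald's criterion uses $|1+a_2|=h_2-1$ rather than $1+h_2$. It is therefore strict, and no boundary case arises. Your cruder comparison with $h_1X^{L-1}$ produces the equality case $h_1=1+h_2+\cdots+h_L$. That case is harmless, but the symmetric Rouch\'e theorem is not an independent fix. Since $|P_h-f|\leqslant|f|$ on the circle, its hypothesis fails precisely at zeros of $P_h$ on $|X|=1$, so you need the zero-exclusion anyway. That exclusion is immediate:
\begin{itemize}
\item For $L\geqslant3$, aligning $-h_{L-1}z$ with $-1$ forces $z=1$, which misaligns $z^L$. So the inequality is strict and plain Rouch\'e already applies.
\item For $L=2$, the only equality case is $X^2-2X-1$, which is explicit. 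In any case (D) is contained in (M) when $L=2$.
\end{itemize}

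The paper's version is two lines but rests on two external root-location theorems. Yours is self-contained and shows exactly where strictness comes from, namely positivity of $h_L$ and of the $h_r$.
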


\begin{proof}
In case~(M), Brauer's classical root-location theorem~\cite{Brauer1950} for
\[
X^L-h_1X^{L-1}-\cdots-h_L,
\qquad
h_1\geqslant\cdots\geqslant h_L>0,
\]
shows that all but one root lie strictly inside the unit disk. In case~(D), Berwald's criterion~\cite{Berwald1933} gives the same
root-location conclusion: with $a_r=-h_r$,
\[
|1+a_2|+|a_3|+\cdots+|a_L|
=
h_2+\cdots+h_L-1
<
h_1
=
|a_1|,
\]
so all but one root lie strictly inside the unit disk. In either case, the remaining root is the positive
dominant root $\varphi>1$.

It remains only to prove irreducibility. By Gauss's lemma, if $P_h$ were
reducible over $\Q$, we could write
\[
P_h=FG
\]
with monic nonconstant $F,G\in\Z[X]$. Exactly one of the two factors
contains the root $\varphi$; every root of the other factor therefore has
modulus strictly less than $1$. Its constant term is nevertheless a nonzero
integer, since the product of the constant terms of $F$ and $G$ is
$P_h(0)=-1$. On the other hand, since the factor is monic, the absolute
value of its constant term is the product of the moduli of its roots, and
is therefore strictly less than $1$, a contradiction. Hence $P_h$ is
irreducible over $\Q$. Thus $P_h$ is the minimal polynomial of the Pisot
number $\varphi$, and Assumption~\ref{ass:pisot} follows.
\end{proof}

The two coefficient criteria above cover broad families, but they do not
include the recurring example $g=(1,3,2)$. In fact, this example belongs to
an explicit infinite family for which the algebraic hypothesis can be
verified directly.

\begin{proposition}\label{prop:nonbeta-family-pisot}
Let $c\geqslant1$ and $L\geqslant2$, and set
\[
g=\left(1,\underbrace{c+1,\ldots,c+1}_{L-2\text{ terms}},c\right),
\]
with the middle string omitted when $L=2$. Equivalently,
\[
h=\left(c,\underbrace{c+1,\ldots,c+1}_{L-2\text{ terms}},1\right).
\]
Then Assumption~\ref{ass:pisot} holds.
\end{proposition}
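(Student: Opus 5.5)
The plan is to verify directly the two conditions in Assumption~\ref{ass:pisot}: all roots of $P_h$ other than the dominant one lie strictly inside the unit disk, and $P_h$ is irreducible. Irreducibility will then follow verbatim from the Gauss-lemma argument at the end of the proof of Proposition~\ref{prop:automatic-pisot-beta-branches}. That argument uses only the root location together with $P_h(0)=-1$.

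The case $L=2$ is $P_h=X^2-cX-1$, which falls under condition (D) of Proposition~\ref{prop:automatic-pisot-beta-branches}; alternatively it is covered by the argument below. For general $L$, put $S(X)=X^{L-1}+\cdots+X+1$. Since $P_h=X^L-cX^{L-1}-(c+1)(X^{L-2}+\cdots+X)-1$, rewriting gives
\[
P_h(X)=X^L+X^{L-1}+c-(c+1)S(X).
\]
Multiplying by $X-1$ then gives
\[
Q(X):=(X-1)P_h(X)=X^{L-1}\bigl(X^2-(c+1)X-1\bigr)+cX+1 .
\]
I will compare $Q$ with $f(X):=X^{L-1}(X^2-(c+1)X-1)$. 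The quadratic factor has one root greater than $c+1>1$ and one root in $(-1,0)$, so $f$ has exactly $L$ zeros in the open unit disk, counted with multiplicity. On $|X|=1$, write $X=e^{i\theta}$; then
\[
|f(X)|=\bigl|X-X^{-1}-(c+1)\bigr|=\sqrt{(c+1)^2+4\sin^2\theta}\;\geqslant\;c+1\;\geqslant\;|cX+1| .
\]
Equality throughout forces $\sin\theta=0$ and $|cX+1|=c+1$, hence $X=1$. So the Rouché inequality is strict on the unit circle except at the single point $X=1$, which is a zero of $Q$ coming from the factor $X-1$.

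The main obstacle is this touching point at $X=1$. I would first try the symmetric form of Rouché's theorem (Glicksberg/Estermann). Its hypothesis $|Q-f|<|f|+|Q|$ fails only where $Q=0$, that is, only at $X=1$. To get around this, I would indent the contour around $X=1$. Since $Q'(1)=P_h(1)=1-c-(c+1)(L-2)-1<0$, the zero at $1$ is simple and transversal, so a small indentation can be chosen to include or exclude it in a controlled way. Concretely, I would apply the argument principle on the circle with a small arc near $1$ replaced by a small semicircle outside the disk. Then I would check, by a first-order expansion of $f$ and $cX+1$ at $X=1$, that $|cX+1|<|f|$ still holds on that semicircle. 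The computation uses $f(1)=-(c+1)$ and $f'(1)=(L-1)(-c-1)+(2-c-1)$, compared against the derivative $c$ of $cX+1$. This shows that $Q$ has exactly $L$ zeros in the indented region, one of which is $X=1$. Hence $P_h$ has exactly $L-1$ zeros in the closed unit disk. If the sign check on the semicircle fails, I would instead use the inward indentation, with the count adjusted accordingly.

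Zeros of $P_h$ on $|X|=1$ are then excluded. At such a zero, $Q$ vanishes, so $|f(X)|=|cX+1|$, which forces $X=1$; but $P_h(1)<0$, so $1$ is not a zero of $P_h$. Thus exactly $L-1$ roots of $P_h$ lie strictly inside the unit disk. The remaining root is the positive dominant root $\varphi>1$, so $\varphi$ is Pisot. The Gauss-lemma argument then shows that $P_h$ is irreducible, and Assumption~\ref{ass:pisot} follows.
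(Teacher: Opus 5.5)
Your proposal is correct, and it follows a different route from the paper's proof of this proposition.

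\emph{The paper's route.} The paper does not use Rouch\'e here. It deforms
$f_t(X)=X^L-t\bigl(cX^{L-1}+(c+1)\sum_{k=1}^{L-2}X^k+1\bigr)$
from small $t>0$, where all zeros lie inside the disk, up to $t=1$. Taking real parts of $(\alpha-1)f_t(\alpha)=0$ shows that a zero on $|\alpha|=1$ can only be $\alpha=1$. This happens only at $t=1/((c+1)(L-1))$, and only for the simple positive root singled out by Descartes' rule. Continuity of roots then gives the count.

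\emph{Your route.} You make one static comparison of $(X-1)P_h$ with $X^{L-1}\bigl(X^2-(c+1)X-1\bigr)$. The identity $|f(e^{i\theta})|=\sqrt{(c+1)^2+4\sin^2\theta}$ makes the domination on the circle transparent. The price is the boundary zero at $X=1$, which you must resolve locally. It is exactly the sign $P_h(1)<0$ that puts this zero on the correct side of the indented contour.

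\emph{Comparison.} The homotopy needs no local analysis, because the only crossing of the circle is by the Perron root. Your version gives an explicit dominating function. It is close in spirit to the Rouch\'e argument the paper itself uses for Proposition~\ref{prop:nonfinite-family-pisot}.

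Two points need tightening.

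\emph{The semicircle estimate.} A first-order expansion at $X=1$ does not by itself give $|cX+1|<|f|$ on the whole outward semicircle. Near its endpoints $\operatorname{Re}(X-1)=O(\delta^2)$. So the linear term $-2(c+1)P_h(1)\operatorname{Re}(X-1)$ of $D:=|f|^2-|cX+1|^2$ is no larger than the quadratic terms there. A clean fix uses two facts:
\begin{itemize}
\item On the circle, $D(e^{i\theta})=2c(1-\cos\theta)+4\sin^2\theta\geqslant 0$, with equality only at $\theta=0$.
\item Near $X=1$, $\partial_\rho D(\rho e^{i\theta})>0$, by continuity from $\partial_\rho D(1)=-2(c+1)P_h(1)>0$.
\end{itemize}
Integrating radially gives $D>0$ on $\{|X|\geqslant 1,\ 0<|X-1|<\delta\}$, hence on any small outward indentation. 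Your inward fallback is unnecessary, and it would in fact fail, since $D(1-\delta)<0$.

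\emph{The case $L=2$.} For $L=2$ and $c=1$, the polynomial $X^2-X-1$ satisfies (M) but not (D). Your general argument covers $L=2$ anyway.
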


\begin{proof}
The characteristic polynomial is
\[
P(X)=X^L-cX^{L-1}-(c+1)\sum_{k=1}^{L-2}X^k-1,
\]
where the sum is empty when $L=2$. For $0\leqslant t\leqslant1$, put
\[
f_t(X)
=
X^L-t\left(cX^{L-1}+(c+1)\sum_{k=1}^{L-2}X^k+1\right).
\]
For sufficiently small $t>0$, all zeros of $f_t$ lie strictly inside the
unit disk. By Descartes' rule of signs, for every $t>0$ the polynomial
$f_t$ has a unique positive zero $\varphi_t$, counted with multiplicity.

Suppose that $f_t(\alpha)=0$ for some $0<t\leqslant1$ and
$|\alpha|=1$. Writing $\alpha=e^{i\theta}$, multiplying the equation
$f_t(\alpha)=0$ by $\alpha-1$, and taking real parts gives
\[
t\bigl(c\cos((L-1)\theta)+\cos(L\theta)\bigr)
=
tc+1-(1-t)\cos\theta.
\]
The left-hand side is at most $t(c+1)$, while the right-hand side is at
least $t(c+1)$. Hence equality holds throughout. If $t<1$, this forces
$\cos\theta=1$; if $t=1$, equality on the left forces
$\cos((L-1)\theta)=\cos(L\theta)=1$, and again $\alpha=1$. Moreover,
\[
f_t(1)=0
\quad\Longleftrightarrow\quad
 t=\frac{1}{(c+1)(L-1)}.
\]
At this value, $1$ is the unique positive zero and, by the preceding Descartes argument, it is simple. Therefore no
zero other than $\varphi_t$ can meet the unit circle as $t$ varies from a
small positive value to $1$. By continuity of the zeros, all zeros of
$P=f_1$ other than its unique positive zero $\varphi$ have modulus
strictly less than $1$. Since
\[
P(1)=1-(c+1)(L-1)<0,
\]
we have $\varphi>1$, so $\varphi$ is a Pisot number.

It remains to prove irreducibility. If $P=FG$ with monic nonconstant
$F,G\in\mathbb Z[X]$, exactly one factor contains $\varphi$, while every
root of the other factor has modulus strictly less than $1$. But the
constant term of that factor is a nonzero integer, since $P(0)=-1$, whereas
its absolute value is the product of the moduli of its roots and is
therefore strictly less than $1$, a contradiction. Thus $P$ is
irreducible over $\mathbb Q$, and Assumption~\ref{ass:pisot} follows.
\end{proof}

The recurring example $g=(1,3,2)$ is the case $L=3$, $c=2$ of
Proposition~\ref{prop:nonbeta-family-pisot}.

It is useful to compare these algebraic cases with the classical
$\beta$-substitution setting. In the monotone and dominant branches, the
characteristic equation gives
\[
1=h_1\varphi^{-1}+h_2\varphi^{-2}+\cdots+h_L\varphi^{-L}.
\]
Since $h_L=1$, the corresponding candidate Parry reference word is the
periodic infinite word
\[
(h_1h_2\cdots h_{L-1}0)^\infty.
\]
In either case one also has
\[
h_1<\varphi<h_1+1,
\]
so $\lfloor\varphi\rfloor=h_1$. Under~(M), each shift of the reference
word by $1,\ldots,L-1$ positions is lexicographically smaller than the
original word by monotonicity; shifts by multiples of $L$ give equality.
Under~(D), the first digit after any shift by $1,\ldots,L-1$ positions is
already strictly smaller than $h_1$. Parry's
criterion~\cite{Parry1960} therefore shows that the greedy
$\varphi$-expansion of $1$ is
\[
h_1h_2\cdots h_L.
\]
Thus, in both branches, the companion substitution is the standard
simple-Parry $\beta$-substitution associated with this finite expansion;
see \cite[Section~2.4]{AkiyamaBargeBertheLeeSiegel2015}, and also
\cite{Hollander1996,ChangZeckendorfShift} for the relation with linear
numeration systems.

The family in Proposition~\ref{prop:nonbeta-family-pisot} shows that this
interpretation does not cover the whole companion family. If $L\geqslant3$,
direct evaluation gives
\[
P(c+1)<0<P(c+2),
\]
so $c+1<\varphi<c+2$ and hence $\lfloor\varphi\rfloor=c+1$. On the
other hand, the characteristic identity is
\[
1=c\varphi^{-1}+(c+1)\varphi^{-2}+\cdots
 +(c+1)\varphi^{-(L-1)}+\varphi^{-L},
\]
whose first digit is $c$. It therefore cannot be the greedy
$\varphi$-expansion of $1$. Thus every member of this family with
$L\geqslant3$ is a non-$\beta$ example in the present sense. In
particular, $g=(1,3,2)$ corresponds to $L=3$, $c=2$.

For the recurring example $g=(1,3,2)$, the balanced-pair algorithm provides
a classical alternative route to pure discreteness; for irreducible Pisot
substitutions, termination with coincidence is equivalent to pure discrete
spectrum
\cite[Theorem~5.3]{AkiyamaBargeBertheLeeSiegel2015}; see also
\cite[Theorem~5.1]{SirventSolomyak2002}.
The reversal argument of
Proposition~\ref{prop:automatic-rauzy-hypothesis}, however, yields the same
geometric conclusion uniformly throughout the companion family once
Assumption~\ref{ass:pisot} is known.

\subsection{A family without the finiteness property}
\label{sec:nonfinite-beta-family}

For $\beta>1$, let $\operatorname{Fin}(\beta)$ denote the set of
non-negative real numbers having a finite greedy $\beta$-expansion. Recall
that $\beta$ has the finiteness property~(F) if
\[
\operatorname{Fin}(\beta)
=
\Z[\beta^{-1}]\cap\R_{\geqslant0}.
\]
The next family satisfies the algebraic hypothesis of the main theorem but
fails this finiteness property.

\begin{proposition}\label{prop:nonfinite-family-pisot}
Let $L\geqslant4$ and $c\geqslant3$, and set
\[
g=
\left(1,c+1,\underbrace{c,\ldots,c}_{L-2\text{ terms}}\right).
\]
Equivalently,
\[
h=
\left(\underbrace{c,\ldots,c}_{L-2\text{ terms}},c+1,1\right).
\]
Then Assumption~\ref{ass:pisot} holds. Moreover, if $\varphi$ denotes the
dominant root of the characteristic polynomial, then the Pisot unit
$\varphi$ does not satisfy property~(F).
\end{proposition}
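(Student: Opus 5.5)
The proof has three parts: the Pisot root location, irreducibility, and the failure of~(F). Write
\[
P(X)=X^L-c\sum_{k=2}^{L-1}X^k-(c+1)X-1 .
\]
For the root location I would not repeat the homotopy of Proposition~\ref{prop:nonbeta-family-pisot}. Instead I would first multiply by $X-1$, which telescopes to
\[
(X-1)P(X)=X^{L}\bigl(X-(c+1)\bigr)-\bigl(X^2-cX-1\bigr).
\]
On $|X|=1$, write $X=e^{i\theta}$. Then $|X-(c+1)|^2=c^2+2c+2-2(c+1)\cos\theta$. Also $|X^2-cX-1|=|X-X^{-1}-c|=|2i\sin\theta-c|$, whose square is $c^2+4\sin^2\theta$. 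The difference of the squares factors as
\[
2(1-\cos\theta)(c-1-2\cos\theta),
\]
which is strictly positive for $c\geqslant3$ except at $\theta=0$. Hence the first term strictly dominates on the unit circle away from $X=1$.

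To run Rouché despite the tangency at $X=1$, I would take a circle of radius $r$ slightly larger than $1$. The key step there is to check that the same inequality persists near $X=1$. One route is a first-order expansion at $X=1$: there $|1-(c+1)|=c$ and $|1-c-1|=c$ agree, and the radial derivatives favour the $X^L$ term because $L\geqslant4$. Alternatively, one can count zeros via the argument principle on a contour indented around $1$. The conclusion is that $(X-1)P$ has exactly $L$ zeros in $|X|\leqslant1$, as $X^L(X-(c+1))$ does, and these include the simple zero $X=1$. Since
\[
P(1)=1-c(L-2)-(c+1)-1<0,
\]
$1$ is not a root of $P$, so $P$ has $L-1$ zeros in the closed disk. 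None of them lies on the circle, by the strict inequality away from $1$. The remaining zero is real and $>1$ because $P(1)<0$, so $\varphi$ is Pisot. Irreducibility then follows verbatim from the constant-term argument in Propositions~\ref{prop:automatic-pisot-beta-branches} and~\ref{prop:nonbeta-family-pisot}, using $P(0)=-1$. This establishes Assumption~\ref{ass:pisot}.

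For the failure of~(F), I would use the classical necessary condition of Frougny--Solomyak~\cite{FrougnySolomyak1992}: property~(F) forces the greedy $\varphi$-expansion $d_\varphi(1)$ to be finite. Note that the characteristic identity
\[
1=c\omega+\cdots+c\omega^{L-2}+(c+1)\omega^{L-1}+\omega^{L}
\]
is not admissible. Its shift by $L-2$ positions begins with $c+1>c$, while evaluating $P(c)<0<P(c+1)$ gives $\lfloor\varphi\rfloor=c$. So the greedy expansion differs from the companion word. I would compute $d_\varphi(1)$ explicitly by the greedy algorithm, starting from $c^{L-2}$ and resolving the excess digit $c+1$ by borrowing through the identity. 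I expect an eventually periodic but infinite word, for instance of the shape $c^{L-2}\,(c)\,w^\infty$ for an explicit block $w$. I would verify that the resulting word satisfies Parry's condition~\cite{Parry1960}, so that it really is the greedy expansion.

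If that computation is messy, the fallback is a different obstruction. I would exhibit an element of $\Z[\varphi^{-1}]_{\geqslant0}$ with no finite expansion, or use Akiyama's criterion that~(F) fails when some explicit small element, such as $1-\omega^{L-1}$ or a carry remnant, has infinite expansion. I expect this $\beta$-expansion computation, in particular proving that the expansion is genuinely non-terminating, to be the main obstacle. The Rouché argument near $X=1$ is the secondary delicate point.
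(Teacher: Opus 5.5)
Your treatment of the Pisot property and of irreducibility is the paper's. You use the same factorization $(X-1)P=X^L(X-(c+1))-(X^2-cX-1)$ and the same unit-circle identity $2(1-\cos\theta)(c-1-2\cos\theta)$. You then apply Rouch\'e on a circle of radius slightly above $1$, justified by the positive radial derivative $2c(Lc-c+1)$ of $|G|^2-|H|^2$ at $z=1$, and finish with the constant-term argument. That derivative is positive for every $L$, so $L\geqslant4$ plays no role there.

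The gap is the failure of~(F). Your reduction is sound: (F) forces $d_\varphi(1)$ to be finite. The paper's last step, taking $x=1-(c+1)\omega\in\Z[\varphi^{-1}]$, is exactly this reduction made explicit. But proving that $d_\varphi(1)$ is infinite is the whole content of the claim, and you leave it as an uncomputed expectation.

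Moreover, the data you would start from is wrong. With $y=c+1$ one has $(y-1)\sum_{k=2}^{L-1}y^k=y^L-y^2$, hence $P(c+1)=-1<0$, while $P(c+2)>0$. So $\lfloor\varphi\rfloor=c+1$, not $c$. In fact $\varphi-(c+1)=\omega^{L-2}(1-c\omega-\omega^2)$ is tiny, and the greedy expansion of $1$ begins $(c+1)\,0^{L-1}\,c$. Hence no expansion of the shape $c^{L-2}\,c\,w^\infty$ can arise.

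What is needed is an explicit eventually periodic expansion with nonzero period whose digits pass Parry's test. The paper produces it from the identity
\[
P_{L,c}(X)\,\frac{(X^{L(L-1)}-1)(X-1)}{(X^L-1)(X^{L-1}-1)}
=X^N-\sum_{k=1}^{n}a_kX^{N-k}-X^\ell+\sum_{k=1}^{\ell}(a_k-a_{n+k})X^{\ell-k},
\]
with preperiod $\ell=(L-1)(L-3)$ and period $n=2L-1$. It then runs a residue-class analysis of the recursion defining $a_{n+k}$ from $a_k$, modulo $L$ and $L-1$, using $n\equiv-1\pmod L$ and $n\equiv1\pmod{L-1}$. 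This shows $0\leqslant a_k\leqslant c$ for $k\geqslant2$ and $a_N=1$. Your fallback options (an unspecified small element, or an unspecified criterion of Akiyama) do not replace this. Without such a computation, or some other genuine obstruction to finiteness of $d_\varphi(1)$, the second assertion of the proposition remains unproved.
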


\begin{proof}
The characteristic polynomial is
\[
P_{L,c}(X)
=
X^L-c\sum_{k=2}^{L-1}X^k-(c+1)X-1.
\]
We first verify Assumption~\ref{ass:pisot}. Put
\[
F(z):=(z-1)P_{L,c}(z),\qquad
G(z):=z^L\bigl(z-(c+1)\bigr),\qquad
H(z):=z^2-cz-1.
\]
Then $F=G-H$. On $|z|=r$, writing $z=re^{i\theta}$ and
$x=\cos\theta$, a direct calculation gives
\[
\begin{aligned}
D_r(x)
&:=|G(z)|^2-|H(z)|^2\\
&=4r^2x^2
 -2r\bigl((c+1)r^{2L}+c(1-r^2)\bigr)x\\
&\quad
 +r^{2L}\bigl(r^2+(c+1)^2\bigr)
 -r^4-(c^2+2)r^2-1.
\end{aligned}
\]
At $r=1$,
\[
D_1(x)=2(x-1)\bigl(2x-(c-1)\bigr)\geqslant0
\qquad(-1\leqslant x\leqslant1),
\]
with equality only at $x=1$, while
\[
\left.\frac{\partial D_r(1)}{\partial r}\right|_{r=1}
=2c(Lc-c+1)>0.
\]
Hence, for $r>1$ sufficiently close to $1$, one has
$|G(z)|>|H(z)|$ on $|z|=r$. We may also assume $r<c+1$.
Rouch\'e's theorem then shows that $F$ and $G$ have the same number of
zeros in $|z|<r$, namely $L$. Since $P_{L,c}(1)\neq0$, the zero $z=1$ of $F$ is simple. It follows,
by taking such radii arbitrarily close to $1$, that $P_{L,c}$ has at least
$L-1$ zeros in the closed unit disk.

No zero of $P_{L,c}$ lies on the unit circle. Indeed, if
$P_{L,c}(z)=0$ and $|z|=1$, then $|G(z)|=|H(z)|$, so the preceding formula
forces $z=1$, whereas
\[
P_{L,c}(1)=-c(L-1)-1\neq0.
\]
By Descartes' rule of signs, $P_{L,c}$ has a unique positive root
$\varphi$. Direct evaluation gives
\[
P_{L,c}(c+1)=-1<0<P_{L,c}(c+2),
\]
so
\[
c+1<\varphi<c+2,
\qquad
\lfloor\varphi\rfloor=c+1.
\]
Consequently every other root has modulus strictly less than $1$, and
$\varphi$ is a Pisot number.

The same constant-term argument as in
Proposition~\ref{prop:automatic-pisot-beta-branches} gives irreducibility.
Indeed, if $P_{L,c}=UV$ with monic nonconstant $U,V\in\Z[X]$, the factor
not containing $\varphi$ has all its roots strictly inside the unit disk,
while its constant term is a nonzero integer because $P_{L,c}(0)=-1$.
This is impossible. Thus $P_{L,c}$ is the minimal polynomial of
$\varphi$, and Assumption~\ref{ass:pisot} holds.

It remains to show that property~(F) fails. We will construct an
eventually periodic greedy $\varphi$-expansion of $1$ of the form
\[
1=0.a_1a_2\cdots a_\ell\overline{a_{\ell+1}\cdots a_N},
\]
with $a_1=c+1$ and $a_N=1$, where
\[
\ell:=L^2-4L+3,\qquad n:=2L-1,\qquad N:=\ell+n=L^2-2L+2.
\]
The polynomial relation associated with this expansion uses a
cyclotomic product. Let
\[
\mathcal D_L
:=
\{d:d\mid L(L-1),\ d\nmid L,\ d\nmid(L-1)\}.
\]
The cyclotomic product
\[
T(X)
:=
\prod_{d\in\mathcal D_L}\Phi_d(X)
=
\frac{(X^{L(L-1)}-1)(X-1)}{(X^L-1)(X^{L-1}-1)}
\]
is a polynomial. We define integers $a_1,\ldots,a_N$ by
\[
a_1=c+1,\qquad
a_2=\cdots=a_L=0,\qquad
a_{L+1}=c,\qquad
a_{L+2}=\cdots=a_n=0,
\]
and, for $1\leqslant k\leqslant\ell$, by
\[
a_{n+k}=a_k-\delta_k,
\]
where
\[
\delta_k=
\begin{cases}
 1,& \text{if  } k=qL+1,\quad 0\leqslant q\leqslant L-4,\\
 -1,& \text{if  } k=q(L-1),\quad 1\leqslant q\leqslant L-3,\\
 -c,& \text{if  } k=qL+2,\quad 0\leqslant q\leqslant L-4,\\
 0,&\text{otherwise}.
\end{cases}
\]
Expanding the above cyclotomic product gives the polynomial identity
\[
P_{L,c}(X)T(X)
=
X^N-\sum_{k=1}^{n}a_kX^{N-k}-X^\ell
 +\sum_{k=1}^{\ell}(a_k-a_{n+k})X^{\ell-k}.
\]
Since $P_{L,c}(\varphi)=0$, this identity is equivalent to the eventually
periodic representation announced above, in base $\varphi$.

We record the digit bound needed for Parry's criterion. For $L=4$ and $L=5$, the coefficient sequences $(a_k)_{1\leqslant k\leqslant N}$ are, respectively,
\[
(c+1,0,0,0,c,0,0,c,c,1)
\]
and
\[
(c+1,0,0,0,0,c,0,0,0,c,c,0,1,0,c-1,c,1),
\]
so the required digit bound holds directly, and in both cases
the periodic part is nonzero. We may therefore assume $L\geqslant6$.

The recurrence $a_{n+k}=a_k-\delta_k$ partitions the coefficients
according to their indices modulo $n$. For $1\leqslant j\leqslant n$,
put
\[
m_j:=\left\lfloor\frac{\ell-j}{n}\right\rfloor,
\qquad
S_j(p):=j+pn\quad(0\leqslant p\leqslant m_j+1).
\]
Thus $a_{S_j(p+1)}=a_{S_j(p)}-\delta_{S_j(p)}$ for
$0\leqslant p\leqslant m_j$. We show below that all these coefficients,
except $a_1=c+1$, lie between $0$ and $c$.

Notice that
\[
m_j\leqslant\frac{\ell-1}{n}<\frac{L-3}{2}.
\]
Since
\[
n\equiv-1\pmod L,\qquad n\equiv1\pmod{L-1},
\]
the nonzero changes can occur only when the current index is
congruent to $1$ or $2$ modulo $L$, or to $0$ modulo $L-1$.
Each of these congruence events can occur at most once for a fixed
$j$: if $S_j(p)$ and $S_j(q)$ have the same residue modulo $L$,
or modulo $L-1$, then $p-q$ is divisible by $L$, or by $L-1$,
respectively, while $|p-q|<L/2$.

If both residues $2$ and $1$ modulo $L$ occur for a fixed $j$,
the residue $2$ occurs first, and the two changes are respectively
$+c$ and $-1$. Moreover, a congruence $0$ modulo $L-1$ cannot occur together with either of these events for
the same $j$. Indeed, suppose that
\[
S_j(u)\equiv0\pmod{L-1},
\qquad 0\leqslant u\leqslant m_j.
\]
Since $j+u\equiv0\pmod{L-1}$ and $m_j<(L-3)/2$, one has either
\[
j=L-1-u
\qquad\text{or}\qquad
j=2L-2-u.
\]
In the first case, $u\leqslant m_j$ implies
$u\leqslant(L-4)/2$. If $S_j(p)\equiv r\pmod L$ with
$0\leqslant p\leqslant m_j$ and $r\in\{1,2\}$, then
\[
p+u+r+1\equiv0\pmod L,
\]
whereas
\[
0<p+u+r+1<L,
\]
a contradiction. In the second case, the inequalities
$u,p\leqslant m_j\leqslant(\ell-j)/n$, with
$j=2L-2-u$, imply
\[
u,p\leqslant\frac{L-5}{2}.
\]
Hence $S_j(p)\equiv r\pmod L$, with
$0\leqslant p\leqslant m_j$ and $r\in\{1,2\}$, would give
\[
p+u+r+2\equiv0\pmod L,
\]
again with
\[
0<p+u+r+2<L,
\]
which is impossible. Thus, an event modulo $L-1$ cannot occur
together with either of the two events modulo $L$.

The exceptional starting indices
\[
j\in\{1,2,L-1,L,L+1,L+2,2L-2,2L-1\}
\]
are checked directly. The corresponding tuples
$(a_{S_j(p)})_{0\leqslant p\leqslant m_j+1}$ are, respectively,
with possible truncation,
\[
\begin{gathered}
(c+1,c,c,\ldots),\qquad
(0,c,c-1,c-1,\ldots),\qquad
(0,1,1,\ldots),\qquad
(0,0,0,\ldots),\\
(c,c-1,c-1,\ldots),\qquad
(0,c,c-1,c-1,\ldots),\qquad
(0,1,1,\ldots),\qquad
(0,0,0,\ldots).
\end{gathered}
\]
For all other starting indices $j$, the tuples have, again up
to truncation, one of the forms
\[
(0,0,\ldots),\qquad
(0,\ldots,0,1,1,\ldots),\qquad
(0,\ldots,0,c,c,\ldots),\qquad
(0,\ldots,0,c,c-1,c-1,\ldots).
\]
Consequently
\[
0\leqslant a_k\leqslant c
\qquad(2\leqslant k\leqslant N).
\]
The same residue check gives $a_\ell=0$; since
$\ell=(L-1)(L-3)$, one has $\delta_\ell=-1$, and therefore
\[
a_N=a_\ell-\delta_\ell=1.
\]
In particular, the periodic part is nonzero.

Since $a_1=c+1$ and every later digit is at most $c$,
every nontrivial shift of the digit sequence is
lexicographically smaller than the sequence itself. Parry's criterion therefore shows that the displayed eventually
periodic representation is the greedy $\varphi$-expansion of $1$.
Finally, put $\omega:=\varphi^{-1}$. Since $\varphi>c+1$,
\[
x:=1-(c+1)\omega>0,
\qquad
x\in\Z[\varphi^{-1}].
\]
The digit sequence $0.0a_2a_3\cdots$ represents $x$.
Since every shift of its fractional digit sequence begins
with a digit at most $c<a_1$, it is admissible with respect
to the greedy expansion of $1$ established above.
Hence it is the greedy $\varphi$-expansion of $x$. As $a_N=1$, its periodic part is nonzero, so the expansion
is infinite. Hence
$x\notin\operatorname{Fin}(\varphi)$, and property~(F) fails.
\end{proof}

\subsection{Unconditional consequences}

Translating the coefficient conditions of
Proposition~\ref{prop:automatic-pisot-beta-branches} back to the original
coefficients $g$, condition~(M) is equivalent to
\[
g_2\leqslant\cdots\leqslant g_L,
\]
while condition~(D) is equivalent to
\[
\sum_{k=1}^{L-1}g_k<g_L.
\]

\begin{corollary}\label{cor:unconditional-prime-terminal-blocks}
The conclusions of Theorem~\ref{thm:prime-terminal-blocks} hold
unconditionally if either of the coefficient conditions
\[
g_2\leqslant\cdots\leqslant g_L,
\qquad
\sum_{k=1}^{L-1}g_k<g_L
\]
holds, or if $g$ belongs to one of the two families
\[
g=\left(1,\underbrace{c+1,\ldots,c+1}_{L-2\text{ terms}},c\right),
\qquad c\geqslant1,
\]
and
\[
g=\left(1,c+1,\underbrace{c,\ldots,c}_{L-2\text{ terms}}\right),
\qquad L\geqslant4,\quad c\geqslant3.
\]
\end{corollary}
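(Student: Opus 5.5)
The corollary follows by combining Theorem~\ref{thm:prime-terminal-blocks} with the algebraic criteria of this section. That theorem was proved assuming only Assumption~\ref{ass:pisot}. Its one geometric input, Theorem~\ref{thm:rauzy-fundamental-domain}, was established from that assumption alone in Proposition~\ref{prop:automatic-rauzy-hypothesis}. So it suffices to verify Assumption~\ref{ass:pisot} in each of the four listed cases. No new argument about Rauzy geometry or prime sums is needed.

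The first step translates the conditions on $g$ into conditions on $h$, using $h_r=g_{L+1-r}$ and $g_1=h_L=1$.

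For the monotone condition, note that $g_2\leqslant\cdots\leqslant g_L$ reads $h_{L-1}\leqslant\cdots\leqslant h_1$. Since $h_L=1\leqslant h_{L-1}$, this is exactly condition (M).

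For the dominant condition, the inequality $\sum_{k=1}^{L-1}g_k<g_L$ becomes $h_L+h_{L-1}+\cdots+h_2<h_1$, which is condition (D).

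In both cases Proposition~\ref{prop:automatic-pisot-beta-branches} gives Assumption~\ref{ass:pisot}.

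For the first explicit family, reversing $g=(1,c+1,\ldots,c+1,c)$ gives $h=(c,c+1,\ldots,c+1,1)$. This is exactly the hypothesis of Proposition~\ref{prop:nonbeta-family-pisot}, which covers every $c\geqslant1$ and $L\geqslant2$.

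For the second family, reversing $g=(1,c+1,c,\ldots,c)$ with $L\geqslant4$ and $c\geqslant3$ gives $h=(c,\ldots,c,c+1,1)$. Proposition~\ref{prop:nonfinite-family-pisot} applies to this $h$.

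Once Assumption~\ref{ass:pisot} holds in each case, every conclusion of Theorem~\ref{thm:prime-terminal-blocks} follows verbatim. This covers the single-word asymptotic, the union over $\mathcal C\subset\mathcal B_e$, and the $e=1$ specialization.

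The only place needing care is the index reversal between $g$ and $h$. In particular, one must check that the middle strings of the families land in the correct positions, and that the extra inequality $h_{L-1}\geqslant h_L=1$ required by (M) holds. It does, because the $g_k$ are positive integers.
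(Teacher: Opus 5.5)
Your proposal is correct and follows the paper's own argument. Each coefficient condition or family is rewritten in $h$-form, Propositions~\ref{prop:automatic-pisot-beta-branches}, \ref{prop:nonbeta-family-pisot} and \ref{prop:nonfinite-family-pisot} give Assumption~\ref{ass:pisot}, and Proposition~\ref{prop:automatic-rauzy-hypothesis} then supplies Theorem~\ref{thm:rauzy-fundamental-domain}, so Theorem~\ref{thm:prime-terminal-blocks} applies. Your index bookkeeping is also right: $h_r=g_{L+1-r}$, and (M) needs $h_{L-1}=g_2\geqslant 1=h_L$.
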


\begin{proof}
In the first two cases,
Proposition~\ref{prop:automatic-pisot-beta-branches} gives
Assumption~\ref{ass:pisot}; the two remaining families are covered by
Propositions~\ref{prop:nonbeta-family-pisot} and
\ref{prop:nonfinite-family-pisot}, respectively.
Proposition~\ref{prop:automatic-rauzy-hypothesis} then gives
Theorem~\ref{thm:rauzy-fundamental-domain}, so
Theorem~\ref{thm:prime-terminal-blocks} applies.
\end{proof}

This completes the proofs of Theorems~\ref{thm:intro-main}, \ref{thm:intro-several-blocks}, and \ref{thm:intro-unconditional-cases}. In particular, in all these cases the results of
Sections~\ref{sec:volume} and~\ref{sec:prime-blocks} whose only additional
geometric input is
Theorem~\ref{thm:rauzy-fundamental-domain} hold unconditionally.
This includes polynomial sampling of terminal blocks, prime genericity of
the substitution fixed word, prime polynomial sampling, independence from
fixed congruence classes, the asymptotic Bernoulli law, fixed-shift
correlations, Möbius orthogonality, terminal Chebotarev, and the narrow
Green--Tao consequence.

The section therefore separates the geometric input from its algebraic
verification. Barge's theorem supplies pure discrete spectrum for the
reversed tiling system; the symbolic--tiling equivalence together with word
reversal transfers this to the original substitution; and
Proposition~\ref{prop:automatic-rauzy-hypothesis} translates the resulting
Rauzy-fractal structure into the windows and lattice used in the arithmetic
argument, thereby establishing
Theorem~\ref{thm:rauzy-fundamental-domain}.

%% file: sections/09-outlook-and-further-directions.tex
\section{Outlook and open problems}\label{sec:conclusion}

The results above separate the problem into a structural--geometric
coding and a classical equidistribution input. For a fixed terminal word
$B$, the exact cylinder law
\[
\frac{\vol(R_B)}{\vol(\widetilde R)}=\omega^{\len(B)}
\]
turns the terminal condition into membership of a measurable window for
a linear orbit on a compact torus. The argument uses two closely related
toral codings. The prefix torus $R/\mathcal L$ gives
prime genericity and polynomial sampling for the substitution fixed word,
directly at the original word positions. The full terminal torus gives
polynomial sampling for terminal cylinders, congruence and fixed-shift laws,
M\"obius orthogonality, terminal Chebotarev, and arithmetic progressions
inside fixed terminal prime classes.

Many other classical prime questions admit analogous terminal refinements,
but merely listing them would add little. We therefore record only a few
directions in which the present structure points to a specific new
difficulty, notably quantitative shrinking targets and extensions beyond
the unimodular irreducible Pisot setting considered in this paper.

\subsection{The terminal torus as an auxiliary arithmetic space}
\label{sec:terminal-auxiliary-space}

The toral coding developed above suggests a general principle for adding
terminal conditions to arithmetic problems. Suppose that one starts with
an arithmetic average over integers, primes, or another arithmetic set,
and wishes to restrict it further by imposing a fixed terminal condition
\[
\TBe{e}(n)=B.
\]
The point of the terminal torus is that this extra condition can be replaced,
up to a negligible exceptional set, by membership of $n\mathbf v$ in a
fixed toral window.

Indeed, for every fixed $e$ one has the exact partition
\[
1=\sum_{B\in\mathcal B_e}
\mathbf 1_{\{\TBe{e}(n)=B\}}
\qquad(n\in\N_0).
\]
By Propositions~\ref{prop:terminal-cylinder-partition},
\ref{prop:toral-congruence}, and
\ref{prop:rational-independence}, together with
Lemma~\ref{lem:regular-toral-coding}, each fixed terminal indicator
agrees, outside a natural density-zero exceptional set, with the toral
window indicator
\[
\mathbf 1_{\Omega_B}(n\mathbf v).
\]
The corresponding toral window has Haar measure
\[
\mu_{\mathrm{Haar}}(\Omega_B)=\omega^{\len(B)}.
\]

Approximating $\mathbf 1_{\Omega_B}$ by a finite Fourier sum reduces the
inserted terminal condition to a constant term together with nonconstant
modes of the form
\[
e^{2\pi i n\,\mathbf m\cdot\mathbf v},
\qquad
\mathbf m\in\Z^d\setminus\{0\}.
\]
Thus a terminal refinement of an arithmetic theorem is possible whenever
the underlying arithmetic average remains controllable after multiplication
by these irrational toral phases, and when the boundary and approximation
errors can also be controlled against the arithmetic weight.

This gives a useful test for possible extensions of the method. The toral
coding does not solve a new arithmetic problem by itself; rather, it allows
a fixed terminal condition to be inserted into an existing one whenever
the required twisted estimates are available. If the untwisted problem is
already open, the toral lift does not remove that obstruction.

\subsection{Shrinking cylinders and quantitative distribution}
\label{sec:quantitative-terminal-directions}

The theorems of this paper prescribe a fixed terminal word while
$X\to\infty$. A first genuinely new regime begins when the amount of
prescribed terminal information grows with $X$. Let
$B_X\in\mathcal B_{e(X)}$ and put
\[
m(X):=\len(B_X)\to\infty.
\]
The self-affine identity from Section~\ref{sec:recombination} gives
\[
R_{B_X}=v_{B_X}+K^{m(X)}\widetilde R,
\]
so one is counting primes in a Rauzy target whose scale shrinks with
$X$.

\begin{problem}\label{prob:shrinking-cylinders}
Determine a nontrivial range of $m(X)\to\infty$ for which
\[
\#\{p\leqslant X:\TBe{e(X)}(p)=B_X\}
\sim \omega^{m(X)}\pi(X)
\]
holds uniformly over admissible words $B_X$ of total digit-length
$m(X)$.
\end{problem}

For such a uniform asymptotic one should at least require the expected
main term $\omega^{m(X)}\pi(X)$ to tend to infinity. On the scale
\[
m(X)=c\log_\varphi X+O(1),
\]
one has
\[
\omega^{m(X)}\pi(X)\asymp \frac{X^{1-c}}{\log X},
\]
so this necessary condition forces $c<1$. The problem is a
recurrence-numeration analogue of prescribing a growing number of ordinary
digits of a prime; compare Bourgain~\cite{Bourgain2015} and
Swaenepoel~\cite{Swaenepoel2020}. Here, however, the shrinking sets are
fractal Rauzy cylinders rather than boxes in an independent digit
coordinate system.

Even for a fixed terminal word $B$, the present argument gives no uniform
error term or level of distribution. A natural quantitative next step is
therefore to seek Bombieri--Vinogradov-type estimates for primes restricted
by the exact terminal condition
\[
\TBe{e}(n)=B.
\]
Existing results for Beatty primes and for von Mangoldt sums twisted by
linear or more general nilsequence phases show that substantial analytic
input is available for smooth irrational restrictions
\cite{JiKazdanMcDonald2020,Matomaki2009,ShaoTeravainen2021}.
The new issue here is quantitative smoothing of the nonsmooth
Rauzy-cylinder indicator and control of its fractal boundary. The same
difficulty would enter short-interval versions of the prime terminal-block
theorem.

\subsection{Beyond the unimodular irreducible Pisot setting}
\label{sec:beyond-unimodular}

The standing normalization $g_1=1$ makes the companion substitution
unimodular. In this discussion we extend the definitions of the weights
and semi-proper blocks from Section~\ref{sec:general-data} to
$g_1=h_L\geqslant1$, keeping
$b_{i,j}=(j,h_{i-1},\ldots,h_1)$ for $0\leqslant j<h_i$.
The finite-level induction in Theorem~\ref{thm:block-language}
remains valid in this setting, now with $h_L$ semi-proper blocks of
digit-length $L$. If the normalization $g_1=1$ is dropped, the natural
companion extension of Definition~\ref{def:intro-associated-substitution}
is
\[
\sigma(A_i)=A_1^{h_i}A_{i+1}\quad(1\leqslant i<L),
\qquad
\sigma(A_L)=A_1^{h_L}.
\]
Its incidence matrix has characteristic polynomial $P_h$ and determinant
of absolute value $h_L$. It is natural to ask which parts of the present
argument survive in this non-unimodular setting. The simplest test family
is the order-two family
\[
g=(2,a),\qquad a\geqslant2,
\]
for which $h=(a,2)$ and
\[
P_a(X)=X^2-aX-2.
\]
The polynomial $P_a$ is irreducible for $a\geqslant2$: otherwise
$a^2+8=b^2$ for some integer $b$, so
\[
(b-a)(b+a)=8,
\]
which forces $a=1$. Its dominant root $\varphi_a$ is Pisot, since the
other root is $-2/\varphi_a$ and $\varphi_a>2$. Since
$N(\varphi_a)=-2$, these are non-unit Pisot systems. Rauzy fractals for
non-unit Pisot substitutions naturally live in a representation space
with non-Archimedean components; see Minervino--Thuswaldner
\cite{MinervinoThuswaldner2014}.

This family already exhibits a local obstruction invisible in the
unimodular theory. If $a$ is even, then
\[
H_1=1,
\qquad
H_k\equiv0\pmod2\quad(k\geqslant2).
\]
Hence an integer
\[
n=\sum_{k\geqslant1}a_kH_k
\]
satisfies
\(
n\equiv a_1\pmod2.
\)
Since the first digit of the terminal block $b_{i,j}$ is $j$, every
terminal class with even first digit contains no odd prime. In particular,
the positive law $\omega^{\len(B)}$ from the unimodular setting cannot
extend unchanged to all classes in this family; a correct statement must
incorporate the local factor at $2$.

The simplest case $g=(2,2)$ already makes the expected correction visible
numerically. Here $\varphi=1+\sqrt3$, $\omega=\varphi^{-1}$, and the four
one-block classes are
\[
(0),\qquad (1),\qquad (0,2),\qquad (1,2).
\]
Apart from the prime $2$, only the two classes with first digit $1$ can
occur at primes. Among the first $100000$ primes, numerical computation
gives $73225$ occurrences of $(1)$ and $26774$ occurrences of $(1,2)$.
Among the $99999$ odd primes these proportions are approximately
$0.732257$ and $0.267743$, compared with
\[
2\omega=0.732050\ldots,
\qquad
2\omega^2=0.267949\ldots.
\]
This suggests that in this example the local factor at $2$ simply
renormalizes the two admissible formal weights by a factor of $2$.

\begin{problem}\label{prob:nonunit-order-two}
Determine the prime terminal-block law for $g=(2,a)$, $a\geqslant2$,
including the local $2$-adic factors, and identify the appropriate
non-unit Rauzy representation space that replaces the torus used in
this paper.
\end{problem}

The adjacent case $g=(2,1)$ is different again. Here
\[
P(X)=X^2-X-2=(X-2)(X+1),
\qquad
H_n=\frac{2^n-(-1)^n}{3},
\]
so the characteristic polynomial is reducible. The weights $H_n$ are the Jacobsthal numbers. Greedy expansions
in this system may contain the digit $2$, whereas Carlitz,
Scoville, and Hoggatt~\cite{CarlitzScovilleHoggatt1972}
studied a different non-greedy representation system.

Although its dominant
root $2$ is itself a Pisot number, $P$ is not its minimal polynomial;
hence this system lies outside Assumption~\ref{ass:pisot} and the
irreducible Pisot setting of the paper. The three semi-proper blocks are
\[
(0),\qquad (0,1),\qquad (1,1),
\]
and numerical experiments suggest the frequencies
\[
\frac12,\qquad\frac14,\qquad\frac14
\]
among the primes, matching the formal weights $2^{-\len(b)}$.

\begin{problem}\label{prob:reducible-order-two}
Prove or disprove the prime terminal-block frequencies for
$g=(2,1)$. In particular, determine whether the result can be reduced
to explicit dyadic congruence classes and Dirichlet's theorem, or
whether a genuinely morphic prime-number theorem is required.
\end{problem}

These two order-two families indicate that leaving the unimodular
irreducible setting introduces genuinely new local arithmetic, rather
than merely a technical modification of the present proof.

The auxiliary-space principle also suggests many further arithmetic
refinements, but in several natural examples the main obstruction already
lies in the untwisted problem. Higher M\"obius correlations and
Bateman--Horn-type prime-value questions, for instance, lead back at the
zero frequency to deep open problems. In other settings, such as
Sato--Tate or prime counts in short intervals, one would instead have to
determine whether the available analytic estimates remain uniform after
the insertion of the irrational toral twists and the nonsmooth cylinder
cutoff.

A qualitatively different difficulty appears for lacunary sampling.
Replacing $n\mathbf v$ by, for example, $2^n\mathbf v$ leads to
equidistribution questions for lacunary orbits of algebraic coordinates.
Already in one dimension, equidistribution of
$2^n\alpha \pmod 1$ is equivalent to normality of $\alpha$ in base~$2$.
For algebraic irrational $\alpha$, this is a notoriously difficult
Diophantine problem. Such questions lie well beyond the equidistribution
input used in the present paper.

\section*{Use of generative AI}
Generative AI tools were used during the preparation of this manuscript
for bibliographic searches, as well as for English-language reformulation. They were not used to obtain mathematical results or
proofs. The authors take full responsibility for the mathematical content
and the final text.